\tikzset{middlearrow/.style={
		decoration={markings,
			mark= at position 0.5 with {\arrow{#1}} ,
		},
		postaction={decorate}
	}
}
\newcommand{\pd}{{\partial}}
\newcommand{\ol}{\overline}
\newcommand{\wt}{\widetilde}
\newcommand{\id}{\mathbb{1}}
\newcommand{\be}{\begin{equation}}
	\newcommand{\ee}{\end{equation}}
\newcommand{\beqn}{\begin{equation}}
	\newcommand{\eeqn}{\end{equation}}
\newcommand{\bp}{\begin{pmatrix}}
	\newcommand{\ep}{\end{pmatrix}}
\newcommand{\bsp}{\left(\begin{smallmatrix}}
	\newcommand{\esp}{\end{smallmatrix}\right)}
\newcommand*\diff{\mathop{}\!\mathrm{d}}
\newcommand{\R}{{\mathbf R}}
\renewcommand{\P}{{\mathbb P}}
\newcommand{\C}{{\mathbf C}}
\newcommand{\Z}{{\mathbf Z}}
\newcommand{\CA}{{\mathcal A}}
\newcommand{\CF}{{\mathcal F}}
\newcommand{\CH}{{\mathcal H}}
\newcommand{\CI}{{\mathcal I}}
\newcommand{\CK}{{\mathcal K}}
\newcommand{\CN}{{\mathcal N}}
\newcommand{\CO}{{\mathcal O}}
\newcommand{\CR}{{\mathcal R}}
\newcommand{\CV}{{\mathcal V}}
\newcommand{\CW}{{\mathcal W}}
\newcommand{\CL}{{\mathcal L}}
\newcommand{\T}{\text{T}}
\newcommand{\fgl}{\mathfrak{gl}}
\newcommand{\fg}{\mathfrak{g}}
\newcommand{\im}{\mathrm{i}}
\newcommand{\norm}[1]{{{:\!{#1}\!:}}}
\numberwithin{equation}{section}
\numberwithin{figure}{section}
\numberwithin{table}{section}
\newcommand{\lie}{\mathfrak}
\newcommand{\bu}{\bullet}
\newcommand{\del}{\partial}
\newcommand{\cR}{\mathcal{R}}
\newcommand{\op}{\operatorname}
\renewcommand\Hat{\widehat}
\renewcommand{\d}{\mathrm{d}}
\newcommand{\cA}{\mathcal{A}}
\newcommand{\define}{\overset{\text{def}}{=}}
\newcommand{\dbar}{\overline{\del}}
\newcommand{\rav}{\mathbb{R}\text{av}}
\renewcommand{\Bar}{\overline}
\newcommand{\zbar}{\Bar{z}}
\newcommand{\wbar}{\Bar{w}}
\newcommand{\ul}{\underline}
\newcommand{\three}{\CK_{z,w,z-w}}
\DeclareMathOperator{\End}{End}
\DeclareMathOperator{\Sym}{Sym}
\DeclareMathOperator{\Hom}{Hom}
\DeclareMathOperator{\Spec}{Spec}
\DeclareMathOperator{\Vect}{\text{Vect}}
\DeclareMathOperator{\Res}{Res}
\newtheoremstyle{thm}% name
{7pt}%      Space above
{7pt}%      Space below
{\itshape}%         Body font
{}%         Indent amount (empty = no indent, \parindent = para indent)
{\bf}% Thm head font
{.}%        Punctuation after thm head
{5pt}%     Space after thm head: " " = normal interword space;
\newtheoremstyle{def}% name
{7pt}%      Space above
{10pt}%      Space below
{\itshape}%         Body font
{}%         Indent amount (empty = no indent, \parindent = para indent)
{\bf}% Thm head font
{.}%        Punctuation after thm head
{5pt}%     Space after thm head: " " = normal interword space;
\newtheoremstyle{rem}% name
{4pt}%      Space above
{10pt}%      Space below
{}%         Body font
{}%         Indent amount (empty = no indent, \parindent = para indent)
{\itshape}% Thm head font
{:}%        Punctuation after thm head
{3pt}%     Space after thm head: " " = normal interword space;
\newtheoremstyle{texttheorem}% name
{8pt}%      Space above
{8pt}%      Space below
{\itshape}%         Body font
{}%         Indent amount (empty = no indent, \parindent = para indent)
{\bf}% Thm head font
{. \hspace{5pt}}%        Punctuation after thm head
{3pt}%     Space after thm head: " " = normal interword space;
\theoremstyle{thm}
\newtheorem*{theorem*}{Theorem}
\newtheorem*{lemma*}{Lemma}
\newtheorem*{corollary*}{Corollary}
\newtheorem*{proposition*}{Proposition}
\newtheorem*{definition*}{Definition}
\newtheorem{theorem}{Theorem}[subsection]
\newtheorem{thm-def}{Theorem/Definition}[theorem]
\newtheorem{prop}[theorem]{Proposition}
\newtheorem*{question*}{Question}
\newtheorem{lemma}[theorem]{Lemma}
\newtheorem{corollary}[theorem]{Corollary}
\numberwithin{equation}{subsection}
\theoremstyle{definition}
\newtheorem{dfn}[theorem]{Definition}
\theoremstyle{rem}
\newcommand{\defterm}[1]{\textbf{\emph{#1}}}
\date{}
\title{Raviolo vertex algebras}
\author{Niklas Garner}
\author{Brian R. Williams}
\begin{document}
	
\begin{abstract}
	We develop an algebraic structure modeling local operators in a three-dimensional quantum field theory which is partially holomorphic and partially topological. 
	The geometric space organizing our algebraic structure is called the raviolo (or bubble) and replaces the punctured disk underlying vertex algebras; we refer to this structure as a raviolo vertex algebra.
	The raviolo has appeared in many contexts related to three-dimensional supersymmetric gauge theory, especially in work on the affine Grassmannian.
	We prove a number of structure theorems for raviolo vertex algebras and provide simple examples that share many similarities with their vertex algebra counterparts.
\end{abstract}

\maketitle
\tableofcontents

Our goal is to develop the algebraic structure underpinning the local observables of a particular class of three-dimensional quantum field theories that has received considerable attention in recent years, particularly in the context of supersymmetric gauge theory.
It is well-known that the local observables of a two-dimensional chiral conformal field theory organize into the structure of a vertex algebra.
Since its inception, the theory of vertex algebras has led to remarkable interactions between physics and mathematics of which quantum field theory and representation theory have significantly profited and we are optimistic that an analogous role will be played by this higher dimensional version of a vertex algebra.

%The sort of three-dimensional quantum field theories this paper concerns are close cousins to chiral conformal field theories.
In two dimensions, a chiral conformal field theory is one which depends on the complex structure of the underlying surface.
The sort of three-dimensional theories whose local operators we aim to model depend on the data of a particular complex foliation which equips the three-manifold with local coordinates of the form $(z,t)$ where $z$ is a holomorphic coordinate and $t$ is a real smooth coordinate.
Such a foliation is called a \textit{transverse holomorphic foliation} (THF) \cite{DuchampKalka,Rawnsley,Asuke}; we recall its precise definition in the main text below.

Typical examples of three-manifolds equipped a THF include products $\Sigma \times S$ where $\Sigma$ is a Riemann surface and $S$ is a real one-dimensional manifold.
More generally, however, THFs are not necessarily split in this form.
The geometric incarnation of the central algebraic object in this note is an example of a three-manifold which is equipped with such a non-split THF structure.
Simply, it is the THF structure on punctured affine space $\C \times \R - \{0\}$ inherited from the tautological one on $\C \times \R$.
Of course, this punctured affine space is homotopically equivalent to the two-sphere $S^2$ but geometrically the data of the foliation allows us to consider an analog of the algebra of `holomorphic' functions on this space--these are functions which are constant along the leaves of the foliation.

Holomorphic functions on the punctured affine line $\C^\times = \C-\{0\}$, or more precisely functions on the formal disk $D^\times$, conveniently organize the modes of a vertex algebra.
There is a formal version of the THF structure on punctured affine space $\C \times \R - \{0\}$ which we will refer to as the \textit{formal raviolo} and denote it $\rav$.
As a non-separated scheme this is given by gluing two formal disks over a shared punctured disk $\rav = D \cup_{D^\times} D$.
The space $\C^\times$ plays a fundamental role in two-dimensional chiral conformal field theory since it is the configuration space of two points in $\C$, modulo overall translations, therefore two-point correlation functions, which control the operator product expansions (OPEs) in a vertex algebra, belong to holomorphic functions on $\C^\times$.
Analogously, in a three-dimensional theory which is holomorphic-topological, functions which are constant along the leaves of the THF on $\C \times \R - \{0\}$ control the raviolo analog of the OPE.

An important subtlety here is that the formal raviolo is not an affine scheme, therefore instead of considering just the algebra of functions on $\rav$ we consider the derived sections of its structure sheaf.
In Section \ref{sec:raviolo} we construct an explicit `de Rham' model for this dg algebra using the THF structure on punctured affine space.
This model is similar to the one used in \cite{FHK} for the derived global sections of the higher dimensional formal punctured disk motivated by the Jouanolou torsor.
A potential drawback of our approach is that in the bulk of the paper we work at the level of cohomology of the raviolo rather than at the cochain level; indeed a \textit{raviolo field} is an operator-valued function parameterized by the cohomology of the formal raviolo.
We expect that a careful analysis of the cochain level version of our algebraic structures could lead to a refinement of our constructions.

From this point, our definition of a \textit{raviolo vertex algebra} is parallel to that of an ordinary vertex algebra with the role of the formal disk being replaced by the formal raviolo $\rav$.
In Section \ref{sec:definition} we give the definition of a raviolo vertex algebra.
Most importantly, we develop the notion of mutual locality between two raviolo fields in terms of distributional calculus on two formal ravioli.

In Section \ref{sec:elementary} we discuss basic properties of raviolo vertex algebras.
The first main result (Theorem \ref{thm:assoc}) of this section is a formulation of associativity for raviolo vertex algebras and the corresponding OPE.
Physically, this OPE encapsulates information about the actual operator product of the holomorphic-topological QFT, which is necessarily regular, as well as the possibly singular operator products of holomorphic-topological descendants.
The second main result (Theorem \ref{thm:RVAvsshiftedPVA}) in this section is an equivalence between raviolo vertex algebras and one-shifted Poisson vertex algebras \cite{OhYagi, CostelloDimofteGaiotto-boundary}, or Gerstenhaber vertex algebras \cite{Bouaziz}.
The work \cite{OhYagi} propose one-shifted Possion vertex algebras as a model for the algebra of local operators in twisted three-dimensional supersymmetric quantum field theories, see also \cite{Zeng}. 
The chiral algebraic analog of this structure appeared in \cite{Tamarkin} under the name pro-$c$-Gerstenhaber
algebra. In that context, it is argued in loc. cit. that this algebraic structure is present in the derived center of a chiral algebra (whose cohomology controls deformations of the chiral algebra); we expect an analogous statement to hold for vertex algebras.
In the context of three-dimensional quantum field theory, it is expected that the derived center of a boundary vertex algebra should be related to local operators in the three-dimensional bulk~\cite{CostelloDimofteGaiotto-boundary}, see also \cite{Zeng}.
It is satisfying that our bottom-up approach to holomorphic-topological theories fits this expectation and agrees with established prescriptions for describing observables in such theories.

The local operators of an $n$-dimensional topological field theory always form an algebra over the operad of little disks in $\R^n$.
In \cite{CG1} it is shown how special algebras over a holomorphic analog of the operad of little disks in $\C$ can be used to recover vertex algebras.
Such algebras can be constructed from translation invariant factorization algebras on $\C$ for which the infinitesimal translation $\del / \del \zbar$ act (homotopically) trivially.
We anticipate a similar result for ravioli vertex algebras: there is a colored operad of disks in $\R \times \C$ which can be enriched to the THF setting.
Furthermore, we expect that translation invariant factorization algebras on $\R \times \C$ for which both $\del / \del t$ and $\del/\del \zbar$ act (homotopically) trivially give rise to raviolo vertex algebras.
The interpretation of raviolo vertex algebras in terms of factorization algebras points towards a global theory, such as a raviolo version of conformal blocks which should exist on general THF three-manifolds.

With the goal of constructing examples, in Section \ref{sec:examples} we state and prove a reconstruction theorem for raviolo vertex algebras (Proposition \ref{prop:reconstruction}).
This is directly analogous to the one for vertex algebras (see, for example, \cite{FBZ}).
Using this we provide a handful of universal examples of raviolo vertex algebras stressing parallels with the usual free field, affine Kac--Moody, Heisenberg, and Virasoro vertex algebras.
Section \ref{sec:examples} ends with a discussion of a class of deformations relevant to three-dimensional gauge theory, for example leading to the raviolo analog of BRST reduction of a vertex algebra.

Finally, in Section \ref{sec:modules} we initiate the study of modules for raviolo vertex algebras.
For raviolo vertex algebras induced from Lie algebras, such as the raviolo analogs of free field, Heisenberg, affine Kac--Moody, and the Virasoro algebras, we relate raviolo vertex algebra modules with smooth modules for the associated Lie algebra.
We end by considering Fock modules for the raviolo Heisenberg algebra and use these to construct a lattice-type raviolo vertex algebra which models non-perturbative local operators in twisted three-dimensional $\mathcal{N}=2$ abelian Yang--Mills theory.
In \cite{Zeng} motivated by a bulk/boundary (3d/2d) correspondence, many other non-perturbative examples of shifted Poisson vertex algebras were considered, and it would be very interesting to understand these within the context of raviolo vertex algebras.

\subsection*{Acknowledgements}

We thank Owen Gwilliam, Zhengping Gui, Surya Raghavendran, and Keyou Zeng for conversation and inspiration of all kinds related to the formation and development of the ideas in this paper.
We are especially grateful to Surya Raghavendran for his collaboration on work pertaining to symmetry enhancement for twists of three-dimensional supersymmetric gauge theories which partly inspired this project.

\section{The raviolo}
\label{sec:raviolo}

In this section we construct a model of the formal raviolo (sometimes referred to as the bubble, see \cite[Section 7]{Kamnitzer}).
Algebro-geometrically, the formal raviolo is given as the scheme theoretic intersection of two formal disks relative to a formal punctured disk
\beqn
\rav = D \cup_{D^\times} D .
\eeqn
Topologically, the formal bubble is equivalent to the two-sphere, but scheme theoretically it differs from $\C \P^1$.
Indeed, the raviolo is non-separated, but it can be thought of as an infinitesimal version of $\C \P^1$ where the transition function identifies $z_1 = z_2$ rather than $z_1 = z_2^{-1}$.

The raviolo $\rav$ shares a close relationship to the affine Grassmannian.
If $G$ is a Lie group then the moduli of $G$-bundles on $\rav$ is equivalent to the space of pairs of $G$-bundles $P,P'$ on the formal disk which are isomorphic when restricted to the formal punctured disk.
The affine Grassmannian $Gr_G$ is naturally a $\text{Map}(D, G)$-torsor over~$\text{Bun}_G(\rav)$.

The raviolo is not affine in the usual sense, indeed $H^1(D \cup_{D^\times} D, \CO) \ne 0$, but it is affine in a derived sense.
In analogy with the formal punctured disk $D^\times = \Spec \C(\!(z)\!)$ the formal raviolo is the spectrum of a certain commutative dg algebra $\CA = (\CA^\bu, \d')$.
We construct a model for this dg algebra using the geometric perspective of THF structures.

\subsection{Transverse holomorphic foliations}

A transversely holomorphic foliation (THF) on a smooth manifold $M$ is a smooth foliation $\CF$ of even codimension whose leaf space is equipped with the structure of a complex manifold \cite{DuchampKalka,Asuke,Rawnsley}.
In other words, there exists a foliation atlas whose transition functions are biholomorphisms.
In this paper we will only be concerned with three-manifolds $M$ so that the leaf space of $\CF$ is simply a Riemann surface.
%Suppose $M$ is a manifold equipped with a THF structure and let $\cF$ be the corresponding foliation of even codimension.
The product $\Sigma \times S$, where $\Sigma$ is a complex manifold and $S$ is a smooth manifold is equipped with a natural foliation of this type. 
In this case, $\CF$ is the restriction of the tangent bundle of $S$ along the obvious projection.
Locally, any THF three-manifold is split of the form $\C \times \R$, whose coordinates we will denote by $(z , t)$ where $z$ is a holomorphic coordinate along $\C$ and $t$ is a real smooth coordinate so that locally the tangent space to $\CF$ is spanned by $\del/\del t$.

Let $Q$ be the (real) quotient bundle $\T_M / \T_\CF$.
%There is an integrable subbundle $V$ of the complexified tangent bundle $\T^\C_ M$ which is locally spanned by $\partial / \partial \zbar_i$ and $\partial / \partial x_j$. 
%Integrability of $\CF$ ensures that $V$ is well-defined globally. 
Since $\CF$ is a THF, the bundle $Q$ admits an almost complex structure so that the complexification admits a canonical decomposition
\beqn
Q \otimes \C = Q^{1,0} \oplus Q^{0,1}
\eeqn
which satisfies $\Bar{Q^{1,0}} = Q^{0,1}$.
Locally, $Q^{1,0}$ is spanned by $\del/\del z$ and $Q^{0,1}$ by $\del/\del \zbar$.
The canonical projection onto $Q^{1,0}$ determines a subbundle of the complexified tangent bundle
\beqn
V_{\CF} \define \ker (\T_M^\C \to Q^{1,0}) .
\eeqn
Conversely, if $V \subset \T_M \otimes \C$ is an involutive complex subbundle such that $\T_M \otimes \C = V \oplus \Bar{V}$ then $M$ admits a THF with $Q^{1,0} = V / (V \cap \Bar{V})$ \cite{DuchampKalka}.
Locally, $V$ is spanned by $\del/\del \zbar, \del / \del t$.

In the context of foliations, the natural replacement for the notion of a holomorphic function defined on an open set $U$ is a function which is constant along the leaves of the foliation.
In the notation above, this is a function $f \in C^\infty(U)$ such that $L_X f = 0$ for all $X \in \Gamma(U,V_\CF)$.
We denote the algebra of such functions $\CO_{\CF}(U)$; note that the assignment $U \mapsto \CO_{\CF}(U)$ defines a sheaf on $M$.

%Sections of $V$ form a Lie algebra with respect to the Lie bracket of vector fields. 

Given any foliation, there is a cohomology theory associated to it. 
In terms of de Rham cohomology, this is built from the so-called $\CF$-basic de Rham forms. 
An explicit model for THF structures is as follows.
For each $p,q$ denote by $\CA^{(p);q}$ smooth sections of the bundle $\wedge^p (Q^{1,0})^\vee \otimes \wedge^q V_\CF^\vee$. 
The derivative along the leaves of the foliation defined by $V$ defines a map 
\[
\d' \colon \CA^{(p);q} \to \CA^{(p);q+1}  .
\]
By integrability one has $\d' \circ \d' = 0$ and so $\d'$ equips $\CA^{(p);\bu} = \oplus_q \CA^{(p);q}[-q]$ with the structure of a cochain complex for each $p$. 
Notice that $\CA^{(p);\bu}$ is concentrated in degrees 0, 1, and 2 for all $p$.
The obvious exterior product $\CA^{(p);q} \times \CA^{(r);s} \to \CA^{(p+r);q+s}$ further endows 
\[
\left(\CA^{(\bu);\bu}, \d' \right) = \left(\oplus_p \CA^{(p);\bu}[-p] , \d' \right) 
\]
with the structure of a bigraded commutative dg algebra.
In particular $(\CA^{(0);\bu},\d')$ is a commutative dg algebra with wedge product being the multiplication.

In degree zero $\CA^{(0);0}$ consists just of smooth (complex valued) functions. 
Locally, a section of $\CA^{(0);1}$ is of the form
\[
f_{\zbar} (z,t) \d \zbar + f_t (z,t) \d t
\]
where $f_{\zbar},f_t$ are smooth functions.
Similarly, a section of $\CA^{(0);2}$ is of the form $f_{\zbar t}(z,t) \d \zbar \d t$.
The differential $\d'$ acting on a function $f$ is 
\[
\d' f = \frac{\del f}{\del \zbar} \d \zbar + \frac{\del f}{\del t} \d t .
\] 
Thus locally $\d'$ is simply a sum of the Dolbeault operator $\dbar$ on $\C$ and the de Rham operator $\d$ on $\R$.
Similar local formulae hold for $\CA^{(p);\bu}$.

The \defterm{Dolbeault--de Rham cohomology} of $M$ with coefficients in $\wedge^p Q^\vee$ is
\[
H^{(p);\bu}_{\d'} (M) \define H^{\bu} \left(\CA^{(p);\bu}(M), \d' \right) .
\]
We refer to $(\CA^{(p);\bu}(M), \d')$ as the Dolbeault--de Rham complex of $\wedge^p Q^\vee$.
We will mostly be concerned with the case $p=0$.
In this case, $H^{(0);\bu}_{\d'} (M)$ can naturally be identified with the algebra $\CO_\CF(M)$ of functions which are constant along the leaves of the foliation.
In fact, it is shown in \cite{Rawnsley} that $\CA^{(0);\bu}$ is a de Rham-type model for the sheaf cohomology of $\CO_{\CF}$ in the sense that there is an isomorphism
\beqn
H^\bu(M,\CO_{\CF}) \simeq H^{(0);\bu}_{\d'} (M)
\eeqn
where on the left hand side we mean sheaf cohomology.
A similar result holds for $\CA^{(p);\bu}$ where the sheaf on the left hand side is replaced by the sheaf of $V_\CF$-constant sections of $\wedge^p (Q^{1,0})^\vee$.

As a split example consider the case $M = \Sigma \times S$ where $\Sigma$ is a Riemann surface and $S$ is a one-manifold.
Then there is an isomorphism of cochain complexes
\[
\left(\CA^{(p);\bu}(M), \d'\right) \simeq \left(\Omega^{p,\bu}(\Sigma) \Hat{\otimes} \Omega^\bu(S) \, , \, \dbar \otimes \id + \id \otimes \d\right) 
\]
where $\Omega^{p,\bu}$ is the Dolbeault complex and $\Omega^\bu$ is the de Rham complex.
It follows that in cohomology one has an isomorphism
\beqn
H^{(p);\bu}_{\d'}(M) \simeq H^{p,\bu}(\Sigma) \otimes H^\bu(S)
\eeqn
for $p=0,1$.

The restriction of a THF to any open submanifold is again a THF.
In particular, the manifold 
\beqn
\C \times \R - \{0\}
\eeqn
is equipped with a natural THF coming from the obvious split one on $\C \times \R$.
We will need the following result characterizing the $\d'$-cohomology of punctured space.

\begin{theorem}\label{thm:cohomology}
For $p=0,1$ the cohomology $H^{(p);\bu}_{\d'}(\C \times \R - \{0\})$ is concentrated in degrees zero and one.
In degree zero, there is an isomorphism
\beqn
H^{(p);0}_{\d'} (\C \times \R - \{0\}) \simeq \Omega^{p,hol}(\C)
\eeqn
induced by pulling back holomorphic $p$-forms along the composition
\beqn
\C \times \R - \{0\} \hookrightarrow \C \times \R \to \C . 
\eeqn 
In degree one, there is an isomorphism
\beqn
H^{(p);1}_{\d'} (\C \times \R - \{0\}) \simeq \left(\Omega^{1-p,hol}(\C)\right)^\vee,
\eeqn
where the dual is the topological dual.
\end{theorem}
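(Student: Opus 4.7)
The plan is to establish the theorem via a Mayer--Vietoris argument using a suitable cover of $M \define \C \times \R - \{0\}$ and then identify the resulting cohomology with the claimed spaces.

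First I set up the cover. Let
\[
U_\pm = \{(z, t) \in \C \times \R : z \neq 0 \text{ or } \pm t > 0\},
\]
so that $U_\pm$ are open in $\C \times \R$, $U_+ \cup U_- = M$, and $U_+ \cap U_- = \C^\times \times \R$. A smooth partition of unity subordinate to this cover produces a short exact sequence of complexes
\[
0 \to \CA^{(p);\bullet}(M) \to \CA^{(p);\bullet}(U_+) \oplus \CA^{(p);\bullet}(U_-) \to \CA^{(p);\bullet}(\C^\times \times \R) \to 0,
\]
yielding a Mayer--Vietoris long exact sequence in $\d'$-cohomology.

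The main computational step is to show that each of $U_\pm$ and $U_+ \cap U_-$ has $\d'$-cohomology concentrated in degree zero and equal to the Dolbeault cohomology of a Stein Riemann surface (namely $\C$ or $\C^\times$). For $U_+$, the flow $r_s(z, t) = (z, s + (1-s)t)$, $s \in [0, 1]$, defines a THF-compatible deformation retraction onto the horizontal slice $\C \times \{1\}$; its generating vector field is proportional to $\del/\del t \in V_\CF$, so the flow preserves the foliation (one readily checks the flow stays inside $U_+$). The associated Cartan-type chain homotopy, which on forms of positive $q$-degree can be written
\[
K\alpha(z, t) = \int_{1}^{t} \iota_{\del/\del t}\alpha (z, s)\, ds ,
\]
satisfies $\d' K + K \d' = \id - \pi^* i^*$, where $i$ and $\pi$ denote the slice inclusion and projection. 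This realizes $i^*$ as a quasi-isomorphism $\CA^{(p);\bullet}(U_+) \xrightarrow{\sim} (\Omega^{p,\bullet}(\C), \dbar)$; since $\C$ is Stein, the Dolbeault lemma gives cohomology $\Omega^{p,hol}(\C)$ concentrated in degree zero. A symmetric argument using the slice $t = -1$ handles $U_-$, and the analogous retraction $(z, t) \mapsto (z, (1-s)t)$ reduces $\C^\times \times \R$ to $(\Omega^{p,\bullet}(\C^\times), \dbar)$, giving $H^{(p);\bullet}_{\d'}(\C^\times \times \R) \simeq \Omega^{p,hol}(\C^\times)$ in degree zero since $\C^\times$ is Stein.

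Feeding these into Mayer--Vietoris produces the four-term exact sequence
\[
0 \to H^{(p);0}_{\d'}(M) \to \Omega^{p,hol}(\C)^{\oplus 2} \to \Omega^{p,hol}(\C^\times) \to H^{(p);1}_{\d'}(M) \to 0,
\]
with $H^{(p);q}_{\d'}(M) = 0$ for $q \geq 2$. The middle map $(\omega_+, \omega_-) \mapsto \omega_+|_{\C^\times} - \omega_-|_{\C^\times}$ has kernel the diagonal copy of $\Omega^{p,hol}(\C)$, establishing the degree-zero identification by pullback. Its cokernel $\Omega^{p,hol}(\C^\times)/\Omega^{p,hol}(\C)$ is paired with $\Omega^{1-p,hol}(\C)$ via the residue at the origin, $\langle [\omega], \eta \rangle = \Res_{z=0}(\omega \wedge \eta)$; this pairing is well-defined (vanishing on holomorphic representatives) and is easily shown non-degenerate by testing against monomial forms $z^n\, dz^{1-p}$, yielding the identification with the topological dual. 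The most delicate points are the careful verification of the Cartan-type homotopy identity for $\d'$ (which uses crucially that the generating vector field lies in $V_\CF$) and the selection of appropriate topologies on the Dolbeault groups so that the residue pairing indeed realizes the full continuous dual.
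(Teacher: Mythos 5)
Your Mayer--Vietoris route is genuinely different from the paper's argument (the paper first proves a Hartogs-type extension statement, Proposition \ref{prop:hartog}, to identify $H^{(p);0}_{\d'}$ of the punctured space with that of $\C\times\R$, uses the K\"unneth splitting of the split THF, and then obtains the degree-one statement from the asserted perfectness of the $\oint_{S^2}$ integration pairing). Your cover $U_\pm$, the leafwise contraction with the explicit homotopy $K$, and the resulting four-term sequence are fine: they correctly give vanishing in degrees $\geq 2$, the identification $H^{(p);0}_{\d'}(\C\times\R-\{0\})\simeq\Omega^{p,hol}(\C)$ by pullback, and $H^{(p);1}_{\d'}(\C\times\R-\{0\})\simeq \Omega^{p,hol}(\C^\times)/\Omega^{p,hol}(\C)$. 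Up to that point this is a clean, self-contained alternative to the paper's Hartogs argument.

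The gap is in the last step. Non-degeneracy of the residue pairing between $\Omega^{p,hol}(\C^\times)/\Omega^{p,hol}(\C)$ and $\Omega^{1-p,hol}(\C)$ does not identify the former with the full topological dual, and with the standard compact-open (Fr\'echet) topology it provably does not: take $p=0$. A class in $\CO(\C^\times)/\CO(\C)$ is represented by a principal part $\sum_{n\geq 1}a_{-n}z^{-n}$ converging on all of $\C^\times$, so $\limsup_n |a_{-n}|^{1/n}=0$; under the residue pairing these give exactly the functionals on $\Omega^{1,hol}(\C)$ whose values $c_n$ on $z^n\,\d z$ decay faster than every geometric sequence. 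But a continuous functional on the Fr\'echet space $\Omega^{1,hol}(\C)$ only needs $|c_n|\leq C R^n$ for some $R$; for instance evaluation $g\,\d z\mapsto g(1)$ is continuous, and its would-be representative is the principal part of $1/(z-1)$, which is not holomorphic on $\C^\times$ and so defines no class in your cokernel. Hence the image of your cokernel in $(\Omega^{1,hol}(\C))^\vee$ is a proper dense subspace, and "testing against monomials" cannot close this: to reach the statement as the paper phrases it you need genuine functional-analytic input---either working with a suitably completed (topological) version of the Dolbeault--de Rham complex, or a Serre-type duality theorem for the foliated complex of the kind the paper invokes when it asserts that $\oint_{S^2}$ induces a perfect pairing in $\d'$-cohomology. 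The point you flag as "delicate" is therefore not a routine verification but the missing substance of the degree-one claim, and your algebraic cokernel on its own does not deliver it.
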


Before proving this result, we will need to develop an analog of the residue in complex geometry.
Denote the coordinates on $\C \times \R$ adapted to the standard THF by 
\beqn
\ul{z} = (z,t) .
\eeqn
For $\ul{z} = (z,t), \ul{w} = (w,s)$ let 
\beqn
r(\ul{z},\ul{w})^2 = |z-w|^2 + (t-s)^2 
\eeqn
be the squared distance between $\ul{z}$ and $\ul{w}$.
Viewing $\ul{w}$ as fixed, define the $(0;1)$-form $\omega(\ul{z};\ul{w})$ on $\C \times \R - \{\ul{w}\}$ by the formula
\beqn\label{eqn:omegas2}
\omega(\ul{z};\ul{w}) \define \frac{(t-s) \d \zbar - 2(\zbar - \wbar) \d t}{r(\ul{z},\ul{w})^3} .
\eeqn
It is immediate to check that 
\beqn
	\d' \omega(\ul{z};\ul{w}) = 0 .
\eeqn
It turns out that this $(0;1)$ form is \textit{not} $\d'$-exact, so it represents a cohomology class.
Indeed, consider the one-form $\d z$; this is a $\d'$-closed section of $(Q^{1,0})^\vee$ so that $\d z \omega(\ul{z};\ul{w}) \in \CA^{(1);1} (\C \times \R - \{0\})$.
This class represents the volume form of a two-sphere centered at $\ul{w}$.

\begin{lemma}\label{lem:residue}
One has
\beqn
	\oint_{r(\ul{z},\ul{w}) = r}\d z\, \omega(\ul{z};\ul{w}) = 8 \pi \im,
\eeqn
independent of the radius $r>0$.
\end{lemma}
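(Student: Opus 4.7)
The plan is to first reduce to a single convenient radius using the fact that $\d z \wedge \omega(\ul{z};\ul{w})$ is $\d'$-closed, then compute the integral directly in spherical coordinates.

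First, I would observe that independence of $r$ is a consequence of Stokes' theorem: since $\d z$ is $\d'$-closed (it is the pullback along the projection $\C \times \R \to \C$ of a holomorphic form) and $\d' \omega(\ul{z};\ul{w}) = 0$ was already noted, the $(1;1)$-form $\d z \wedge \omega(\ul{z};\ul{w})$ is $\d'$-closed on $\C \times \R - \{\ul{w}\}$. Since the only component of total form degree $2$ of $\d'$ acting on $(1;1)$-forms agrees with the usual de Rham differential restricted to $\CA^{(1);1}$, applying the ordinary Stokes theorem on the annular region between two concentric spheres $\{r(\ul{z},\ul{w})=r_1\}$ and $\{r(\ul{z},\ul{w})=r_2\}$ shows the integral is independent of the radius. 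By translation invariance of both the form and the sphere, I may also reduce to the case $\ul{w}=0$.

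Next I would evaluate the integral at $r=1$. The unit sphere $|z|^2 + t^2 = 1$ is parametrized by
\[
z = e^{\im\theta}\sin\phi, \quad t = \cos\phi, \qquad \theta \in [0,2\pi],\ \phi \in [0,\pi].
\]
Computing the differentials yields
\[
\d z = \im e^{\im\theta}\sin\phi\,\d\theta + e^{\im\theta}\cos\phi\,\d\phi, \quad \d\zbar = -\im e^{-\im\theta}\sin\phi\,\d\theta + e^{-\im\theta}\cos\phi\,\d\phi, \quad \d t = -\sin\phi\,\d\phi.
\]
Substituting these into $t\,\d\zbar - 2\zbar\,\d t$ and wedging with $\d z$, the exponentials $e^{\pm\im\theta}$ cancel and repeated use of $\sin^2\phi + \cos^2\phi = 1$ collapses the coefficient to $2\im \sin\phi$; that is, the pullback of $\d z \wedge \omega$ to the sphere is simply $2\im \sin\phi\,\d\theta \wedge \d\phi$.

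Finally, direct integration gives
\[
\int_0^{2\pi}\!\!\int_0^{\pi} 2\im \sin\phi \, \d\phi\, \d\theta = 8\pi\im,
\]
as claimed. The only nontrivial step is the bookkeeping in the wedge computation; no analytic subtlety arises because on the unit sphere the denominator $r(\ul{z},\ul{w})^3$ is identically $1$. If I wanted to avoid the reduction to $r=1$, I could instead rescale $\ul{z}\mapsto r\ul{z}$ and observe that $\omega$ is homogeneous of degree $-2$ while $\d z$ scales as $r$, making the total form scale-invariant on the sphere of radius $r$; this gives an alternative, coordinate-free justification of the radius-independence.
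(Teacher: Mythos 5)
Your proof is correct, but it takes a different route from the paper's. For the radius independence, the paper simply rescales $\ul{z} = \ul{w} + r\ul{\xi}$ and notes that $\d z\,\omega(\ul{z};\ul{w})$ is scale invariant (the argument you offer as an alternative at the end), whereas your primary argument invokes $\d'$-closedness plus the fact that on forms containing $\d z$ the leafwise differential computes the full de Rham differential, and then applies Stokes on the shell between two spheres; both are fine, and your closedness argument is the more conceptual one. For the evaluation itself the paper does \emph{not} parametrize the sphere: it applies Stokes once more, writing $\oint_{r(\ul{\xi},0)=1}\d\xi\,(u\,\d\bar\xi - 2\bar\xi\,\d u) = 3\int_{B(0,1)}\d u\,\d\xi\,\d\bar\xi = 3\cdot 2\im\cdot\tfrac{4\pi}{3} = 8\pi\im$, trading the surface integral for the volume of the unit ball, while you pull the form back through $z = e^{\im\theta}\sin\phi$, $t=\cos\phi$ and check that the pullback collapses to $2\im\sin\phi\,\d\theta\wedge\d\phi$ (I verified this; the coefficient is $\im\sin\phi\,(1+\sin^2\phi+\cos^2\phi)$, as you indicate). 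The paper's route avoids parametrization bookkeeping and makes the role of $\d z\,\omega$ as a multiple of the fundamental class transparent; yours is more elementary and self-contained. The only point to be explicit about is the orientation of the sphere: your sign (integrating with $\d\theta\wedge\d\phi$ positive) agrees with the convention the paper fixes implicitly through $\d\xi\,\d\bar\xi\,\d u = 2\im\,vol_{\R^3}$, so the answers match, but it is worth stating the orientation you use since the overall sign of $8\pi\im$ depends on it.
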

\begin{proof}
Suppose that $\ul{\xi} = (\xi, u)$ is a new variable on $\C \times \R$ such that $z = w + r \xi, t = s + r u$.
Then $\omega(\ul{z};\ul{w}) = \omega(r \ul{\xi};0)$ so that by Stokes' theorem we have
\beqn
	\oint_{r(\ul{z},\ul{w}) = r}\d z\, \omega(\ul{z};\ul{w}) = \oint_{r(\ul{\xi},0)=1} \d \xi \left(u \d \Bar{\xi} - 2 \Bar{\xi}\d u \right) = 3 \int_{B(0,1)} \d u \d \xi \d \Bar{\xi} = 8\pi \im .
\eeqn
In the last line we have used the identity $\d \xi \d \Bar{\xi} \d u = 2 \im \; vol_{\R^3}$, where $vol_{\R^3}$ is the standard volume element on $\R^3$ and that the volume of the unit ball is $vol(B(0,1)) = 4 \pi / 3$.
\end{proof}

Essentially by Stokes' formula again, we have the following analog of the Cauchy residue formula.
If $f$ is a smooth function defined on a ball $\{\ul{z} \; | \; r(\ul{z},\ul{w}) \leq r\}$ then
\beqn
	f(\ul{w}) = \oint_{r(\ul{z},\ul{w}) = r} \frac{\d z}{8\pi \im} f(\ul{z}) \omega(\ul{z};\ul{w}) - \int_{r(\ul{z},\ul{w}) \leq r} \frac{\d z}{8\pi \im} \d' f (\ul{z}) \omega(\ul{z};\ul{w}).
\eeqn
In particular, when $f$ is $\d'$-closed we have
\beqn\label{eqn:residueformula}
	f(\ul{w}) = \oint_{r(\ul{z},\ul{w}) = r} \frac{\d z}{8\pi \im} \omega(\ul{z};\ul{w}) f(\ul{z}) .
\eeqn

\begin{lemma}
\label{lem:exact1}
One has the following identities
\begin{align*}
\frac{\del}{\del \wbar} \omega(\ul{z};\ul{w}) & = \d' V_{\wbar} (\ul{z},\ul{w}) \\
\frac{\del}{\del s} \omega(\ul{z};\ul{w}) & = \d' V_{s} (\ul{z},\ul{w})
\end{align*}
where
\begin{align*}
V_{\wbar} = \frac{s-t}{r(\ul{z},\ul{w})^3} \qquad V_{s} = \frac{2(\ol{z}-\ol{w})}{r(\ul{z},\ul{w})^3}.
\end{align*}
\end{lemma}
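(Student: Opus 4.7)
The plan is to verify both identities by direct computation, since each side is an explicit rational function of $(z-w, \bar{z}-\bar{w}, t-s)$ multiplied by $r^{-3}$ or $r^{-5}$. The only nontrivial simplification needed will be the defining relation $r^2 = |z-w|^2 + (t-s)^2$.

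First I would record the elementary derivatives that will feed into every subsequent computation:
\begin{align*}
\tfrac{\del}{\del \bar w} r^2 &= -(z-w), & \tfrac{\del}{\del s} r^2 &= -2(t-s), \\
\tfrac{\del}{\del \bar z} r^2 &= z-w, & \tfrac{\del}{\del t} r^2 &= 2(t-s),
\end{align*}
and hence $\partial r^{-3} = -\tfrac{3}{2}\, r^{-5}\,\partial r^2$ for each of these derivations. With these in hand, both $\d' V_{\bar{w}}$, $\d' V_s$ and $\del_{\bar w}\omega$, $\del_s \omega$ become finite sums of terms of the form (polynomial)$\cdot r^{-3}$ plus (polynomial)$\cdot r^{-5}$.

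For the first identity I would expand
\[
\tfrac{\del}{\del \bar w}\omega(\ul z;\ul w) \;=\; \tfrac{2\, \d t}{r^3} \;+\; \bigl((t-s)\d\bar z - 2(\bar z-\bar w)\d t\bigr)\cdot \tfrac{3(z-w)}{2 r^5},
\]
and separately expand
\[
\d' V_{\bar w} \;=\; \tfrac{\del V_{\bar w}}{\del \bar z}\,\d\bar z + \tfrac{\del V_{\bar w}}{\del t}\,\d t \;=\; \tfrac{3(t-s)(z-w)}{2 r^5}\,\d\bar z + \Bigl(-\tfrac{1}{r^3} + \tfrac{3(t-s)^2}{r^5}\Bigr)\d t .
\]
The $\d\bar z$ coefficients agree on the nose. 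For the $\d t$ coefficients, matching reduces to
\[
\tfrac{2}{r^3} - \tfrac{3(z-w)(\bar z-\bar w)}{r^5} \;=\; -\tfrac{1}{r^3} + \tfrac{3(t-s)^2}{r^5},
\]
which, after multiplying by $r^5/3$, becomes exactly $r^2 = |z-w|^2 + (t-s)^2$. The second identity is entirely analogous: expand
\[
\tfrac{\del}{\del s}\omega(\ul z;\ul w) \;=\; -\tfrac{\d\bar z}{r^3} \;+\; \bigl((t-s)\d\bar z - 2(\bar z-\bar w)\d t\bigr)\cdot \tfrac{3(t-s)}{r^5}
\]
and
\[
\d' V_s \;=\; \tfrac{\del V_s}{\del \bar z}\,\d\bar z + \tfrac{\del V_s}{\del t}\,\d t \;=\; -\tfrac{3(\bar z-\bar w)(z-w)}{r^5}\,\d\bar z + \bigl(-\tfrac{6(\bar z - \bar w)(t-s)}{r^5}\bigr)\d t ,
\]
and check that the $\d t$ coefficients match directly while the $\d\bar z$ coefficients match after again invoking $|z-w|^2 + (t-s)^2 = r^2$.

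There is no conceptual obstacle; the only thing to be careful about is bookkeeping of signs and of the factor $-\tfrac{3}{2}$ coming from differentiating $r^{-3}$. The whole proof is roughly eight lines once the derivatives of $r^2$ are tabulated, and the single nontrivial input is the defining identity for $r$.
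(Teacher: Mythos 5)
The paper states this lemma without proof, treating it as a direct check, and your route---expand both sides over the basis $\d\zbar,\d t$, use $\partial r^{-3}=-\tfrac32 r^{-5}\partial r^{2}$, and invoke $r^{2}=|z-w|^{2}+(t-s)^{2}$---is exactly the intended verification. Your treatment of the first identity is correct in every coefficient.

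There is one slip in the second identity: in your display for $\d' V_s$ you dropped the term coming from differentiating the numerator of $V_s=\tfrac{2(\zbar-\wbar)}{r^{3}}$ with respect to $\zbar$. The correct coefficient is
\[
\frac{\del V_s}{\del \zbar} \;=\; \frac{2}{r^{3}} \;-\; \frac{3(\zbar-\wbar)(z-w)}{r^{5}},
\]
not just $-\tfrac{3(\zbar-\wbar)(z-w)}{r^{5}}$. As written, your $\d\zbar$ coefficients do \emph{not} match even after using the relation for $r^{2}$: the left-hand side gives $-\tfrac{1}{r^{3}}+\tfrac{3(t-s)^{2}}{r^{5}}$, and equating this with your expression would force $-|z-w|^{2}+2(t-s)^{2}=-3|z-w|^{2}$, which is false in general. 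With the corrected coefficient the matching reduces, exactly as in your first identity, to $3r^{2}=3|z-w|^{2}+3(t-s)^{2}$, so the argument closes; your statement that the $\d t$ coefficients agree on the nose is correct. So the proof is right in structure and in the first half, and needs only this one term restored in the second half.
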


In complex geometry, Hartogs' lemma states that when $d > 1$ a holomorphic function with an isolated singularity can always be extended to a holomorphic function; in other words, isolated singularities are removable.
From this lemma we can prove the following analog of Hartogs's lemma in the context of THF's.

\begin{prop}\label{prop:hartog}
Let $K$ be a compact subset of an open set $U \subset \C \times \R$ such that $U - K$ is connected.
Then every $\d'$-closed smooth function $f$ on $U - K$ is the restriction of a $\d'$-closed smooth function on $U$.
\end{prop}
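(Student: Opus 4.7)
The plan is to adapt the classical proof of Hartogs' extension theorem in several complex variables: use a smooth cutoff to localise the obstruction to extending $f$, solve the resulting compactly supported $\d'$-equation via an integral transform built from the kernel $\omega$, and then form the extension as (cutoff of $f$) minus (primitive). The only place where we really pay for the three-dimensional topology is in showing that this primitive actually vanishes off a compact set, and that is where the transverse holomorphic nature of the foliation enters crucially.

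Concretely, I would first pick a cutoff $\chi \in C^\infty_c(U)$ with $\chi \equiv 1$ on an open neighbourhood of $K$ and set
\[
g := (1 - \chi) f ,
\]
extended by zero on the region where $\chi \equiv 1$. Then $g$ is smooth on $U$, and the $(0;1)$-form
\[
\alpha := \d' g = -(\d'\chi)\, f
\]
is smooth, $\d'$-closed, and compactly supported in $U - K$; extending $\alpha$ by zero, I would view it as a compactly supported form on $\C \times \R$. Next, I would define a candidate primitive
\[
u(\ul w) := -\frac{1}{8\pi\im} \int_{\C \times \R} \alpha(\ul z) \wedge \d z \wedge \omega(\ul z;\ul w)
\]
and verify $\d' u = \alpha$ by a direct computation: using Lemma \ref{lem:exact1} to write $\d'_{\ul w}\omega = \d'_{\ul z}(V_{\wbar}\,\d\wbar + V_s\,\d s)$, integrating by parts on the complement of a small sphere around $\ul z = \ul w$, and evaluating the boundary contribution via the residue Lemma \ref{lem:residue} and formula \eqref{eqn:residueformula}. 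This is the direct analog of verifying that the Bochner--Martinelli kernel is a fundamental solution. Then $F := g - u$ is smooth on $U$ with $\d' F = \alpha - \alpha = 0$, and is the proposed extension.

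The main obstacle is to check that $F$ really restricts to $f$ on $U - K$, equivalently that $u$ vanishes on the unbounded component $\mathcal U$ of $\C \times \R \setminus \operatorname{supp}(\alpha)$. The explicit expression \eqref{eqn:omegas2} gives the decay $|\omega(\ul z;\ul w)| = O(r(\ul z,\ul w)^{-2})$ uniformly for $\ul z$ in the compact set $\operatorname{supp}(\alpha)$, so $u(\ul w) \to 0$ as $\ul w \to \infty$. On $\mathcal U$ the function $u$ is $\d'$-closed, hence locally holomorphic in $z$ and locally independent of $t$; it is therefore constant along any leaf $\{z\} \times \R$ lying entirely in $\mathcal U$. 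Once $|z|$ exceeds the diameter of the image of $\operatorname{supp}(\alpha)$ under the projection to $\C$, the whole leaf is in $\mathcal U$ and the decay at infinity forces the constant value to be zero, so $u$ vanishes on a non-empty open piece of $\mathcal U$. Holomorphic unique continuation in the transverse direction $z$, applied in local foliation charts using the connectedness of $\mathcal U$, then propagates this vanishing to all of $\mathcal U$. Finally, $F - f = -\chi f - u$ is $\d'$-closed on the connected set $U - K$ and vanishes on the non-empty open subset where $\chi \equiv 0$ and $u \equiv 0$, so the same unique continuation argument gives $F \equiv f$ throughout $U - K$, completing the proof.
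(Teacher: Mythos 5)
Your route is genuinely different from the paper's. The paper proves the proposition in one stroke, Bochner-style: it defines the extension directly as the boundary integral $F(\ul w)=\int_{\del U}\tfrac{\d z}{8\pi\im}\,\omega(\ul z;\ul w)f(\ul z)$, checks $\d'$-closedness by pushing $\del_{\wbar}$ and $\del_s$ through the integral with Lemma \ref{lem:exact1} and Stokes, and invokes the residue formula \eqref{eqn:residueformula} for the agreement with $f$. You instead run the Ehrenpreis-style cutoff argument: localize the obstruction in $\alpha=-(\d'\chi)f$, solve the compactly supported equation $\d'u=\alpha$ by convolving with the Green's kernel $\omega$, and set $F=g-u$. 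Your approach is more robust in that it isolates exactly where connectedness of $U-K$ and the three-dimensionality enter (vanishing of $u$ on the unbounded component via constancy along leaves plus decay, then unique continuation transverse to the foliation), at the cost of having to establish the solvability statement $\d'u=\alpha$; the paper's approach avoids the $\d'$-problem entirely but silently assumes things like a reasonable $\del U$ to integrate over. One small correction to your sketch of $\d'u=\alpha$: the boundary contribution from the small sphere around $\ul z=\ul w$ involves the potentials $V_{\wbar}$, $V_s$ of Lemma \ref{lem:exact1}, not $\omega$ itself, so it is not literally evaluated by Lemma \ref{lem:residue}; an analogous small-sphere computation (or differentiating under the integral after the substitution $\ul\zeta=\ul z-\ul w$ and then using \eqref{eqn:residueformula}) is needed, but this is routine.

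The one genuine gap is your final sentence: you assert that $F-f=-\chi f-u$ vanishes on ``the non-empty open subset where $\chi\equiv 0$ and $u\equiv 0$'' of $U-K$, but non-emptiness is exactly the point that needs proof. You only know $u\equiv 0$ on the unbounded component $\mathcal U$ of $\C\times\R-\operatorname{supp}\alpha$, so what you must produce is a non-empty open subset of $(U-\operatorname{supp}\chi)\cap\mathcal U$. If $U$ is unbounded this is immediate (points of $U$ of large norm), but for bounded $U$ it is not automatic from what you wrote: a priori $U\setminus\operatorname{supp}\chi$ could conceivably sit inside bounded components of the complement of $\operatorname{supp}\chi$. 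The standard fix is short: pick $\ul x_0\in\del U$ of maximal norm over $\overline U$; the outward radial ray from $\ul x_0$ lies outside $\overline U\supset\operatorname{supp}\alpha$, and since $\operatorname{dist}(\operatorname{supp}\chi,\del U)>0$ a small ball $B$ around $\ul x_0$ is disjoint from $\operatorname{supp}\chi$ and meets that ray, so $B\subset\mathcal U$, while $B\cap U$ is a non-empty open subset of $U-K$ on which $\chi=0$ and $u=0$. With that inserted (and your transverse unique-continuation argument, which is fine), the proof closes; without it, the last step is an unproved assertion rather than a consequence of what precedes it.
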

\begin{proof}
Define
\beqn
F(\ul{w}) = \int_{\del U}\frac{\d z}{8\pi \im} \omega(\ul{z};\ul{w}) f(\ul{z}) .
\eeqn
Then, $F$ is defined for all $\ul{w} \in U$.
By the residue formula \eqref{eqn:residueformula} it is straightforward to see that $F$ agrees with $f$ on $U \setminus K$.
To see that $F$ is $\d'$-closed we observe that by Lemma \ref{lem:exact1} we have
\beqn
\frac{\del}{\del \wbar} F = \int_{\del U}\frac{\d z}{8\pi \im} \d' V_{\wbar}(\ul{z},\ul{w}) f(\ul{z}) = \int_{\del U} \d \left(-\frac{\d z}{8\pi \im}V_{\wbar}(\ul{z},\ul{w})f(\ul{z})\right) = 0 .
\eeqn
Similarly, $\frac{\del}{\del s} F = 0$. 
\end{proof}

We can now turn to the proof of Theorem \ref{thm:cohomology}.

\begin{proof}[Proof of Theorem \ref{thm:cohomology}]
First we will argue that 
\beqn
H^{(p);0}_{\d'} (\C \times \R - \{0\}) \simeq H^{(p);0}_{\d'} (\C \times \R) .
\eeqn
In other words, any $\CF$-flat section of $\wedge^p Q^{1,0}$ on $\C \times \R - \{0\}$ extends to a $\CF$-flat section over $\C \times \R$.
Indeed, by Proposition \ref{prop:hartog} any $\d'$-closed function on $\C \times \R - \{0\}$ extends uniquely to a $\d'$-closed function on $\C \times \R$.
A similar result holds for $\d'$-closed sections of $(Q^{1,0})^\vee$.

Now, since the $\d'$-complex on $\C \times \R$ simply splits as a tensor product of the Dolbeault complex of $(p,\bu)$ forms on $\C$ with the de Rham complex on $\R$ we have an isomorphism
\beqn
H^{(p);\bu}_{\d'}(\C \times \R) \simeq H^{p,\bu}_{\dbar} (\C) \otimes H^\bu(\R) .
\eeqn
This proves the first statement in the proposition.

The second statement follows from the fact that the integration pairing
\beqn
\oint_{S^2} \colon \CA^{(0);\bu} \times \CA^{(1);\bu} \to \C[-1]
\eeqn
induces a perfect pairing in $\d'$-cohomology.
\end{proof}

We note that by Lemma \ref{lem:residue} the element dual to $1 \in \CO^{hol}(\C)$ in $H^{(1);1}_{\d'}(\C \times \R - \{0\})$ is the cohomology class of the residue element $\omega(\ul{z};0) \d z$. 

The 1-form $\omega(\ul{z};\ul{w})$ on $\C\times \R -\{\ul{w}\}$ is the direct analog of the function $(z-w)^{-1}$ on $\C-\{w\}$. In particular, they are each Green's functions for the corresponding differential: the latter is the Green's function supported at $w$ for the Dolbeault differential $\ol{\pd}$ and the former the Green's function supported at $\ul{w}$ for the THF differential $\d'$. Correspondingly, they play the role of propagators in field theories having the corresponding kinetic operators.

\subsection{An algebraic model}
\label{sec:model}
The goal in this section is to construct an explicit algebraic model for the Dolbeault-de Rham complex of the THF manifold $\C \times \R - \{0\}$.

Let $\CR$ be the commutative algebra generated by variables 
\beqn
	z,\lambda,x
\eeqn
subject to the relation
\beqn
	z \lambda + x^2 = 1 .
\eeqn
Let $\CA$ be the graded commutative algebra freely generated over $\CR$ by a degree $+1$ element $\omega$.
Thus $\CA = \CR \oplus \CR \omega [-1]$.
Define a differential
\beqn
	\d' \colon \CA \to \CA [1]
\eeqn
on $\CA$ by the formulas
\beqn
	\d' (z) = 0, \quad \d' (\lambda) = x \omega, \quad \d' (x) = -\tfrac{1}{2} z \omega ,
\eeqn
and extend it to all of $\CA$ by the rule that it is a graded derivation.
%Notice that the algebra of vector fields $\Vect^{poly}(\C) = \C[z] \del_z$ with polynomial coefficients naturally acts on $\CA_{1,1}$ as a cochain complex.

\begin{prop}
\label{prop:algcohomology}
The cohomology of $(\cA,\d')$ is concentrated in degree zero and one.
The zeroth cohomology $H^0 (\cA) = \C[z]$  is identified with polynomials in the holomorphic variable $z$.
The first cohomology $H^1(\cA) = \C[\lambda] \omega$ is identified with polynomials in the variable $\lambda$ times the element $\omega$.
\end{prop}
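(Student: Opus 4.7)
Since $\cA = \CR \oplus \CR\omega$ is concentrated in degrees $0$ and $1$, the cohomology automatically vanishes in other degrees, and it only remains to identify $H^0 = \ker \d'$ and $H^1 = \cA^1 / \mathrm{Im}\, \d'$. My plan is to use the defining relation $x^2 = 1 - z\lambda$ to present $\CR$ as the free $\C[z,\lambda]$-module on the basis $\{1, x\}$, which splits $\cA^0 = \C[z,\lambda] \oplus \C[z,\lambda] x$ and $\cA^1 = \C[z,\lambda]\omega \oplus \C[z,\lambda] x\omega$. A direct Leibniz-rule computation then yields
\begin{equation*}
\d'(a(z,\lambda) + b(z,\lambda)\,x) \;=\; \frac{\del a}{\del \lambda}\, x\omega \;+\; \Bigl[\frac{\del b}{\del\lambda}(1-z\lambda) - \tfrac{1}{2} bz\Bigr]\,\omega,
\end{equation*}
so that the $\omega$- and $x\omega$-components of $\d'$ can be analyzed separately.

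For $H^0$, setting both components to zero forces $\frac{\del a}{\del \lambda} = 0$ (so $a \in \C[z]$) and a linear ODE-like relation on $b$. Expanding $b = \sum_k b_k(z)\lambda^k$ and matching coefficients of each $\lambda^j$ produces the recursion $(j+1)\,b_{j+1} = (j+\tfrac{1}{2})\,z\, b_j$; reading this from the top $\lambda$-degree downward and using that $z$ is not a zero-divisor in $\C[z]$ forces $b = 0$. This gives $H^0 = \C[z]$.

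For $H^1$, observe first that the $x\omega$-component of $\cA^1$ is entirely exact, since every element of $\C[z,\lambda]$ has the form $\frac{\del a}{\del\lambda}$ for some $a$. Hence $H^1$ is naturally identified (as a $\C$-vector space) with $\mathrm{coker}(M)\,\omega$, where $M\colon \C[z,\lambda]\to \C[z,\lambda]$ sends $b \mapsto \frac{\del b}{\del\lambda}(1-z\lambda) - \tfrac{1}{2}bz$. Computing $M$ on monomials gives the two families
\begin{equation*}
M(z^i) = -\tfrac{1}{2}\,z^{i+1}, \qquad M(z^i\lambda^j) = j\,z^i\lambda^{j-1} - (j+\tfrac{1}{2})\,z^{i+1}\lambda^j \quad (j\geq 1),
\end{equation*}
which translate into the relations $[z^k] = 0$ for $k \geq 1$ and $[z^{i+1}\lambda^j] = \tfrac{2j}{2j+1}[z^i\lambda^{j-1}]$ in $\mathrm{coker}(M)$. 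Iterating these reduces every monomial $[z^i\lambda^j]$ either to a scalar multiple of $[\lambda^{j-i}]$ (when $j \geq i$) or to $0$ (when $j < i$), so the classes $\{[\lambda^k]\}_{k\geq 0}$ span the cokernel.

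The principal obstacle is the linear independence of these spanning classes: the relations above are self-referential and it is not a priori clear that they do not secretly collapse some $\lambda^k$ to zero. To handle this I would construct an explicit $\C$-linear projection $S\colon \C[z,\lambda] \to \C[\lambda]$ by declaring, on the monomial basis, $S(\lambda^j) = \lambda^j$, $S(z^i) = 0$ for $i \geq 1$, and $S(z^i\lambda^j) = \tfrac{2j}{2j+1}\, S(z^{i-1}\lambda^{j-1})$ for $i,j \geq 1$. A direct check on the two families of generators of $\mathrm{Im}\, M$ shows $S \circ M = 0$, so $S$ descends to a well-defined left inverse of the tautological map $\C[\lambda] \to \mathrm{coker}(M)$. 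This proves the inclusion $\C[\lambda] \hookrightarrow H^1$ is an isomorphism, completing the identification $H^1 = \C[\lambda]\,\omega$.
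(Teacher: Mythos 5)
Your proof is correct, but it takes a genuinely different route from the paper. The paper's argument is representation-theoretic: it observes that $\d' = -\tfrac{1}{2}\omega \wedge e$ for the raising operator $e = z\del_x - 2x\del_\lambda$ of an $\lie{sl}(2)$-action on $\CR$, decomposes $\CR \simeq \bigoplus_n \Sym^{2n}(\C^2)$ via harmonic polynomials for the Laplacian $\del_x^2 + 4\del_z\del_\lambda$, and then reads off $H^0$ and $H^1$ as the spans of highest and lowest weight vectors ($z^n$ and $\lambda^n$) in each irreducible; injectivity/surjectivity questions are absorbed into complete reducibility of $\lie{sl}(2)$-modules. You instead exploit the relation $x^2 = 1 - z\lambda$ to present $\CR$ as the free $\C[z,\lambda]$-module on $\{1,x\}$, write $\d'$ in closed form in that basis, and compute directly: the top-degree-in-$\lambda$ recursion $(j+1)b_{j+1} = (j+\tfrac12) z b_j$ correctly kills the $x$-component of any cocycle in degree $0$, and in degree $1$ your identification $H^1 \cong \mathrm{coker}(M)\,\omega$ is right because $\mathrm{Im}\,\d' = \C[z,\lambda]x\omega \oplus \mathrm{Im}(M)\omega$, with $\del_\lambda$ surjective. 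The spanning relations $[z^k]=0$ ($k\geq 1$) and $[z^{i+1}\lambda^j] = \tfrac{2j}{2j+1}[z^i\lambda^{j-1}]$ are the same relations the paper records later (e.g.\ $z\lambda^{m+1}\omega \equiv \tfrac{2(m+1)}{2m+1}\lambda^m\omega$), and your explicit splitting $S$ with $S\circ M = 0$ (which checks out: $(j+\tfrac12)S(z^{i+1}\lambda^j) = j\,S(z^i\lambda^{j-1})$ by the defining recursion) is the elementary substitute for the paper's appeal to complete reducibility, settling the linear independence that a pure relations-chase would leave open. What you gain is a self-contained, purely computational proof that avoids justifying the harmonic decomposition $\CR_n = q(\CH_n) \simeq \Sym^{2n}(\C^2)$; what the paper's approach buys is conceptual transparency (the answer is visibly highest/lowest weight vectors) and an $\lie{sl}(2)$-equivariant picture that is reused implicitly in the surrounding discussion of the module structure of $\CK_{poly}$.
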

\begin{proof}
To compute the cohomology we decompose $\CR$ as a $\lie{sl}(2)$ representation.
The vector fields $(e,f,h)$ where 
\beqn
e = z \del_x - 2 x \del_\lambda, \quad f = -\lambda \del_x + 2x \del_z 
\eeqn
and $h = [e,f]$ endow $\CR$ with the structure of an $\lie{sl}(2)$ representation.
Consider the Laplacian $\Delta = \del_x^2 + 4 \del_z \del_\lambda$ acting on the polynomial algebra $\C[x,z,\lambda]$.
Let $\CH_n \subset \C[x,z,\lambda]$ be the space of polynomials in $x,z,\lambda$ of homogenous degree $n$ which are $\Delta$-closed.
Then under the quotient map $q \colon \C[x,z,\lambda] \to \CR$ we have a decomposition $\cR = \oplus_n \CR_n$; where $\cR_n = q(\CH_n)$.
Computing characters we see that $\CR_n = \Sym^{2n}(\C^2)$ as $\lie{sl}(2)$ representations.
Note that for each $n$, the highest weight vector of $\CR_n$ is proportional to $z^n$ while the lowest weight vector is proportional to $\lambda^n$.

With this identification, we see that $\d'=-\tfrac{1}{2}\omega \wedge e$ where $e$ is as above.
We can thus identify the zeroth cohomology of $\CA$ as
\beqn
H^0 (\CA) = \C[z] = \bigoplus_{n \geq 0} \C \{\text{highest\;weight\;vector\;in} \; \CR_n\} .
\eeqn
Similarly
\beqn
H^1(\CA) = \C[\lambda] \omega = \bigoplus_{n \geq 0} \C \{\text{lowest\;weight\;vector\;in} \; \CR_n\} \cdot \omega .
\eeqn
\end{proof}

The commutative dg algebra $\cA$ is an algebraic model for the THF cohomology of the THF manifold $\C\times \R - \{0\}$.

\begin{prop}
\label{prop:model1}
There is an injective morphism of commutative dg algebras 
\[
j \colon \CA \hookrightarrow \cA^{(0);\bu}(\C\times \R - \{0\}) .
\]
which is a dense embedding in cohomology.
On degree zero generators it is defined by $j(z) = z$, $j(\lambda) = \frac{\zbar}{r^2}$, $j(x) = \frac{t}{r}$ where $r^2 = z \zbar + t^2$.
On the degree one generator we define $j (\omega) = \omega(\ul{z};0)$, see equation \eqref{eqn:omegas2}.
\end{prop}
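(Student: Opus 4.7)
The plan is to establish four things in sequence: well-definedness as a graded algebra map, compatibility with the differentials, injectivity at the cochain level, and denseness of the induced map on cohomology.

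For well-definedness, under the assignment $z\mapsto z$, $\lambda\mapsto\zbar/r^2$, $x\mapsto t/r$ the relation $z\lambda+x^2=1$ becomes $(z\zbar+t^2)/r^2=1$, so $j$ extends uniquely to a graded algebra map by freeness in $\omega$. Both $\d'$'s are graded derivations, so checking $\d'\circ j=j\circ\d'$ reduces to the four generators. The $z$-case is immediate, and both sides vanish on $\omega$ since $\omega(\ul z;0)$ is $\d'$-closed (from the text preceding Lemma \ref{lem:residue}) while $\CA$ is concentrated in degrees $0,1$. For $\lambda$: a direct computation gives $\del_{\zbar}(\zbar/r^2)=t^2/r^4$ and $\del_t(\zbar/r^2)=-2t\zbar/r^4$, which combine to $(t/r)\omega(\ul z;0)=j(x\omega)=j(\d'\lambda)$. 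Similarly $\del_{\zbar}(t/r)=-tz/(2r^3)$ and $\del_t(t/r)=z\zbar/r^3$ assemble into $-(z/2)\omega(\ul z;0)=j(\d'x)$.

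For injectivity at the cochain level, note that $\CR$ is free of rank two over $\C[z,\lambda]$ with basis $\{1,x\}$ (via $x^2=1-z\lambda$), so every element of $\CA$ has a unique presentation $P(z,\lambda)+Q(z,\lambda)x+[P'(z,\lambda)+Q'(z,\lambda)x]\,\omega$. The scalar and $\omega$ parts of $j(\cdot)$ must vanish separately because $\omega(\ul z;0)$ is nonzero as a $1$-form. Fixing $z_0\in\C-\{0\}$, the involution $t\mapsto -t$ preserves $z_0$ and $\zbar_0/r^2$ while flipping $t/r$, so $P$ and $Q$ contribute the even and odd parts in $t$ respectively and must vanish independently. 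As $t$ varies over $\R$ with $z_0$ fixed, $\zbar_0/(|z_0|^2+t^2)$ sweeps out an interval, forcing $P(z_0,\cdot)$ and $Q(z_0,\cdot)$ to be the zero polynomial. Letting $z_0$ range over $\C-\{0\}$ then kills $P$ and $Q$, and identically $P',Q'$.

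For denseness in cohomology, Proposition \ref{prop:algcohomology} gives $H^0(\CA)=\C[z]$ and $H^1(\CA)=\C[\lambda]\omega$, while Theorem \ref{thm:cohomology} identifies the target cohomologies with $\CO^{hol}(\C)$ and $(\Omega^{1,hol}(\C))^\vee$ respectively. In degree zero, $H^0(j)$ is the inclusion of polynomials into entire functions, which is dense in the compact-open topology. In degree one, I would evaluate the pairing $\langle[\lambda^n\omega],[z^m\,\d z]\rangle=\oint\tfrac{1}{8\pi\im}(\zbar/r^2)^n\omega(\ul z;0)\wedge z^m\,\d z$. By the scaling trick in Lemma \ref{lem:residue} this can be integrated on the unit sphere $|z|^2+t^2=1$; parametrizing $z=e^{\im\phi}\sin\theta$, $t=\cos\theta$ and using $\omega(\ul z;0)\wedge\d z=-2\im\sin\theta\,\d\phi\wedge\d\theta$ on this sphere, the integrand reduces to a nonzero constant times $e^{\im(m-n)\phi}\sin^{m+n+1}\theta\,\d\phi\wedge\d\theta$: the $\phi$-integral forces $m=n$ and the $\theta$-integral is nonzero. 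Thus $[\lambda^n\omega]$ is, up to a nonzero scalar, dual to $[z^n\,\d z]$, which yields injectivity of $H^1(j)$; for weak-$*$ density, if $\eta=\sum a_m z^m\,\d z$ pairs to zero with every $[\lambda^n\omega]$, then every $a_n=0$, so $\eta=0$ and the image of $H^1(j)$ has trivial annihilator in $\Omega^{1,hol}(\C)$, hence is weak-$*$ dense in the topological dual. The most delicate step is this spherical pairing computation, which simultaneously unwinds the lowest-weight-vector description of $H^1(\CA)$ from Proposition \ref{prop:algcohomology} and the residue identification of $H^{(0);1}_{\d'}$ from Theorem \ref{thm:cohomology}.
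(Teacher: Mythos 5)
Your proof is correct, and it follows the same overall strategy as the paper: verify that $j$ is a cochain map on generators and then deduce the cohomological statements from Theorem \ref{thm:cohomology} and Proposition \ref{prop:algcohomology}. The paper's own proof is a two-line assertion ("immediate computation" plus a citation of those two results), so the value of your write-up is in the details you supply that the paper leaves implicit: the explicit derivative checks for $\lambda$ and $x$ are right, and your cochain-level injectivity argument (writing $\CR$ as the free $\C[z,\lambda]$-module on $\{1,x\}$, separating even and odd parts under $t\mapsto -t$, and evaluating along the curve $\lambda=\zbar_0/(|z_0|^2+t^2)$) is a genuine addition, since injectivity does not formally follow from the cohomology computations alone. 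Your spherical computation of the pairing $\langle[\lambda^n\omega],[z^m\,\d z]\rangle\propto\delta_{nm}$ is likewise a concrete instantiation of what the paper only records afterwards via the formula $j(\lambda^m\omega)=\tfrac{(-2)^m}{(2m+1)!!}\pd_z^m\omega(\ul z;0)$ and the statement that integration gives a perfect pairing; note that radius-independence of that integral is cleanest to justify by observing the integrand is closed (it is $\d'$-closed and wedged with $\d z$), rather than by the scaling substitution, which is not literally degree-homogeneous unless $m=n$.
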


\begin{proof}
It is an immediate computation to see that $j$ is a cochain map.
The remaining assertions follow from Theorem \ref{thm:cohomology} and Proposition \ref{prop:algcohomology}.
\end{proof}

Both $\CA$ and $\cA^{(0);\bu}(\C\times \R - \{0\})$ are cochain complexes which have actions by the Lie algebra of algebraic vector fields $\Vect^{alg}(\C) = \C[z] \del_z$ defined via the Lie derivative.
The map $j$ is equivariant for this action.
More generally there is an algebraic model for the Dolbeault-de Rham complex with values in an arbitrary invariant vector bundle.
If $s \in \R$ we have the line bundle $K^{\otimes s}$ whose smooth sections are formal expressions of the form $f(\ul{z}) \d z^{s}$.
There is a similar algebraic model for $\cA^{(0);\bu}(\C\times \R - \{0\},K^{\otimes s})$ which as a graded vector space is
\beqn
\CA^{(s)} \define \CR \cdot \d z^s \oplus \CR \cdot \d z^{s} \omega [-1] .
\eeqn
The differential is given by the same formula
\beqn
\d' (f(z,t) \d z^s) = (\d' f(z,t)) \d z^{s} .
\eeqn
The map $j$ extends in a natural way to a cochain map $j^{(s)} \colon \CA^{(s)} \to \cA^{(0);\bu}(\C\times \R - \{0\},K^{\otimes s})$ which is also $\Vect^{alg}(\C)$ equivariant and dense in cohomology.

The proposition implies a relationship between the cohomology classes $\lambda^m \omega \in H^1 (\CA)$ in the algebraic model and the $m$th holomorphic derivative of the class $\omega(\ul{z};0)$ via
\beqn
	j\left(\lambda^m\omega\right) = \frac{(-2)^m}{(2m+1)!!} \pd_z^m \omega(\ul{z};0) .
\eeqn
The algebra structure on the cohomology of $\cA$ is inherited from the one at the cochain level: the degree 0 part $\C[z]$ has its natural ring structure, the product of two degree 1 elements vanishes, and the product of a degree 0 and degree 1 element is as follows. Based on the form of the differential, we already know that $z \omega = -2\d' x \equiv 0$; more generally we have
\beqn
\begin{aligned}
	z \lambda^{m+1} \omega & = (1-x^2)\lambda^{m}\omega\\
	& = \lambda^m \omega - \big(\tfrac{1}{2(m+1)} z \lambda^{m+1}\omega + \diff'(\tfrac{1}{m+1}x \lambda^{m+1})\big)
\end{aligned}
\eeqn
In particular, we have
\beqn
	z \lambda^{m+1} \omega \equiv \frac{2(m+1)}{2m+1} \lambda^m \omega 
\eeqn
In view of this relation, and the above identification with derivatives of $\omega(\ul{z};0)$, it convenient to define
\beqn
	\Omega^m \define \frac{(2m+1)!!}{2^m m!} \lambda^m \omega \rightsquigarrow j\left(\Omega^m\right) = \frac{(-1)^m}{m!} \pd_z^m \omega(\ul{z};0)
\eeqn
so the action of $z$ at the level of cohomology takes the form
\be
z \Omega^m = \begin{cases}
	0 & m = 0\\ 
	\Omega^{m-1} & m > 0
\end{cases}
\ee
and, more generally,
\be
z^n \Omega^m = \begin{cases}
	0 & n > m\\
	\Omega^{m-n} & n \leq m
\end{cases}
\ee

Schematically, $\Omega^m$ plays the role of the homogenous Laurent polynomial ``$\frac{1}{z^{m+1}}$''.
Notice that unlike the case of ordinary Laurent polynomials, however, there is no relation of the form $z^n \frac{1}{z^m} = z^{n-m}$ for $n \geq m$.\footnote{This points to the existence of a transferred $A_\infty$ structure on the cohomology of $\CA$, which we do not consider here.}

%Formally we can consider the $\CA$-module $\CA^{(1)}$ which is defined analogously to $\CA$ except the one-form $\d z$ appears explicitly. 
%That is
%\beqn
%\CA = \CR \d z \oplus \CR \d z  \omega [-1]
%\eeqn
%and the differential $\d'$ is defined by the same formula.
%This is an algebraic model for the $\CA^{(0);\bu} (\C \times \R - 0)$-module $\CA^{(1);\bu}(\C \times \R - 0)$ as is shown by the same argument.
%We denote the cohomology by $\CK^{(1)} = H^\bu(\CA^{(1)})$.

From here on we will mostly be interested in the cohomology of $\cA$ as a graded commutative algebra and we denote it by 
\beqn
\CK_{poly} \define H^\bu (\cA,\d') .
\eeqn
Thus, in degree zero $\CK_{poly}^0 = \C[z]$ and in degree one 
\beqn
\CK_{poly}^1 = \C[\lambda] \omega = \C\{\Omega^0,\Omega^1,\ldots\} .
\eeqn
$\CK_{poly}$ vanishes in degrees higher than $1$. There is a relation in this graded algebra $z^n \Omega^m = \Omega^{m-n}$ for $n \leq m$ and $z^n \Omega^m = 0$ otherwise.
One other way we can present $\CK_{poly}$ is as the quotient of the polynomial algebra generated by the infinitely many variables $z,\Omega^0,\Omega^1, \ldots$ of degrees $0,1,1,\ldots$ subject to the relations $z^n \Omega^m = \Omega^{m-n}$ for $n \leq m$, $z^n \Omega^m = 0$ for $n > m$, and $\Omega^n \Omega^m = 0$ for all $n,m \geq 0$.
We refer to $\CK_{poly}$ as the space of polynomials on the algebraic raviolo.
This is the raviolo analog of the space of polynomials $\C[z,z^{-1}]$ on $\C^\times$.

There are variants of $\CK_{poly}$ that will be of interest to us:
\begin{itemize}
	\item Let $\CK$ be the commutative graded algebra which in degree zero is $\C[\![z]\!]$, in degree one is $\C[\lambda] \omega = \C\{\Omega^0, \Omega^1,\ldots\}$ and whose ring structure satisfies the same relations as $\CK_{poly}$.
	This is the analog of the ring of Laurent series $\C(\!(z)\!)$ in a single variable, and hence we refer to this as the space of raviolo Laurent series.
	There is an embedding of algebras $\CK_{poly} \hookrightarrow \CK$.
	\item Let $\CK_{dist}$ denote the graded vector space which in degree zero is $\C[\![z]\!]$, and in degree one is $\C[\![\lambda]\!] \omega$.
	Notice that one cannot multiply arbitrary elements in $\CK_{dist}$ so that there is no natural commutative algebra structure.
	Nevertheless, $\CK_{dist}$ is a graded module over $\CK_{poly}$.
	We call $\CK_{dist}$ the space of formal raviolo distributions, it is the analog of $\C[\![z,z^{-1}]\!]$.
\end{itemize}
For $\alpha$ in $\CK_{poly}$ or $\CK$, we say that $\alpha$ has a \emph{pole of order $m$} if the coefficient in front of $\Omega^m$ is nonzero, but the coefficient of $\Omega^{n}$ in $\alpha$ vanishes for all $n > m$. If the coefficient in front of $\Omega^n$ vanishes for all $n\geq0$, we say $\alpha$ is \emph{regular}.

We note that the graded algebras $\CK,\CK_{poly}$ are each equipped with a natural degree zero derivation $\pd_z$ defined by
\be
	\pd_z z^n = n z^{n-1} \qquad \pd_z \Omega^m = -(m+1) \Omega^{m+1}
\ee
Given an element $f(z) \in \CK_{dist}$ (or $\CK, \CK_{poly}$), it is useful to separate it into its homogeneous pieces. Explicitly, if
\be
	f(z) = \sum_{n\geq0} z^{n} f_n + \Omega^n f_{-n-1}
\ee
then we define
\be
	f(z)_+ \define \sum_{n\geq0} z^{n} f_n \qquad f(z)_- \define \sum_{n\geq0} \Omega^n f_{-n-1}
\ee

We have utilized integration of Dolbeault-de Rham forms along the $2$-sphere to characterize THF cohomology in the previous sections.
We now develop an algebraic version of the residue.
In terms of differential forms in the analytic model for THF cohomology, integration over the $2$-sphere centered at the origin defines a linear map
\beqn
\oint_{S^2} \colon \CA^{(1);\bu} \to \C[-1]
\eeqn
of cohomological degree $-1$.
Similarly, if we fix the holomorphic one-form $\d z$ we obtain a linear map 
\beqn
\oint_{S^2} \d z \wedge (-) \colon \CA^{(0);\bu} \to \C[-1]
\eeqn
also of cohomological degree one.
This map clearly descends to cohomology.

As we have already pointed out, the two-form $\d z \wedge \omega(\ul{z};0)$ is the fundamental class of this $2$-sphere (up to a factor of $8 \pi \im$), see Lemma \ref{lem:residue}.
Formally, we can mimic this integration to define a residue map on our algebra model $\CK$.
We define the linear map
\beqn
	\Res  \colon \CK^{(1)} \to \C[-1]
\eeqn
of cohomological degree one by the formula
\be
	\Res \d z \, z^n \define 0\,, \qquad \Res \d z \, \Omega^m z^n \define \delta_{n,m} .
\ee
Note that this is equivalent to $\Res \d z \, z^m = 0$ and $\Res \d z \, \Omega^m = \delta_{m,0}$. In the analytic description, we see that $\Res$ can be identified with performing a surface integral over an $S^2$ (of any radius) centered at $0$, divided by $8 \pi i$. We add a subscript to this residue map if we need to specify the corresponding variable, e.g. $\Res_z$ or $\Res_w$.
%To simplify the notation, we replace the subscript $S^2_0$ denoting the integration cycle by a subscript ${}_0$ indicating where the 2-sphere is centered. 
%More generally, if the integration cycle is a small 2-sphere surrounding a point $(w,s) \in \C \times \R$ we put a subscript ${}_w$. As mentioned above, we do not indicate the coordinate along $\R$ in the notation.

More generally, if we do not explicitly include the factor $\d z$, the 2-sphere residue defines a non-degenerate pairing
\be
	\langle -, -\rangle \colon \CK^{(s)} \otimes \CK^{(1-s)} \to \C[-1] .
\ee
In particular, the linear dual of $\CK$ can be identified with $\CK^{(1)}[1]$.

\subsection{The raviolo delta function}

Let $\CK_{dist}^{z,w}$ denote the graded vector space which in degree zero is $\C[\![z,w]\!]$, in degree one is $\C[\![z,\lambda_w]\!] \omega_w \oplus \C[\![\lambda_z,w]\!] \omega_z$, in degree two is $\C[\![\lambda_z,\lambda_w]\!] \omega_z \omega_w$.
In other words, $\CK_{dist}^{z,w} = \CK_{dist}^{\otimes 2}$.
We will refer to this as the space of \textit{formal ravioli distributions} in two variables $z,w$.
In this space, define the degree one element
\beqn
	\Delta(z-w) \define \sum_{n \geq 0} w^n \Omega_z^{n} - \sum_{n < 0} z^{-n-1} \Omega_w^{-n-1}
\eeqn 
where $\Omega_z^n$ and $\Omega_w^n$ are defined in terms of $\lambda_z, \omega_z$ and $\lambda_w, \omega_w$ as above.
Notice that for any raviolo distribution $A(z)$ in one variable, we have the equality
\beqn\label{eqn:essential}
	\Delta(z-w) A(z) = \Delta(z-w) A(w) .
\eeqn

We also define 
\begin{align*}
	\Delta_-^{(j)}(z-w) & \define \sum_{n \geq 0} \begin{pmatrix} n + j \\ j \end{pmatrix}  w^{n} \Omega_z^{n+j} \in \CK^1_{z,dist} \otimes \CK^0_{w,dist} \\
	\Delta^{(j)}_+(z-w) & \define \sum_{n < 0} (-1)^{j+1} \begin{pmatrix} j-n-1 \\ j \end{pmatrix} z^{-n-1} \Omega_w^{j-n-1} \in \CK^0_{z,dist} \otimes \CK^1_{w,dist} ,
\end{align*}
so that 
\beqn
	\Delta(z-w) = \Delta_-(z-w) + \Delta_+(z-w),
\eeqn
where we set $\Delta_{\pm} = \Delta_\pm^{(0)}$.
We observe that $\Delta^{(j)}_{\pm} (z-w) = \frac{1}{j!} \del_w^j \Delta_{\pm}(z-w)$.

If $R$ is any $\C$-algebra then $R[\![z]\!]$ is the space of $R$-valued single variable formal Taylor series.
We let $R \langle\!\langle z \rangle\!\rangle = R \otimes \CK$
be the space of $R$-valued formal raviolo Laurent series; this is the analog of $R$-valued single variable formal Laurent series $R(\!(z)\!)$. Concretely, $R \langle \!\langle z \rangle \! \rangle$ is the space of series
\beqn\label{eqn:RK}
	\sum_{n < 0} z^{-n-1} a_n + \sum_{n \geq 0} \Omega_z^{n} a_n 
\eeqn
where $a_n \in R$ for all $n \in \Z$ with the property that there exists an $N$ such that $a_n = 0$ for $n \geq N$. For example, $\CK = \C\langle\!\langle z \rangle\!\rangle$.
Note that $R\langle\!\langle z \rangle\!\rangle$ is naturally a graded algebra.

We can apply this construction to $R = \C\langle \!\langle z\rangle\!\rangle$ to obtain the graded algebra $\C\langle \!\langle z\rangle\!\rangle\langle \!\langle w\rangle\!\rangle$.
Elements of this space are expressions of the form \eqref{eqn:RK} where each $a_n \in \C\langle \!\langle z\rangle\!\rangle$ and $a_n = 0$ for $n \gg 0$.
For example, observe that for all $j \geq 0$ one has 
\beqn
	\Delta^{(j)}_- (z-w) \in \C\langle \!\langle z\rangle\!\rangle\langle \!\langle w\rangle\!\rangle .
\eeqn
Similarly, for all $j \geq 0$
\beqn
	\Delta^{(j)}_+ (z-w) \in \C\langle \!\langle w\rangle\!\rangle\langle \!\langle z\rangle\!\rangle .
\eeqn

Both $\C\langle \!\langle z\rangle\!\rangle\langle \!\langle w\rangle\!\rangle$ and $\C\langle \!\langle z\rangle\!\rangle\langle \!\langle w\rangle\!\rangle$ naturally embed inside of the space of bivariate formal ravioli distributions $\CK^{z,w}_{dist}$ defined above.
Their intersection is the quotient of the algebra $\C[\![z,w]\!][\Omega^{n}_z, \Omega^m_z]_{n,m \geq 0}$, where $z,w$ are degree zero and $\Omega^n_z,\Omega^m_w$ are degree one, subject to the usual relations 
\beqn
	x^n \Omega^m_x = \Omega_x^{m-n}
\eeqn
for $n \leq m$ and $x=z,w$, $x^n \Omega^m_x = 0$ for $n > m$ and $x=z,w$ as well as
\beqn
	\Omega^n_x \Omega^m_x = 0
\eeqn
for $n,m \geq 0$ and $x=z,w$.
We denote this algebra by
\beqn
	\CK^{z,w}_{poly} \define \C\langle \!\langle z\rangle\!\rangle\langle \!\langle w\rangle\!\rangle \cap \C\langle \!\langle w\rangle\!\rangle\langle \!\langle z\rangle\!\rangle \subset \CK_{dist}^{z,w} .
\eeqn

%This implies the following distributional formula for the Taylor expansion of $\Omega^m_z$:
%\be
%	\Omega^m_z = \sum_{n \geq 0}(-w)^n\binom{n+m}{n}\Omega^{m+n}_{z-w}
%\ee
%where $\Omega^{m}_{z-w} = \frac{1}{m!} \del_z^{m} \Omega^0_{z-w}$.

%We start by recalling the algebraic model for the cohomology of the ring $\CA^{(0),\bullet}$ of holomorphic functions on $\C \times \R-\{0\}$. The degree zero cohomology is isomorphic to holomorphic functions in one variable $\C[z]$; the degree 1 component can be identified with the $\C[z]$-module $\C[\pd_z]\omega$, where $\omega \in \CA^{(0),1}$ is the 1-form propagator
%\be
%	\omega(z,t) = c \frac{2\oz \diff t - t \diff \oz}{r^3} \qquad \rightsquigarrow \qquad \oint_{S^2_0} \omega = 1\,.
%\ee
%were $S^2_0$ is a small 2-sphere surrounding the point $0$. Note that $\omega(z,t) = \omega(-z,-t)$. We entirely suppress the $t$ dependence in our notation.

\begin{prop}
The following identities hold involving the formal distribution~$\Delta(z-w)$.
\begin{itemize}
	\item[(a)] For any Laurent series $f \in \CK$ one has
	\beqn
	\Res_z \d z \Delta(z-w) f(z) = f(w) .
	\eeqn
	\item[(b)] $\Delta(z-w) = -\Delta(w-z)$.
	\item[(c)] $\del_z \Delta(z-w) = - \del_w \Delta(z-w)$.
\end{itemize}
\end{prop}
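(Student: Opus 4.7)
The plan is to verify each of the three identities by direct term-by-term calculation with the series definition of $\Delta(z-w)$, using the multiplication relations in $\CK_{dist}^{z,w}$ (namely $z^n\Omega^m = \Omega^{m-n}$ when $n\leq m$, $z^n\Omega^m = 0$ when $n>m$, and $\Omega^n\Omega^m = 0$) together with the residue rules $\Res \d z\, z^n = 0$ and $\Res \d z\, \Omega^m = \delta_{m,0}$.

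For part (a), I would write a general $f(z)\in\CK$ in its polynomial-plus-singular decomposition $f(z) = \sum_{k\geq 0}z^k f_k + \sum_{j\geq 0}\Omega_z^j g_j$ (with the $\Omega$-sum finite), distribute the product $\Delta(z-w)f(z)$ across the two summands of $\Delta$, and then apply $\Res_z \d z$. The summand $\sum_{n\geq 0} w^n \Omega_z^n$ of $\Delta(z-w)$ annihilates the singular part of $f$ (since $\Omega^a\Omega^b = 0$); on the polynomial part, $\Omega_z^n\cdot z^k f_k$ reduces to $\Omega_z^{n-k}f_k$, whose residue picks out $k=n$, contributing $\sum_n w^n f_n$. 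Dually, the summand involving $z^m\Omega_w^m$ is annihilated by the polynomial part of $f$ (since $\Res \d z\, z^N = 0$) and, on the singular part, $z^m\Omega_z^j g_j = \Omega_z^{j-m} g_j$ has residue picking out $j=m$, contributing $\sum_m \Omega_w^m g_m$. Their sum reproduces $f(w)$.

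For part (b), I would swap the dummy labels $z\leftrightarrow w$ directly in the defining series and reindex the resulting second summation via $m = -n-1$ to get $\Delta(w-z) = \sum_{n\geq 0} z^n \Omega_w^n - \sum_{m\geq 0} w^m \Omega_z^m$; comparison with $\Delta(z-w) = \sum_{n\geq 0} w^n \Omega_z^n - \sum_{m\geq 0} z^m \Omega_w^m$ shows the two differ by an overall sign. The minus placed between the two summands in the definition of $\Delta$ is precisely what enforces antisymmetry rather than symmetry. For part (c), I would differentiate each summand term by term using $\pd_z z^n = n z^{n-1}$, $\pd_z\Omega^m = -(m+1)\Omega^{m+1}$, and the analogous rules for $w$; a single shift of the summation index ($n\mapsto n\pm 1$) on each sum then identifies $\pd_z\Delta(z-w)$ with $-\pd_w\Delta(z-w)$ term by term.

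The only real obstacle is careful bookkeeping of signs—the minus sign in the defining formula, the sign in $\pd\Omega^m = -(m+1)\Omega^{m+1}$, and the reindexings of sums—together with consistent handling of the $\Omega_w$ factors as $z$-independent coefficients under $\Res_z\d z$. No conceptual difficulty arises; each of the three identities reduces to matching coefficients of $w^n$ and $\Omega_w^m$ (or of $\Omega_z^n$ and $\Omega_w^m$ in parts (b) and (c)) in the underlying formal series.
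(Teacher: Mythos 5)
Your proposal is correct and follows essentially the same route as the paper: part (a) is verified by linearity on the basis elements $z^k$ and $\Omega_z^j$ (your series expansion of $f$ is the same computation), and parts (b) and (c) are proved exactly as in the paper by relabeling/reindexing the defining sums and by term-by-term differentiation with an index shift. The only point the paper likewise leaves implicit is the Koszul sign when the degree-one residue $\Res_z \d z$ is moved past the degree-one coefficients $\Omega_w^m$, which you correctly flag as bookkeeping.
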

\begin{proof}
For (a) it suffices to check this relation for $f(z) = z^n$ or $f(z) = \Omega^{m}_z$ for $n \geq 0$, $m \geq 0$ which is straightforward.

For (b) notice
\begin{align*}
	\Delta(z-w) & = \sum_{n \geq 0} w^n \Omega_z^{n} - \sum_{n < 0} z^{-n-1} \Omega_w^{-n-1} \\
	& = \sum_{n < 0} w^{-n-1} \Omega_z^{-n-1} - \sum_{n \geq 0} z^n \Omega_w^n \\
	& = -\Delta(w-z) .
\end{align*}

For (c) we see directly that
\[
-\del_z \Delta(z-w) = \sum_{n \geq 0} (n+1) w^n \Omega_z^{n+1} - \sum_{n < 0} (n+1) z^{-n-2} \Omega_w^{-n-1} 
\]
is the same as  
\[
\del_w \Delta(z-w) = \sum_{n \geq 0} n w^{n-1} \Omega_z^n - \sum_{n < 0} n z^{-n-1} \Omega_w^{-n} 
\]
after reindexing the summations via $n \to n+1$.
\end{proof}

In comparing equivalent definitions of mutual locality for raviolo fields, we will use the following technical lemma which provides a uniqueness result for bivariate formal ravioli distributions.

\begin{lemma}
\label{lem:deltafunction}
Let $N \geq 0$ be a non-negative integer and let $f(z,w) \in \CK_{dist}^{z,w}$ be a bivariate formal ravioli distribution satisfying the following properties:
\begin{enumerate}
	\item $(z-w)^{N+1} f(z,w) = 0$, and
	\item or all $m \geq 0$ 
	\[
		\left(\Omega^m_z - \sum_{j=0}^N (-1)^j \binom{m+j}{j} (z-w)^j \Omega^{m+j}_w \right) f(z,w) = 0 .
	\]
\end{enumerate}
Then $f(z,w)$ can be written uniquely as a sum 
\beqn\label{eqn:fexp}
	f(z,w) = \sum_{i=0}^{N} \del_w^i \Delta(z-w) g^{(i)}(w)
\eeqn
where $g^{(i)}(w) \in \CK_{dist}^w$.
\end{lemma}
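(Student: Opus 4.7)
The plan is to extract the $g^{(i)}$ by residues and show the resulting expansion exhausts $f$. The key input is the orthogonality identity
\beqn
\Res_z\,\d z\,(z-w)^j\,\partial_w^i\Delta(z-w)=i!\,\delta_{i,j}, \quad 0\leq i,j\leq N,
\eeqn
which follows inductively from the elementary relation $(z-w)\partial_w\Delta(z-w)=\Delta(z-w)$ (verified directly from the series form of $\Delta$) combined with the residue formula of the preceding proposition. Applying $\tfrac{1}{j!}\Res_z\,\d z\,(z-w)^j(-)$ to any putative relation $\sum_{i=0}^N\partial_w^i\Delta(z-w)g^{(i)}(w)=0$ extracts each $g^{(j)}(w)=0$, yielding uniqueness.

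For existence, set $g^{(j)}(w):=\tfrac{1}{j!}\Res_z\,\d z\,(z-w)^j f(z,w)\in\CK_{dist}^w$ and $h:=f-\sum_{i=0}^N\partial_w^i\Delta(z-w)g^{(i)}(w)$. Each summand $\partial_w^i\Delta\cdot g^{(i)}$ satisfies hypothesis (1) because $(z-w)^{i+1}\partial_w^i\Delta=i!\,\Delta$ and $(z-w)\Delta=0$, and satisfies hypothesis (2) by induction on $i$ from the base identities $\Omega^m_z\Delta=\Omega^m_w\Delta$ and $(z-w)\Omega^p_w\Delta=0$---both short telescoping calculations---with graded commutativity propagating (2) through multiplication by the $w$-only factor $g^{(i)}(w)$. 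Consequently, $h$ satisfies (1) and (2) with $\Res_z\,\d z\,(z-w)^j h=0$ for $j=0,\ldots,N$, and the task reduces to showing any such $h$ must vanish.

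Decompose $h=h_{00}+h_{10}+h_{01}+h_{11}$ by cohomological bidegree; every hypothesis splits. The $(0,0)$-piece vanishes because $\C[\![z,w]\!]$ is an integral domain. For $h_{11}=\sum c_{p,q}\Omega^p_z\Omega^q_w$, the expansion $(z-w)^k\Omega^p_z\Omega^q_w=\sum_{l=0}^k(-1)^l\binom{k}{l}\Omega^{p-k+l}_z\Omega^{q-l}_w$ translates the hypotheses into discrete-calculus conditions on each diagonal sequence $d_p:=c_{p,T-p}$: the $(N+1)$-st forward difference vanishes (condition (1)), and $d_0=d_1=\cdots=d_{\min(N,T)}=0$ (residue vanishings). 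Since a polynomial of degree $\leq N$ with $N+1$ consecutive zeros is identically zero, we conclude $h_{11}=0$, and the same finite-difference argument gives $h_{10}=0$. Finally, with $h_{10}=h_{11}=0$, condition (2) collapses (as $\Omega^{m+j}_w h_{01}$ automatically vanishes) to $\Omega^m_z h_{01}=0$ for all $m\geq 0$, which directly forces every coefficient of $h_{01}$ to vanish. The main obstacle is this combinatorial step on $h_{11}$ and $h_{10}$: the residue and $(z-w)^{N+1}$-vanishing equations must fit together into a nondegenerate system on each diagonal, which works thanks to the raviolo-specific truncation $z\cdot\Omega^0_z=0$ (in contrast to $z\cdot z^{-1}=1$ in ordinary Laurent series) cleanly bounding the relevant linear systems.
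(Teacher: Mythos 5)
Your proposal is correct, but it takes a genuinely different route from the paper's. The paper argues degree by degree and first classifies \emph{all} solutions of condition (1): in cohomological degree one the recursion $(\delta^{N+1}f)_{m,n}=0$ on each diagonal is solved exactly by $\del_w^i\Delta_\pm(z-w)$ times $w$-distributions, and condition (2), through the identities for $\Omega^m_z\Delta_\pm$ and $\Omega^m_w\Delta_\pm$, is then used to force $g^{(i)}_+=g^{(i)}_-$; in degree two condition (2) is vacuous and condition (1) alone gives the span of $\del_w^i\Delta(z-w)\Omega^{p+i}_w$. You instead extract the coefficients a priori through the residue pairing, via the orthogonality $\Res_z\,\d z\,(z-w)^j\del_w^i\Delta(z-w)=i!\,\delta_{i,j}$ (which is correct, following from $(z-w)\del_w^i\Delta=i\,\del_w^{i-1}\Delta$ and $\Res_z\d z\,\del^k_w\Delta=\delta_{k,0}$), subtract the candidate expansion, and kill the remainder bidegree by bidegree, invoking condition (2) only at the very end to eliminate the $(0,1)$ sector --- precisely the pure $\Delta_+$-type solutions that are invisible to $\Res_z$. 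This buys an explicit formula $g^{(i)}=\tfrac{1}{i!}\Res_z\d z\,(z-w)^if$ and an immediate uniqueness proof, whereas the paper's uniqueness rests on linear independence; the paper's route, in exchange, exhibits the full solution space of condition (1), which it reuses in the locality arguments.

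Two phrasings deserve tightening, though neither is a genuine gap. First, ``every hypothesis splits'' is not literally true: the bidegree-$(1,1)$ component of condition (2) couples $h_{10}$ and $h_{01}$, since it reads $\Omega^m_zh_{01}=\sum_{j}(-1)^j\binom{m+j}{j}(z-w)^j\Omega^{m+j}_wh_{10}$; your argument survives because you only use (2) after establishing $h_{10}=0$, at which point it does collapse to $\Omega^m_zh_{01}=0$. Second, the constraints on $h_{10}$ are not verbatim ``the same'' as on $h_{11}$: there the $w$-variable carries power series rather than $\Omega_w$'s, so on a fixed diagonal the residue equations are binomial moment conditions rather than vanishing initial values, and small diagonals receive no boundary-truncated windows from (1). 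A clean fix, which also streamlines your $h_{11}$ step, is a two-stage induction: the $j=0,\dots,N$ residue conditions successively force the coefficients of $\Omega^m_z$ with $m\le N$ to vanish, and then condition (1), read as expressing the row $m$ in terms of rows $m-1,\dots,m-N-1$, kills everything above. With that adjustment every step of your argument goes through.
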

\begin{proof}
Suppose that $g_0(w) \in \CK_{dist}^w$ is any formal raviolo distribution in the variable $w$.
Applying \eqref{eqn:essential} to the cases $A(z) = z^n, A(z) = \Omega^m_z$ we obtain
\begin{align*}
	(z^n - w^n)\Delta(z-w) g_0(w) & = 0, \quad n \geq 0\\
	(\Omega^m_z - \Omega^m_w) \Delta(z-w) g_0(w)& = 0, \quad m \geq 0 .
\end{align*}
Note that the first equation is trivial for $n = 0$, and the relation for $n > 1$ follows from the $n=1$ case.

Next, consider the derivative $\del_w \Delta(z-w)$.
Differentiating the relation \eqref{eqn:essential} we obtain
\begin{align*}
	\del_w \Delta(z-w) A(z) & = \del_w \Delta(z-w) A(w) + \Delta(z-w) \del_w A(w) \\
	& =  \del_w \Delta(z-w) \left(A(w) + (z-w) \del_w A(w)\right) ,
\end{align*}
for any single variable formal raviolo distribution $A(z)$.
The second equality follows from the relation $\Delta(z-w) = (z-w) \del_w \Delta(z-w)$, which follows from taking the derivative of \eqref{eqn:essential} when $A = z$.
Let $g_1(w)$ be any formal raviolo distribution in the variable $w$.
Applying this derivative formula to the test functions $A = z^n, A=\Omega^m_z$ we obtain
\begin{align*}
	\left(z^n - (w^n + n w^{n-1} (z-w) \right) \del_w \Delta(z-w)g_1(w) & = 0, \quad n \geq 0\\
	\left(\Omega^m_z - (\Omega^m_w - (m+1) (z-w) \Omega_w^{m+1})\right) \del_w \Delta(z-w) g_1(w) & = 0, \quad m \geq 0 .
\end{align*}
In the first relation, the cases $n=0,1$ are trivial, while the relation for $n > 2$ follows from the case $n=2$ which reads
\beqn
	(z-w)^2 \pd_w \Delta(z-w)g_1(w) = 0 .
\eeqn

More generally, taking the $N$th derivative of the relation \eqref{eqn:essential} we obtain
\beqn
	\del_w^N \Delta(z-w) A(z) = \del_w^N \Delta(z-w) \sum_{j=0}^N \frac{(z-w)^j}{j!} \del_w^j A(w) 
\eeqn
for any distribution $A(z)$.
Applying this to the test distribution $A(z) = z^{N+1}$ we obtain
\beqn
	(z-w)^{N+1} \del_w^N \Delta(z-w) g_N(w) = 0
\eeqn
which holds for any formal raviolo distribution $g_N(w)$ in the variable $w$.
Similarly, applying this to the test distribution $A(z) = \Omega_z^m$, we obtain
\beqn
	\left(\Omega^m_z - \sum_{j=0}^N (-1)^j \binom{m+j}{j} (z-w)^j \Omega^{m+j}_w \right) \del_w^N \Delta(z-w) g_N(w) = 0 , \quad m \geq 0 .
\eeqn
Thus, we see that any bivariate formal ravioli distribution of the form \eqref{eqn:fexp} satisfies conditions (1) and (2).

Conversely, suppose that $f$ satisfies the equations in (1) and (2).
We prove the result for $f$ of homogenous cohomological degree.
If $f$ is of degree zero then condition (1) implies that $f \equiv 0$. 
%We will therefore assume that $f$ is of cohomological degree one.

Suppose that $f$ is degree one.
Then it admits an expansion of the form
\beqn
	f(z,w) = \sum_{m < 0, n \geq 0} f_{m,n} z^{-m-1} \Omega_w^{n} + \sum_{m \geq 0, n < 0} f_{m,n} \Omega_z^m w^{-n-1} ,
\eeqn
where $f_{m,n}$ are coefficients.
Define
\beqn
	(\delta f)_{m,n} \define f_{m+1,n} - f_{m,n+1} .
\eeqn
Condition (1) implies the following recursive relation among the coefficients
\beqn\label{eqn:Delta}
	\left(\delta^{N+1} f\right)_{m,n} = 0 
\eeqn
which must hold in the two cases 
\[
	\begin{array}{lllll}
	\alpha \colon & m < 0 , & n \geq 0 \\
	\beta \colon & m \geq 0, & n  < 0 .
	\end{array}
\]
We remark that in the relation \eqref{eqn:Delta} the coefficients $f_{m,n}$ are only defined when $(m,n)$ lie in $\alpha$ or $\beta$. 
Our convention is that when indices $m,n$ appear that do not fall into the sets $\alpha,\beta$ above then we declare $f_{m,n} = 0$.
For example, in the case $N=0$ we have $f_{m+1,n} - f_{m,n+1} = 0$ when $m < -1, n \geq 0$ (respectively, $m \geq 0, n < -1$) and $f_{-1, n+1} = 0$ (respectively, $f_{m+1,-1}=0$) for $n \geq 0$ (respectively, $m \geq 0$).

Relation \eqref{eqn:Delta} implies that $f(z,w)$ is of the form
\beqn
	f(z,w) = \sum_{p \in \Z} f^{(p)}(z,w)
\eeqn
where $f^{(p)}_{m,n} = 0$ unless $m+n=p$ where $(m,n) \in \alpha$ or $\beta$.
Suppose that~$p \in \Z$ and $(m,n) \in \alpha$.
Then, Eq. \eqref{eqn:Delta} has $N+1$ independent solutions
\beqn
	w^{p+i+1} \del_w^i \Delta_+(z-w) , \quad i = 0,\ldots,N .
\eeqn
In fact, since \eqref{eqn:Delta} is a difference equation with $N+1$ terms, these are all of the solutions.
Similarly, if $(m,n) \in \beta$, then Eq. \eqref{eqn:Delta} has the $N+1$ independent solutions
\beqn
	w^{p+i+1} \del_w^i \Delta_-(z-w), \quad i =0,\ldots, N .
\eeqn
Thus, we see that condition (1) implies that $f(z,w)$ is of the form
\beqn\label{eqn:fexp1}
	f(z,w) = \sum_{i=0}^N \del_w^i \Delta_+(z-w) g_{+}^{(i)}(w) + \sum_{i=0}^N \del_w^i \Delta_-(z-w)  g_{-}^{(i)} (w) 
\eeqn
where $g_\pm^{(i)} (w) = \sum_{m \geq 0} g_{\pm,m}^{(i)} w^{m} \in \CK_{dist}^w$ are of cohomological degree zero.

We now consider condition (2).
Observe the following identities
\begin{align*}
	\Omega^m_z \Delta_+(z-w) & = \Omega^m_w \Delta_- (z-w) = \Omega_{z,w}^m \\
	\Omega^m_z \Delta_-(z-w) & = \Omega^m_w \Delta_+(z-w) = 0 
\end{align*}
for each $m \geq 0$, where
\beqn
	\Omega^m_{z,w} \define \sum_{\ell = 0}^m \Omega^\ell_z \Omega^{m-\ell}_w \in \CK^{z,w}_{dist} .
\eeqn
Given the expansion as in \eqref{eqn:fexp1} we see that
\beqn
	\Omega_z^m f (z,w) = \sum_{i=0}^N \del_w^i \Omega_{z,w}^m  g_+^{(i)} (w)
\eeqn
and
\beqn
	\left(\sum_{j=0}^N (-1)^j \binom{m+j}{j} (z-w)^j \Omega^{m+j}_w \right) f(z,w) = \sum_{i=0}^N \del_w^i \Omega_{z,w}^m g_-^{(i)} (w).
\eeqn
Applying condition (2) successively for each $m \geq 0$ we thus find that all coefficients of $g_{\pm}^{(i)}$ must satisfy $g^{(i)}_{+,m} - g^{(i)}_{-,m} = 0$ for every $i,m \geq 0$; thus $f(z,w) = \sum_{i=0}^N \del_w^i \Delta(z-w) g^{(i)} (w)$ where $g^{(i)} = g^{(i)}_+ = g^{(i)}_-$ as desired. 

Finally suppose that $f$ is degree two.
In this case, we note that condition (2) is vacuously true.
The formal distribution admits an expansion of the form
\beqn
	f(z,w) = \sum_{n,m \geq 0} f_{m,n} \Omega_z^m \Omega_w^{n} 
\eeqn
for coefficients $f_{m,n}$.
Condition (1) implies the relation
\beqn\label{eqn:Delta2}
	(\delta^{N+1} f)_{m,n} = 0
\eeqn
for $m,n \geq 0$.
This means that $f(z,w)$ is of the form
\beqn
	f(z,w) = \sum_{p \in \Z} f^{(p)}(z,w)
\eeqn
where $f^{(p)}_{m,n} = 0$ unless $n-m=p$.
For each $p$ this equation has $N+1$ independent solutions
\beqn
	\del_w^i \Delta (z-w) \Omega_w^{p+i} , \quad i = 0,\ldots,N .
\eeqn
This completes the proof.
\end{proof}

\subsection{Distributional expansions}
\label{sec:expansions}
The vertex algebra axioms use the function space 
\beqn
	\C[\![z,w]\!][z^{-1}, w^{-1}, (z-w)^{-1}]
\eeqn
to formulate notions like mutual locality and associativity. Geometrically, this function space is closely related to the space of holomorphic functions on the configuration space of three points in $\C$, modulo overall translations.
In our context, the natural replacement for this is thus the space of `holomorphic' functions on the configuration space of three points in the THF space $\C \times \R$, modulo overall translations. (By holomorphic we mean those functions preserved by the foliation defining the THF structure on $\C \times \R$).
As in the case of two points, we must consider not only functions on the configuration space of three points, but the full THF cohomology of this space.

There are three differential forms of particular interest: we have the two 1-forms $\omega(\ul{z};0)$ and $\omega(\ul{w};0)$ inherited from the two coordinate copies of $\C\times\R-\{0\}$, as well as the 1-form
\beqn
	\omega(\ul{z}, \ul{w}) \define \frac{(t-s) (\d \zbar - \d \wbar) - 2(\zbar - \wbar) (\d t - \d s)}{r(\ul{z},\ul{w})^3} .
\eeqn
from the diagonal. Note that $\omega(\ul{z}, \ul{w}) = \omega(\ul{w}, \ul{z})$ is not quite the same as $\omega(\ul{z};\ul{w})$, cf. Eq. \eqref{eqn:omegas2}; rather, it is $\omega(\ul{z}-\ul{w};0)$. As was the case on $\C \times \R - \{0\}$, the derivatives of these 1-forms with respect to the anti-holomorphic coordinates $\zbar, \wbar$ and the smooth coordinates $t$, $s$ are trivial in $\diff'$ cohomology, as are $z \omega(\ul{z};0)$, $w \omega(\ul{w};0)$, and $(z-w) \omega(\ul{z},\ul{w})$. Also note that
\beqn
	\pd_w \omega(\ul{z};0) = 0 = \pd_z \omega(\ul{w};0) \qquad \pd_z \omega(\ul{z}, \ul{w}) = - \pd_w \omega(\ul{z}, \ul{w})
\eeqn
Our algebraic model for the space of functions should thus have three towers of generators $\Omega^n_z$, $\Omega^n_w$, and $\Omega^n_{z-w}$ in degree 1. Explicitly, we define $\three$ as the quotient of the graded polynomial algebra 
\beqn
	\C[\![z,w]\!][\Omega_z^n,\Omega^m_m,\Omega^k_{z-w}]_{n,m,k \geq 0}
\eeqn
where $z,w$ are of degree zero, and $\Omega^n_x$ are of degree $+1$ for $x=z,w,z-w$, $n \geq 0$ by the following relations:
\begin{itemize}
	\item $x^n \Omega^m_x = \Omega_x^{m-n}$ for $n \leq m$ and $x=z,w,z-w$, $x^n \Omega^m_x = 0$ for $n > m$ and $x=z,w,z-w$.
	\item $\Omega^n_x \Omega^m_x = 0$ for $n,m \geq 0$ and $x=z,w,z-w$.
	\item Finally, the relation between degree one generators
	\beqn
	\Omega^0_{z-w} \Omega^0_z + \Omega^0_w \Omega^0_{z-w} + \Omega^0_z \Omega^0_w = 0
	\eeqn
	together with all relations obtained by taking holomorphic derivatives $\del_z,\del_w$.\footnote{This is analogous to the degree two relation present in the de Rham cohomology $H^\bu(Conf_3(\R^2))$ of three points in $\R^2$, see \cite{Arnold,Totaro}.
	We expect that our relation is present in the THF cohomology of three points in $\C \times \R$.}
\end{itemize}

There are three particularly important regions of this configuration space: when $\ul{w}$ is close to $0$, $r(\ul{z},0) \gg r(\ul{w},0)$; when $\ul{z}$ is close to $0$, $r(\ul{z},0) \ll  r(\ul{w},0)$; and when $\ul{z}$ is close to $\ul{w}$, $r(\ul{z}, \ul{w}) \ll r(\ul{w},0)$. When $\ul{w}$ is close to $0$ we can expand $\omega(\ul{z},\ul{w})$ as a series in $\ul{w}$; as the $\zbar$ and $t$ derivatives of $\omega(\ul{z};0)$ are trivial in cohomology, we are lead to the following expansion in our algebraic model:
\beqn
	\Omega^0_{z-w} \to \sum_{n \geq 0} \frac{(-w)^n}{n!} \pd^n_z\Omega^0_z = \sum_{n \geq 0} w^n \Omega^n_z = \Delta_-(z-w)
\eeqn
and, more generally,
\beqn
	\Omega^m_{z-w} \to \sum_{n \geq 0} \binom{m+n}{n} w^n \Omega^{m+n}_z = \tfrac{1}{m!} \pd^m_w \Delta_-(z-w) .
\eeqn
Similarly, when $\ul{z}$ is close to $0$ we expand as a series in $z$
\beqn
	\Omega^m_{z-w} \to (-1)^m\sum_{n \geq 0} \binom{m+n}{n} z^n \Omega^{m+n}_w = -\tfrac{1}{m!}\pd^m_w\Delta_+(z-w) .
\eeqn
Finally, when $\ul{z}$ is close to $\ul{w}$ we expand as a series in $(z-w)$
\beqn
	\Omega^n_{z} \to \sum_{n\geq0} (-1)^n \binom{n+m}{n} (z-w)^n \Omega^{m+n}_w .
\eeqn
\begin{lemma}
\label{lem:expansions}
The assignment
\beqn
	\Omega^n_{z-w} \mapsto \tfrac{1}{n!} \del^n_w \Delta_-(z-w)
\eeqn
defines a graded algebra map $\three \to \C\langle\!\langle z\rangle\!\rangle\langle\!\langle w\rangle\!\rangle$.
Similarly, the assignment
\beqn
	\Omega^n_{z-w} \mapsto -\tfrac{1}{n!} \del^n_w \Delta_+ (z-w) 
\eeqn
defines a graded algebra map $\three \to \C\langle\!\langle w\rangle\!\rangle\langle\!\langle z\rangle\!\rangle$ and the assignment
\beqn
	\Omega^n_{z} \mapsto \sum_{n\geq0} (-1)^n \binom{n+m}{n} (z-w)^n \Omega^{m+n}_w
\eeqn
defines a graded algebra map $\three \to \C\langle\!\langle w\rangle\!\rangle\langle\!\langle z-w\rangle\!\rangle$.
\end{lemma}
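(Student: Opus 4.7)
The plan is to verify, for each of the three assignments, that the defining relations of $\three$ are sent to zero, so that the assignment descends from the free graded polynomial algebra $\C[\![z,w]\!][\Omega^n_z,\Omega^m_w,\Omega^k_{z-w}]_{n,m,k\geq 0}$ to a well-defined map on $\three$. Since each assignment fixes all but one family of generators, the work reduces to checking the relations involving the family being re-expanded.

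For the first map I would begin by establishing two key identities,
\beqn
	(z-w)\Delta_-(z-w) = 0 \qquad \text{and} \qquad (z-w)\pd_w \Delta_-(z-w) = \Delta_-(z-w),
\eeqn
both verified by direct reindexing of $\Delta_-(z-w) = \sum_{n\geq 0} w^n \Omega^n_z$. Setting $f_n := \tfrac{1}{n!}\pd_w^n \Delta_-(z-w)$ and differentiating the second identity repeatedly yields $(z-w) f_{n+1} = f_n$, so an induction gives $(z-w)^k f_n = f_{n-k}$ for $k\leq n$ and $(z-w)^k f_n = 0$ for $k > n$, establishing the first family of relations. The vanishing of products $\Omega^k_{z-w}\Omega^\ell_{z-w}$ then follows automatically, because each $f_n$ is a $\C[\![w]\!]$-linear combination of generators $\Omega^*_z$ which square to zero in $\CK$. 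For the degree-two relation I would compute termwise: $\Delta_-(z-w)\Omega^0_z$ vanishes summand by summand, while $\Omega^0_w \Delta_-(z-w) = \Omega^0_w \Omega^0_z$ since $w^n \Omega^0_w = \Omega^{-n}_w = 0$ for $n\geq 1$; graded commutativity then forces $\Omega^0_w\Omega^0_z + \Omega^0_z\Omega^0_w = 0$. The relations obtained by applying $\pd_z$ or $\pd_w$ are preserved because the assignment commutes with these derivations on generators. The second map is then handled in exactly the same way with $\Delta_-$ replaced by $-\Delta_+$: the analogous identities $(z-w)\Delta_+(z-w) = 0$ and $(z-w)\pd_w\Delta_+(z-w) = \Delta_+(z-w)$ hold by a mirror reindexing, and the degree-two relation is verified using $z^n \Omega^0_z = 0$ for $n \geq 1$ in place of the $w$-side vanishing.

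The main obstacle will be the third map, whose image
\beqn
	h_m := \sum_{k\geq 0}(-1)^k \binom{m+k}{k}(z-w)^k \Omega^{m+k}_w
\eeqn
of $\Omega^m_z$ is an infinite series mixing $(z-w)$ and $\Omega^*_w$. The relation $z^j \Omega^m_z = \Omega^{m-j}_z$ (for $j\leq m$) must be checked by rewriting $z = w + (z-w)$, using $w\,\Omega^{m+k}_w = \Omega^{m+k-1}_w$ and merging the two resulting sums via the Pascal identity $\binom{m+k}{k} - \binom{m+k-1}{k-1} = \binom{m+k-1}{k}$ to recover $h_{m-1}$. Once this combinatorial telescoping is in hand, the remaining relations $\Omega^m_z \Omega^n_z \mapsto h_m h_n = 0$ and the degree-two relation in the new $(w, z-w)$ basis follow by structural arguments analogous to those used for the first map. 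I expect this series rearrangement for the third map to be the most delicate single step of the proof.
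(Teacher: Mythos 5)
Your proposal is correct and follows essentially the same route as the paper: the decisive step in both is checking that the three-term relation $\Omega^0_{z-w}\Omega^0_z+\Omega^0_w\Omega^0_{z-w}+\Omega^0_z\Omega^0_w$ is sent to zero, via exactly the product identities $\Delta_-(z-w)\,\Omega^0_z=0$, $\Omega^0_w\,\Delta_-(z-w)=\Omega^0_w\Omega^0_z$ and their analogues for $-\Delta_+(z-w)$ and for the expansion of $\Omega^0_z$ in $\C\langle\!\langle w\rangle\!\rangle\langle\!\langle z-w\rangle\!\rangle$. The only difference is that you also spell out the routine verifications (the action of $(z-w)^k$ on $\tfrac{1}{n!}\partial_w^n\Delta_\pm(z-w)$, nilpotency of the degree-one images, the Pascal-identity telescoping giving $z\cdot h_m=h_{m-1}$, and compatibility with $\partial_z,\partial_w$), which the paper treats as immediate; these checks are all accurate.
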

\begin{proof}
For the first claim it suffices to show that
\beqn
	\Delta_-(z-w) \Omega^0_z + \Omega^0_w \Delta_-(z-w) + \Omega^0_z \Omega^0_w = 0
\eeqn
in the space $\C\langle\!\langle z\rangle\!\rangle\langle\!\langle w\rangle\!\rangle$, but this follows by noting
\beqn
	\Delta_-(z-w) \Omega^0_z = 0 \qquad \Omega^0_w \Delta_-(z-w) = \Omega^0_w \Omega^0_z
\eeqn
The second and third claims similarly follow by noting
\beqn
	-\Delta_+(z-w)\Omega^0_z = \Omega^0_w \Omega^0_z \qquad -\Omega^0_w \Delta_+(z-w) = 0
\eeqn
and
\beqn
	\Omega^0_{z-w}\Delta_0(z-w) = \Omega^0_{z-w} \Omega^0_w \qquad \Delta_0(z-w)\Omega^0_w = 0
\eeqn

\end{proof}

Viewing $\C\langle\!\langle z\rangle\!\rangle\langle\!\langle w\rangle\!\rangle$ and $\C\langle\!\langle w\rangle\!\rangle\langle\!\langle z\rangle\!\rangle$ as subspaces of $\CK^{z,w}_{dist}$, we see that the images of $\Omega^0_{z-w}$ under the above maps are not the same: $\Delta_-(z-w) \neq - \Delta_+(z-w)$. Their difference is precisely the raviolo delta function $\Delta(z-w)$. This is entirely analogous to the ordinary formal distributions on the disk, cf. Section 1.1 of \cite{FBZ}.

\section{The definition of a raviolo vertex algebra}
\label{sec:definition}

Like a vertex algebra, we start with a vector space $\CV$, which we imagine being the space of local operators in a mixed holomorphic-topological quantum field theory.
We assume that the vector space is $\Z$-graded by cohomological degree---this is a key difference with the ordinary vertex algebra case.
While most vertex algebras studied in practice make sense without reference to cohomological degree, non-trivial examples of raviolo vertex algebras exist only when the cohomological degree is non-trivial.\footnote{This should be compared to algebras over the little $n$-disks operad valued in ordinary vector spaces.
For any $n>1$ this simply recovers the notion of a commutative algebra.}
We denote by $\CV^r$ the space of vectors which are of homogenous cohomological degree $r$.

%Finally, we assume that there is a non-cohomological $\Z$-grading by {\em spin}; for each $r \in \Z$ we have a decomposition $\CV^r_{\pm,j}$ 
%For simplicity, we assume that each of the graded components $\CV^{\pm}_{r,j}$ is finite dimensional.%
%
%\footnote{This assumption can be relaxed if, for example, there are additional gradings (e.g. weights of other symmetries) so that the graded components become finite dimensional after further refining.} %
%
%We will make no assumptions on the quantization of $R$-charges $r$ or spins $j$, or their relation to parity.%
%
%\footnote{Our conventions are that parity alone determines signs when performing algebraic manipulations; although it is convenient to assume the mod $2$ reduction of the $R$-symmetry/cohomological grading agrees with parity, we will not do so. Indeed, in many of our examples it is convenient to have non-integral $R$-charges.} %
%
%We will assume that (the real part of) $j$ is bounded from below. 

\subsection{Raviolo fields}
\label{sec:ravfields}

We start with the raviolo analog of a \emph{field} on $\CV$, cf. Section 1.2.1 of \cite{FBZ}. This object will allow us to organize the action of a given local operator on $\CV$ in a way that makes manifest the geometry of the raviolo.

\begin{dfn}
\label{dfn:field}
A \defterm{raviolo field} on $\CV$ is an element
\beqn
	A(z) = \sum_{m < 0} z^{-m-1} A_m + \sum_{m \geq 0} \Omega^{m}_z A_m \in \End(\CV)\otimes \CK_{dist} .
\eeqn
such that for any $v \in \CV$ there exists $N$ sufficiently large with
\beqn
	A_m v = 0, \quad \text{for all} \; m \geq N 
\eeqn
We call the endomorphisms $A_m$ \defterm{modes} of $A(z)$ and the above expression the \defterm{mode expansion} of $A(z)$. We say that $A(z)$ is \defterm{homogeneous} (of degree $|A|$) if $A_m$ is an endomorphism of cohomological degree $|A|$ for $m < 0$ and of cohomological degree $|A|-1$ for $m \geq 0$.
\end{dfn}

A few remarks are in order. First, we often simply call this a \emph{field} when it is the understood that we are not speaking of a field in the sense of vertex algebras. We denote by $\CF_{rav}(\CV) \subset \End(\CV) \otimes \CK_{dist}$ the vector space of (raviolo) fields on $\CV$. We also note that the main property of a field is equivalent to saying $A(z) v$ is a formal raviolo Laurent series valued in $\CV$, that is $A(z) v \in \CV \langle\!\langle z \rangle \! \rangle = \CV \otimes \CK$ for every $v\in \CV$. Equivalently, there exists $N \geq 0$ sufficiently large such that $z^N A(z) v \in \CV[\![z]\!]$. Finally, the shift in degree for the $A_m$ with $m \geq 0$ is due to the fact that $\Omega_z^m$ carries cohomological degree $+1$ in $\CK_{dist}$. Unless otherwise specified, we will only consider homogeneous fields.

%We organize the linear operators $O_n, \Pi_n$ into a field $Y(O,z)$ on $\CV$:
%\be
%	Y(O,z) = \sum_{n \geq 0} z^n O_n + \Omega^z_n \Pi_n \in \End(\CV)[\CK]
%\ee

%We can use the 2-sphere residue pairing to express the linear operators $a_{(m)}$ via integrals of $a(z)$:
%\be
%	a_{(m)} = \oint_0 \diff z\, \Omega^z_{m} a(z)
%\ee
%for $m < 0$ and
%\be
%a_{(m)} = \oint_0 \diff z\, z^m a(z)
%\ee
%for $m \geq 0$.
%Note that for any $b \in \CV$ it follows that the coefficient of $\Omega^z_n$ in $Y(a,z)b$ necessarily vanishes for $n$ sufficiently large. 

The action of $\pd_z$ on $\CK_{dist}$ induces an action on $\End(\CV) \otimes \CK_{dist}$. It is easy to verify that $\pd_z$ preserves the space of fields $\CF_{rav}(\CV)$.
\begin{lemma}
	If $A(z)$ is a field on $\CV$ then
	\begin{equation*}
		\pd_z A(z) = \sum_{m < 0} z^{-m-1} \big(-mA_{m-1}\big) + \sum_{m \geq 0} \Omega^{m}_z \big(-m A_{m-1}\big)
	\end{equation*}
	is also a field on $\CV$.
\end{lemma}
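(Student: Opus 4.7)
The plan is to directly compute $\pd_z A(z)$ by applying the derivation rules for $\pd_z$ on $\CK_{dist}$ term-by-term, then reindex to read off the modes and verify the field axiom.

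First I would split $A(z)$ into its polar and regular parts and compute
$$\pd_z A(z) = \sum_{m < 0} \pd_z(z^{-m-1}) A_m + \sum_{m \geq 0} \pd_z(\Omega^m_z) A_m.$$
Using the stated formulas $\pd_z z^n = n z^{n-1}$ and $\pd_z \Omega^m_z = -(m+1) \Omega^{m+1}_z$, the first sum becomes $\sum_{m<0} (-m-1) z^{-m-2} A_m$; the $m=-1$ term drops out, and reindexing via $m' = m+1$ (so $m' \leq -1$) yields $\sum_{m'<0} (-m') z^{-m'-1} A_{m'-1}$. Similarly the second sum, after reindexing $m' = m+1$, becomes $\sum_{m' \geq 1} (-m') \Omega^{m'}_z A_{m'-1}$, which equals $\sum_{m' \geq 0} (-m') \Omega^{m'}_z A_{m'-1}$ since the $m'=0$ summand is zero. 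Combining these recovers exactly the claimed expansion with modes $(\pd_z A)_m = -m A_{m-1}$.

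It then remains to check that this expression defines a field. Given $v \in \CV$, the hypothesis on $A(z)$ provides $N$ with $A_m v = 0$ for all $m \geq N$, whence $(\pd_z A)_m v = -m A_{m-1} v = 0$ for all $m \geq N+1$, so the truncation condition holds. Homogeneity is automatic from the same mode formula: for $m < 0$ the mode is proportional to $A_{m-1}$ with $m-1 < 0$, hence of degree $|A|$; for $m \geq 1$ it is proportional to $A_{m-1}$ with $m-1 \geq 0$, hence of degree $|A|-1$; and for $m=0$ the mode vanishes. Thus $\pd_z A(z)$ is a homogeneous field of the same degree as $A(z)$.

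The argument is essentially bookkeeping; there is no genuine obstacle once one is careful that the $m = -1$ term of the original polar part is annihilated by $\pd_z$ and that the $\Omega$-sum naturally starts at $m' = 1$, both of which are absorbed by the factor of $-m$ in the final formula.
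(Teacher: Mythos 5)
Your proof is correct and is exactly the routine computation the paper has in mind (the paper states this lemma without proof, remarking only that it is easy to verify that $\pd_z$ preserves the space of fields). Your term-by-term differentiation, reindexing, and check of the truncation and degree conditions all match the intended argument.
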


Given two fields $A(z)$ and $B(w)$ on $\CV$, their composition $A(z)B(w)$ is itself not a field. Moreover, it generally does not have a well-defined $z \to w$ limit. If we are to realize the action of local operators on $\CV$ by way of raviolo fields, we need to replace composition of fields with a suitable regularized product that allows us to take this coincidence limit.

\begin{dfn}
	\label{dfn:NOP}
	Let $A(z)$ and $B(w)$ be fields on $\CV$. We define their \defterm{normal-ordered product} $\norm{A(z)B(w)}$ to be the following element of $\End(\CV)\otimes\CK_{dist}^{z,w}$:
	\beqn
	\begin{aligned}
		\norm{A(z) B(w)} & = A(z)_+ B(w) + (-1)^{|A||B|} B(w) A(z)_-\\
		& =\sum_{n < 0} w^{-n-1}\bigg(\sum_{m < 0} z^{-m-1} A_{m} B_{n} + (-1)^{(|A|+1)|B|} \sum_{m \geq 0}\Omega^m_z B_{n} A_{m}\bigg)\\
		& \hspace{-1cm} + \sum_{n \geq 0} \Omega^{n}_w \bigg((-1)^{|A|}\sum_{m < 0} z^{-m-1} A_{m} B_{n} + (-1)^{(|A|+1)(|B|+1)} \sum_{m \geq 0}\Omega^m_z B_{n} A_{m}\bigg)\\
	\end{aligned}
	\eeqn
\end{dfn}

The following lemma is immediate from the definition.

\begin{lemma}
	\label{lem:NOP}
	Let $A(z)$ and $B(w)$ be fields on $\CV$. The specialization of their normal-ordered product $\norm{A(z) B(w)}$ at $w = z$ is a well-defined field on $\CV$. Moreover,
	\beqn
		\norm{A(z) B(z)} = \Res_w \bigg(\Delta_-(w-z) A(w) B(z) - (-1)^{|A||B|} \Delta_+(w-z) B(z) A(w)\bigg)
	\eeqn
\end{lemma}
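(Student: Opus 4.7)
The proof splits cleanly into two parts: verifying that $\norm{A(z)B(z)}$ is a well-defined raviolo field, and then computing the two residues.

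For well-definedness, I would fix $v \in \CV$ and exploit the two finiteness hypotheses: since $B(z)$ is a field, $B(z) v = \sum_{k \geq 0} z^k B_{-k-1} v + \sum_{k=0}^K \Omega^k_z B_k v$ for some $K$ depending on $v$, and $A(z)_- v = \sum_{m=0}^{N-1} \Omega^m_z A_m v$ is a finite sum for some $N$. The key algebraic inputs are the $\CK$-relations $z^n \Omega^k_z = \Omega^{k-n}_z$ for $n \leq k$ (and zero otherwise) together with $\Omega^n_z \Omega^m_z = 0$. For $A(z)_+ B(z) v = \sum_{n \geq 0} z^n A_{-n-1} B(z) v$, collecting the coefficient of $z^p$ gives the finite sum $\sum_{n=0}^p A_{-n-1} B_{-(p-n)-1} v$, while the coefficient of $\Omega^\ell_z$ truncates to $\sum_{n=0}^{K-\ell} A_{-n-1} B_{n+\ell} v$, vanishing for $\ell > K$. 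For $B(z) A(z)_- v = \sum_{m=0}^{N-1} B(z) \Omega^m_z A_m v$ the outer sum is manifestly finite, and the factor $B(z)\Omega^m_z = \sum_{k=0}^m \Omega^{m-k}_z B_{-k-1}$ is a finite sum by the same relations. Consequently $\norm{A(z)B(z)} \in \CF_{rav}(\CV)$, and the modes are easily seen to inherit the required annihilation property with bound $\max(K, N-1)$.

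For the residue formula, I would work with the explicit expansions $\Delta_-(w-z) = \sum_{n \geq 0} z^n \Omega^n_w$ and $\Delta_+(w-z) = -\sum_{n \geq 0} w^n \Omega^n_z$, interpreting $\Res_w$ as extraction of the coefficient of $\Omega^0_w$. Direct computation using $\Omega \cdot \Omega = 0$ yields
\[
\Omega^n_w A(w) \;=\; \sum_{m=-n-1}^{-1} \Omega^{n+m+1}_w A_m ,
\]
whose $\Omega^0_w$-coefficient (at $n+m+1=0$) is $A_{-n-1}$. Summing over $n$ gives $\Res_w \Delta_-(w-z) A(w) = A(z)_+$, hence $\Res_w \Delta_-(w-z) A(w) B(z) = A(z)_+ B(z)$. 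The parallel calculation $w^n A(w) = \sum_{m < 0} w^{n-m-1} A_m + \sum_{m \geq n} \Omega^{m-n}_w A_m$ shows the $\Omega^0_w$-coefficient is $A_n$, so $\Res_w \Delta_+(w-z) B(z) A(w) = -\sum_{n \geq 0} \Omega^n_z B(z) A_n$. I would then note that $\Omega^n_z B(z) = B(z) \Omega^n_z$ (both collapse to $\sum_{k=0}^n \Omega^{n-k}_z B_{-k-1}$ since $\Omega^n_z B(z)_- = 0$), giving $-B(z) A(z)_-$. Combining:
\[
\Res_w \bigl(\Delta_-(w-z) A(w) B(z) - (-1)^{|A||B|} \Delta_+(w-z) B(z) A(w)\bigr) = A(z)_+ B(z) + (-1)^{|A||B|} B(z) A(z)_-,
\]
which is precisely $\norm{A(z)B(z)}$ after specializing $w=z$ in the definition.

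The main obstacle is not conceptual but rather careful bookkeeping of mode indices, degree shifts, and the somewhat unusual $\CK$-relations $z^n \Omega^m_z = \Omega^{m-n}_z$ and $\Omega^n_z \Omega^m_z = 0$. The central insight is that these relations cause a priori infinite sums (over the non-trivial modes of $A$ or $B$) to telescope to finite ones once evaluated on any vector $v \in \CV$; this is what simultaneously powers both the well-definedness argument and the residue extraction.
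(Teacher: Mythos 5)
Your proof is correct and follows essentially the same route as the paper: the well-definedness argument is the same mode-by-mode finiteness check (the paper writes out the specialized mode expansion of $\norm{AB}(z)$ and notes that $B_{m-n-1}v$, $A_{m-n-1}v$ vanish for $m$ large since $m-n-1\geq m$ when $n<0$), and your residue computation simply carries out explicitly what the paper dismisses with ``follows from computing residues.'' The only caveat is cosmetic: identities like $\Omega^n_z B(z)=B(z)\Omega^n_z$ hold in the convention where $\CK$-coefficients are moved past modes without Koszul signs, which is the same level of sign bookkeeping the paper itself uses here, so nothing substantive is missing.
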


For brevity, we often denote the specialization $\norm{A(z) B(z)}$ by $\norm{AB}(z)$. We note that the normal-ordered product $\norm{AB}(z)$ is homogeneous of degree $|A|+|B|$.

\begin{proof}
	The desired specialization is given by
	\beqn
	\begin{aligned}
		\norm{AB}(z) & = \sum_{m < 0} z^{-m-1}\bigg(\sum_{n = m}^{-1} A_{n} B_{m-n-1}\bigg)\\
		& \hspace{-1cm} + \sum_{m \geq 0} \Omega^{m}_z \bigg((-1)^{|A|}\sum_{n < 0} A_{n} B_{m-n-1} + (-1)^{|A||B|} B_{n} A_{m-n-1}\bigg)\\
	\end{aligned}
	\eeqn
	To verify that this defines a field on $\CV$, we choose $v \in \CV$; as $A(z), B(w)$ are fields on $\CV$, there exists an integer $N \geq 0$ such that $A_{n} v = 0$ and $B_{n} v = 0$ for all $n \geq N$. As $m-n-1 \geq m$ for $n < 0$, it follows that $v$ is annihilated by the coefficient of $\Omega^m_z$ in $\norm{AB}(z)$ for all $m \geq N$. The second assertion follows from computing residues.
\end{proof}

It is straight forward to verify the following lemma characterizing the interplay between $\pd_z$ and the normal-ordered product.
\begin{lemma}
	\label{lem:derivNOP}
	Let $A(z), B(w)$ be fields on $\CV$, then $\pd_z$ is a derivation of the normal-ordered product
	\begin{equation*}
		\pd_z \norm{A(z)B(z)} = \norm{\pd_z A(z) B(z)} + \norm{A(z) \pd_z B(z)}
	\end{equation*}
\end{lemma}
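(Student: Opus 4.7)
The proof is essentially a bookkeeping exercise that uses three ingredients: (i) the formula for the specialization $\norm{AB}(z)$ established in Lemma \ref{lem:NOP}, (ii) the Leibniz rule for $\pd_z$ on products of $z$-dependent series, and (iii) the fact that $\pd_z$ commutes with the decomposition $A(z)=A(z)_++A(z)_-$.

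The plan is as follows. First I would rewrite the specialized normal-ordered product in the compact form
\[
\norm{AB}(z) = A(z)_+ B(z) + (-1)^{|A||B|} B(z) A(z)_-,
\]
which follows directly from Definition \ref{dfn:NOP} after setting $w=z$ and collecting terms (the well-definedness being guaranteed by Lemma \ref{lem:NOP}). Then I would apply $\pd_z$ termwise, which amounts to the ordinary Leibniz rule since both factors are composable endomorphism-valued series in a single variable:
\[
\pd_z \norm{AB}(z) = \pd_z\!\bigl(A(z)_+\bigr) B(z) + A(z)_+\, \pd_z B(z) + (-1)^{|A||B|}\bigl(\pd_z B(z)\, A(z)_- + B(z)\,\pd_z\!\bigl(A(z)_-\bigr)\bigr).
\]

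Next, I would verify the key compatibility
\[
\pd_z\!\bigl(A(z)_+\bigr) = (\pd_z A(z))_+, \qquad \pd_z\!\bigl(A(z)_-\bigr) = (\pd_z A(z))_-,
\]
which is immediate from the formulas $\pd_z z^{-m-1} = -(m+1)z^{-m-2}$ for $m<-1$ (with $\pd_z 1 = 0$ at $m=-1$) and $\pd_z \Omega_z^m = -(m+1)\Omega_z^{m+1}$: the former preserves the regular part and the latter preserves the singular part. I also note that $\pd_z$ is a degree-zero operation on $\CK$, so $|\pd_z A| = |A|$ and $|\pd_z B| = |B|$, hence the sign $(-1)^{|A||B|}$ coincides with both $(-1)^{|\pd_z A||B|}$ and $(-1)^{|A||\pd_z B|}$.

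Finally, I would regroup the four terms: the first and fourth reproduce exactly $\norm{(\pd_z A)B}(z)$ via the compact formula above applied to $\pd_z A$ in place of $A$, while the second and third reproduce $\norm{A(\pd_z B)}(z)$ by the same reasoning with $B$ replaced by $\pd_z B$. This yields the claimed identity. There is no serious obstacle here; the only thing to be careful about is that $(-)_+$ and $(-)_-$ commute with $\pd_z$, so that the Leibniz computation after specialization can be repackaged back into the normal-ordered form. The signs behave uniformly because $\pd_z$ preserves cohomological degree.
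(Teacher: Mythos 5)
Your proof is correct, and it fills in exactly the verification the paper leaves implicit (the paper only remarks that the lemma is ``straightforward to verify'' and gives no argument). Writing $\norm{AB}(z) = A(z)_+B(z) + (-1)^{|A||B|}B(z)A(z)_-$, applying the Leibniz rule (valid here since $\pd_z$ is a degree-zero derivation of $\CK$ and all mode sums are finite on each vector by Lemma \ref{lem:NOP}), and checking $\pd_z(A(z)_\pm) = (\pd_z A(z))_\pm$ together with $|\pd_z A| = |A|$ is precisely the intended computation, so nothing is missing.
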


\subsection{Locality}
\label{sec:locality}

The most important preliminary definition is that of \textit{mutual locality} of two fields $A(z), B(w)$. For this we closely follow the analogous notion in the setting of vertex algebras, cf. Section 1.2 \cite{FBZ}, for example.
In Section \ref{sec:model} we introduced the bivariate ravioli algebras $\C\langle\!\langle z\rangle\!\rangle\langle\!\langle w\rangle\!\rangle$ and $\C\langle\!\langle w\rangle\!\rangle\langle\!\langle z\rangle\!\rangle$, which should be viewed as the ravioli analogs of the bivariate Laurent series algebras $\C(\!(z)\!)(\!(w)\!)$ and $\C(\!(w)\!)(\!(z)\!)$ respectively. Similarly, we introduced the algebra $\three$ in Section \ref{sec:expansions} serving as the ravioli analog of $\C[\![z,w]\!][z^{-1}, w^{-1}, (z-w)^{-1}]$; in Lemma \ref{lem:expansions}, we showed that there were maps of graded algebras
\beqn
	\C\langle\!\langle z\rangle\!\rangle\langle\!\langle w\rangle\!\rangle \leftarrow \three \rightarrow \C\langle\!\langle w\rangle\!\rangle\langle\!\langle z\rangle\!\rangle
\eeqn
encoding the expansion of an element of $\three$ in when either $z$ is close to $0$ (left) or $w$ is close to $0$ (right), cf. Eq. 1.1.9 of \cite{FBZ}.

Choose $v \in \CV$ and $\varphi \in \CV^*$ (the linear dual of $\CV$). Given two raviolo fields $A(z)$ and $B(w)$ it follows that the matrix element $\langle \varphi, A(z) B(w)v\rangle$ belongs to $\C\langle\!\langle z\rangle\!\rangle\langle\!\langle w\rangle\!\rangle$. Similarly, $(-1)^{|A||B|}\langle \varphi, B(w) A(z)v\rangle$ belongs to $\C\langle\!\langle z\rangle\!\rangle\langle\!\langle w\rangle\!\rangle$. Requiring these two matrix elements are equal in $\CK^{z,w}_{dist}$ is too strong of a constraint to impose; as in vertex algebras, we instead require that they are the images of the same element of $\three$.

\begin{dfn}\label{dfn:mutual}
	We say two fields $A(z), B(w)$ are \defterm{mutually local} if for every $v \in \CV$ and $\varphi \in \CV^*$ (the linear dual of $\CV$) the matrix elements
	\begin{equation*}
		\langle \varphi, A(z) B(w) v\rangle \qquad \text{and} \qquad (-1)^{|A||B|}\langle \varphi, B(w) A(z) v\rangle
	\end{equation*}
	are expansions of one and the same element
	\begin{equation*}
		f_{v, \varphi} \in \CK_{z,w,z-w}
	\end{equation*}
	in $\C\langle\!\langle z \rangle\!\rangle\langle\!\langle w \rangle\!\rangle$ and $\C\langle\!\langle w \rangle\!\rangle\langle\!\langle z \rangle\!\rangle$, respectively, and the order of the pole of $f_{v,\varphi}$ in $z-w$ is uniformly bounded for all $v, \varphi$.
\end{dfn}	
	
As in the theory of vertex algebras, there are several equivalent formulations of mutual locality in terms of commutators%
\footnote{We use the convention that $[x,y] = xy - (-1)^{|x||y|} yx$ denotes the graded commutator.} %
and normal-ordered products, cf. Theorem 2.3 of \cite{Kac} or Propositions 1.2.5 and 3.3.1 of \cite{FBZ}.

\begin{prop}
	\label{prop:locality}
	Let $A(z)$ and $B(w)$ be fields on $\CV$, then the following are equivalent.
	
	\begin{itemize}
		\item[1)] $A(z)$ and $B(w)$ are mutually local.
		\item[2)]There exists $N \geq 0$ such that
		\begin{equation*}
			(z-w)^{N+1} [A(z), B(w)] = 0
		\end{equation*}
		and
		\begin{equation*}
			\displaystyle \bigg(\Omega^m_z - \sum^N_{n=0} (w-z)^n \binom{m+n}{n} \Omega^{m+n}_w\bigg) [A(z), B(w)] = 0
		\end{equation*}
		as elements of $\End(\CV) \otimes \CK_{dist}^{z,w}$.
		\item[3)] There is an identity in $\End(\CV) \otimes \CK^{z,w}_{dist}$
		\begin{equation*}
			[A(z), B(w)] = \sum^N_{n=0}\tfrac{1}{n!} \pd^n_w \Delta(z-w) C^n(w)
		\end{equation*}
		for fields $C^n(w)$.
		\item[4)] The modes $A_{m}$ and $B_{l}$ of $A(z)$ and $B(w)$ have the following commutators
		\begin{equation*}
			[A_{m}, B_{l}] = \begin{cases}
				\displaystyle (-1)^{|A|+1}\sum^N_{n= 0} \binom{m}{n} C^n_{m+l-n} & l,m \geq 0 \\
				\displaystyle \sum^N_{n = \max(0,m+l+1)} \binom{m}{n} C^n_{m+l-n} & m \geq 0, l < 0 \\ 
				\displaystyle (-1)^{|A|+1} \sum^N_{n = \max(0,m+l+1)}\binom{m}{n} C^n_{m+l-n} & m<0, l \geq 0  \\
				\displaystyle 0 & l,m < 0\\
			\end{cases}
		\end{equation*}
		where $C^n_{m}$ are the modes of fields  $C^n(w)$.
		\item[5)] There are identities
		\begin{equation*}
			A(z) B(w) = \norm{A(z) B(w)} + \sum_{n=0}^N \tfrac{1}{n!} \pd^n_w\Delta_-(z-w) C^n(w)
		\end{equation*}
		and
		\begin{equation*}
			(-1)^{|A||B|} B(w) A(z) = \norm{A(z) B(w)} - \sum_{n=0}^N \tfrac{1}{n!} \pd^n_w\Delta_+(z-w) C^n(w)
		\end{equation*}
		in $\End(\CV) \otimes \CK^{z,w}_{dist}$ for fields $C^n(w)$.
	\end{itemize}
\end{prop}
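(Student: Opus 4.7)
The plan is to establish the chain $1) \Rightarrow 2) \Rightarrow 3) \Rightarrow 5) \Rightarrow 1)$ together with the equivalence $3) \Leftrightarrow 4)$ obtained by direct mode extraction. The engine of the argument is Lemma \ref{lem:deltafunction}, which classifies bivariate ravioli distributions satisfying the two killing conditions appearing in 2). The key geometric input is Lemma \ref{lem:expansions}: the generator $\Omega^n_{z-w} \in \three$ expands to $\tfrac{1}{n!}\pd^n_w \Delta_-(z-w)$ in $\C\langle\!\langle z \rangle\!\rangle\langle\!\langle w \rangle\!\rangle$ and to $-\tfrac{1}{n!}\pd^n_w \Delta_+(z-w)$ in $\C\langle\!\langle w \rangle\!\rangle\langle\!\langle z \rangle\!\rangle$, so the difference of the two expansions is exactly $\tfrac{1}{n!}\pd^n_w \Delta(z-w)$.

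For $1) \Rightarrow 2)$, I fix $v, \varphi$ and write the common element $f_{v,\varphi} \in \three$ as $\sum_{n=0}^{N} \Omega^n_{z-w} h_n(z,w)$ plus a part regular in $z-w$, using the uniform pole bound $N$. Taking the difference of the two expansions yields $\langle \varphi, [A(z),B(w)]v\rangle = \sum_{n=0}^N \tfrac{1}{n!}\pd^n_w \Delta(z-w)\, h_n(z,w)$. Expressions of this form are killed by $(z-w)^{N+1}$ and by the operator $\Omega^m_z - \sum_{n=0}^N (w-z)^n \binom{m+n}{n} \Omega^{m+n}_w$, as shown in the converse direction of Lemma \ref{lem:deltafunction}; since this holds for every $v, \varphi$, condition 2) follows. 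For $2) \Rightarrow 3)$, I apply Lemma \ref{lem:deltafunction} matrix-elementwise to $f(z,w) = \langle \varphi, [A(z),B(w)]v\rangle$; the two hypotheses match precisely the two equations of 2). The resulting coefficients $g^{(i)}(w)$ are uniform in $v,\varphi$, so they assemble into operator-valued $C^n(w) = g^{(n)}(w)$, and one checks they define fields because $[A(z),B(w)]v$ involves only finitely many mode products acting on any fixed $v$.

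For $3) \Leftrightarrow 4)$, I extract coefficients of the monomials $z^{-m-1}w^{-l-1}$, $z^{-m-1}\Omega^l_w$, $\Omega^m_z w^{-l-1}$, and $\Omega^m_z \Omega^l_w$ from both sides of the identity in 3). The mode expansion of $\tfrac{1}{n!}\pd^n_w \Delta(z-w)$ splits as $\tfrac{1}{n!}\pd^n_w \Delta_- + \tfrac{1}{n!}\pd^n_w \Delta_+$, with the $\Delta_-$ term contributing to the $\Omega^m_z$-coefficients (requiring $m \geq 0$) and the $\Delta_+$ term to the $z^{-m-1}$-coefficients (requiring $m < 0$). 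The binomial factors $\binom{m}{n}$ in 4) are precisely those produced by $\pd^n_w$, and the case splits match the four regions $(m,l)$ appearing there. For $3) \Rightarrow 5)$, I split the commutator by the $z_\pm$ decomposition: on one hand $A(z)B(w) - \norm{A(z)B(w)} = [A(z)_-,B(w)]$, which is the $z_-$-part of $[A(z),B(w)]$ and equals $\sum_{n=0}^N \tfrac{1}{n!}\pd^n_w \Delta_-(z-w) C^n(w)$; on the other hand $(-1)^{|A||B|} B(w)A(z) - \norm{A(z)B(w)} = -[A(z)_+,B(w)]$, which is minus the $z_+$-part and equals $-\sum_{n=0}^N \tfrac{1}{n!}\pd^n_w \Delta_+(z-w) C^n(w)$.

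Finally, $5) \Rightarrow 1)$ is essentially reading the statement backwards: both sides of the two identities in 5) are matrix-elementwise the expansions (in $\C\langle\!\langle z \rangle\!\rangle\langle\!\langle w \rangle\!\rangle$ and $\C\langle\!\langle w \rangle\!\rangle\langle\!\langle z \rangle\!\rangle$, respectively) of the common element $f_{v,\varphi} = \langle \varphi, \norm{A(z)B(w)} v\rangle + \sum_{n=0}^N \tfrac{1}{n!}\Omega^n_{z-w}\, \langle \varphi, C^n(w) v\rangle \in \three$, via Lemma \ref{lem:expansions}. The pole order in $z-w$ is uniformly bounded by $N$. The main obstacles will be careful bookkeeping of the graded-commutator signs $(-1)^{|A||B|}$ (and the degree-shift $|A|$ that appears only on singular modes of $A$, reflecting the cohomological degree of $\Omega^m_z$) and verifying that the kernel of the two expansion maps out of $\three$ intersects trivially in the finite-pole-order subspace, which is essentially the uniqueness clause of Lemma \ref{lem:deltafunction}; once these are under control, the rest is a routine translation between distributional, modewise, and normal-ordered presentations.
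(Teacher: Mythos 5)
Your route is essentially the paper's: $1)\Rightarrow 2)\Rightarrow 3)$ via Lemma \ref{lem:deltafunction}, the equivalences $3)\Leftrightarrow 4)\Leftrightarrow 5)$ via residues and the splittings $A(z)B(w)-\norm{A(z)B(w)}=[A(z)_-,B(w)]$ and $(-1)^{|A||B|}B(w)A(z)-\norm{A(z)B(w)}=-[A(z)_+,B(w)]$, and $5)\Rightarrow 1)$ via Lemma \ref{lem:expansions} together with the observation that $\langle\varphi,\norm{A(z)B(w)}v\rangle\in\CK^{z,w}_{poly}$. The small slips are cosmetic: in $1)\Rightarrow 2)$ the singular coefficients should be taken to depend on $w$ alone (reduce $h_n(z,w)$ using $\Delta(z-w)A(z)=\Delta(z-w)A(w)$ and its derivatives, as in the paper's decomposition $f_{v,\varphi}=g_{v,\varphi}(z,w)+\sum_n\Omega^n_{z-w}f^n_{v,\varphi}(w)$), and in $5)\Rightarrow 1)$ the common element carries $\Omega^n_{z-w}\langle\varphi,C^n(w)v\rangle$ with no extra $\tfrac{1}{n!}$, since $\Omega^n_{z-w}$ already expands to $\pm\tfrac{1}{n!}\pd^n_w\Delta_\mp(z-w)$.

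There is, however, one genuine gap, in $2)\Rightarrow 3)$: you dismiss the verification that the $C^n(w)$ are \emph{fields} with the remark that ``$[A(z),B(w)]v$ involves only finitely many mode products acting on any fixed $v$.'' That is not true and does not suffice: $A(z)B(w)v$ involves $A_mB_nv$ for \emph{all} Taylor modes $n<0$ of $B$, and the bound on the singular $z$-modes of $A(z)(B_nv)$ depends on $n$; Lemma \ref{lem:deltafunction} only delivers $C^n(w)$ as elements of $\End(\CV)\otimes\CK_{dist}$, with no a priori control on how many $\Omega^m_w$-terms survive when applied to a fixed $v$. This is exactly the point the paper flags and then proves: choose $M,L$ with $z^Mw^L[A(z)_-,B(w)_-]v=0$, substitute the identity $[A(z)_-,B(w)_-]=\sum_n\tfrac{1}{n!}\pd^n_w\Delta_-(z-w)C^n(w)_-$, expand $z=w+(z-w)$ using $(z-w)\pd^{n+1}_w\Delta_-(z-w)=\pd^n_w\Delta_-(z-w)$ and $(z-w)\Delta_-(z-w)=0$, invoke linear independence of the $\pd^n_w\Delta_-(z-w)$, and run a downward induction from $n=N$ to get $w^{L+M+N-n}C^n(w)_-v=0$. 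Without an argument of this kind your $2)\Rightarrow 3)$ only produces distribution-valued coefficients, which is strictly weaker than condition $3)$ as stated; you should supply this step (or an equivalent residue-based bound) to close the proof.
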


\begin{proof}
	We start by showing the equivalence of $3)$, $4)$, and $5)$. The equivalence of $3)$ and $4)$ follows from taking ravioli residues of the stated commutator. To see $3)$ implies $5)$, we note that the normal-ordered product satisfies
	\beqn
		A(z) B(w) = \norm{A(z) B(w)} + [A(z)_-, B(w)]
	\eeqn
	The presented formula for the commutator $[A(z), B(w)]$ implies
	\beqn
		[A(z)_-, B(w)] = \sum_{n\geq0} \tfrac{1}{n!} \pd^n_w \Delta_-(z-w) C^n(w)
	\eeqn
	yielding the first identity. The second identity is similarly obtained by considering the commutator $[A(z)_+, B(w)]$ and using
	\beqn
		(-1)^{|A||B|}B(w) A(z) = \norm{A(z) B(w)} - [A(z)_+, B(w)]
	\eeqn
	The opposite implication $5) \Rightarrow 3)$ follows by explicitly computing the commutator from the given expressions:
	\beqn
	\begin{aligned}[]
		[A(z), B(w)] & = \sum_{n\geq0}\tfrac{1}{n!} \big(\pd^n_w\Delta_+(z-w) + \pd^n_w\Delta_-(z-w)\big) C^n(w)\\
		& = \sum_{n\geq0}\tfrac{1}{n!} \pd^n_w\Delta(z-w) C^n(w)
	\end{aligned}
	\eeqn
	Thus $3)$, $4)$, and $5)$ are equivalent.
	
	To finish the proof, we show $1)$ implies $2)$, $2)$ implies $3)$, and $5) \equiv 3)$ implies $1)$. Suppose $A(z)$, $B(w)$ are mutually local fields and choose $v \in \CV$ and $\varphi \in \CV^*$. It follows that the matrix elements $\langle \varphi, A(z) B(w) v\rangle$ and $(-1)^{|A||B|} \langle \varphi, B(w) A(z) v \rangle$ are expansions of the same element $f_{v,\varphi}(z,w)$ of $\three$. We write $f_{v,\varphi}$ uniquely as
	\beqn
		f_{v,\varphi} = g_{v, \varphi}(z,w) + \sum_{n = 0}^N \Omega^{n}_{z-w} f^n_{v,\varphi}(w)
	\eeqn
	where $f^n_{v,\varphi}(w) \in \CK^w$  and $g_{v, \varphi}(z,w) \in \CK_{poly}^{z,w}$. The expansion of $f_{v,\varphi}$ in $\C\langle\!\langle z \rangle\!\rangle\langle\!\langle w \rangle\!\rangle$ takes the form
	\beqn
		g_{v, \varphi}(z,w) + \sum_{n = 0}^N \tfrac{1}{n!} \pd^n_w \Delta_{-}(z,w) f^n_{v,\varphi}(w)
	\eeqn
	whereas its expansion in $\C\langle\!\langle w \rangle\!\rangle\langle\!\langle z \rangle\!\rangle$ takes the form
	\beqn
		g_{v, \varphi}(z,w) - \sum_{n = 0}^N \tfrac{1}{n!}\pd^n_w \Delta_{+}(z,w) f^n_{v,\varphi}(w)
	\eeqn
	We conclude that the matrix elements of the commutator are given by
	\beqn
		\langle \varphi, [A(z), B(w)] v \rangle = \sum_{n = 0}^N \tfrac{1}{n!}\pd^n_w \Delta(z,w) f^n_{v,\varphi}(w)
	\eeqn
	In particular, Lemma \ref{lem:deltafunction} implies
	\beqn
	\label{eq:local1}
		(z-w)^{N+1}\langle \varphi, [A(z), B(w)] v \rangle = 0
	\eeqn
	and
	\beqn
	\label{eq:local2}
		\bigg(\Omega^m_z - \sum_{n=0}^N(w-z)^n \binom{m+n}{n} \Omega^{m+n}_w\bigg) \langle\varphi, [A(z), B(w)] v \rangle = 0
	\eeqn
	for every $m$. The requirement that the order of the pole in $(z-w)$ is uniformly bounded in $v, \varphi$ implies that if $N$ is sufficiently large then these equations hold independent of $v, \varphi$ and hence we conclude $A(z)$ and $B(w)$ satisfy condition $2)$.
	
	To show $2)$ implies $3)$, we note that Proposition \ref{prop:locality} and Lemma \ref{lem:deltafunction} imply that the commutator $[A(z),B(w)]$ has the expected form with $C^n(w)$ a general element of $\End(\CV)\otimes \CK_{dist}$, but not necessarily a field. We verify that the $C^n(w)$ are fields as follows. We start by considering the negative modes of the commutator:
	\beqn
		[A(z)_-, B(w)_-] = \sum_{n\geq0} \tfrac{1}{n!}\pd^{n}_{w} \Delta_-(z,w) C^n(w)_-
	\eeqn
	where $C^n(w) = 0$ for $n > N$. Now choose $v \in \CV$. Using the fact that $A$ and $B$ are fields, there exist $M,L \geq 0$ sufficiently large so that
	\beqn
		0 = z^M w^L [A(z)_-, B(w)_-]v = z^M w^L \bigg(\sum_{n\geq0} \tfrac{1}{n!}\pd^{n}_{w} \Delta_-(z,w) C^n(w)_-\bigg)v
	\eeqn
	Expanding $z = w + (z-w)$ and using $(z-w) \pd^{n+1}_w \Delta_-(z,w) = \pd^n_w \Delta_-(z,w)$ and $(z-w) \Delta_-(z,w) = 0$, this equation becomes
	\beqn
		\bigg(\sum_{m=0}^M\sum_{n \geq 0} \frac{1}{(m+n)!} \binom{M}{m} w^{L+M-m}\pd^{n}_{w} \Delta_-(z,w) C^{m+n}(w)_-\bigg)v = 0
	\eeqn
	By linear independence of the $\pd^n_w \Delta_-(z,w)$, we conclude
	\beqn
		\bigg(\sum_{m=0}^M \frac{1}{(m+n)!} \binom{M}{m} w^{L+M-m} C^{m+n}(w)_-\bigg)v = 0
	\eeqn
	for every $n \geq 0$. Consider the case $n = N$. Only the $m = 0$ term contributes because all higher $C^n$ vanish, therefore
	\beqn
		w^{L+M} C^N(w)_- v = 0
	\eeqn
	so that $C^N(w)$ is a field. For $n = N-1$, we find
	\beqn
		\bigg(\frac{1}{(N-1)!} w^{L+M} C^{N-1}(w)_- + \frac{M}{N!} w^{L+M-1} C^{N}(w)_-\bigg)v = 0
	\eeqn
	and by multiplying this equation by $w$ and using the above we see that $C^{N-1}(w)$ is a field:
	\beqn
		w^{L+M+1} C^{N-1}(w)_-v = 0
	\eeqn
	Continuing in this fashion, we conclude
	\beqn
		w^{L+M+N-n} C^{n}(w)_-v = 0
	\eeqn
	and hence all $C^n(w)$ are fields.
	
	Finally, to show that condition $5)$ implies $1)$ we choose $v \in \CV$ and $\varphi \in \CV^*$, leading to the following identities:
	\beqn
	\begin{aligned}
		\langle \varphi, A(z) B(w) v \rangle & = \langle \varphi, \norm{A(z) B(w)}v\rangle\\
		& + (-1)^{|\varphi|} \sum_{n=0}^N \tfrac{1}{n!} \pd^n_w \Delta_-(z-w) \langle \varphi, C^n(w) v\rangle
	\end{aligned}
	\eeqn
	and
	\beqn
	\begin{aligned}
		(-1)^{|A||B|}\langle \varphi, B(w) A(z) v \rangle & = \langle \varphi, \norm{A(z) B(w)}v\rangle\\
		&+ (-1)^{|\varphi|} \sum_{n=0}^N \tfrac{1}{n!} \pd^n_w \Delta_+(z-w) \langle \varphi, C^n(w) v\rangle
	\end{aligned}
	\eeqn
	Based on the form of the normal-ordered product $\norm{A(z)B(w)}$, and using the fact that $A(z)$ and $B(w)$ are fields, it follows that $\langle \varphi, \norm{A(z) B(w)}v\rangle$ belongs to $\CK^{z,w}_{poly}$. To finish the proof, we note that $\Delta^{(n)}_+(z-w)$ and $\Delta^{(n)}_-(z-w)$ are expansions of $n! \Omega^n_{z-w}$ in their respective domains, so that $\langle \varphi, A(z) B(w) v \rangle$ and $(-1)^{|A||B|}\langle \varphi, B(w) A(z) v \rangle$ are both expansions in their respective domains of
	\beqn
		\langle \varphi, \norm{A(z) B(w)}v\rangle + (-1)^{|\varphi|} \sum_{n=0}^N \Omega^n_{z-w} \langle \varphi, C^n(w) v\rangle
	\eeqn
	whence $A(z)$ and $B(w)$ are mutually local.
\end{proof}

Each of these reformulations of locality has an analog in the theory of vertex algebras and the differences are worth pointing out. Conditions $3)$ and $5)$ are direct translations of the analogous statements for vertex algebras, modulo replacing formal delta function $\delta(z-w)$ and its positive/negative parts $\delta_\pm(z-w)$ with their raviolo analogs $\Delta(z-w)$ and $\Delta_\pm(z-w)$, so we do not discuss them. The first relation in condition $2)$ is the same as in vertex algebras, the second is new: its vertex-algebraic analog would read
\beqn
	\bigg(\frac{1}{z^{m+1}} - \sum_{n = 0}^N \binom{m+n}{n} \frac{(w-z)^n}{w^{m+n+1}}\bigg)[A(z), B(w)] = 0
\eeqn
but this Laurent polynomial coefficient is proportional to $(z-w)^N$, so this relation is a consequence of the other. This formulation of locality for vertex-algebraic fields comes directly from properties of the formal delta function $\delta(z-w)$, cf. Lemma 1.1.4 \cite{FBZ} or Corollary 2.2 of \cite{Kac}. That the analogous formulation of locality for raviolo fields requires an extra constraint follows from the analogous properties of the formal raviolo delta function $\Delta(z-w)$, cf. Lemma \ref{lem:deltafunction}.

We finally turn to condition $4)$, where the differences are most dramatic. For reference, we note that the analogous commutators of vertex-algebraic fields $A(z)$, $B(w)$ take the form
\beqn
	[A_m, B_l] = \sum_{n \geq 0} \binom{m}{n} C^n_{m+l-n}
\eeqn
The additional signs $(-1)^{|A|+1}$ when $l \geq 0$ are due to the fact that the ravioli residue pairings $\Res_z \d z$ and $\Res_w \d w$ have cohomological degree $1$; these signs do not appear in ordinary vertex algebras as the usual residue pairing has degree $0$. The truncation of the sum when $m \geq 0$, $l < 0$ or $m < 0$, $l \geq 0$ and the fact that the commutator must vanish when $m,l < 0$ do not appear in vertex algebras and both stem from the relations $z \Omega^0_z = 0$ and $w \Omega_w = 0$. As we will see, the vanishing of these last set of commutators has pretty remarkable consequences.

%We assume the \emph{locality axiom} that the fields corresponding to any two states are mutually local.

We now collect some useful properties of mutually local fields for later use. We start by showing that taking derivatives doesn't impact mutual locality.
\begin{prop}
	\label{prop:derivlocality}
	Suppose $A(z)$ and $B(w)$ are mutually local fields on $\CV$, then $\pd^m_z A(z)$ and $\pd^l_w B(w)$ are mutually local for any $m,l \geq 0$.
\end{prop}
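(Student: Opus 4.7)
My plan is to use characterization $3)$ from Proposition \ref{prop:locality}, which states that mutual locality of $A(z)$ and $B(w)$ is equivalent to the existence of a finite collection of fields $C^n(w)$ and $N \geq 0$ such that
\begin{equation*}
[A(z), B(w)] = \sum^N_{n=0}\tfrac{1}{n!} \pd^n_w \Delta(z-w) C^n(w).
\end{equation*}
By an evident induction on $m+l$ it suffices to show separately that $\pd_z A(z)$ and $B(w)$ are mutually local, and that $A(z)$ and $\pd_w B(w)$ are mutually local; the lemma in Section \ref{sec:ravfields} already ensures that $\pd_z A(z)$ and $\pd_w B(w)$ are themselves fields on $\CV$.

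For the first statement, I would apply $\pd_z$ to the commutator identity. Since $\pd_z$ commutes with the action of $B(w)$ and similarly for $\pd_z A(z)$, one has $[\pd_z A(z), B(w)] = \pd_z [A(z), B(w)]$. Using property $(c)$ of the raviolo delta function, namely $\pd_z \Delta(z-w) = -\pd_w \Delta(z-w)$, this becomes
\begin{equation*}
[\pd_z A(z), B(w)] = -\sum^N_{n=0}\tfrac{1}{n!} \pd^{n+1}_w \Delta(z-w) C^n(w),
\end{equation*}
which is again of the form required by characterization $3)$, with the bound on the order of the pole increased by one. Hence $\pd_z A(z)$ and $B(w)$ are mutually local.

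For the second statement, I would compute $[A(z), \pd_w B(w)] = \pd_w [A(z), B(w)]$ and apply the Leibniz rule to each summand:
\begin{equation*}
[A(z), \pd_w B(w)] = \sum^N_{n=0}\tfrac{1}{n!} \bigl(\pd^{n+1}_w \Delta(z-w) C^n(w) + \pd^n_w \Delta(z-w)\, \pd_w C^n(w)\bigr),
\end{equation*}
which once again exhibits the commutator as a finite sum of derivatives of $\Delta(z-w)$ times fields (noting that Lemma \ref{lem:derivNOP} style reasoning, or the observation in Section \ref{sec:ravfields}, ensures $\pd_w C^n(w)$ is again a field). This fits the criterion of characterization $3)$, so mutual locality holds. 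The combined argument, iterated $m$ times on the first slot and $l$ times on the second, yields the claim. No real obstacle is anticipated; the whole argument is essentially a manipulation with the derivation property of $\pd_z, \pd_w$ on the delta function and its uniform-bound on the pole order, which is automatically preserved under differentiation.
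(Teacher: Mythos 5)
Your proof is correct and follows essentially the same route as the paper: invoke characterization $3)$ of Proposition \ref{prop:locality}, use $\pd_z \Delta(z-w) = -\pd_w \Delta(z-w)$ for the $z$-derivative, use the Leibniz rule and the fact that $\pd_w C^n(w)$ is again a field for the $w$-derivative, and conclude by induction. The only cosmetic difference is that after applying Leibniz you should regroup the terms so the coefficient of each $\tfrac{1}{n!}\pd^n_w\Delta(z-w)$ is a single field, which is immediate.
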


\begin{proof}
	We use the formulation of mutual locality given by assertion 3) of Proposition \ref{prop:locality}. In particular, mutual locality implies the existence of fields $C^n(w)$ such that
	\beqn
	[A(z), B(w)] = \sum_n \tfrac{1}{n!} \pd^n_w \Delta(z,w) C^n(w)
	\eeqn
	Using the fact that $\pd_z \Delta(z,w) = - \pd_w \Delta(z,w)$, we see that $\pd_z A(z)$ and $B(w)$ are mutually local because
	\beqn
	[\pd_z A(z), B(w)] = -\sum_n \tfrac{1}{n!} \pd^{n+1}_w \Delta(z,w) C^n(w)
	\eeqn
	Similarly, because that $\pd_w C^n(w)$ is also a field, we see that $A(z)$ and $\pd_w B(w)$ are mutually local. The assertion follows by induction on $m$ and $l$.
\end{proof}

We also have the following raviolo analog of Dong's Lemma, cf. Proposition 2.3.4 of \cite{FBZ} or Proposition 3.2.7 of \cite{Li}:
\begin{lemma}
	\label{lem:Dong}
	Let $A(z)$, $B(x)$, and $C(w)$ be fields on $\CV$ and suppose $A(z)$ is mutually local with $B(x)$ and $C(w)$, then $A(z)$ and $\norm{BC}(w)$ are mutually local. 
\end{lemma}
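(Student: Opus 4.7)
The plan is to verify condition $2)$ of Proposition \ref{prop:locality} for the pair $(A(z), \norm{BC}(w))$. By the mutual locality hypotheses, there exist integers $N_1, N_2 \geq 0$ such that condition $2)$ holds with bound $N_1$ for the pair $(A, B)$ (in the variables $z, x$) and with bound $N_2$ for the pair $(A, C)$ (in the variables $z, w$). Setting $N \define \max(N_1, N_2)$, both pairs continue to satisfy condition $2)$ with this common $N$, since enlarging the truncation index preserves both relations: the additional terms $(x-z)^n\binom{m+n}{n}\Omega^{m+n}_x[A(z), B(x)]$ for $n > N_1$ vanish using the first relation $(z-x)^{N_1+1}[A(z), B(x)] = 0$, and similarly for $(A, C)$.

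For the first vanishing $(z-w)^{N+1}[A(z), \norm{BC}(w)] = 0$, I would use the integral formula of Lemma \ref{lem:NOP} to express $\norm{BC}(w)$ and then distribute $[A(z), -]$ across the integrand by the graded Leibniz rule, producing two $[A(z), B(x)]$ terms and two $[A(z), C(w)]$ terms. The identity $(x-w)\Delta_\pm(x-w) = 0$---a simplification unavailable in the vertex-algebra setting---causes the binomial expansion $(z-w)^{N+1} = ((z-x)+(x-w))^{N+1}$ to collapse when multiplied by $\Delta_\pm(x-w)$, yielding $(z-w)^{N+1}\Delta_\pm(x-w) = (z-x)^{N+1}\Delta_\pm(x-w)$. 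The factor $(z-x)^{N+1}$ kills the $[A(z), B(x)]$ contributions by $(A, B)$-locality, while the original $(z-w)^{N+1}$ kills the $[A(z), C(w)]$ contributions by $(A, C)$-locality.

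For the second vanishing $\chi^m(z,w)[A(z), \norm{BC}(w)] = 0$ with $\chi^m(z,w) \define \Omega^m_z - \sum_{n=0}^{N}(w-z)^n\binom{m+n}{n}\Omega^{m+n}_w$, the $[A(z), C(w)]$ terms vanish directly via $(A, C)$-locality, while the $[A(z), B(x)]$ terms require converting $\chi^m(z,w)\Delta_\pm(x-w)$ into expressions built from $\chi^m(z, x)\Delta_\pm(x-w)$, which annihilates $[A(z), B(x)]$ by $(A, B)$-locality in the $(z, x)$ variables. This conversion uses $(w-z)^n\Delta_\pm(x-w) = (x-z)^n\Delta_\pm(x-w)$ together with the collision identities between $\Omega^{m+n}_w$ and $\Delta_\pm(x-w)$ recorded in the proof of Lemma \ref{lem:deltafunction}. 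The remainder involves bivariate products $\Omega^\ell_x\Omega^{m+n-\ell}_w$; these are handled by invoking the mode-level form $4)$ of $(A, B)$-locality to rewrite $\Omega^\ell_x[A(z), B(x)]$ in terms of $\Omega^\ell_z[A(z), B(x)]$, after which the various contributions combine to vanish under $\Res_x$. The main obstacle is this last step, which demands careful tracking of the Koszul signs arising from multiple degree-one formal objects and of the cancellations among the bivariate $\Omega$-cross-product remainder terms; once completed, condition $2)$ of Proposition \ref{prop:locality} yields the desired mutual locality of $A(z)$ and $\norm{BC}(w)$.
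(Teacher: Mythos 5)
Your opening moves match the paper's: express $\norm{BC}(w)$ via the residue formula of Lemma \ref{lem:NOP}, distribute $[A(z),-]$ by the graded Leibniz rule, and exploit $(x-w)\Delta_\pm(x-w)=0$. But from there you verify mutual locality through condition $2)$ of Proposition \ref{prop:locality}, and the decisive half of that verification is not actually carried out. The first annihilation identity, $(z-w)^{N+1}[A(z),\norm{BC}(w)]=0$, does go through as you describe. The second, involving $\Omega^m_z-\sum_{n\le N}(w-z)^n\binom{m+n}{n}\Omega^{m+n}_w$ applied to the $[A(z),B(x)]$ contributions under $\Res_x$, is precisely where you stop: you propose to convert to $\chi^m(z,x)$, invoke ``collision identities,'' and rewrite $\Omega^\ell_x[A(z),B(x)]$ in terms of $\Omega^\ell_z[A(z),B(x)]$ via the mode form $4)$ — but condition $2)$ for $(A,B)$ gives an identity in the opposite direction (it expresses $\Omega^m_z[A(z),B(x)]$ through $\Omega^{m+n}_x$ terms), the claimed cancellation of the bivariate $\Omega^\ell_x\Omega^{m+n-\ell}_w$ remainder terms is asserted rather than proved, and you yourself flag this as the unresolved ``main obstacle.'' As written, the proof therefore has a genuine gap at its crucial step.

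The gap disappears if you use a different, stronger form of the hypotheses: by the equivalence in Proposition \ref{prop:locality}, mutual locality gives $[A(z),B(x)]=\sum_j\tfrac{1}{j!}\partial^j_x\Delta(z-x)D^j(x)$ and $[A(z),C(w)]=\sum_j\tfrac{1}{j!}\partial^j_w\Delta(z-w)E^j(w)$ with $D^j,E^j$ fields. Substituting these into your Leibniz expansion and using the Taylor formula $\partial^j_x\Delta(z-x)=\sum_{n\ge0}\tfrac{1}{n!}(x-w)^n\partial^{j+n}_w\Delta(z-w)$ together with $(x-w)\Delta_\pm(x-w)=0$, only the $n=0$ term survives against $\Delta_\pm(x-w)$, and the $x$-residues reassemble into normal-ordered products. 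One obtains directly
\begin{equation*}
[A(z),\norm{BC}(w)]=\sum_{j\ge0}\tfrac{1}{j!}\,\partial^j_w\Delta(z-w)\Bigl(\norm{D^jC}(w)+(-1)^{(|A|+1)|B|}\norm{BE^j}(w)\Bigr),
\end{equation*}
which is condition $3)$ of Proposition \ref{prop:locality} with manifestly field-valued coefficients, so no separate verification of the $\Omega$-constraint is needed. This is exactly the paper's proof (and it yields Corollary \ref{cor:NOPcomm} as a bonus); I recommend replacing your condition-$2)$ strategy by this computation rather than attempting the cross-term cancellation you left open.
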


\begin{proof}
	We compute the commutator between $A(z)$ and $\norm{BC}(w)$. By Lemma \ref{lem:NOP}, this commutator takes the form
	\beqn
	\begin{aligned}[]
		[A(z), \norm{BC}(w)] & = \text{Res}_x \Delta_-(x-w) [A(z), B(x)C(w)]\\
		& \qquad - (-1)^{|B||C|} \text{Res}_x \Delta_+(x-w) [A(z), C(w)B(x)]\\
		& = \text{Res}_x \Delta_-(x-w) [A(z), B(x)] C(w)\\
		& \qquad  - (-1)^{(|A|+|B|)|C|}\text{Res}_x \Delta_+(x-w) C(w) [A(z), B(x)]\\
		& + (-1)^{|A||B|} \text{Res}_x \Delta_-(x-w)B(x) [A(z), C(w)]\\
		& \qquad - (-1)^{|B|(|A|+|C|)} \text{Res}_x \Delta_+(x-w) [A(z), C(w)] B(x)\bigg)\\
	\end{aligned}
	\eeqn
	Mutual locality implies that the commutator $[A(z), B(x)]$ takes the form
	\beqn
	[A(z), B(x)] = \sum_{m \geq 0} \tfrac{1}{m!}\pd^m_x \Delta(z-x) D^m(x)
	\eeqn
	and, similarly,
	\beqn
	[A(z), C(w)] = \sum_{m \geq 0} \tfrac{1}{m!} \pd^m_w \Delta(z-w) E^m(w) 
	\eeqn
	for fields $D^m(x)$ and $E^m(w)$, where $D^m(x) = 0$ and $E^m(w) = 0$ for $m \gg 0$. Using the fact that $(x-w)\Delta_\pm(x-w) = 0$ together with the Taylor formula
	\beqn
	\pd^m_x \Delta(z-x) = \sum_{n \geq 0} \tfrac{1}{n!}(x-w)^n \pd^{m+n}_w\Delta(z-w)
	\eeqn
	we conclude that the commutator $[A(z), \norm{BC}(w)]$ is equal to
	\beqn
	\sum_{m \geq 0}\tfrac{1}{m!}\pd^m_w \Delta(z-w) \bigg(\norm{D^m C}(w) + (-1)^{(|A|+1)|B|}\norm{B E^m}(w)\bigg)
	\eeqn
	whence $A(z)$ and $\norm{BC}(w)$ are mutually local.
\end{proof}

Not only does this proof show $A(z)$ and $\norm{BC}(w)$ are mutually local, it actually gives the following useful expression for the commutator of $A(z)$ and $\norm{BC}(w)$:

\begin{corollary}
	\label{cor:NOPcomm}
	Let $A(z)$, $B(w)$, $C(u)$ be fields on $\CV$ and suppose $A(z)$ is mutually local with respect to $B(w)$ and $C(u)$ with commutators
	\begin{equation*}
		[A(z), B(w)] = \sum_{m\geq0}\tfrac{1}{m!} \pd^m_w \Delta(z-w) D^m(w)
	\end{equation*}
	and
	\begin{equation*}
		[A(z), C(u)] = \sum_{m\geq0}\tfrac{1}{m!} \pd^m_u \Delta(z-u) E^m(u)
	\end{equation*}
	then we have the following identity:
	\begin{equation*}
		[A(z), \norm{BC}(w)] = \sum_{m \geq 0}\tfrac{1}{m!} \pd^m_w \Delta(z-w) \bigg(\norm{D^m C}(w) + (-1)^{(|A|+1)|B|}\norm{B E^m}(w)\bigg)
	\end{equation*}
\end{corollary}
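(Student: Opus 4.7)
The plan is to observe that this formula is essentially already derived during the proof of Lemma \ref{lem:Dong} (Dong's lemma); the corollary simply packages the intermediate expression obtained there. So the proof amounts to retracing that computation while keeping track of the resulting form of the commutator, rather than merely verifying mutual locality.

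First, I would apply Lemma \ref{lem:NOP} to rewrite
\begin{equation*}
    \norm{BC}(w) = \Res_x\bigl(\Delta_-(x-w) B(x) C(w) - (-1)^{|B||C|} \Delta_+(x-w) C(w) B(x)\bigr),
\end{equation*}
then expand $[A(z), \norm{BC}(w)]$ using the graded Leibniz rule for the commutator. This produces four residue terms involving $[A(z),B(x)]C(w)$, $C(w)[A(z),B(x)]$, $B(x)[A(z),C(w)]$, and $[A(z),C(w)]B(x)$, each decorated with the appropriate Koszul signs coming from passing $A(z)$ past $B(x)$ or $C(w)$.

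Next, I would substitute the hypothesized commutator expressions in terms of $\pd_x^m \Delta(z-x) D^m(x)$ and $\pd_u^m \Delta(z-u) E^m(u)$. To eliminate the $x$-dependence inside the outer residue, I would invoke the Taylor expansion
\begin{equation*}
    \pd_x^m \Delta(z-x) = \sum_{n \geq 0} \tfrac{1}{n!} (x-w)^n \pd_w^{m+n} \Delta(z-w),
\end{equation*}
combined with the key relation $(x-w)\Delta_\pm(x-w) = 0$. This annihilates every term with $n \geq 1$ upon multiplying by $\Delta_\pm(x-w)$, leaving only the $n=0$ piece where $\pd_x^m \Delta(z-x)$ effectively becomes $\pd_w^m \Delta(z-w)$ pulled outside the $x$-residue. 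The remaining $x$-integrand has the shape $\Delta_\pm(x-w)$ times a field in $x$, which by Lemma \ref{lem:NOP} produces exactly the normal-ordered products $\norm{D^m C}(w)$ and $\norm{B E^m}(w)$ after the residue is taken.

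Finally, I would collect the surviving terms and check that the signs reorganize as claimed. The main obstacle will be sign bookkeeping: one must confirm that the two contributions coming from the $[A,B]$-pieces (with their $\Delta_-$ and $\Delta_+$ parts and the $(-1)^{|B||C|}$ already present) recombine via Lemma \ref{lem:NOP} into the single normal-ordered product $\norm{D^m C}(w)$ without residual signs, while the two $[A,C]$-pieces recombine into $\norm{B E^m}(w)$ precisely with the overall factor $(-1)^{(|A|+1)|B|}$. Because $D^m$ inherits cohomological degree $|A|+|B|$ (after the degree shift from $\Omega_w^m$) and $E^m$ inherits degree $|A|+|C|$, the Koszul signs from commuting $A(z)$ past $B(x)$ in the inner step match those demanded by the normal-ordered product in Definition \ref{dfn:NOP}. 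Once this sign match is verified, the formula stated in the corollary follows verbatim.
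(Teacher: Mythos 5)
Your proposal is correct and is essentially the paper's own argument: the corollary is simply read off from the computation in the proof of Lemma \ref{lem:Dong}, which proceeds exactly as you describe (the residue formula of Lemma \ref{lem:NOP}, the graded Leibniz expansion into four residue terms, the Taylor formula combined with $(x-w)\Delta_\pm(x-w)=0$, and recombination of the surviving terms into normal-ordered products). One small caution for your sign bookkeeping: the field $D^m$ has cohomological degree $|A|+|B|-1$ rather than $|A|+|B|$, since $\pd^m_w\Delta(z-w)$ already carries degree $1$, and this shift is precisely what produces the factor $(-1)^{(|A|+1)|B|}$ in front of $\norm{B E^m}(w)$.
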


%\begin{remark}
%	This property reflects the fact that the descent bracket with $O$, i.e. $\Pi_n = \{\{O, -\}\}^{(n)}$, is a derivation of the operator product in the QFT. From this QFT perspective, this property is encoded in the fact these the descent bracket yields a ($1$-shifted) $\lambda$-bracket, cf. Section 2 of \cite{OhYagi}.
%\end{remark}

%The following is an immediate corollary, cf. Corollary 3.3.5 of \cite{FBZ}.
%\begin{corollary}
%	Let $\CV$ by a ravioli vertex algebra. For any $a^1, ..., a^k \in \CV$ and $n_1, ..., n_k < 0$ we have
%	\begin{equation*}
	%		\begin{aligned}
		%			& Y(a^1_{(n_1)} \dots a^k_{(n_k)}|0\rangle, z)\\
		%			& \qquad =\frac{1}{(-n_1-1)!}\dots\frac{1}{(-n_k-1)!} \norm{\pd^{-n_1-1}_z Y(a^1, z) \dots \pd^{-n_k-1}_z Y(a^k,z)}
		%		\end{aligned}
	%	\end{equation*}
%\end{corollary}

The final property of mutually local fields we present uses the reformulation of mutual locality in Proposition \ref{prop:locality} in terms of the commutators of modes.

\begin{prop}
	\label{prop:assoc}
	Let $A(z)$, $B(w)$ be mutually local fields on $\CV$, then
	\begin{equation*}
		\norm{A B}(z) = (-1)^{|A||B|}\norm{BA}(z)\,.
	\end{equation*} Moreover, given pairwise mutually local fields $A(z)$, $B(w)$, $C(u)$ on $\CV$ we have
	\begin{equation*}
		\norm{A\norm{BC}}(z) = \norm{\norm{AB} C}(z)\,.
	\end{equation*}
\end{prop}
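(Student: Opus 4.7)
My plan is to prove the two assertions separately: Part 1 by a matrix-element argument, and Part 2 by a mode-by-mode comparison.

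For Part 1, I will fix $v \in \CV$ and $\varphi \in \CV^*$ and work inside the three-point algebra $\CK_{z,w,z-w}$. Definition \ref{dfn:mutual} applied to the pair $(A, B)$ produces an element $f_{v,\varphi} \in \CK_{z,w,z-w}$ whose expansions in $\C\langle\!\langle z\rangle\!\rangle\langle\!\langle w\rangle\!\rangle$ and $\C\langle\!\langle w\rangle\!\rangle\langle\!\langle z\rangle\!\rangle$ are $\langle\varphi, A(z)B(w) v\rangle$ and $(-1)^{|A||B|}\langle\varphi, B(w)A(z) v\rangle$ respectively. Applying the same definition to $(B, A)$ produces an element whose expansions are obtained from those of $f_{v,\varphi}$ by simultaneously swapping the variables $z \leftrightarrow w$ and multiplying by $(-1)^{|A||B|}$. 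The proof of the implication $(5)\Rightarrow(1)$ in Proposition \ref{prop:locality} identifies $\langle\varphi, \norm{A(z)B(w)} v\rangle$ with the regular part of $f_{v,\varphi}$ in $\CK^{z,w}_{poly}$, and similarly for the other ordering. Specializing to the diagonal $w = z$ — where the variable swap acts trivially — then yields
\[
\langle \varphi, \norm{BA}(z) v\rangle \;=\; (-1)^{|A||B|} \langle \varphi, \norm{AB}(z) v\rangle
\]
for all $v, \varphi$, which gives the desired identity.

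For Part 2, I will first use Dong's Lemma (Lemma \ref{lem:Dong}) to conclude that both $(A, \norm{BC})$ and $(\norm{AB}, C)$ are mutually local, so the iterated products are well-defined fields. I will then match modes using Lemma \ref{lem:NOP}. For $M < 0$, expanding each side directly via the mode formula produces the same unreordered triple sum
\[
\sum_{\substack{n,k,\ell < 0\\ n+k+\ell = M-2}} A_n B_k C_\ell,
\]
using associativity of composition in $\End(\CV)$ and routine manipulation of the nested summation ranges. For $M \geq 0$, each side decomposes into three families of triple products, classified by which of $A, B, C$ contributes the unique non-negative mode. Matching these families involves reordering pairs of negative-mode operators, which is permissible thanks to the vanishing commutators $[A_n, B_k] = [A_n, C_k] = [B_k, C_n] = 0$ for $n, k < 0$ provided by part $(4)$ of Proposition \ref{prop:locality}. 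Each such reordering produces only a Koszul sign, which must then be checked to combine correctly with the Koszul signs already appearing in Definition \ref{dfn:NOP}.

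The main difficulty lies in Part 2 for non-negative modes: one must verify that the several Koszul signs arising from reordering pairs of negative-mode operators combine exactly with those in the normal-ordered product. The crucial ingredient is the vanishing of \emph{all} commutators of negative-mode raviolo operators — a distinctive feature of the raviolo setting, rooted in the relation $z\,\Omega^0_z = 0$, with no direct counterpart in ordinary vertex algebras. Part 1 is comparatively immediate given the matrix-element framework already in place.
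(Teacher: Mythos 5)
Your treatment of the associativity statement is essentially the paper's own proof: expand both sides into modes via Lemma \ref{lem:NOP}, observe that the $z$-power coefficients are literally the same triple sums over negative modes, and match the three families inside each $\Omega^m_z$ coefficient (labelled by which of $A,B,C$ carries the non-negative mode) by reordering pairs of negative modes, which is legitimate precisely because Proposition \ref{prop:locality}, part $4)$, gives $[A_m,B_l]=0$ for $m,l<0$. The only remaining content is the Koszul-sign bookkeeping, which you correctly locate but do not carry out.

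Your argument for the commutativity statement, however, has a genuine gap. You identify $\langle\varphi,\norm{B(z)A(w)}v\rangle$ with the regular part of $(-1)^{|A||B|}f_{v,\varphi}(w,z)$ and then pass to the diagonal ``where the variable swap acts trivially.'' But an element of $\three$ cannot be evaluated at $w=z$ until its singular part has been stripped off, and taking the regular part does not commute with the swap $z\leftrightarrow w$. Concretely, the coefficients of $\Omega^n_{z-w}$ in $f_{v,\varphi}$ are (up to sign) matrix elements of the commutator fields $C^n(w)$ and so in general contain $\Omega^m_w$ components; after the swap one must rewrite products of the form $\Omega^n_{z-w}\,\Omega^m_z$ in canonical form using the Arnold-type relation $\Omega^0_{z-w}\Omega^0_z+\Omega^0_w\Omega^0_{z-w}+\Omega^0_z\Omega^0_w=0$ and its derivatives, and this feeds singular terms back into the regular part, so the regular part of the swapped element is \emph{not} the swap of the regular part. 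Your conclusion does survive, because every such correction lies in the span of products $\Omega^a_z\Omega^b_w$, which vanish upon setting $w=z$; but that verification, together with a definition of how the swap acts on the generators $\Omega^n_{z-w}$ and a check of its compatibility with the two expansion maps of Lemma \ref{lem:expansions} (neither of which the paper sets up), is exactly what your sketch omits. Note also that commutativity follows far more cheaply by the same device you use for associativity: by Lemma \ref{lem:NOP}, $\norm{AB}(z)$ and $(-1)^{|A||B|}\norm{BA}(z)$ differ only in the ordering of products of two negative modes, and these reorder freely (with Koszul signs) since $[A_m,B_l]=0$ for $m,l<0$ --- this is the paper's short argument.
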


This proposition implies the normal-ordered product defines a commutative and associative product on a collection of mutually local raviolo fields. This is in stark contrast to the theory of vertex algebras, where the normal-ordered product of mutually local fields is generally neither commutative nor associative.
	
As we remarked above, the field $\norm{AB}(z)$ is meant to represent the usual operator product in the underlying quantum field theory. That the normal-ordered product defines a commutative, associative product reflects the fact that this operator product is commutative and associative. The model for local operators in a minimally twisted three-dimensional supersymmetric quantum field theory described by \cite{OhYagi, CostelloDimofteGaiotto-boundary} is as a commutative vertex algebra with some extra structure (e.g. a $1$-shifted $\lambda$-bracket). Although the normal-ordered product in a vertex algebra is generally not associative, this is true for commutative vertex algebras, cf. Section 1.4 of \cite{FBZ}.

\begin{proof}
	Comparing the above formula for the normal-ordered product $\norm{AB}(z)$ to that of $(-1)^{|A||B|}\norm{BA}(z)$, we see the two expressions agree thanks to the commutativity of $A_{m}$, $B_{n}$ for $n,m < 0$:
	\beqn
		\norm{AB}(z) - (-1)^{|A||B|}\norm{BA}(z) = \sum_{m<0} \sum_{n=m}^{-1} z^{-m-1} [A_{m}, B_{m-n-1}] = 0
	\eeqn
	
	In order to verify associativity, we compare the two normal-ordered products. The mode expansion of the left-hand side is given by
	\be
	\begin{aligned}
		\norm{A \norm{B C}}(z) & = \sum_{m\geq0} z^m \bigg(\sum_{l=0}^{m} \sum_{n =0}^{m-l} A_{-l-1} B_{-n-1} C_{l+n-m-1}\bigg)\\
		& + \sum_{m \geq 0} \Omega^m_z\bigg(\sum_{l\geq0} \sum_{n \geq 0} (-1)^{|A||B|}A_{-l-1} B_{-n-1} C_{n+m+l}\\
		& \qquad + (-1)^{|A|+(|B|+1)|C|} A_{-l-1} C_{-n-1} B_{n+m+l}\\
		& \qquad + (-1)^{(|B|+|C|)(|A|+1)}B_{-l-1}C_{-n-1} A_{n+m+l}\bigg)
	\end{aligned}
	\ee
	whereas the right-hand side is given by
	\be
	\begin{aligned}
		\norm{\norm{AB} C}(z) & = \sum_{m\geq0} z^m \bigg(\sum_{n=0}^{m} \sum_{l=0}^{n} A_{-l-1} B_{l-n-1} C_{n-m-1}\bigg)\\
		& + \sum_{m \geq 0} \Omega^m_z\bigg(\sum_{l\geq0} \sum_{n \geq 0} (-1)^{|A||B|}A_{-l-1} B_{-n-1} C_{n+m+l}\\
		& \qquad + (-1)^{|A|+(|A|+|B|+1)|C|} C_{-l-1} A_{-n-1} B_{n+m+l}\\
		& \qquad + (-1)^{(|B|+|C|)(|A|+1) + |B||C|}C_{-l-1}B_{-n-1} A_{n+m+l}\bigg)
	\end{aligned}
	\ee
	Again, these two expressions are equal to one another due to the commutativity of the $A_{n}$, $B_{m}$, $C_{l}$ for $n, m, l <0$.
\end{proof}

\subsection{The main definition}
We now move to the definition of a \emph{raviolo vertex algebra}. 
From the point of view of quantum field theory, this structure is parallel to that of a vertex algebra.
A vertex algebra is an algebraic structure modeling the local operators of a chiral/holomorphic field theory in two real dimensions.

Likewise, our motivation for the definition of a raviolo vertex algebra is the algebraic structure underpinning the local operators of a three-dimensional quantum field theory which is partially holomorphic and partially topological.
Such theories arise naturally from twists of three-dimensional supersymmetric theories \cite{Closset:2012ru, Closset:2013vra, OhYagi, CostelloDimofteGaiotto-boundary}.
%, twistedN=4}.

The essential data of a vertex algebra is the so-called \textit{state-operator} (or \textit{state-field}) correspondence which is a linear assignment from $\CV$ to the space of fields on $\CV$.
With our modified notion of field, i.e. a raviolo field, we can apply the same definition in our setting.
%\beqn
%a \in \CV \mapsto Y(a,z) \in \End(\CV)\otimes \CK_{dist} .
%\eeqn
We require that the state-operator correspondence
\beqn
	Y(-,z) \colon \CV \to \CF_{rav}(\CV) \subset \End(\CV)\otimes \CK_{dist}
\eeqn
be compatible with the gradings on $\CV$ and $\CK_{dist}$, i.e. if $a \in \CV^r$ then $Y(a,z)$ is a homogeneous raviolo field of cohomological degree $r$, so that $Y(-,z)$ is a homogeneous linear map of degree $0$. We write the mode expansion of this field as
\beqn
	Y(a,z) = \sum_{m<0} z^{-m-1} a_{(m)} + \sum_{m \geq0} \Omega^m_z a_{(m)}
\eeqn
By a common abuse of notation we will sometimes denote the field $Y(a,z)$ by $a(z)$.

\begin{dfn}
	A \defterm{raviolo vertex algebra} is the data $(\CV, |0\rangle, \del, Y)$ where 
	\begin{itemize}
	\item $\CV = \bigoplus \CV^r$ is a $\Z$ graded vector space.
	\item $|0\rangle \in \CV^0$ is a distinguished element (the vacuum vector).
	\item $\del \colon \CV \to \CV$ is an endomorphism of degree $0$ (the translation operator).
	\item $Y = Y(-,z) \colon \CV \to \End(\CV) \otimes \CK_{dist}$ is a linear map of degree $0$.
	\end{itemize}
	This data is required to satisfy the following axioms
	\begin{enumerate}
	\item For every $a \in \CV$ the element $Y(a,z)$ is a raviolo field as in Definition \ref{dfn:field} (state-field correspondence).
	\item One has
	\beqn
	[\pd, Y(a,z)] = \pd_z Y(a,z)
	\eeqn
	for every $a \in \CV$ (translation axiom).
	\item The vacuum vector satisfies $\del |0 \rangle = 0$, $Y(|0\rangle, z) = \id_\CV$, and for any $a \in \CV$ one has $Y(a,z) |0\rangle \in \CV[\![z]\!],$ and $Y(a,z=0)|0\rangle = a$ (vacuum axiom).
	\item For every $a,b \in \CV$ the fields $Y(a,z),Y(b,w)$ are mutually local as in Definition \ref{dfn:mutual} (locality axiom).
	\end{enumerate}

	We say that a $\pd$-invariant subspace $\CW \subset \CV$ containing the vacuum vector $|0\rangle \in \CW$ is a \defterm{raviolo vertex subalgebra} of $\CV$ if $Y(a,z) b \in \CW\langle\!\langle z \rangle\!\rangle$ for any $a,b \in \CW$. A $\pd$-invariant subspace $\CI$ is called a \defterm{raviolo vertex algebra ideal} of $\CV$ if $Y(a,z) b \in \CI\langle\!\langle z \rangle\!\rangle$ for any $a \in \CI$ and $b \in \CV$.

	A \defterm{morphism} of raviolo vertex algebras
	\[
		\rho \colon (\CV_1, |0\rangle_1, \pd_1, Y_1) \to (\CV_2, |0\rangle_2, \pd_2, Y_2)
	\]
	is the data of a linear map $\rho: \CV_1 \to \CV_2$ of degree $0$ that preserves the vacuum $\rho(|0\rangle_1) = |0\rangle_2$, and intertwines the translation operators $\pd_2 \rho(a) = \rho(\pd_1 a)$ and state-operator correspondences $\rho(Y_1(a,z) b) = Y_2(\rho(a),z) \rho(b)$.
\end{dfn}
%A even (resp. odd) \emph{derivation} $D$ of a ravioli vertex algebra $\CV$ with $R$-charge $r_D$ and spin $j_D$ is a collection of linear maps $D_{r,j}: \CV^\pm_{r,j} \to \CV^\pm_{r+r_D, j+j_D}$ (resp. $D_{r,j}: \CV^\pm_{r,j} \to \CV^\mp_{r+r_D, j+j_D}$), collected into a map $D: \CV \to \CV$, such that
%\be
%	[D, Y(O, z)] = Y(DO, z)
%\ee
%as fields on $\CV$. We will usually denote this as $DO$. We call an odd, square-zero derivation $D: \CV \to \CV$ of $R$-charge 1 and spin 0 a \emph{differential} on $\CV$ and call the pair $(\CV, D)$ a \emph{differential-graded ravioli vertex algebra} or \emph{DG ravioli vertex algebra}.

The kernel of a morphism $\rho: \CV_1 \to \CV_2$ is an ideal of $\CV_1$ and the image is a subalgebra of $\CV_2$. As with vertex algebras, there is a skew-symmetry property (Proposition \ref{prop:skew}) ensuring that if $\CI$ is an ideal of $\CV$ then $Y(b,z)a \in \CI\langle\!\langle z \rangle\!\rangle$ for any $a \in \CI$ and $b \in \CV$. In particular, the quotient $\CV/\CI$ has the structure of a raviolo vertex algebra. We also note that if $\CV_1$ and $\CV_2$ are raviolo vertex algebras, then the tensor product $\CV_1 \otimes \CV_2$ naturally inherits the structure of a raviolo vertex algebra.

\begin{dfn}
	A \defterm{derivation} $D$ (of degree $r$) of a raviolo vertex algebra $\CV$ is a (homogeneous) linear map $D: \CV \to \CV$ (of degree $r$) such that
	\begin{equation*}
		[D, Y(a, z)] = Y(Da, z)
	\end{equation*}
	as fields on $\CV$. A degree $1$, square-zero derivation $D$ is called a \defterm{differential} on $\CV$ and we call the pair $(\CV, D)$ a \defterm{differential-graded raviolo vertex algebra} or \defterm{dg raviolo vertex algebra}.
\end{dfn}

For example, the translation operator $\pd$ is always a derivation. We note that if $D$ is a derivation of $\CV$ then the kernel of $D$ is a subalgebra of $\CV$. If $D$ is a differential, then the image of $D$ is an ideal of the kernel of $D$. In particular, the cohomology of a differential $D$ has the structure of a raviolo vertex algebra.

\subsection{Additional gradings}
The underlying vector space $\CV$ often has additional gradings that are compatible with the axioms of a raviolo vertex algebra; correspondingly, the space of fields $\CF_{rav}(\CV)$ inherits natural gradings from $\End(\CV)$.

Let $L > 0$ be an integer.
A \textit{spin grading} on a raviolo vertex algebra is an additional (non-cohomological) grading 
\beqn
	\CV = \oplus_{s \in \frac{1}{L}\Z} \CV^{(s)} 
\eeqn
such that the vacuum vector $|0\rangle$ and $Y$ are weight $0$, the translation operator $\del \colon \CV^{(s)} \to \CV^{(s+1)}$ increases the weight by $+1$, and for any $a \in \CV^{(s)}$ the mode $a_{(n)}$ increases weight by $s-n-1$. We call the weight with respect to such a grading \emph{spin}.
This grading does not appear in any Koszul rule of signs.
Typically $L = 1,2,$ or $4$ in the examples that we consider.

It will be useful to allow for an additional $\Z/2$ grading, which amounts to working with graded super vector spaces.
%We also assume that there is a $\Z/2$ grading by parity (fermion number).
That is, for each cohomological degree $r \in \Z$ we have a super vector space 
\be
	\CV^r = \CV_+^r \oplus \Pi \CV_-^r
\ee
where $\CV_+^r$ is the \emph{even} part and $\CV^r_-$ is the \emph{odd} part.
We assume that the vacuum $|0\rangle$, the state-operator correspondence $Y$, and the translation operator $\del$ are even with respect to the super grading.
Additionally, the Koszul rule of signs must be modified; in any place where the degree $|a|$ of an element enters in an algebraic expression it must be understood as the totalized grading. 
That is, if $a \in \CV^r_{+}$ then $|a| = r \mod 2$, but if $a \in \CV_-^r$ then $|a| = (r+1) \mod 2$. We say that a state/field is \emph{bosonic} (resp. \emph{fermionic}) if it has even (resp. odd) totalized grading and call such states/fields \emph{bosons} (resp. \emph{fermions}).
%Thus, in total $\CV$, is a $\Z \times \Z/2$ graded vector space.

If $\CV$ has a spin grading then we can consider the following graded character as a formal $q$-series 
\beqn
	\text{ch}_{\CV}(q) \define \sum_{s \in \frac{1}{L} \Z} \text{grdim}(\CV^{(s)}) q^s
\eeqn
where $\text{grdim}(\CV^{(s)})$ is the dimension of the vector space spanned by bosonic (with respect to the totalized grading) elements of spin $s$ minus the dimension of the vector space spanned by the fermionic elements of spin $s$.

\subsection{Raviolo vertex algebras over graded commutative unital $\C$-algebras}
\label{sec:commCalgs}
For this last subsection, fix a graded commutative unital $\C$-algebra $S$. We introduce the notion of a raviolo vertex algebra over $S$; there should be a similar construction when $S$ is replaced by a general graded commutative ring, but we will not need that level of generality. See \cite{Mason} for the analogous construction in the theory of vertex algebras. In brief, a raviolo vertex algebra over $S$ is merely a graded $S$-module $\CV = \bigoplus \CV^r$ equipped with a compatible raviolo vertex algebra structure.

The notion of raviolo field has a natural incarnation over $S$ by simply replacing linear maps by $S$-module morphisms.

\begin{dfn}
\label{dfn:Sfield}
	A \defterm{raviolo field over $S$} on $\CV$ is an element
	\begin{equation*}
		A(z) = \sum_{m<0} z^{-m-1} A_m + \sum_{m\geq0} \Omega^m_z A_m \in \End_S(\CV) \otimes \CK_{dist}
	\end{equation*}
	such that for every $v \in \CV$ there exists $N$ sufficiently large with
	\begin{equation*}
		A_m v = 0\, \qquad \text{for all } m \geq N
	\end{equation*}
	A raviolo field $A(z)$ over $S$ is \defterm{homogeneous} of degree $|A|$ if $A_m$ is an $S$-module morphism of degree $|A|$ for $m < 0$ and degree $|A|-1$ for $m \geq 0$.
\end{dfn}

The space of raviolo fields over $S$ on $\CV$ will be denoted $\CF_{rav/S}(\CV)$. There is a notion of mutual locality of two raviolo fields over $S$; we use the algebraic reformulation of mutual locality provided by Proposition \ref{prop:locality}.

\begin{dfn}
\label{dfn:Smutual}
	Two raviolo fields $A(z)$, $B(w)$ over $S$ are \defterm{mutually local} if there exists $N \geq 0$ such that
	\begin{equation*}
		(z-w)^{N+1} [A(z), B(w)] = 0
	\end{equation*}
	and
	\begin{equation*}
		\displaystyle \bigg(\Omega^m_z - \sum^N_{n=0} (w-z)^n \binom{m+n}{n} \Omega^{m+n}_w\bigg) [A(z), B(w)] = 0
	\end{equation*}
	as elements of $\End_S(\CV) \otimes \CK_{dist}^{z,w}$.
\end{dfn}

With the notion of mutually local raviolo fields over $S$ on an $S$-module $\CV$, we can now state the definition of a raviolo vertex algebra over $S$.

\begin{dfn}
	A \defterm{raviolo vertex algebra over $S$} is the data $(\CV, |0\rangle, \del, Y)$ where 
	\begin{itemize}
		\item $\CV = \bigoplus \CV^r$ is a graded $S$-module.
		\item $|0\rangle \in \CV^0$ is a distinguished element (the vacuum vector).
		\item $\del \colon \CV \to \CV$ is an $S$-module morphism of degree $0$ (the translation operator).
		\item $Y = Y(-,z) \colon \CV \to \End_S(\CV) \otimes \CK_{dist}$ is a $S$-module morphism of degree $0$.
	\end{itemize}
	This data is required to satisfy the following axioms
	\begin{enumerate}
		\item For every $a \in \CV$ the element $Y(a,z)$ is a raviolo field over $S$ as in Definition \ref{dfn:Sfield} (state-field correspondence).
		\item One has
		\beqn
		[\pd, Y(a,z)] = \pd_z Y(a,z)
		\eeqn
		for every $a \in \CV$ (translation axiom).
		\item The vacuum vector satisfies $\del |0 \rangle = 0$, $Y(|0\rangle, z) = \id_\CV$, and for any $a \in \CV$ one has $Y(a,z) |0\rangle \in \CV[\![z]\!],$ and $Y(a,z=0)|0\rangle = a$ (vacuum axiom).
		\item For every $a,b \in \CV$ the fields $Y(a,z),Y(b,w)$ are mutually local as in Definition \ref{dfn:Smutual} (locality axiom).
	\end{enumerate}
	
	We say that a $\pd$-invariant $S$-submodule $\CW \subset \CV$ containing the vacuum vector $|0\rangle \in \CW$ is a \defterm{raviolo vertex subalgebra over $S$} if for any $Y(a,z) b \in \CW\langle\!\langle z \rangle\!\rangle$ for any $a,b \in \CW$. A $\pd$-invariant $S$-submodule $\CI$ is called a \defterm{raviolo vertex algebra ideal over $S$} if $Y(a,z) b \in \CI\langle\!\langle z \rangle\!\rangle$ for any $a \in \CI$ and $b \in \CV$.
	
	A \defterm{morphism} of raviolo vertex algebras over $S$
	\[
		\rho \colon (\CV_1, |0\rangle_1, \pd_1, Y_1) \to (\CV_2, |0\rangle_2, \pd_2, Y_2)
	\]
	is the data of an $S$-module morphism $\rho: \CV_1 \to \CV_2$ of degree $0$ that preserves the vacuum $\rho(|0\rangle_1) = |0\rangle_2$, and intertwines the translation operators $\pd_2 \rho(a) = \rho(\pd_1 a)$ and state-operator correspondences $\rho(Y_1(a,z) b) = Y_2(\rho(a),z) \rho(b)$.
\end{dfn}

There are natural variants of the above that allow for spin and super gradings. Although it is not strictly necessary, our examples are such that $S$ has trivial spin and super gradings, i.e. all elements of $S$ are spin $0$ and even, but can have support in non-trivial cohomological degree.

It is easy to characterize when a raviolo vertex algebra is a raviolo vertex algebra over $S$, but we must put off the proof until the end on Section \ref{sec:elementary} when we have more tools.
\begin{prop}
\label{prop:Sraviolo}
	Let $\CV$ be a raviolo vertex algebra, then $\CV$ is a raviolo vertex algebra over $S$ if and only if $\CV$ is a graded $S$-module such that the $S$-action commutes with $\pd$ and $Y(a,z)$ for all $a \in \CV$.
\end{prop}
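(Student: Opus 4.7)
The forward direction is immediate from the definitions: if $\CV$ is a raviolo vertex algebra over $S$, then each mode of $Y(a,z)$ lies in $\End_S(\CV)$ and $\pd \in \End_S(\CV)$, so the $S$-action commutes (in the graded sense) with both $\pd$ and every $Y(a,z)$. The substance is in the reverse direction.

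For the reverse direction, suppose $\CV$ is a raviolo vertex algebra with a graded $S$-module structure whose action commutes with $\pd$ and with each $Y(a,z)$. The hypothesis that the $S$-action commutes with each mode $a_{(m)}$ immediately gives $a_{(m)} \in \End_S(\CV)$, so $Y(a,z) \in \End_S(\CV) \otimes \CK_{dist}$ defines a raviolo field over $S$ in the sense of Definition \ref{dfn:Sfield}. Similarly $\pd$ is an $S$-module morphism. The locality axiom over $S$ (Definition \ref{dfn:Smutual}) has the same algebraic form as condition $2)$ of Proposition \ref{prop:locality}, so it holds automatically in $\End_S(\CV) \otimes \CK_{dist}^{z,w}$ because it already holds in $\End(\CV) \otimes \CK_{dist}^{z,w}$. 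The vacuum and translation axioms are inherited verbatim from the raviolo vertex algebra structure on $\CV$.

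The only nontrivial step is verifying $S$-linearity of the state-field correspondence, namely $Y(sa,z) = s \cdot Y(a,z)$ as raviolo fields for every homogeneous $s \in S$ and $a \in \CV$. The plan is to first check this when applied to the vacuum: combining the translation axiom with the vacuum axiom gives $Y(a,z)|0\rangle = e^{z\pd}a$, and since $[s,\pd]=0$ we obtain
\[
  Y(sa,z)|0\rangle = e^{z\pd}(sa) = s\, e^{z\pd} a = s\cdot Y(a,z)|0\rangle .
\]
Next, I would verify that $s\cdot Y(a,z)$ is a raviolo field (its modes vanish on any $v\in\CV$ for $m$ large, since $a_{(m)}v=0$ implies $s\cdot a_{(m)}v=0$) and that it is mutually local with every $Y(b,w)$: because $s$ commutes (gradedly) with $Y(b,w)$, the commutator $[s\cdot Y(a,z),Y(b,w)]$ equals $\pm s\cdot [Y(a,z),Y(b,w)]$, so conditions $2)$ of Proposition \ref{prop:locality} transfer directly. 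The final step is then to apply the raviolo version of Goddard's uniqueness theorem: a raviolo field which is mutually local with every $Y(b,w)$ and whose action on $|0\rangle$ equals $e^{z\pd}\alpha$ must coincide with $Y(\alpha,z)$. With $\alpha = sa$, this forces $s\cdot Y(a,z) = Y(sa,z)$.

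The main obstacle, and the reason the proof is deferred, is that the argument at the last step relies on the Goddard-type uniqueness (equivalently, the skew-symmetry Proposition \ref{prop:skew}) which is developed in Section \ref{sec:elementary}; once that tool is available, the reduction above extends the $S$-linearity from $|0\rangle$ to all of $\CV$ and completes the verification of the axioms of a raviolo vertex algebra over $S$.
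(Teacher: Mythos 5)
Your proof is correct, and it takes a somewhat different route from the paper at the one step that matters. Both arguments reduce the reverse direction to verifying $S$-linearity of the state-operator map and both ultimately rest on the raviolo Goddard uniqueness result (Proposition \ref{prop:uniqueness}), but they apply it to different objects. You apply uniqueness directly to the field $s\cdot Y(a,z)$: it is a field, it is mutually local with every $Y(b,w)$ because $s$ (gradedly) commutes with them, and it agrees with $Y(sa,z)$ on the vacuum since $Y(sa,z)|0\rangle = e^{z\pd}(sa) = s\,e^{z\pd}a = s\cdot Y(a,z)|0\rangle$ by Lemma \ref{lem:translation}(1) and $[\pd,s]=0$; uniqueness then gives $s\cdot Y(a,z)=Y(sa,z)$. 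The paper instead first uses the translation and vacuum axioms together with Proposition \ref{prop:uniqueness} to identify the action of $\kappa\in S$ with the \emph{constant} field $Y(\kappa^\CV|0\rangle,z)=\kappa^\CV$, and only then transfers the action to arbitrary fields via the associativity consequence $Y(a_{(-1)}b,z)=\norm{Y(a,z)Y(b,z)}$ (Theorem \ref{thm:assoc}), obtaining $\kappa^\CV Y(a,z)=\norm{Y(\kappa^\CV|0\rangle,z)Y(a,z)}=Y(\kappa^\CV a,z)$. Your argument is a bit more economical, needing only Lemma \ref{lem:translation}(1) and Proposition \ref{prop:uniqueness} and bypassing Theorem \ref{thm:assoc}; the paper's version buys, as a byproduct recorded in the corollary that follows, the explicit description of the $S$-action as constant fields acting by normal-ordered product. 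Note that your route still recovers the constant-field statement by specializing your identity to $a=|0\rangle$, though the normal-ordered product description would still require the associativity input. The remaining bookkeeping in your write-up (fields over $S$, locality over $S$, inheritance of the vacuum and translation axioms, and the immediate forward direction) matches the paper and is fine.
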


In fact, the proof of this result allows us to characterize the action of $S$ on general states in $\CV$, and their corresponding fields, in terms of its action on the vacuum $|0\rangle$ via the state-operator correspondence.

\begin{corollary}
	Let $\CV$ be a raviolo vertex algebra over $S$. The linear map $\kappa^\CV$ representing the action of $\kappa \in S$ on $\CV$ is equal to the constant field $Y(\kappa^\CV |0\rangle, z)$. The action of $\kappa \in S$ on the field $Y(a,z)$ is via normal-ordered product with $Y(\kappa^\CV |0\rangle, z)$.
\end{corollary}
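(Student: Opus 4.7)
The plan is to exploit the $S$-linearity of the state-field correspondence, which is built into the definition of a raviolo vertex algebra over $S$, together with the vacuum axiom identifying $Y(|0\rangle,z)$ with $\id_\CV$. Proposition \ref{prop:Sraviolo} has already been granted, so I may freely use that $Y\colon \CV \to \End_S(\CV)\otimes\CK_{dist}$ is an $S$-module morphism.

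For the first assertion, I would argue directly from $S$-linearity of $Y$. Since $Y$ is an $S$-module morphism of degree zero,
\[
Y(\kappa^\CV|0\rangle, z) \;=\; Y(\kappa\cdot |0\rangle, z) \;=\; \kappa\cdot Y(|0\rangle, z).
\]
The vacuum axiom gives $Y(|0\rangle, z) = \id_\CV$, regarded as the constant raviolo field whose only nonvanishing mode is $(|0\rangle)_{(-1)} = \id_\CV$. Because the $S$-action on $\End_S(\CV)\otimes\CK_{dist}$ is induced from the action on $\End_S(\CV)$ by post-composition with $\kappa^\CV$, this gives $\kappa\cdot\id_\CV = \kappa^\CV$, so $Y(\kappa^\CV|0\rangle, z)$ is the constant field $\kappa^\CV$, as claimed.

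For the second assertion, set $C(z) = Y(\kappa^\CV|0\rangle, z)$ and $A(z) = Y(a, z)$. By the first part, $C(z) = \kappa^\CV$ is a constant field, so in its mode expansion only $C_{-1} = \kappa^\CV$ is nonzero; consequently
\[
C(z)_+ \;=\; \kappa^\CV \qquad\text{and}\qquad C(z)_- \;=\; 0.
\]
Unwinding Definition \ref{dfn:NOP} and using Lemma \ref{lem:NOP} to specialize at $w=z$,
\[
\norm{C(z)A(z)} \;=\; C(z)_+\, A(z) + (-1)^{|C||A|}\, A(z)\, C(z)_- \;=\; \kappa^\CV A(z),
\]
which is precisely the statement that the $S$-action on the field $Y(a,z)$ is implemented by normal-ordered product with $Y(\kappa^\CV|0\rangle, z)$.

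No substantive obstacle arises: the entire argument is an unpacking of the defining $S$-linearity of $Y$, the vacuum axiom, and the observation that the normal-ordered product against a constant field collapses to ordinary composition. The only bookkeeping concern is the sign when $\kappa$ carries nontrivial cohomological degree, but this drops out of the final expression because the sign-bearing term $(-1)^{|C||A|} A(z)\,C(z)_-$ vanishes identically when $C(z)_- = 0$.
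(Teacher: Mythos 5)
Your proof is correct, and for the first claim it coincides with the paper's: the chain $Y(\kappa^\CV|0\rangle,z)=\kappa\cdot Y(|0\rangle,z)=\kappa\cdot\id_\CV=\kappa^\CV$ is exactly the identity $\kappa^\CV=\kappa^\CV Y(|0\rangle,z)=Y(\kappa^\CV|0\rangle,z)$ used in the proof of Proposition \ref{prop:Sraviolo} (you get constancy for free from the identification with $\kappa\cdot\id_\CV$, whereas the paper notes it separately via the translation axiom and $S$-linearity of $\pd$; these are interchangeable). For the second claim your route is genuinely a bit different and more elementary: you unwind Definition \ref{dfn:NOP} for a constant field, observing $C(z)_+=\kappa^\CV$, $C(z)_-=0$, so $\norm{C\,Y(a,-)}(z)=\kappa^\CV\,Y(a,z)$, and then invoke $S$-linearity of $Y$ (part of the data of a raviolo vertex algebra over $S$) to identify this with the $\kappa$-action on the field. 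The paper instead reads the statement off from its proof of Proposition \ref{prop:Sraviolo}, where the key identity $\norm{Y(\kappa^\CV|0\rangle,z)Y(a,z)}=Y(\kappa^\CV a,z)$ is obtained through the associativity consequence $Y(a_{(-1)}b,z)=\norm{Y(a,z)Y(b,z)}$ (Theorem \ref{thm:assoc}, Corollary \ref{cor:stateopNOP}); that heavier machinery is what one needs in the converse direction of the proposition, where $S$-linearity of $Y$ is not assumed but must be derived, while in the setting of the corollary your shortcut is perfectly adequate. The only care needed in your version, which you do address, is that the $S$-action on $\End_S(\CV)\otimes\CK_{dist}$ is post-composition with $\kappa^\CV$, and that the sign-bearing term drops because $C(z)_-=0$.
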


From this perspective, a subalgebra of a raviolo vertex algebra over $S$ is a raviolo vertex subalgebra that is also an $S$-submodule, i.e. it is preserved by the action of $S$. Suppose $I$ is an ideal of $S$. If $\CV$ is a raviolo vertex algebra over $S$, then there is a natural ideal $\CI_I$ of $\CV$ spanned (as an $S$-module) by finite sums of elements of the form $\kappa a$ with $\kappa \in I$ and $a \in \CV$. The quotient $\CV/\CI_I$ is then naturally a raviolo vertex algebra over the quotient $S/I$.

\section{Elementary properties of raviolo vertex algebras}
\label{sec:elementary}
We now prove some elementary properties of raviolo vertex algebras. We stress that the statement of many of our results is a direct translation from the theory of vertex algebras, but often the proofs of these results provided in, e.g., \cite{Kac, FBZ} must be modified due to the fact that the ring of formal raviolo Laurent series $\CK = \C\langle\!\langle z \rangle\!\rangle$ has zero-divisors, whereas $\C(\!(z)\!)$ is a field.

\subsection{Rigidity of the state-operator correspondence}
We start with a raviolo analog of Goddard's uniqueness theorem, cf. Theorem 3.1.1 of \cite{FBZ} or \cite{Goddard}.
\begin{prop}
\label{prop:uniqueness}
	Let $\CV$ be a raviolo vertex algebra and $A(z)$ a field on $\CV$. If $A(z)$ is mutually local with $Y(b,z)$ for all $b \in \CV$ and there exists $a \in \CV$ with
	\begin{equation*}
		A(z)|0\rangle = Y(a,z)|0\rangle
	\end{equation*}
	then $A(z) = Y(a,z)$.
\end{prop}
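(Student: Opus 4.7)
The plan is to follow the classical Goddard uniqueness argument, adapted to formal raviolo distributions: probe the desired equality $A(z) = Y(a,z)$ by applying both fields to states of the form $Y(b,w)|0\rangle$, use mutual locality together with the hypothesis $A(z)|0\rangle = Y(a,z)|0\rangle$ to identify the resulting matrix elements, and finally specialize at $w = 0$ to recover the action on an arbitrary vector $b \in \CV$. Concretely, fix $b \in \CV$ and $\varphi \in \CV^*$ and set $F(z,w) = \langle \varphi, A(z) Y(b,w)|0\rangle\rangle$, $G(z,w) = \langle \varphi, Y(a,z) Y(b,w)|0\rangle\rangle$. By the vacuum axiom $Y(b,w)|0\rangle \in \CV[\![w]\!]$, so expanding as a Taylor series in $w$ and applying the raviolo fields term-by-term places both $F$ and $G$ in $\C\langle\!\langle z\rangle\!\rangle[\![w]\!]$; in particular they can be specialized at $w = 0$, giving $F(z,0) = \langle \varphi, A(z) b\rangle$ and $G(z,0) = \langle \varphi, Y(a,z) b\rangle$ since $Y(b,0)|0\rangle = b$.

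Next I would invoke mutual locality of $A(z)$ with $Y(b,w)$ (by hypothesis) and of $Y(a,z)$ with $Y(b,w)$ (by the locality axiom of $\CV$). This produces elements $f, g \in \three$ whose $\C\langle\!\langle z\rangle\!\rangle\langle\!\langle w\rangle\!\rangle$-expansions are $F$ and $G$, and whose $\C\langle\!\langle w\rangle\!\rangle\langle\!\langle z\rangle\!\rangle$-expansions are $(-1)^{|A||b|}\langle \varphi, Y(b,w) A(z)|0\rangle\rangle$ and $(-1)^{|a||b|}\langle \varphi, Y(b,w) Y(a,z)|0\rangle\rangle$ respectively. The hypothesis forces $|A| = |a|$ by homogeneity and $Y(b,w) A(z)|0\rangle = Y(b,w) Y(a,z)|0\rangle$, so these two opposite-ordering expansions coincide.

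The final step is to conclude from agreement of the $\C\langle\!\langle w\rangle\!\rangle\langle\!\langle z\rangle\!\rangle$-expansions that $f = g$ in $\three$, hence $F = G$ in $\C\langle\!\langle z\rangle\!\rangle\langle\!\langle w\rangle\!\rangle$, and then set $w = 0$ to get $\langle \varphi, A(z) b\rangle = \langle \varphi, Y(a,z) b\rangle$ for all $b, \varphi$ and conclude $A(z) = Y(a,z)$. The main obstacle is injectivity of the expansion map $\three \to \C\langle\!\langle w\rangle\!\rangle\langle\!\langle z\rangle\!\rangle$ from Lemma \ref{lem:expansions}: a priori two different elements of $\three$ could agree in one ordering. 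I would establish injectivity using the unique decomposition $\three = \CK^{z,w}_{poly} \oplus \bigoplus_{n \geq 0} \Omega^n_{z-w}\, \CK^w$ implicit in the analysis of Lemma \ref{lem:deltafunction}, observing that the images $-\tfrac{1}{n!} \pd^n_w \Delta_+(z-w)$ of the $\Omega^n_{z-w}$ are linearly independent modulo $\CK^{z,w}_{poly}$ inside $\C\langle\!\langle w\rangle\!\rangle\langle\!\langle z\rangle\!\rangle$, so that the $\CK^{z,w}_{poly}$ and $\Omega^n_{z-w}$ contributions to $f$ can be read off the expansion separately. Alternatively, one can sidestep the injectivity issue entirely by noting that $F, G \in \C\langle\!\langle z\rangle\!\rangle[\![w]\!]$ embed naturally into $\C\langle\!\langle w\rangle\!\rangle\langle\!\langle z\rangle\!\rangle$ via reordering, so that coefficient-wise comparison in the latter directly yields $F = G$.
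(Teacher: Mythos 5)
Your strategy (compare the two matrix elements through a single element of $\three$ and then set $w=0$) hinges on a claim that is false: the expansion map $\three \to \C\langle\!\langle w\rangle\!\rangle\langle\!\langle z\rangle\!\rangle$ of Lemma \ref{lem:expansions} is \emph{not} injective. The failure is exactly the zero--divisor phenomenon characteristic of the raviolo. In $\three$ the degree-two element $\Omega^0_{z-w}\Omega^0_w$ is nonzero, but its image is $-\Delta_+(z-w)\,\Omega^0_w=\sum_{k\geq0}z^k\,\Omega^k_w\Omega^0_w=0$, and more generally $\pd^n_w\Delta_+(z-w)\,\Omega^m_w=0$ for all $n,m\geq0$. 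Your proposed justification (linear independence of the images of the $\Omega^n_{z-w}$ modulo $\CK^{z,w}_{poly}$) only treats $\Omega^n_{z-w}$ multiplied by degree-zero coefficients in $\C[\![w]\!]$; once the coefficient in $\CK^w$ is itself an $\Omega^m_w$, the image vanishes identically. Worse for your argument, the two expansion maps have \emph{different} kernels: $i_{z,w}(\Omega^n_{z-w}\Omega^m_w)=\tfrac{1}{n!}\pd^n_w\Delta_-(z-w)\,\Omega^m_w=\sum_{k=0}^m\binom{n+k}{k}\Omega^{n+k}_z\Omega^{m-k}_w\neq0$, so agreement of the $\C\langle\!\langle w\rangle\!\rangle\langle\!\langle z\rangle\!\rangle$-expansions does not imply $f=g$, nor agreement of the $\C\langle\!\langle z\rangle\!\rangle\langle\!\langle w\rangle\!\rangle$-expansions, which is what you need before setting $w=0$. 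Your fallback does not repair this: $\C\langle\!\langle z\rangle\!\rangle[\![w]\!]$ does not sit inside $\C\langle\!\langle w\rangle\!\rangle\langle\!\langle z\rangle\!\rangle$ (e.g. $\Delta_-(z-w)=\sum_{n\geq0}w^n\Omega^n_z$ lies in the former but has unbounded $\Omega_z$-order, so not in the latter; nothing bounds the $z$-poles of the $w$-coefficients of $F$ a priori), and in any case the equality you actually possess in $\C\langle\!\langle w\rangle\!\rangle\langle\!\langle z\rangle\!\rangle$ concerns the opposite ordering $Y(b,w)A(z)|0\rangle$, so coefficientwise comparison there never directly compares $F$ with $G$.

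The gap is repairable, but only by injecting the extra information that the paper uses for precisely this purpose: by the vacuum axiom the differences in question contain no $\Omega_w$-terms, and this is what rules out the kernel contributions $\Omega^n_{z-w}\Omega^m_w$ (via the linear independence of the $\Omega^j_z\Omega^k_w$ in $\CK^{z,w}_{dist}$, or via the uniqueness statement of Lemma \ref{lem:deltafunction}). The paper's own proof is organized around this subtlety: multiplying by $(z-w)^N$ and setting $w=0$ only yields $z^NA(z)b=z^NY(a,z)b$, and since $z^N$ is a zero divisor on the polar part of $\CK$ this pins down only $A(z)_+=Y(a,z)_+$; the polar part is then handled by a second argument, writing $[A(z)-Y(a,z),Y(b,w)]=\sum_m\tfrac{1}{m!}\pd^m_w\Delta(z-w)C^m(w)$ by locality, applying it to $|0\rangle$, and using the absence of $\Omega^m_w$-terms on the left together with independence of the distributions $\pd^m_w\Delta_+(z-w)w^n$ and $\pd^m_w\Delta_-(z-w)\Omega^n_w$ to force $C^m(w)|0\rangle=0$, hence $A(z)_-=Y(a,z)_-$. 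If you want to keep your single-pass route through $\three$, you must add the analogous step; as written, the proof breaks at the injectivity claim.
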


\begin{proof}
	As in the proof of Theorem 3.1.1 in \cite{FBZ}, mutual locality implies
	\beqn
		(z-w)^{N}A(z)Y(b,w)|0\rangle = (z-w)^{N}Y(a,z) Y(b,w)|0\rangle
	\eeqn 
	for $N$ sufficiently large. The vacuum axiom implies these expressions are well-defined at $w = 0$; using $Y(b,0)|0\rangle = b$, we conclude
	\beqn
		z^{N} A(z) b = z^{N} Y(a,z) b
	\eeqn for all $b \in \CV$. The exponent $N$ varies as we vary $b$, but we can nonetheless conclude that $A(z)_+ b = Y(a,z)_+ b$ for all $b$, and hence $A(z)_+ = Y(a,z)_+$.
	
	In order to show that the remaining modes agree, we start with the following observation:
	\beqn
		[A(z) - Y(a,z), Y(b,w)]|0\rangle = (A(z)_--Y(a,z)_-)Y(b,w)|0\rangle
	\eeqn
	which follows from $A(z)_+ = Y(a,z)_+$ and $A(z)|0\rangle = Y(a,z)|0\rangle$; the vacuum axiom implies that this expression has no terms proportional to $\Omega^m_w$. On the other hand, mutual locality implies that
	\beqn
		[A(z) - Y(a,z), Y(b,w)] = \sum_{m \geq 0} \tfrac{1}{m!} \pd^m_w \Delta(z-w) C^m(w)
	\eeqn
	for some fields $C^m(w)$, from which it follows that
	\beqn
		(A(z)_--Y(a,z)_-)Y(b,w)|0\rangle = \sum_{m \geq 0} \tfrac{1}{m!} \pd^m_w \Delta(z-w) C^m(w)|0\rangle
	\eeqn
	As the left-hand side does not contain terms proportional to $\Omega^n_w$, we conclude
	\beqn
		\sum_{m \geq 0} \tfrac{1}{m!} \bigg(\pd^m_w \Delta_+(z-w) C^m(w)_+ - \pd^m_w \Delta_-(z-w) C^m(w)_-\bigg)|0\rangle = 0
	\eeqn
	The distributions $\pd^m_w\Delta_+(z-w) w^n$ and $\pd^m_w\Delta_-(z-w) \Omega^n_w$ are all linearly independent, so we conclude $C^m(w)|0\rangle = 0$. In particular,
	\beqn
		(A(z)_- - Y(a,z)_-)Y(b,w)|0\rangle = 0 \rightsquigarrow (A(z)_--Y(a,z)_-)b = 0
	\eeqn
	whence $A(z)_- = Y(a,z)_-$.
\end{proof}

\subsection{Relations involving the translation operator}
Using this uniqueness result, we can prove the following properties describing the action of $\pd$; see Lemma 3.1.3, Corollary 3.1.6, and Lemma 3.2.3 of \cite{FBZ} for the analogous results in the theory of vertex algebras.

\begin{lemma}
\label{lem:translation}
	Let $\CV$ be a raviolo vertex algebra, then for all $a \in \CV$ we have
	\begin{itemize}
		\item[1)] $Y(a,z)|0\rangle = e^{z \pd} a$ as elements of $\CV[\![z]\!]$
		\item[2)] $Y(\pd a,z) = \pd_z Y(a,z)$ as fields on $\CV$
		\item[3)] $e^{-w \pd} Y(a,z) e^{w \pd} = Y(a,z-w)$ as elements of $\End(\CV)\otimes \CK^{z,w}_{dist}$
	\end{itemize}
\end{lemma}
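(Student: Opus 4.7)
The plan is to prove the three assertions in order, with (2) bootstrapping (1) through Goddard uniqueness and (3) coming from iteration of the translation axiom.

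For (1), set $f(z) := Y(a,z)|0\rangle$, which lies in $\CV[\![z]\!]$ by the vacuum axiom and satisfies $f(0) = a$. The translation axiom combined with $\pd|0\rangle = 0$ gives
\[
\pd_z f(z) \;=\; \pd_z Y(a,z)|0\rangle \;=\; [\pd, Y(a,z)]|0\rangle \;=\; \pd Y(a,z)|0\rangle \;=\; \pd f(z),
\]
and the unique solution in $\CV[\![z]\!]$ to $\pd_z f = \pd f$ with $f(0) = a$ is $f(z) = e^{z\pd} a$.

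For (2), the plan is to apply Goddard uniqueness (Proposition \ref{prop:uniqueness}) to $\pd_z Y(a,z)$. By Proposition \ref{prop:derivlocality} this field is mutually local with every $Y(b,z)$, and its action on the vacuum is computed using (1) as
\[
\pd_z Y(a,z)|0\rangle \;=\; \pd_z e^{z\pd} a \;=\; e^{z\pd}\pd a \;=\; Y(\pd a,z)|0\rangle,
\]
so uniqueness forces $\pd_z Y(a,z) = Y(\pd a,z)$.

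For (3), the strategy is to Taylor expand the conjugation in $w$. Differentiating and applying the translation axiom,
\[
\pd_w \bigl(e^{-w\pd} Y(a,z) e^{w\pd}\bigr) \;=\; -e^{-w\pd}[\pd, Y(a,z)]e^{w\pd} \;=\; -\pd_z \bigl(e^{-w\pd} Y(a,z) e^{w\pd}\bigr),
\]
so iteration at $w = 0$ produces the formal expansion
\[
e^{-w\pd}Y(a,z)e^{w\pd} \;=\; \sum_{k \geq 0} \tfrac{(-w)^k}{k!}\,\pd_z^k Y(a,z) \;\in\; \End(\CV)\otimes\CK_{dist}^{z,w}.
\]
The right-hand side $Y(a,z-w)$ is to be interpreted through the expansion map $\three \to \C\langle\!\langle z\rangle\!\rangle\langle\!\langle w\rangle\!\rangle$ of Lemma \ref{lem:expansions}, under which $z^n \mapsto (z-w)^n$ (a polynomial, since the modes $a_{(m)}$ for $m < 0$ pair with non-negative powers) and $\Omega^m_z \mapsto \Omega^m_{z-w} = \tfrac{1}{m!}\pd_w^m \Delta_-(z-w)$. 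A direct binomial check using $\pd_z^k \Omega^m_z = (-1)^k \tfrac{(m+k)!}{m!}\Omega^{m+k}_z$ together with $\tfrac{1}{m!}\pd_w^m \Delta_-(z-w) = \sum_{k \geq 0}\binom{m+k}{k} w^k \Omega^{m+k}_z$ matches the two expressions term by term.

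The main subtlety lies in (3): because $\CK$ has zero divisors, $Y(a,z-w)$ is not defined by naive substitution and its precise meaning as an element of $\End(\CV)\otimes\CK_{dist}^{z,w}$ has to be fixed through the expansion map of Lemma \ref{lem:expansions}. Once this convention is in hand, the identity reduces to the finite combinatorial verification above, and (1)-(2) are immediate consequences of the translation axiom, the vacuum axiom, and Goddard uniqueness.
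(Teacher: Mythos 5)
Your proof is correct and follows essentially the same route as the paper, which handles 1) via the vacuum and translation axioms (the formal ODE argument), 2) via Goddard uniqueness applied to $\pd_z Y(a,z)$, and 3) via the translation axiom and the Taylor formula, deferring details to the corresponding statements in \cite{FBZ}. Your added care in part 3) about interpreting $Y(a,z-w)$ through the expansion $\Omega^m_{z-w}\mapsto \tfrac{1}{m!}\pd_w^m\Delta_-(z-w)$ is consistent with the paper's conventions and just makes explicit what the citation leaves implicit.
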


\begin{proof}
	The proofs of Lemma 3.1.3, Corollary 3.1.6, and Lemma 3.2.3 in \cite{FBZ} transfer with little modification to raviolo vertex algebras to prove assertions $1)$, $2)$, and $3)$, respectively. The first assertion follows from the vacuum and translation axioms; the second is a consequence of Proposition \ref{prop:uniqueness} applied to $A(z) = \pd_z Y(a,z)$; the third is another consequence of the translation axiom and the Taylor formula.
\end{proof}

The raviolo analog of Goddard's uniqueness theorem together with property 1) in Lemma \ref{lem:translation} implies that the field $Y(a,z)$ is the unique raviolo field on $\CV$ satisfying the differential equation
\beqn
	\pd_z A(z) |0\rangle = \pd A(z)|0\rangle
\eeqn
subject to the initial condition $A(z)|0\rangle|_{z=0} = a$, cf. Remark 3.1.5 of \cite{FBZ}.

The second property we verify is the raviolo analog of the skew-symmetry property of a vertex algebra, cf. Proposition 3.2.5 of \cite{FBZ}.

\begin{prop}
\label{prop:skew}
	Let $\CV$ be a raviolo vertex algebra, then for any $a, b \in \CV$
	\begin{equation*}
		Y(a,z) b = (-1)^{|a||b|} e^{z \pd} Y(b,-z) a
	\end{equation*}
	as elements of $\CV\langle\!\langle z \rangle\!\rangle$.
\end{prop}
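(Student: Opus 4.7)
The plan is to adapt the vertex-algebraic skew-symmetry argument (cf. Proposition 3.2.5 of \cite{FBZ}) by exploiting the third $(z-w)$-expansion of $\three$ provided by Lemma \ref{lem:expansions}. This lets us avoid the standard final step of cancelling a power of $z$, which is unavailable in our setting because $\CK$ contains zero divisors.

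First I would invoke mutual locality of $Y(a,z)$ and $Y(b,w)$: for every $\varphi \in \CV^*$, the matrix elements
\beqn
	\langle \varphi, Y(a,z)Y(b,w)|0\rangle\rangle \quad \text{and} \quad (-1)^{|a||b|}\langle \varphi, Y(b,w)Y(a,z)|0\rangle\rangle
\eeqn
are the $z$-near-$0$ and $w$-near-$0$ expansions of a single element of $\three$. The goal is then to compute the third $(z-w)$-expansion of this common element (via Lemma \ref{lem:expansions}) starting from each of the two products, and to equate the results.

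The next step is this computation. Using Lemma \ref{lem:translation}(1) to write $Y(b,w)|0\rangle = e^{w\pd}b$ and $Y(a,z)|0\rangle = e^{z\pd}a$, together with the intertwining relation $Y(a,z) e^{w\pd} = e^{w\pd} Y(a,z-w)$ from Lemma \ref{lem:translation}(3), I would rewrite
\beqn
	Y(a,z)Y(b,w)|0\rangle = e^{w\pd}\, Y(a,z-w)\, b,
\eeqn
and, factoring $e^{z\pd} = e^{w\pd} e^{(z-w)\pd}$,
\beqn
	Y(b,w)Y(a,z)|0\rangle = e^{z\pd}\, Y(b,w-z)\, a = e^{w\pd} e^{(z-w)\pd}\, Y(b,-(z-w))\, a,
\eeqn
so that both expressions sit in $\CV\langle\!\langle z-w\rangle\!\rangle[\![w]\!]$ with $(z-w)$ as the natural formal variable. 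Mutual locality (via the $(z-w)$-expansion) then identifies them up to the sign $(-1)^{|a||b|}$; multiplying by $e^{-w\pd}$ (equivalently, taking the coefficient of $w^0$) leaves
\beqn
	Y(a,z-w)\, b = (-1)^{|a||b|}\, e^{(z-w)\pd}\, Y(b,-(z-w))\, a
\eeqn
in $\CV\langle\!\langle z-w\rangle\!\rangle$, and renaming $u = z-w$ produces the asserted identity.

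The main obstacle is verifying that the two rewritten expressions $e^{w\pd}Y(a,z-w)b$ and $e^{w\pd}e^{(z-w)\pd}Y(b,-(z-w))a$ really are the images of the common element of $\three$ under the third expansion map of Lemma \ref{lem:expansions}, as opposed to the first two. This requires tracking how the substitution $z \mapsto z-w$ implicit in Lemma \ref{lem:translation}(3) interacts with the embedding $\CK^{z-w}_{dist} \hookrightarrow \C\langle\!\langle w\rangle\!\rangle\langle\!\langle z-w\rangle\!\rangle$, and in particular checking that the $\Omega^m_z$-modes of $Y(a,z)$ are routed to $\Omega^m_{z-w}$-modes of $Y(a,z-w)$ under this expansion, consistent with the algebra map of Lemma \ref{lem:expansions}. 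Once this bookkeeping is in place, the remainder of the argument is purely formal.
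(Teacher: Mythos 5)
There is a genuine gap, and it sits exactly where you wave it off as ``bookkeeping.'' The rewriting via Lemma \ref{lem:translation} does not move you into $\CV\langle\!\langle z-w\rangle\!\rangle[\![w]\!]$: in part 3) of that lemma, $Y(a,z-w)$ means $\sum_n \tfrac{(-w)^n}{n!}\pd_z^n Y(a,z)$, so in $e^{w\pd}Y(a,z-w)b$ the singular modes $a_{(m)}b$ come attached to $\tfrac{1}{m!}\pd^m_w\Delta_-(z-w)$, i.e.\ this expression is still the element of $\CV\langle\!\langle z\rangle\!\rangle[\![w]\!]$ you started with, and the other ordering similarly stays in $\CV\langle\!\langle w\rangle\!\rangle[\![z]\!]$. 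What the rewriting really shows is that each matrix element is the image, under the \emph{first} (resp.\ \emph{second}) expansion map of Lemma \ref{lem:expansions}, of an element of $\three$ whose poles lie only in $z-w$. To conclude that these two elements of $\three$ coincide (up to the sign) with the common element $f_{v,\varphi}$ provided by locality, you need to know that an element of $\three$ is uniquely determined by its expansion in $\C\langle\!\langle z\rangle\!\rangle\langle\!\langle w\rangle\!\rangle$ --- injectivity of the expansion maps. That statement is nowhere in the paper, your proposal does not prove it, and it is not automatic precisely because of the zero divisors ($z\,\Omega^0_z=0$, $\Delta_-(z-w)\,\Omega^0_z=0$, \dots) that you correctly identified as the obstruction to the textbook argument; you have not avoided the zero-divisor subtlety, only relocated it. The alternative shortcut --- invoking the three-expansion statement directly --- is Theorem \ref{thm:assoc}, whose proof in the paper \emph{uses} Proposition \ref{prop:skew}, so that route is circular.

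The paper's own proof does the work you are missing at the level of distributions rather than of $\three$: it applies the delta-function form of locality (Proposition \ref{prop:locality}, conditions 3/5) to the vacuum, uses Lemma \ref{lem:translation}(1) to get $Y(a,z)e^{w\pd}b = (-1)^{|a||b|}Y(b,w)e^{z\pd}a + \sum_n \tfrac{1}{n!}\pd^n_w\Delta(z-w)C^n(w)|0\rangle$, and then splits this identity in $\CV\otimes\CK^{z,w}_{dist}$ into four components according to distribution type. The regular part of the claim follows by setting $w=0$; the singular part $Y(a,z)_-b$ cannot be obtained by setting $w=0$ (the $\Omega^m_w$ terms obstruct it), and is instead extracted by identifying $a_{(n)}b = C^n_{-1}|0\rangle$, applying Lemma \ref{lem:translation}(3) once more to the $\Delta_+$ component, and using linear independence of the distributions $w^i\pd^j_w\Delta_+(z-w)$ to match coefficients. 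If you want to salvage your route, you must either prove injectivity of the maps $\three\to\C\langle\!\langle z\rangle\!\rangle\langle\!\langle w\rangle\!\rangle$ and $\three\to\C\langle\!\langle w\rangle\!\rangle\langle\!\langle z\rangle\!\rangle$ (a uniqueness statement in the spirit of Lemma \ref{lem:deltafunction}, which would be a nice addition but is real work), or else carry out the componentwise extraction in $\CK^{z,w}_{dist}$ --- at which point you have reproduced the paper's argument.
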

\begin{proof}
	Mutual locality of $Y(a,z)$ and $Y(b,z)$ ensures that there exist fields $C^n(w)$ such that
	\beqn
		Y(a,z) Y(b,w)|0\rangle = (-1)^{|a||b|} Y(b,w) Y(a,z)|0\rangle + \sum_{n \geq 0} \tfrac{1}{n!} \pd^n_w \Delta(z,w) C^n(w)|0\rangle
	\eeqn
	We then apply the first assertion in Lemma \ref{lem:translation} to both sides to conclude
	\beqn
		Y(a,z) e^{w \pd} b = (-1)^{|a||b|} Y(b,w) e^{z \pd} a + \sum_{n \geq 0} \tfrac{1}{n!}\pd^n_w \Delta(z,w) C^n(w)|0\rangle
	\eeqn
	This equality implies the following four equalities:
	\beqn
	\begin{aligned}
		Y(a,z)_+ e^{w \pd} b & = (-1)^{|a||b|} Y(b,w)_+ e^{z \pd} a\\
		Y(a,z)_- e^{w \pd} b & = \sum_{n\geq0} \tfrac{1}{n!} \pd^n_w \Delta_-(z,w) C^n(w)_+|0\rangle\\
		0 & = (-1)^{|a||b|}Y(b,w)_- e^{z \pd}a - \sum_{n\geq0} \tfrac{1}{n!} \pd^n_w \Delta_+(z,w) C^n(w)_+|0\rangle\\
		0 & = - \sum_{n\geq0}\tfrac{1}{n!} \pd^n_w \Delta_-(z,w) C^n(w)_-|0\rangle\\
	\end{aligned}
	\eeqn
	
	Applying the third assertion of Lemma \ref{lem:translation} to the first equation and setting $w=0$ leads to
	\beqn
		Y(a,z)_+ b = (-1)^{|a||b|} e^{z \pd} Y(b,-z)_+ a
	\eeqn
	Setting $w=0$ in the second equation, and using $\pd^n_w \Delta_-(z,w)|_{w=0} = n! \Omega^n_z$, gives us
	\beqn
		Y(a,z)_- b = \sum_{n\geq0} \Omega^n_z C^n_{(-1)}|0\rangle \rightsquigarrow a_{(n)}b = C^n_{-1}|0\rangle
	\eeqn
	We cannot set $w=0$ in the third equality due to the factors of $\Omega^m_w$, but we can once again apply the third assertion of Lemma \ref{lem:translation} to turn it into
	\beqn
		(-1)^{|a||b|}e^{z \pd}Y(b,w-z)_- a = \sum_{n\geq0} \tfrac{1}{n!} \pd^n_w \Delta_+(z,w) C^n(w)_+|0\rangle
	\eeqn
	where we identify $m! \Omega^m_{w-z}$ with the expansion $(-1)^m \pd^m_w \Delta_+(z,w)$. The left-hand side of this equality is thus equal to
	\beqn
		(-1)^{|a||b|} \sum_{m,l\geq 0} \frac{z^l(-1)^m}{l!m!} \pd^m_w \Delta_+(z,w) \pd^l b_{(m)}a
	\eeqn
	expanding $z = (z-w) + w$ and taking the term proportional to $w^0$ gives the equality
	\beqn
		(-1)^{|a||b|} \sum_{n,l\geq 0} \frac{(-1)^{n+l}}{l!n!} \pd^m_w \Delta_+(z,w) \pd^l b_{(n+l)}a
	\eeqn
	As the distributions $w^i \pd^j_w \Delta_+(z,w)$ are linearly independent, we can equate the coefficients of $\pd^n_w \Delta_+(z,w)$ on both sides of the above equation to find
	\beqn
		(-1)^{|a||b|+n} \sum_{l\geq 0} \frac{(-1)^l}{l!}\pd^l b_{(n+l)}a = C^n_{-1}|0\rangle = a_{(n)} b
	\eeqn
	for all $n \geq0$. Put together, these equations say 
	\beqn
		(-1)^{|a||b|}e^{z \pd} Y(b,-z)_- a = Y(a,z)_- b
	\eeqn
	as desired.
\end{proof}

\subsection{Associativity and the operator product expansion}
The definitions we have given and results we have proven provide obvious parallels with the theory of vertex algebras. As we have seen, many of the manipulations one performs in a vertex algebra transfer mutatis mutundis to raviolo vertex algebras. The last and most important property we consider is the notion of associativity.

\begin{theorem}
\label{thm:assoc}
	Let $\CV$ be a raviolo vertex algebra and choose $a,b,c \in \CV$, then the three expressions
	\begin{equation*}
		\begin{aligned}
			Y(a,z) Y(b,w) c & \in \CV\langle\!\langle z \rangle\!\rangle\langle\!\langle w \rangle\!\rangle\,,\\
			(-1)^{|a||b|} Y(b,w) Y(a,z) c & \in \CV\langle\!\langle w \rangle\!\rangle\langle\!\langle z \rangle\!\rangle\,, \textit{ and}\\
			Y(Y(a,z-w)b,w)  c & \in \CV\langle\!\langle w \rangle\!\rangle\langle\!\langle z-w \rangle\!\rangle\\
		\end{aligned}
	\end{equation*}
	are expansions, in their respective domains, of the same element of $\CV \otimes \three$.
\end{theorem}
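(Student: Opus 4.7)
The plan is to verify that all three expressions are expansions, in their respective domains, of a single element $f_{a,b,c}(z,w) \in \CV\otimes\three$. The first two expressions are so expanded by the locality axiom and Definition \ref{dfn:mutual}: there exists $f_{a,b,c}(z,w) \in \CV\otimes\three$ whose images under the algebra maps of Lemma \ref{lem:expansions} into $\C\langle\!\langle z\rangle\!\rangle\langle\!\langle w\rangle\!\rangle$ and $\C\langle\!\langle w\rangle\!\rangle\langle\!\langle z\rangle\!\rangle$ are precisely the first two expressions. The task is to identify the third expansion of $f_{a,b,c}$, into $\C\langle\!\langle w\rangle\!\rangle\langle\!\langle z-w\rangle\!\rangle$, with $Y(Y(a,z-w)b, w)c$.

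First I would handle the vacuum case $c = |0\rangle$ directly. By parts (1) and (3) of Lemma \ref{lem:translation},
\begin{equation*}
Y(a,z)Y(b,w)|0\rangle = Y(a,z) e^{w\pd}b = e^{w\pd}Y(a,z-w)b .
\end{equation*}
By part (1) alone, applied coefficient-wise in $z-w$, also $Y(Y(a,z-w)b, w)|0\rangle = e^{w\pd}Y(a,z-w)b$. So for $c = |0\rangle$ both sides are expansions, in their respective domains, of the common element $e^{w\pd}Y(a,z-w)b \in \CV\otimes\three$.

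For general $c$, I would write the third expansion of $f_{a,b,c}(z,w)$ as
\begin{equation*}
\sum_{n<0}(z-w)^{-n-1} G^n(w)c + \sum_{m\geq 0}\Omega_{z-w}^m G^m(w)c ,
\end{equation*}
thereby defining, for each $n\in\Z$, an endomorphism-valued raviolo distribution $G^n(w)\in\End(\CV)\otimes\CK_{dist}$. The theorem would then follow from the identification $G^n(w) = Y(a_{(n)}b, w)$ as fields on $\CV$ for each $n$, since that would give $(\text{third expansion of } f_{a,b,c}) = Y(Y(a,z-w)b, w)c$. To make this identification I would apply Proposition \ref{prop:uniqueness} (Goddard's uniqueness), which requires: (i) $G^n(w)$ is a raviolo field; (ii) $G^n(w)$ is mutually local with $Y(d,u)$ for every $d\in\CV$; and (iii) $G^n(w)|0\rangle = Y(a_{(n)}b, w)|0\rangle$. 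Item (iii) is the vacuum case above, compared coefficient-wise in $z-w$, while (i) follows from the uniform pole bound in $(z-w)$ built into Definition \ref{dfn:mutual} (only finitely many $G^m$ with $m\geq 0$ are nonzero) together with the fact that the third expansion lands in the $\C\langle\!\langle w\rangle\!\rangle$-coefficients.

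The main obstacle is item (ii), establishing mutual locality of the extracted coefficients with every $Y(d,u)$. I expect the strategy is to introduce a third variable $u$ and apply pairwise mutual locality to the triple $Y(a,z), Y(b,w), Y(d,u)$: the six orderings of their composition should expand a common element of a natural three-point raviolo algebra $\CK_{z,w,u,z-w,z-u,w-u}$ that extends $\three$ and Lemma \ref{lem:expansions} from two points to three. Extracting the $(z-w)$-basis coefficients of this common element should transfer pairwise locality to mutual locality of each $G^n(w)$ with $Y(d,u)$. Once (ii) is in hand, Goddard's uniqueness yields $G^n(w) = Y(a_{(n)}b, w)$, completing the proof.
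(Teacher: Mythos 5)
Your reduction of the theorem to identifying the coefficient fields $G^n(w)$ of the third expansion with $Y(a_{(n)}b,w)$ via Proposition \ref{prop:uniqueness} is a reasonable skeleton, and your handling of the vacuum case and of conditions (i) and (iii) is essentially sound. But the proof has a genuine gap exactly where you flag "the main obstacle": condition (ii), mutual locality of each $G^n(w)$ with every $Y(d,u)$, is not established -- it is deferred to an unconstructed three-point raviolo function algebra $\CK_{z,w,u,\dots}$ and to an unproven claim that all six orderings of $Y(a,z)Y(b,w)Y(d,u)$ expand a common element of it. In the ordinary vertex-algebra setting such a three-point statement is usually proved by clearing denominators: multiply by $\prod (z_i-z_j)^{N_{ij}}$, observe all orderings then agree in $\CV[\![z_1,z_2,z_3]\!][z_i^{-1}]$, and divide back, which relies on $\C(\!(z)\!)$ being a field. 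In the raviolo setting this route breaks down: $\CK=\C\langle\!\langle z\rangle\!\rangle$ has zero divisors (indeed $z\,\Omega^0_z=0$), which is precisely why Lemma \ref{lem:deltafunction} needs the extra condition (2) beyond $(z-w)^{N+1}f=0$ and why Definition \ref{dfn:mutual} bounds poles uniformly. So the three-point compatibility you "expect" is not routine; constructing the algebra, proving the six-fold common expansion, and showing that coefficient extraction in the $(z-w)$-direction preserves locality would together be at least as much work as the theorem itself, and none of it appears in the paper.

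The paper avoids all of this by the standard FBZ route (Theorem 3.2.1 of \cite{FBZ}): using skew-symmetry (Proposition \ref{prop:skew}) and the conjugation formula $e^{-w\pd}Y(a,z)e^{w\pd}=Y(a,z-w)$ (Lemma \ref{lem:translation}, part 3) one rewrites
\begin{equation*}
Y(a,z)Y(b,w)c=(-1)^{|b||c|}e^{w\pd}\,Y(a,z-w)Y(c,-w)b,\qquad
Y(Y(a,z-w)b,w)c=(-1)^{(|a|+|b|)|c|}e^{w\pd}\,Y(c,-w)Y(a,z-w)b,
\end{equation*}
so that the equality of the first and third expressions becomes an instance of \emph{two-point} mutual locality of $Y(a,\cdot)$ and $Y(c,\cdot)$ in the variables $z-w$ and $-w$, which is already available from the locality axiom and Proposition \ref{prop:locality}. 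If you want to salvage your approach, the fix is either to prove the three-point expansion statement in the raviolo setting (a substantial addition), or to adopt this skew-symmetry reduction, which requires no Goddard-uniqueness argument and no coefficient fields $G^n(w)$ at all.
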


\begin{proof}
	The proof of Theorem 3.2.1 of \cite{FBZ} transfers without issue. For completeness, we sketch the argument. First, mutual locality of $Y(a,z)$ and $Y(b,w)$ implies the assertion that the first two expressions are expansions of the same element. We now show that the first and third expressions are expansions of the same element.
	
	Proposition \ref{prop:skew} and property 3) in Lemma \ref{lem:translation} together imply
	\beqn
		Y(a,z) Y(b,z) c = (-1)^{|b||c|}e^{w \pd} Y(a,z-w) Y(c,-w) b
	\eeqn
	cf. Eq. (3.2.4) of \cite{FBZ}. Similarly, Proposition \ref{prop:skew} implies
	\beqn
		Y(Y(a,z-w)b,w)c = (-1)^{(|a|+|b|)|c|}e^{w \pd} Y(c,-w) Y(a,z-w)b
	\eeqn
	cf. Eq. (3.2.5) of \cite{FBZ}. Mutual locality of $Y(a,z-w)$ and $Y(c,-w)$ then implies that $Y(a,z) Y(b,z) c$ and $Y(Y(a,z-w)b,w)c$ are expansions of the same element.
\end{proof}

As with vertex algebras, we often abbreviate the above result by saying $Y(a,z)Y(b,w)c$ and $Y(Y(a,z-w)b,w)c$ are equal to one another, with the understanding that they are expansions of the same element of $\three$ in different regions. Phrased differently, we find that for any $c$
\beqn
	Y(Y(a,z-w)b)c = \bigg(\sum_{m<0} (z-w)^{-m-1} Y(a_{(m)}b,w) + \sum_{m\geq0} \Omega^m_{z-w} Y(a_{(m)}b,w)\bigg)c
\eeqn
is also equal, by Proposition \ref{prop:locality} and Taylor expanding $Y(a,z)$, to
\beqn
	Y(a,z)Y(b,w)c = \bigg(\sum_{m \geq 0}\tfrac{1}{m!}(z-w)^{m} \norm{\pd^m_wY(a,w) Y(b,w)} + \sum_{m\geq0} \Omega^m_{z-w}A^m(w)\bigg)c
\eeqn
for some fields $A^m(w)$. Comparing the coefficients of $(z-w)^m$ and $\Omega^m_{z-w}$ in these two expressions leads us to the following equality (with the usual caveat that the two sides are expansions in different regions)
\be
\label{eq:OPE}
\begin{aligned}
	Y(a,z)Y(b,w) & = \sum_{n < 0} (z-w)^{-n-1} Y(a_{(n)} b,w) + \sum_{n \geq 0} \Omega_{z-w}^n Y(a_{(n)} b, w)\\
	& = \norm{Y(a,z) Y(b,w)} + \sum_{n \geq 0} \Omega_{z-w}^n Y(a_{(n)} b, w) \\
\end{aligned}
\ee
We call this is the \emph{operator product expansion} (OPE) of the fields $Y(a,z)$ and $Y(b,w)$ and call coefficients of $\Omega_{z-w}^n$ the \emph{singular terms} of the OPE and the remaining terms \emph{regular}. Proposition \ref{prop:skew} implies that the OPE satisfies the following skew symmetry relation:
\be\label{eqn:skewsymmetry}
\begin{aligned}
	Y(b,z) Y(a,w) & = \norm{Y(b,z) Y(a,w)}\\
	& + \sum_{n \geq 0} \Omega_{z-w}^n \bigg((-1)^{|a||b|}\sum_{l\geq0} \frac{(-1)^{n+l}}{l!} Y(\pd^l a_{(n+l)}b, w) \bigg)
\end{aligned}
\ee
Following standard vertex algebra conventions, for brevity we will sometimes omit the regular terms in the OPE and write
\be
	Y(a,z) Y(b,w) \sim \sum_{n \geq 0} \Omega_{z-w}^n Y(a_{(n)}b, w)
\ee
Of course, the notation $\sim$ is borrowed from the theory of vertex algebras and refers to equality up to regular terms. 
As all the $Y(a,z)$ are fields, there is at most a finite number of singular terms in the OPE of any two fields.

An immediate consequence of this analysis, and hence the associativity property established in Theorem \ref{thm:assoc}, is the following result showing that the state-operator correspondence for a general state is uniquely characterized by those of ``simpler'' states.
\begin{corollary}
\label{cor:stateopNOP}
	For any $a^1, \dots, a^l \in \CV$ and $j_1, \dots, j_l<0$, we have
	\begin{equation*}
	\begin{aligned}
		& Y(a^1_{(j_1)} \dots a^l_{(j_l)}|0\rangle,z) =\\
		& \qquad \frac{1}{(-j_1-1)! \dots (-j_l-1)!} \norm{\pd_z^{-j_1-1}Y(a^1,z) \dots \pd^{-j_l-1}_zY(a^l,z)}
	\end{aligned}
	\end{equation*}
\end{corollary}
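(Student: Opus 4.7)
The plan is to prove the corollary by induction on $l$, using Theorem \ref{thm:assoc} to convert modes acting on the vacuum into normal-ordered products of the corresponding fields. The pairwise mutual locality of all state-operator correspondences (locality axiom plus Proposition \ref{prop:derivlocality}) will ensure that Proposition \ref{prop:assoc} applies to freely reassociate the normal-ordered products that appear.

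For the base case $l = 1$, part 1) of Lemma \ref{lem:translation} gives $Y(a^1,z)|0\rangle = e^{z\pd} a^1 = \sum_{m \geq 0} \tfrac{z^m}{m!} \pd^m a^1$, and comparing with the mode expansion of $Y(a^1,z)$ yields $a^1_{(-m-1)}|0\rangle = \tfrac{1}{m!}\pd^m a^1$ for every $m \geq 0$. Setting $m = -j_1-1$ and iterating part 2) of Lemma \ref{lem:translation} then gives
\[
Y(a^1_{(j_1)}|0\rangle, z) = \tfrac{1}{(-j_1-1)!} Y(\pd^{-j_1-1} a^1, z) = \tfrac{1}{(-j_1-1)!} \pd_z^{-j_1-1} Y(a^1, z).
\]

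The inductive step rests on the following intermediate claim: for any $a, b \in \CV$ and any integer $j < 0$,
\[
Y(a_{(j)} b, w) = \tfrac{1}{(-j-1)!}\,\norm{\pd_w^{-j-1} Y(a, w)\, Y(b, w)}.
\]
To establish this, I would compare the two sides of the associativity identity from Theorem \ref{thm:assoc}. On one hand, the OPE (\ref{eq:OPE}) combined with Taylor expansion of $Y(a,z)$ in $z$ around $w$ gives
\[
Y(a,z) Y(b,w) = \sum_{k\geq0}\tfrac{(z-w)^k}{k!}\,\norm{\pd_w^k Y(a,w)\, Y(b,w)} + \sum_{n\geq0} \Omega_{z-w}^n\, Y(a_{(n)}b, w),
\]
while on the other, the mode expansion of $Y(a, z-w)$ in $(z-w)$ directly yields
\[
Y\!\left(Y(a,z-w)b,\, w\right) = \sum_{m<0}(z-w)^{-m-1}\, Y(a_{(m)}b, w) + \sum_{m\geq0}\Omega^m_{z-w}\, Y(a_{(m)}b, w).
\]
Since by Theorem \ref{thm:assoc} these are expansions of the same element of $\CV \otimes \three$, and the coefficients of $(z-w)^k$ and $\Omega^n_{z-w}$ are linearly independent generators of $\three$, one may equate coefficients of $(z-w)^{-j-1}$ with $k = -j-1 \geq 0$ to obtain the claim; the $\Omega^n_{z-w}$ pieces are automatically consistent.

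Now fix $l \geq 2$ and assume the formula for $l-1$ factors. Setting $b = a^2_{(j_2)} \cdots a^l_{(j_l)}|0\rangle$, the inductive hypothesis gives $Y(b,z)$ as $\tfrac{1}{\prod_{i\geq 2}(-j_i-1)!}$ times $\norm{\pd_z^{-j_2-1} Y(a^2,z) \cdots \pd_z^{-j_l-1} Y(a^l,z)}$. Applying the intermediate claim with $a = a^1$ and $j = j_1$ gives
\[
Y(a^1_{(j_1)} b, z) = \tfrac{1}{(-j_1-1)!}\,\norm{\pd_z^{-j_1-1} Y(a^1, z)\, Y(b, z)},
\]
and substituting the expression for $Y(b,z)$ followed by a reassociation via Proposition \ref{prop:assoc} delivers the desired identity.

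The main obstacle is the extraction of the intermediate claim from Theorem \ref{thm:assoc}: one must trust that the regular $(z-w)^k$-part of the OPE cleanly reproduces the negative modes of $Y(a, z-w)$ applied to $b$. Once this is in hand, the induction is routine precisely because of the commutativity and associativity of the normal-ordered product on mutually local raviolo fields (Proposition \ref{prop:assoc})---a property that fails in ordinary vertex algebras and is what makes the reconstruction formula here take such a clean, unambiguous form.
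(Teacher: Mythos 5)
Your proposal is correct and follows essentially the same route as the paper: induction on $l$ with the base case from Lemma \ref{lem:translation}, and the inductive step via the relation $Y(a_{(-n-1)}b,w) = \tfrac{1}{n!}\norm{\pd^n_w Y(a,w)Y(b,w)}$, which the paper extracts from Theorem \ref{thm:assoc} by exactly the coefficient comparison you describe in deriving the OPE \eqref{eq:OPE}. Your explicit appeal to Proposition \ref{prop:assoc} to reassociate the nested normal-ordered products is a harmless (and slightly more careful) addition to the paper's argument.
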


\begin{proof}
	We prove this by induction on $l$. The case of $l = 1$ follows from Lemma \ref{lem:translation}. The general case follows from the relation
	\beqn
		Y(a_{(-n-1)}b,w) = \frac{1}{n!}\norm{\pd^n_w Y(a,w) Y(b,w)}
	\eeqn
	derived above for general $a,b$.
\end{proof}

Another immediate corollary of our OPE formula and the expression in Proposition \ref{prop:locality} for the commutators of the modes of two mutually local fields.
\begin{corollary}
\label{cor:derivation}
	The linear map $a_{(0)}$ is a derivation of $\CV$ for any $a \in \CV$.
\end{corollary}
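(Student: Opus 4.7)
The plan is to establish the field-level identity
\[
[a_{(0)}, Y(b,w)] = Y(a_{(0)}b, w)
\]
for all $b \in \CV$, which by Definition is exactly the statement that $a_{(0)}$ is a derivation (of cohomological degree $|a|-1$, since $a_{(0)}$ is a non-negative mode). The idea is to extract this identity as the leading singular piece of the OPE developed in Theorem \ref{thm:assoc}.

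The first step is to invoke locality in its commutator form (Proposition \ref{prop:locality}(3)) together with the associativity identification of the OPE coefficients established in Theorem \ref{thm:assoc} and Eq. \eqref{eq:OPE}. This gives an identity in $\End(\CV) \otimes \CK^{z,w}_{dist}$,
\[
[Y(a,z), Y(b,w)] = \sum_{n \geq 0} \tfrac{1}{n!}\, \pd_w^n \Delta(z-w)\, Y(a_{(n)}b, w),
\]
with at most finitely many nonzero terms.

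The second step is to apply $\Res_z\,\d z$ to both sides. Since $\Res_z\,\d z$ extracts the $\Omega^0_z$ coefficient of $Y(a,z)$ (namely $a_{(0)}$), the left-hand side becomes $[a_{(0)}, Y(b,w)]$ (the $z$-residue acts only on the $z$-dependence and passes through $Y(b,w)$ compatibly with the graded commutator). For the right-hand side, I would inspect the $\Omega^0_z$ coefficient of $\frac{1}{n!} \pd_w^n \Delta(z-w)$: using
\[
\Delta(z-w) = \sum_{k \geq 0} w^k \Omega^k_z - \sum_{k < 0} z^{-k-1} \Omega^{-k-1}_w,
\]
the $w$-derivatives leave the $\Omega^k_z$ factors in the first sum untouched, and the second sum contributes no $\Omega^m_z$. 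Hence the $\Omega^0_z$ coefficient of $\frac{1}{n!}\pd_w^n\Delta(z-w)$ is $\pd_w^n(w^0)/n! = \delta_{n,0}$, so all but the $n=0$ term of the sum vanishes and the right-hand side collapses to $Y(a_{(0)}b, w)$.

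The only place where care is required is the sign bookkeeping when commuting $\Res_z\,\d z$ past the graded commutator and the field $Y(b,w)$. If these signs feel delicate, the computation can equivalently be carried out mode-by-mode using condition (4) of Proposition \ref{prop:locality}: the binomial $\binom{0}{n}$ vanishes for $n \geq 1$, so in every case of that formula only the $n=0$ term survives and one reads off $[a_{(0)}, b_{(l)}] = (a_{(0)}b)_{(l)}$ (with the correct graded sign on the commutator, matching the fact that $a_{(0)}$ has degree $|a|-1$). Either route yields the derivation property and hence the corollary.
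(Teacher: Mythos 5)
Your proposal is correct and follows the paper's own route: the paper likewise identifies the fields $C^n(w)$ of Proposition \ref{prop:locality} with $Y(a_{(n)}b,w)$ via the OPE from Theorem \ref{thm:assoc} and reads off $[a_{(0)}, Y(b,w)] = Y(a_{(0)}b,w)$, which is exactly your residue/$\Omega^0_z$-coefficient extraction (the paper just leaves that extraction, and the sign bookkeeping you flag, implicit).
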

\begin{proof}
	Identify $C^0(w)$ in Proposition \ref{prop:locality} with $Y(a_{(0)}b,z)$, we find
	\beqn
		[a_{(0)}, Y(b,z)] = Y(a_{(0)}b,z)
	\eeqn
\end{proof}

To finish this subsection, we return to the proof of Proposition \ref{prop:Sraviolo} providing an alternative description of raviolo vertex algebras over a graded commutative unital $\C$-algebra $S$.

\begin{proof}[Proof of Proposition \ref{prop:Sraviolo}]
	Suppose $\CV$ is a raviolo vertex algebra over $S$. As any graded $S$-module is itself a graded vector space (over $\C$), it is clear that the graded vector space underlying $\CV$ is simultaneously an $S$-module and a raviolo vertex algebra. By an abuse of notation, we denote the data (state-operator correspondence, vacuum vector, translation operator) by the same symbols. The fact that the $S$-action on $\CV$ commutes with $\pd$ and $Y(a,z)$ for any $a \in \CV$ is due to the requirement that $\pd$ and the modes $a_{(m)}$ are morphisms of $S$-modules. To see that the action of $\kappa \in S$ is realized by a field, let $\kappa^\CV$ be the linear map realizing action of $\kappa$ on $\CV$, then
	\beqn
	\kappa^\CV = \kappa^\CV \id_\CV = \kappa^\CV Y(|0\rangle, z) = Y(\kappa^\CV |0\rangle, z)
	\eeqn
	where we used the vacuum axiom and the fact that $Y(-,z): \CV \to \End_S(\CV)\otimes \CK_{dist}$ is a morphism of $S$-modules. The fact that $Y(\kappa^\CV |0\rangle, z)$ is constant is due to the translation axiom and the fact that $\pd$ is a morphism of $S$-modules.
	
	Now suppose $\CV$ is both a raviolo vertex algebra and an $S$-module such that the $S$-action commutes with $\pd$ and $Y(a,z)$ for all $a \in \CV$. The former implies that $\pd$ is a morphism of $S$-modules. The latter implies that all of the modes $a_{(m)}$ are morphisms of $S$-modules, hence $Y(a,z)$ is a raviolo field over $S$ for all $a \in \CV$. Now choose $\kappa \in S$, we again denote by $\kappa^\CV$ the linear map representing the action of $\kappa$ on $\CV$. Consider the vector $\kappa^\CV |0\rangle$, then the corresponding field is necessarily constant because
	\beqn
	\pd_z Y(\kappa^\CV|0\rangle,z) = Y(\pd \kappa^\CV|0\rangle, z) = Y(\kappa^\CV\pd|0\rangle, z) = 0
	\eeqn
	where we used property 2) in Lemma \ref{lem:translation} in the first equality, that $\pd$ commutes with the $S$-action in the second, and the vacuum axiom in the third. In particular, we find that the vacuum axiom implies
	\beqn
	Y(\kappa^\CV|0\rangle,z) |0\rangle = \kappa^\CV|0\rangle
	\eeqn
	The uniqueness result in Proposition \ref{prop:uniqueness} then implies
	\beqn
	Y(\kappa^\CV |0\rangle, z) = \kappa^\CV
	\eeqn
	as the $S$-action commutes with all $Y(a,z)$, hence the constant field $\kappa^\CV$ is mutually local with them.
	
	Using this description of the $S$-action in terms of fields, we can see that $Y(-,z)$ is a morphism of $S$-modules using $Y(a_{(-1)}b,z) = \norm{Y(a,z)Y(b,z)}$, a consequence of Theorem \ref{thm:assoc}:
	\beqn
	\kappa^\CV Y(a,z) = \norm{Y(\kappa^\CV|0\rangle,z) Y(a,z)} = Y(\kappa^\CV a,z)
	\eeqn
	Putting this together, we see that the data $(Y, |0\rangle, \pd, Y)$ matches that of a raviolo vertex algebra over $S$. We already established the fact that $Y(a,z)$ is a raviolo field over $S$ for all $a$; the remaining axioms of a raviolo vertex algebra over $S$ follow from the fact that they are satisfied for $\CV$.
\end{proof}

\subsection{Interpreting the OPE}
Raviolo vertex algebras are meant to capture the algebraic structure underlying local operators in a three-dimensional holomorphic-topological quantum field theory. In the theory of vertex algebras, the notion of an OPE encodes the physical expectation that in a 2d holomorphic quantum field theory the insertion of two operators placed at points $z, w$ close to one another can be expressed as a sum of operators inserted at $w$ depending on the separation $(z-w)$. Due to its equally central role in raviolo vertex algebras, we think it is worthwhile to provide a physical interpretation for the above OPE, and why it deserves the name OPE in the first place.

As mentioned in Section \ref{sec:ravfields}, the expansion in $(z-w)$ of the normal-ordered product $\norm{Y(a,z) Y(b,w)}$ encodes the physical operator product expansion of two local operators placed at points $\ul{z}$, $\ul{w}$ in $\C \times \R$. The specialized normal-ordered product $\norm{Y(a,w) Y(b,w)} = Y(a_{(-1)}b,w)$ is the field corresponding to the physical operator product of $a$ and $b$. The remaining fields $Y(a_{(-m-1)}b,w)$ encode the physical operator product of derivatives of $a$ with $b$.

The fields $Y(a_{(m)}b,w)$ for $m \geq 0$ have a different interpretation: they encode the singularities in the physical operator product expansion of the \emph{first descendant} of $a$ with $b$. Indeed, the singular terms encode the fields corresponding to a tower of brackets that we define as
\beqn\label{eqn:bracket}
	\{\{a,b\}\}^{(n)} \define a_{(n)} b , \quad n \geq 0
\eeqn
Momentarily we will show that this tower of brackets and the operator product satisfy relations akin to that of a Poisson algebra. Indeed, these brackets are the raviolo vertex algebra avatar of the holomorphic-topological descent brackets of \cite{OhYagi, CostelloDimofteGaiotto-boundary}.

Notice that for each $n \geq 0$ the bracket $\{\{-,-\}\}^{(n)}$ defined in \eqref{eqn:bracket} is of cohomological degree $-1$ while the products $(-)_{(-n-1)} (-)$ are of cohomological degree zero.
Let 
\beqn
	a \cdot b \define a_{(-1)} b .
\eeqn
These operations satisfy the following relations.

\begin{prop}
	For each $n \geq 0$ the bracket $\{\{-,-\}\}^{(n)}$ and product $\cdot$ satisfy the following relations where $a,b,c \in \CV$:
	\begin{enumerate}
		\item (Super commutativity) $a \cdot b = (-1)^{|a| |b|} b \cdot a$.
		\item (Associativity) $a \cdot (b\cdot c) = (a \cdot b) \cdot c$.
		\item (Graded skew symmetry) $\{\{a,b\}\}^{(n)} = (-1)^{|a||b|} \sum_{m\geq0} \frac{(-1)^{n+m}}{m!} \pd^m\{\{b,a\}\}^{(n+m)}$.
		\item (Derivation) $\{\{a,b\cdot c\}\}^{(n)} = \{\{a,b\}\}^{(n)} \cdot c + (-1)^{(|a|-1)|b|} b \cdot \{\{a,c\}\}^{(n)}$.
	\end{enumerate}
	
	Additionally, for $n,m \geq 0$ the Jacobi-type identity holds
	\beqn
	\begin{aligned}
		\{\{a,\{\{b,c\}\}^{(m)}\}\}^{(n)} & = (-1)^{|a|+1}\sum_{l\geq 0} \binom{m}{l} \{\{\{\{a,b\}\}^{(l)},c\}\}^{(m+n-l)}\\
		& \qquad + (-1)^{(|a|+1)(|b|+1)} \{\{b,\{\{a,c\}\}^{(n)}\}\}^{(m)}
	\end{aligned}
	\eeqn
\end{prop}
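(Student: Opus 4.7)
The plan is to derive all five identities as straightforward consequences of earlier structural results about the state-operator correspondence and the mutual locality of $Y(a,z), Y(b,z), Y(c,z)$. The key technical tools will be Proposition \ref{prop:assoc} (commutativity/associativity of normal-ordered products of mutually local fields), Proposition \ref{prop:skew} (skew symmetry), Corollary \ref{cor:NOPcomm} (commutators with normal-ordered products), and Proposition \ref{prop:locality}(4) (commutators of modes). In each case I will establish an identity of fields, apply it to the vacuum $|0\rangle$, and evaluate at $z = 0$ (or $w = 0$), using Corollary \ref{cor:stateopNOP} to recognize $Y(a \cdot b, z) = \norm{Y(a,z) Y(b,z)}$.

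For super commutativity and associativity of $\cdot$, I will invoke Proposition \ref{prop:assoc} for the pairwise mutually local fields $Y(a,z), Y(b,z), Y(c,z)$: the equalities $\norm{Y(a,z) Y(b,z)} = (-1)^{|a||b|} \norm{Y(b,z) Y(a,z)}$ and $\norm{Y(a,z) \norm{Y(b,z) Y(c,z)}} = \norm{\norm{Y(a,z) Y(b,z)} Y(c,z)}$ translate, via the above identification, into $Y(a \cdot b, z) = (-1)^{|a||b|} Y(b \cdot a, z)$ and $Y(a \cdot (b \cdot c), z) = Y((a \cdot b) \cdot c, z)$. Applying both sides to $|0\rangle$ at $z = 0$ and using the vacuum axiom yields (1) and (2).

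Graded skew symmetry (3) will be essentially a restatement of the key formula obtained inside the proof of Proposition \ref{prop:skew}, which directly identifies $a_{(n)} b$ with $(-1)^{|a||b| + n} \sum_{l \geq 0} \tfrac{(-1)^l}{l!} \pd^l b_{(n+l)} a$; rewriting $(-1)^{|a||b|+n}(-1)^l$ as $(-1)^{|a||b|}(-1)^{n+l}$ gives exactly the claimed identity for $\{\{a,b\}\}^{(n)}$ in terms of $\{\{b,a\}\}^{(n+m)}$.

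The derivation property (4) and the Jacobi-type identity (5) will both be derived from commutators of $Y(a,z)$ against another field, combined with the OPE \eqref{eq:OPE}. For (4), Corollary \ref{cor:NOPcomm} applied to $Y(a,z), Y(b,w), Y(c,w)$, together with $Y(b \cdot c, w) = \norm{Y(b,w) Y(c,w)}$, gives two expressions for $[Y(a,z), Y(b \cdot c, w)]$ as sums over the linearly independent distributions $\tfrac{1}{m!} \pd_w^m \Delta(z-w)$; matching coefficients, applying to $|0\rangle$, and evaluating at $w = 0$ yields the desired derivation identity. For (5), Proposition \ref{prop:locality}(4) specialized to $n, m \geq 0$ gives the explicit mode commutator $[a_{(n)}, b_{(m)}]$; applying it to an arbitrary $c \in \CV$ and rearranging produces the Jacobi identity. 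The main technical obstacle throughout is careful Koszul sign tracking: the non-negative modes $a_{(m)}$ carry cohomological degree $|a|-1$ rather than $|a|$, and the extra factor $(-1)^{|a|+1}$ that appears in the mode commutator of Proposition \ref{prop:locality}(4) is precisely what produces the prefactors $(-1)^{|a|+1}$ and $(-1)^{(|a|+1)(|b|+1)}$ in the stated Jacobi identity.
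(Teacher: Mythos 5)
Your proposal is correct and follows essentially the same route as the paper's own proof: super commutativity and associativity from Proposition \ref{prop:assoc}, graded skew symmetry from the identity established inside the proof of Proposition \ref{prop:skew}, the derivation property from Corollary \ref{cor:NOPcomm}, and the Jacobi-type identity from the mode commutators of Proposition \ref{prop:locality} for non-negative modes, with the identification $Y(a\cdot b,z)=\norm{Y(a,z)Y(b,z)}$ and evaluation on the vacuum supplying the details the paper leaves implicit. One small remark: carrying out your final rearrangement literally gives the binomial $\binom{n}{l}$ (with $n$ the mode index of $a$, as in Proposition \ref{prop:locality}), so the $\binom{m}{l}$ appearing in the stated identity is best read as a typographical slip rather than something your argument needs to reproduce.
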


\begin{proof}
	Super commutativity and associativity follow from Proposition \ref{prop:assoc}; graded skew-symmetry follows from Proposition \ref{prop:skew}; the derivation property follows from Corollary \ref{cor:NOPcomm}; the Jacobi-type identity follows from the commutator of the modes $a_{(n)}$ and $b_{(m)}$ for $n,m\geq0$ established in Proposition \ref{prop:locality} together with the above identification.
\end{proof}

\subsection{Comparison to 1-shifted Poisson vertex algebras}
\label{sec:PVAcomp}

As we have mentioned several times, there is another model for the algebraic structure furnished by local operators in a three-dimensional mixed holomorphic-topological quantum field theory as established by \cite{OhYagi}: that of a 1-shifted Poisson vertex algebra. In this section we compare the notion of a 1-shifted Poisson vertex algebra to that of a raviolo vertex algebra, finding they are totally equivalent.
We first recall the definition of a vertex Lie algebra, cf. Chapter 16 of \cite{FBZ}.

A vertex Lie algebra is the data $(\CL, \pd, Y_-)$ where
\begin{itemize}
		\item $\CL = \oplus_r \CL^r$ is a $\Z$-graded vector space.
		\item $\pd: \CL \to \CL$ is an endomorphism of degree $0$ (the translation operator).
		\item $Y_-: \CL \to \End(\CL) \otimes \CK_{dist}^1$ is a linear map of degree $0$.
\end{itemize}
where $Y_-$ maps a vector $a \in \CL$ to a series
\begin{equation*}
		Y_-(a,z) = \sum_{m \geq 0} \Omega^n_z a_{(m)}
\end{equation*}
such that for any $v \in \CL$ we have $a_{(n)} v = 0$ for $n \gg 0$. This data satisfies the following axioms
\begin{enumerate}
		\item For every $a \in \CL$ we have $Y_-(\pd a,z) = \pd_z Y_-(a,z)$ (translation axiom). 
		\item For every $a,b \in \CL$ we have $Y_-(a,z) b = (-1)^{|a||b|} e^{z \pd} Y_-(b,-z) a$ (skew-symmetry axiom).
		\item For every $a,b \in \CL$ we have \begin{equation*}
			[a_{(m)}, Y_-(b,w)] = \sum_{n\geq0} \binom{m}{n} w^{m-n} Y_-(a_{(n)}b,w) .
		\end{equation*} (commutator axiom).
\end{enumerate}

Even though $\CK_{dist}^1$, the space of degree one formal raviolo distributions, appears explicitly this definition of a vertex Lie algebra completely agrees with the standard one (except for the fact that we are considering a cohomologically graded version of a vertex Lie algebra).
More commonly, the expansion of $Y_-$ is in positive powers of the variable $z^{-1}$, but for raviolo vertex algebras it is more natural to use the variables $\Omega^n_z$, $n \geq 0$.
%It is worth noting that the commutator axiom is equivalent the following commutators:
%\beqn
%	[a_{(m)}, b_{(l)}] = (-1)^{|a|+1} \sum_{n \geq0} \binom{m}{n} (a_{(n)}b)_{(m+l-n)}
%\eeqn
%The factor of $(-1)^{|a|+1}$ comes from the fact that $\Omega^n_w$ is degree $1$ and $a_{(m)}$ is %degree $|a|+1$. 
Also recall that it is common to organize the commutators of the modes in a vertex Lie algebra into a \emph{$\lambda$-bracket}, leading to the notion of a Lie conformal algebra, cf. Section 2.7 \cite{Kac}. 
%Perhaps unsurprisingly, an analogous reorganization of the commutators in a 1-shifted vertex Lie algebra results in a 1-shifted Lie conformal algebra. This algebraic structure, as well as its higher-dimensional analogs, was described in unpublished work of the third author.

As with vertex Lie algebras, we can extract a vertex Lie algebra from a raviolo vertex algebra.
\begin{lemma}
\label{lem:RVAtoshiftedVLA}
	Let $\CV$ be a raviolo vertex algebra. The choice $Y_-(a,z) = Y(a,z)_-$ gives the data $(\CV[1], \pd, Y_-)$ the structure of a vertex Lie algebra.
\end{lemma}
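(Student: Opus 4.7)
The plan is to verify each of the three defining axioms of a vertex Lie algebra (translation, skew-symmetry, and commutator) for $Y_- = Y(-,z)_-$ using the properties of the raviolo vertex algebra $\CV$ established earlier. The cohomological shift $\CV[1]$ is made to align the grading conventions of the two structures; at the level of the underlying data, the mode $a_{(m)}$ carries cohomological degree $|a|-1$ for $m \geq 0$ in both conventions, so the shift is primarily cosmetic and the verification of the axioms proceeds identically.

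The translation axiom is immediate from Lemma \ref{lem:translation}(2), which states $Y(\pd a, z) = \pd_z Y(a,z)$. Since $\pd_z \Omega^m_z = -(m+1)\Omega^{m+1}_z$, the operator $\pd_z$ preserves the singular part of any raviolo field, so taking singular parts of this identity yields $Y_-(\pd a, z) = \pd_z Y_-(a,z)$.

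For the skew-symmetry axiom, I would appeal to Proposition \ref{prop:skew} and, more specifically, its proof. Although the statement of that proposition concerns the full field $Y$, the argument actually establishes the identity separately for positive and negative parts; in particular it produces $Y(a,z)_- b = (-1)^{|a||b|} e^{z\pd} Y(b,-z)_- a$, which is exactly the skew-symmetry axiom for $Y_-$.

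For the commutator axiom, start from the OPE presentation in Proposition \ref{prop:locality}(3),
\[
[Y(a,z), Y(b,w)] = \sum_{n \geq 0} \tfrac{1}{n!} \pd_w^n \Delta(z-w)\, Y(a_{(n)}b, w),
\]
and extract the coefficient of $\Omega^m_z$ on both sides for each $m \geq 0$. A direct expansion shows that the coefficient of $\Omega^m_z$ in $\frac{1}{n!}\pd_w^n \Delta(z-w)$ is precisely $\binom{m}{n} w^{m-n}$ (interpreted as zero when $n > m$), giving
\[
[a_{(m)}, Y(b,w)] = \sum_{n \geq 0} \binom{m}{n} w^{m-n}\, Y(a_{(n)}b, w),
\]
and restricting further to the singular-in-$w$ part of each side produces the desired commutator axiom for $Y_-$. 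The main bookkeeping obstacle is the Koszul sign arising from the graded tensor structure of $\CK_{dist} \otimes \End(\CV)$: moving $a_{(m)}$ past the degree-one factor $\Omega^l_w$ produces a factor of $(-1)^{|a|-1}$, which precisely cancels the sign $(-1)^{|a|+1}$ appearing in the mode-level commutator of Proposition \ref{prop:locality}(4) for $m, l \geq 0$. After this cancellation, the vertex Lie algebra axiom holds in its sign-free form, completing the proof.
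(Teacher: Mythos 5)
Your proof is correct and follows essentially the same route as the paper, whose proof is a one-line appeal to exactly the results you cite: Lemma \ref{lem:translation} for the translation axiom, Proposition \ref{prop:skew} for skew-symmetry, and Proposition \ref{prop:locality} for the commutator axiom. The extra detail you supply (that multiplication by $\pd_z$, $e^{z\pd}$, and nonnegative powers of $w$ preserves the singular part, and that the Koszul sign from moving $a_{(m)}$ past $\Omega^l_w$ cancels the $(-1)^{|a|+1}$ in the mode commutator) is accurate bookkeeping that the paper leaves implicit.
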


Notice that the key difference in obtaining a vertex Lie algebra from a vertex or raviolo vertex algebra is the cohomological degree shift by one.
This is because $\CK^1$ is the degree one part of formal functions $\CK$ on the raviolo.

\begin{proof}
	The translation axiom follows from Lemma \ref{lem:translation}; the skew-symmetry axiom follows from the skew-symmetry property established in Proposition \ref{prop:skew}; the commutator axiom follows from mutual locality, cf. Proposition \ref{prop:locality}.
\end{proof}

There is also the other direction: from a vertex Lie algebra one can construct a raviolo vertex algebra as follows.
Let $\CL$ be a vertex Lie algebra.
Define
\beqn
\text{Lie}_{rav}(\CL) \define (\CL \otimes \CK) \slash \text{Im} \left(\del \otimes \id + \id \otimes \del_z\right) .
\eeqn
For $a \in \CL$ denote by $a_{[n]}$ the image of $a \otimes z^n$ in $\text{Lie}_{rav}(\CL)$ for $n \geq 0$.
Similarly, denote by $a_{[n]}$ the image of $a \otimes \Omega^{-n-1}$ in $\text{Lie}_{rav}(\CL)$ for $n <0$.
Note that if $a$ is of cohomological degree $r$ in $\CL$ then $a_{[n]}$ is of cohomological degree $r$ (respectively $r+1$) for $n \geq 0$ (respectively $n < 0$).
Similarly, we let 
\beqn
\text{Lie}_{rav}(\CL)_+ \define \CL[\![z]\!] \slash \text{Im} \left(\del \otimes \id + \id \otimes \del_z\right) .
\eeqn

The proof of the following lemma is completely analogous to the ordinary vertex algebra setting, see \cite[Lemma 16.1.7]{FBZ}.

\begin{lemma}
The graded vector space $\text{Lie}_{rav}(\CL)$ is a graded Lie algebra with Lie bracket defined by
\beqn
\left[a_{[n]}, b_{[m]} \right] = \sum_{k \geq 0} \binom{n}{k} (a_{(n)} \cdot b)_{[n+m-k]} ,
\eeqn
and $\text{Lie}_{rav}(\CL)_+ \subset \text{Lie}_{rav}(\CL)$ is a Lie subalgebra.
\end{lemma}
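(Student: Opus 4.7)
The plan is to follow the classical argument for vertex Lie algebras in \cite[Lemma 16.1.7]{FBZ}, adapted to the raviolo setting where $\CK$ plays the role of $\C(\!(z)\!)$. The proof decomposes into four steps: well-definedness of the bracket on the quotient, graded antisymmetry, Jacobi identity, and the subalgebra property. The formula for the bracket (which, interpreting $(a_{(n)}\cdot b)$ as $(a_{(k)}\cdot b)$, matches the classical form) packages the vertex Lie algebra commutator axiom into a Lie bracket, so many manipulations are purely combinatorial.

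First I would translate the relation defining the quotient into mode language. Using $\partial_z z^k = k z^{k-1}$ and $\partial_z \Omega^m = -(m+1)\Omega^{m+1}$, the image of $\partial \otimes \id + \id \otimes \partial_z$ yields the single relation $(\partial a)_{[k]} = -k\, a_{[k-1]}$ in $\text{Lie}_{rav}(\CL)$, valid for every $k \in \Z$ (with the case $k < 0$ requiring a careful check of the sign conventions for $\Omega$-reindexing). To verify that the bracket descends, I would show
\begin{equation*}
[(\partial a)_{[n]}, b_{[m]}] = -n\,[a_{[n-1]}, b_{[m]}],
\end{equation*}
which reduces via $(j+1)\binom{n}{j+1} = n\binom{n-1}{j}$ to the mode-level identity $(\partial a)_{(k)} \cdot b = -k\,a_{(k-1)} \cdot b$ obtained by extracting $\Omega^k_z$ coefficients from the translation axiom $Y_-(\partial a, z) = \partial_z Y_-(a,z)$. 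The analogous check in the second slot requires $\partial$ to be a derivation of the $(n)$-products, which follows from combining the translation and skew-symmetry axioms.

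Graded antisymmetry is obtained by applying the skew-symmetry axiom $Y_-(a,z)b = (-1)^{|a||b|} e^{z\partial} Y_-(b,-z) a$. Using $\Omega^n_{-z} = (-1)^n \Omega^n_z$ and extracting coefficients of $\Omega^n_z$ yields the mode-level skew-symmetry
\begin{equation*}
a_{(n)} b = (-1)^{|a||b|} \sum_{k\geq 0} \frac{(-1)^{n+k}}{k!}\, \partial^k b_{(n+k)} a;
\end{equation*}
substituting this into the bracket and collapsing the $\partial^k$ terms via $(\partial c)_{[\ell]} = -\ell\, c_{[\ell-1]}$ (with some Vandermonde bookkeeping) produces $[a_{[n]}, b_{[m]}] = -(-1)^{|a||b|}[b_{[m]}, a_{[n]}]$. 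The Jacobi identity is the main obstacle: I would expand $[[a_{[n]}, b_{[m]}], c_{[p]}]$ by two applications of the bracket formula, and likewise for the other cyclic terms, then match the resulting binomial sums using $\binom{n}{k}\binom{k}{\ell} = \binom{n}{\ell}\binom{n-\ell}{k-\ell}$. The nontrivial input is that, after expansion, the difference collapses to the commutator axiom $[a_{(k)}, Y_-(b,w)] = \sum_{\ell} \binom{k}{\ell} w^{k-\ell} Y_-(a_{(\ell)} b, w)$ applied inside the formal-distribution identity; the Koszul signs from the cohomological grading of the modes thread through the same way they do in the commutator calculation of Proposition \ref{prop:locality}.

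Finally, the subalgebra assertion for $\text{Lie}_{rav}(\CL)_+$ is immediate from the shape of the formula: when $n, m \geq 0$ the binomial coefficient $\binom{n}{k}$ forces $k \leq n$, so $n + m - k \geq m \geq 0$ and every term $(a_{(k)} \cdot b)_{[n+m-k]}$ lies in $\text{Lie}_{rav}(\CL)_+$. Thus the bracket restricts to $\text{Lie}_{rav}(\CL)_+$, completing the proof.
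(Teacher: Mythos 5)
Your proposal is correct and is exactly the argument the paper intends: the paper gives no written proof of this lemma, stating only that it is ``completely analogous to the ordinary vertex algebra setting, see [FBZ, Lemma 16.1.7],'' and your four steps (descent of the bracket through $\operatorname{Im}(\partial\otimes\id+\id\otimes\partial_z)$ via $(\partial a)_{(k)}=-k\,a_{(k-1)}$, antisymmetry from the skew-symmetry axiom in the form $a_{(n)}b=(-1)^{|a||b|}\sum_{k}\tfrac{(-1)^{n+k}}{k!}\partial^k b_{(n+k)}a$, Jacobi from the commutator axiom, and the subalgebra claim from $\binom{n}{k}=0$ for $k>n\geq 0$) are precisely that adaptation, with the bracket read as $(a_{(k)}\cdot b)_{[n+m-k]}$, correcting the paper's typo.
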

%\begin{proof}
%This follows from the commutator axiom of a vertex Lie algebra.
%\end{proof}

Next, define
\beqn
\text{Vac}(\CL) \define U\left(\text{Lie}_{rav}(\CL)\right) \otimes_{U\left(\text{Lie}_{rav}(\CL)\right)_+} \C . 
\eeqn
Here $\C$ denotes the one-dimensional trivial representation of $\text{Lie}_{rav}(\CL)_+$.
From the reconstruction theorem proved below, Proposition \ref{prop:reconstruction}, one can prove the following result.

\begin{prop}
There is a raviolo vertex algebra structure on $\text{Vac}(\CL)$ such that 
\beqn
Y(a_{[-1]} |0 \rangle, z) = \sum_{n \geq 0} a_{[n]} \Omega^n + \sum_{n < 0} a_{[n]} z^{-n-1}  .
\eeqn
Moreover, if $\CV$ is any raviolo vertex algebra then there is an isomorphism
\beqn
\Hom(\CL, \CV[1]) \simeq \Hom(\text{Vac}(\CL),\CV)
\eeqn
where on the left hand side we view $\CV[1]$ as a vertex Lie algebra following Lemma~\ref{lem:RVAtoshiftedVLA}.
\end{prop}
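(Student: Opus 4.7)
The plan is to apply the reconstruction theorem (Proposition \ref{prop:reconstruction}) to the collection of proposed fields $\{Y(a_{[-1]}|0\rangle, z)\}_{a \in \CL}$ on $\text{Vac}(\CL)$, in direct analogy with the classical construction of the universal enveloping vertex algebra of a vertex Lie algebra \cite[Ch.~16]{FBZ}. The required verifications are: each proposed $Y(a_{[-1]}|0\rangle, z)$ is a raviolo field; these fields are pairwise mutually local; and there is a compatible translation operator.

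\textbf{Field, translation, and locality axioms.} By the PBW description $\text{Vac}(\CL) = U(\text{Lie}_{rav}(\CL)) \otimes_{U(\text{Lie}_{rav}(\CL)_+)} \C$, inductively applying the Lie bracket
\[
[a_{[n]}, b_{[m]}] = \sum_{k \geq 0} \binom{n}{k} (a_{(k)} b)_{[n+m-k]}
\]
together with the axiomatic vanishing $a_{(k)} b = 0$ for $k \gg 0$ shows that any fixed vector in $\text{Vac}(\CL)$ is annihilated by $a_{[n]}$ for $n$ sufficiently large, establishing the field property. Define $\pd$ on $\text{Vac}(\CL)$ by $\pd|0\rangle = 0$ together with the commutator $[\pd, a_{[n]}] = -n\, a_{[n-1]}$; this is well-defined on the quotient because the translation axiom for $\CL$ ensures compatibility with the Lie bracket, and matching coefficients of $z^{-n-1}$ and $\Omega_z^n$ gives $[\pd, Y(a_{[-1]}|0\rangle, z)] = \pd_z Y(a_{[-1]}|0\rangle, z)$. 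For mutual locality, the same bracket formula can be reorganized into
\[
[Y(a_{[-1]}|0\rangle, z), Y(b_{[-1]}|0\rangle, w)] = \sum_{k \geq 0} \tfrac{1}{k!} \pd_w^k \Delta(z-w)\, Y((a_{(k)}b)_{[-1]}|0\rangle, w),
\]
a finite sum, whence Proposition \ref{prop:locality}(3) yields mutual locality and Proposition \ref{prop:reconstruction} provides the raviolo vertex algebra structure on $\text{Vac}(\CL)$ for which the generating fields are as stated.

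\textbf{Universal property and main obstacle.} For the adjunction, a morphism $\rho \colon \text{Vac}(\CL) \to \CV$ restricts via $a \mapsto \rho(a_{[-1]}|0\rangle)$ to a vertex Lie algebra morphism $\phi \colon \CL \to \CV[1]$, compatibility with $Y_-$ following from $Y(\rho(a_{[-1]}|0\rangle), z) = \rho \circ Y(a_{[-1]}|0\rangle, z) \circ \rho^{-1}$ restricted to the negative part. Conversely, a vertex Lie algebra morphism $\phi \colon \CL \to \CV[1]$ determines an action of $\text{Lie}_{rav}(\CL)$ on $\CV$ via the modes of $Y(\phi(a), z)$, the Lie bracket relations being preserved thanks to Lemma \ref{lem:RVAtoshiftedVLA} and the translation axiom; the subalgebra $\text{Lie}_{rav}(\CL)_+$ annihilates $|0\rangle_\CV$ by the vacuum axiom in $\CV$, so by universal induction this yields a linear map $\text{Vac}(\CL) \to \CV$, and Corollary \ref{cor:stateopNOP} (which expresses the state-operator correspondence of any PBW monomial in terms of iterated normal-ordered products of the generating fields) ensures it intertwines the full state-operator correspondences. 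The main obstacle is the locality computation: one must carefully repackage the double series with coefficients in $\{z^{-n-1}, \Omega_z^n\}$ and $\{w^{-m-1}, \Omega_w^m\}$ attached to $[a_{[n]}, b_{[m]}]$ into the finite $\pd_w^k \Delta(z-w)$ expansion, using the raviolo distribution identities (notably $z^n \Omega^m = \Omega^{m-n}$) from Section \ref{sec:model} and the explicit definition of $\Delta(z-w)$; the bookkeeping here is the principal place where the raviolo setting differs subtly from the vertex algebra case.
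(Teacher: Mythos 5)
Your proposal follows essentially the same route as the paper, which itself only indicates that the result follows from the reconstruction theorem (Proposition \ref{prop:reconstruction}) applied to the generating fields $\sum_{n\geq 0}a_{[n]}\Omega^n_z+\sum_{n<0}a_{[n]}z^{-n-1}$, with locality coming from rewriting the $\mathrm{Lie}_{rav}(\CL)$ bracket as a finite $\pd_w^k\Delta(z-w)$ expansion and the universal property from Frobenius reciprocity plus Corollary \ref{cor:stateopNOP}; your verification sketch is correct, including the key point that the relations $z^n\Omega^m_z$ produce the truncations that make the mode commutators match Proposition \ref{prop:locality}(4). Only cosmetic issue: in the forward direction of the adjunction write the intertwining property $\rho(Y(a,z)b)=Y(\rho(a),z)\rho(b)$ rather than a conjugation by $\rho^{-1}$, since $\rho$ need not be invertible.
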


We now define a 1-shifted Poisson vertex algebra, see also \cite{Tamarkin,OhYagi}. 
This combines the notion of a (commutative) vertex algebra and a vertex Lie algebra in a compatible way; see e.g. Section 16.2 of \cite{FBZ} for the analogous definition in the unshifted case.

\begin{dfn}
	A \emph{1-shifted Poisson vertex algebra} is the data $(\CV, |0\rangle, \pd, Y_+, Y_-)$, where $(\CV, |0\rangle, \pd, Y_+)$ is a commutative vertex algebra, $(\CV[1], \pd, Y_-)$ is a vertex Lie algebra, and for any $a \in \CV$ the modes of $Y_-(a,z)$ act by derivations of the normal-ordered product on $\CV$.
\end{dfn}

\begin{theorem}
\label{thm:RVAvsshiftedPVA}
	Let $\CV$ be a $\Z$-graded vector space. A raviolo vertex algebra structure on $\CV$ is equivalent to a 1-shifted Poisson vertex algebra structure on $\CV$. These two structure have the same translation operator $\pd$ and vacuum vector $|0\rangle$. The state-operator correspondences are related as
	\[
		Y_\pm(a,z) = Y(a,z)_\pm \qquad Y(a,z) = Y_+(a,z) + Y_-(a,z) .
	\]
	for any $a \in \CV$.
\end{theorem}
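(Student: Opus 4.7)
The plan is to establish the equivalence by constructing inverse maps in both directions and verifying compatibility. Given a raviolo vertex algebra $(\CV, |0\rangle, \pd, Y)$, define $Y_\pm(a,z) \define Y(a,z)_\pm$. For the first direction, I will argue that $(\CV, |0\rangle, \pd, Y_+)$ is a commutative vertex algebra, $(\CV[1], \pd, Y_-)$ is a vertex Lie algebra, and the modes $a_{(n)}$ for $n \geq 0$ act as derivations of the $Y_+$ product. The commutative vertex algebra structure is immediate: $Y_+(a,z) \in \End(\CV)[\![z]\!]$ by construction; the translation and vacuum axioms descend from the corresponding axioms for $Y$ once one notes that $a_{(m)}|0\rangle = 0$ for $m \geq 0$ (from $Y(a,z)|0\rangle \in \CV[\![z]\!]$), so $Y_+(a,z)|0\rangle = Y(a,z)|0\rangle$; commutativity is equivalent to $[a_{(m)}, b_{(l)}] = 0$ for $m,l<0$, which is precisely the bottom case of the commutator formula in Proposition \ref{prop:locality}. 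The vertex Lie algebra structure is Lemma \ref{lem:RVAtoshiftedVLA}, and the derivation property is the content of case 2 ($m \geq 0, l < 0$) of Proposition \ref{prop:locality}, equivalently the derivation identity for the brackets $\{\!\{-,-\}\!\}^{(n)}$ and product $\cdot$ established earlier.

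For the reverse direction, given a $1$-shifted Poisson vertex algebra $(\CV, |0\rangle, \pd, Y_+, Y_-)$, define $Y(a,z) \define Y_+(a,z) + Y_-(a,z)$. That $Y(a,z)$ is a raviolo field follows from the vertex Lie algebra axiom ensuring $a_{(m)}v = 0$ for $m \gg 0$. The translation axiom is the sum of those for $Y_\pm$. The vacuum axiom reduces to two facts: first, $Y_-(|0\rangle, z) = 0$ and $a_{(m)}|0\rangle = 0$ for $m \geq 0$, both of which follow by applying the derivation property to the identity $|0\rangle = |0\rangle \cdot |0\rangle$ (which forces $|0\rangle_{(n)}|0\rangle = 2|0\rangle_{(n)}|0\rangle$, hence $= 0$; for the second, vertex Lie algebra skew-symmetry propagates this to all $a$); second, $Y_+(|0\rangle, z) = \id$ and $Y_+(a,z)|0\rangle \in \CV[\![z]\!]$ with value $a$ at $z=0$ come from the commutative vertex algebra axioms.

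The main obstacle is the locality axiom. To establish it, I will compute the four pieces of
\[
[Y(a,z), Y(b,w)] = [Y_+(a,z), Y_+(b,w)] + [Y_+(a,z), Y_-(b,w)] + [Y_-(a,z), Y_+(b,w)] + [Y_-(a,z), Y_-(b,w)]
\]
and identify each with the corresponding sector of $\sum_{n \geq 0}\tfrac{1}{n!} \pd^n_w \Delta(z-w) Y(a_{(n)}b,w)$, then appeal to condition 3 of Proposition \ref{prop:locality}. The first term vanishes by commutativity of $Y_+$. The last term lives in $\Omega_z \Omega_w$ modes and is computed directly from the vertex Lie algebra commutator axiom; after using the reduction $w^p \Omega_w^l = \Omega_w^{l-p}$ (for $l \geq p$, else $0$) in $\CK_{dist}^{z,w}$, it matches $\sum_n \tfrac{1}{n!}\pd^n_w \Delta_-(z-w) Y_-(a_{(n)}b,w)$. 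For the two mixed terms, the derivation property is essential: using the commutative vertex algebra identity $b_{(-k-1)} = \tfrac{1}{k!}(\pd^k b)\cdot$ and the commutator $[\pd, a_{(n)}] = -n\, a_{(n-1)}$, the derivation relation $a_{(n)}(b \cdot c) = (a_{(n)}b)\cdot c \pm b \cdot (a_{(n)}c)$ yields
\[
[a_{(n)}, b_{(l)}] = \sum_{i=\max(0,n+l+1)}^{n} \binom{n}{i}(a_{(i)}b)_{(n+l-i)}
\]
for $n \geq 0, l < 0$, which is precisely case 2 of Proposition \ref{prop:locality}. Assembling this into $[Y_-(a,z), Y_+(b,w)]$ and using the definition $\Delta_-^{(n)}(z-w) = \tfrac{1}{n!}\pd^n_w\Delta_-(z-w)$ matches the $w^{\geq 0}\Omega_z$ sector; the symmetric computation with $Y_+$ and $Y_-$ interchanged (using also the skew-symmetry of the vertex Lie algebra to switch the roles of $a$ and $b$) handles the $z^{\geq 0}\Omega_w$ sector. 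Together these show
\[
[Y(a,z), Y(b,w)] = \sum_{n \geq 0}\tfrac{1}{n!}\pd^n_w \Delta(z-w)\, Y(a_{(n)}b, w),
\]
establishing mutual locality. The two constructions are manifestly inverse at the level of state-field correspondence since $Y = Y_+ + Y_-$ decomposes uniquely into regular and singular parts, while $|0\rangle$ and $\pd$ are preserved by definition.
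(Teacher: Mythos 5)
Your overall route is the same as the paper's. The forward direction (commutativity of $Y_+$ from the $m,l<0$ commutators, the vertex Lie structure from Lemma \ref{lem:RVAtoshiftedVLA}, the derivation property from Corollary \ref{cor:NOPcomm}) is identical, and in the reverse direction you assemble exactly the same four sectors of mode commutators --- $m,l\geq 0$ from the vertex Lie commutator axiom, $m\geq 0,\,l<0$ from the derivation property propagated by commuting with $\pd$, $m<0,\,l\geq 0$ from skew-symmetry, $m,l<0$ from commutativity --- and feed them into Proposition \ref{prop:locality}; whether one phrases the conclusion via condition 3 (your distributional matching) or condition 4 (the paper's mode form) is immaterial.

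There is, however, one step that does not follow as written: the translation axiom in the reverse direction. You claim it is ``the sum of those for $Y_\pm$'', but the vertex Lie algebra axiom is sesquilinearity, $Y_-(\pd a,z)=\pd_z Y_-(a,z)$, i.e. $(\pd a)_{(m)}=-m\,a_{(m-1)}$ --- a statement about the argument of $Y_-$, not the commutator identity $[\pd,Y_-(a,z)]=\pd_z Y_-(a,z)$ that the raviolo axiom requires. The commutator form must be derived; the paper does this with a separate computation using the derivation property together with $a_{(m)}b=(a_{(m)}b)_{(-1)}|0\rangle$ and $\pd b=b_{(-2)}|0\rangle$ (it can also be deduced from sesquilinearity plus skew-symmetry). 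This is not cosmetic in your write-up, because you invoke $[\pd,a_{(n)}]=-n\,a_{(n-1)}$ for $n\geq 0$ when promoting $[a_{(n)},b_{(-1)}]=(a_{(n)}b)_{(-1)}$ to all $l<0$, so that identity must be established before your locality computation. A second, smaller slip is in the vacuum axiom: your parenthetical runs backwards. Applying the derivation property of $a_{(m)}$, for arbitrary $a$, to $|0\rangle=|0\rangle\cdot|0\rangle$ gives $a_{(m)}|0\rangle=2a_{(m)}|0\rangle=0$ directly, and then skew-symmetry yields $Y_-(|0\rangle,z)=0$; knowing only $|0\rangle_{(n)}|0\rangle=0$ and citing skew-symmetry does not produce $a_{(m)}|0\rangle=0$. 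Both points are repairable with ingredients you already use, and with them supplied your argument coincides with the paper's proof.
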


\begin{proof}
	Suppose $\CV$ has the structure of a raviolo vertex algebra with vacuum vector $|0\rangle$, translation operator $\pd$, and state-operator correspondence $Y(-,z)$; we define
	\beqn
		Y_\pm(a,z) \define Y(a,z)_\pm
	\eeqn
	Mutual locality of the $Y(a,z)$ implies that $Y_+(a,z)$ commutes with $Y_+(b,w)$ for any $a,b \in \CV$. In particular, $(\CV, |0\rangle, \pd, Y_+)$ has the structure of a commutative vertex algebra; the vertex algebra axioms follow from their raviolo analogs. Lemma \ref{lem:RVAtoshiftedVLA} implies $(\CV[1], \pd, Y_-)$ has the structure of a vertex Lie algebra. To see that $(\CV, |0\rangle, \pd, Y_+, Y_-)$ is a 1-shifted Poisson vertex algebra, it remains to check that the modes of $Y_-(a,z)$ act as derivations of the normal-ordered product on $(\CV, |0\rangle, \pd, Y_+)$, but this follows from Corollary \ref{cor:NOPcomm}.
	
	We now suppose $\CV$ has the structure of a 1-shifted Poisson vertex algebra and define
	\beqn
		Y(a,z) = Y_+(a,z) + Y_-(a,z)
	\eeqn
	Note that the commutativity of $(\CV, |0\rangle, \pd, Y_+)$ implies $Y_+(a,z)$ has no terms proportional to negative powers of $z$. (Rather, the corresponding endomorphisms act as zero.) That this defines a (raviolo) field on $\CV$ follows from the fact that $(\CV[1], \pd, Y_-)$ is a vertex Lie algebra.
	
	As the modes of $Y_-(a,z)$ act as derivations of the normal-ordered product on $(\CV, |0\rangle, \pd, Y_+)$, we have the following commutator for any $a,b \in \CV$:
	\beqn
		[a_{(m)}, b_{(-1)}] = (a_{(m)} b)_{(-1)}
	\eeqn
	where $m \geq 0$. Taking commutators with $\pd$, we find for any $l < 0$
	\beqn
		[a_{(m)}, b_{(l)}] = \sum_{n \geq \max(0,m+l+1)} \binom{m}{n} (a_{(n)} b)_{(m+l-n)}
	\eeqn
	The skew-symmetry axiom of $(\CV, \pd, Y_-)$ applied to $Y(b,z)_- a$, together with the above commutator of $[a_{(m)}$, $b_{(l)}]$ for $m, l \geq 0$ and the commutativity of the $Y_+(a,z)$, implies that the (raviolo) fields $Y(a,z)$ and $Y(b,w)$ are mutually local for any $a,b \in \CV$ using Proposition \ref{prop:locality}. 
	
	To verify the translation axiom, it suffices to check $[\pd, Y_-(a,z)] = \pd_z Y_-(a,z)$. The translation axiom of a vertex Lie algebra implies $(\pd a)_{(m)} = -m a_{(m-1)}$. Choose $b \in \CV$, we compute:
	\beqn
	\begin{aligned}
		[\pd, a_{(m)}] b & = \bigg(\pd (a_{(m)}b)_{(-2)} + a_{(m)} b_{(-2)}\bigg)|0\rangle\\
		& = -m a_{(m-1)} b_{(-1)}|0 = -m a_{(m-1)} b
	\end{aligned}
	\eeqn
	whence $[\pd, Y_-(a,z)] = \pd_z Y_-(a,z)$.
	
	We are left with checking the vacuum axiom. The vacuum axiom of $(\CV, |0\rangle, \pd, Y_+)$ implies $Y_+(|0\rangle, z)$ is the unit of the normal-ordered product; because $Y_-(a,z)$ acts a derivation of the normal-ordered product we conclude $[Y_-(a,z), Y_+(|0\rangle, w)] = 0$ for any $a \in \CV$, from which it follows $Y_-(a,z) |0\rangle = 0$ for any $a$. Thus, $Y(a,z)|0\rangle = Y_+(a,z)|0\rangle \in \CV[\![z]\!]$ by the vacuum axiom of $(\CV, |0\rangle, \pd, Y_+)$. Finally, to see that $Y(|0\rangle, z) = \id_\CV$, it suffices to check $Y_-(|0\rangle, z) = 0$, which follows from the skew-symmetry property of $(\CV, \pd, Y_-)$ together with $Y_-(a,z)|0\rangle = 0$:
	\beqn
		Y_-(|0\rangle, z) a = e^{z\pd} Y_-(a,-z)|0\rangle = 0
	\eeqn
\end{proof}

\section{Examples of raviolo vertex algebras}
\label{sec:examples}

We now describe some simple examples of raviolo vertex algebras.

Our last general result is the raviolo analog of the Reconstruction Theorem of vertex algebras, cf. Proposition 3.1 of \cite{FKRW} or Theorem 4.4.1 of \cite{FBZ}. We will find this useful for showing our examples actually furnish the structure of a raviolo vertex algebra.

\begin{prop}
	\label{prop:reconstruction}
	Let $\CV = \bigoplus \CV^r$ be a $\Z$-graded vector space, $|0\rangle$ a non-zero vector, $\pd$ a degree $0$ endomorphism of $\CV$. Further, let $\{a^i\}$ be a countable ordered set of vectors in $\CV$, with $a^i \in \CV^{r_i}$, and suppose we are given homogeneous fields
	\begin{equation*}
		A^i(z) = \sum_{m<0} z^{-m-1} A^i_{m} + \sum_{m\geq0} \Omega^m_z A^i_{m}
	\end{equation*}
	of degree $r_i$ such that the following hold:
	\begin{itemize}
		\item[1)] $A^i_{-1}|0\rangle = a^i$ and $A^i_{m}|0\rangle = 0$ for all $i$ and $m \geq 0$
		\item[2)] $\pd |0\rangle = 0$ and $[\pd, A^i(z)] = \pd_z A^i(z)$ for all $i$
		\item[3)] All fields $A^i(z)$ are mutually local
		\item[4)] $\CV$ is spanned by the vectors
		\begin{equation*}
			A^{i_1}_{j_1} \dots A^{i_l}_{j_l}|0\rangle \qquad j_k < 0
		\end{equation*}
	\end{itemize}
	Then the assignment
	\begin{equation*}
		\begin{aligned}
			Y(A^{i_1}_{j_1} \dots A^{i_l}_{j_l}|0\rangle, z) &\\
			& \hspace{-1cm} =\frac{1}{(-j_1-1)!\dots (-j_l-1)!}\norm{\pd^{-j_1-1}_z A^{i_1} \dots \pd^{-j_l-1}_z A^{i_l}}\\
		\end{aligned}
	\end{equation*}
	determines a well-defined raviolo vertex algebra structure on $\CV$. Moreover, this is the unique raviolo vertex algebra structure on $\CV$ satisfying $1) - 4)$ and such that $Y(a^i,z) = A^i(z)$.
\end{prop}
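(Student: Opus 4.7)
The plan is to mimic the standard vertex-algebraic reconstruction argument (cf. Theorem 4.4.1 of \cite{FBZ}), substituting the raviolo tools developed above: Dong's Lemma \ref{lem:Dong}, the NOP-derivation property of $\pd$ in Lemma \ref{lem:derivNOP}, and the uniqueness argument underlying Proposition \ref{prop:uniqueness}. For each ordered tuple $I = ((i_1,j_1),\ldots,(i_l,j_l))$ with $j_k < 0$, I introduce a candidate raviolo field
\[
B_I(z) \define \frac{1}{\prod_k(-j_k-1)!}\, \norm{\pd^{-j_1-1}_z A^{i_1}(z) \cdots \pd^{-j_l-1}_z A^{i_l}(z)},
\]
where the iterated normal-ordered product is taken right-to-left, which is a raviolo field on $\CV$ by repeated application of Lemma \ref{lem:NOP}. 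The convention for the empty tuple is that $B_\emptyset(z) = \id_\CV$. The goal is to show that the assignment $A^{i_1}_{j_1} \cdots A^{i_l}_{j_l}|0\rangle \mapsto B_I(z)$ descends to a well-defined linear map $Y: \CV \to \CF_{rav}(\CV)$ endowing $\CV$ with a raviolo vertex algebra structure.

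I would first establish three properties of the $B_I(z)$. \textbf{(a) Mutual locality.} Proposition \ref{prop:derivlocality} shows the derivatives $\pd^n_z A^i(z)$ are pairwise mutually local, and iterating Dong's Lemma \ref{lem:Dong} shows each $B_I(z)$ is mutually local with every $A^j(w)$; a further iteration shows the $B_I$ are mutually local among themselves. \textbf{(b) Translation axiom.} $[\pd, B_I(z)] = \pd_z B_I(z)$ follows by induction on $l$ from hypothesis (2) combined with Lemma \ref{lem:derivNOP}. \textbf{(c) Vacuum identities.} Using the explicit mode expansion in Lemma \ref{lem:NOP} together with the vanishing $A^i_m|0\rangle = 0$ for $m \geq 0$, one verifies inductively that $B_I(z)|0\rangle \in \CV[\![z]\!]$ and that $B_I(z)|0\rangle|_{z=0} = A^{i_1}_{j_1} \cdots A^{i_l}_{j_l}|0\rangle$: the $\Omega^m_z$ terms in the outermost NOP necessarily kill the innermost $|0\rangle$, and the value at $z=0$ is computed by Taylor-expanding the outer derivative of $A^{i_1}(z)$ against the inner $e^{z\pd}$-type behaviour supplied by the inductive hypothesis.

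The main conceptual step is \emph{well-definedness}: if two tuples $I,J$ evaluate to the same vector $v$, one must show $B_I(z) = B_J(z)$. Properties (b) and (c) imply that both $B_I(z)|0\rangle$ and $B_J(z)|0\rangle$ solve the initial value problem $\pd_z f(z) = \pd f(z)$, $f(0) = v$, hence both equal $e^{z\pd} v$. I would then adapt the proof of Proposition \ref{prop:uniqueness} directly to the difference $D(z) \define B_I(z) - B_J(z)$, which by (a) is mutually local with every $B_K(w)$ and by the preceding sentence satisfies $D(z)|0\rangle = 0$. Mutual locality yields $(z-w)^N D(z) B_K(w) |0\rangle = 0$ for $N$ large; specializing $w = 0$ gives $z^N D(z) B_K(0)|0\rangle = 0$, so $D(z)_+$ annihilates every generator $B_K(0)|0\rangle$ and hence, by hypothesis (4), all of $\CV$. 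The negative modes $D(z)_-$ are then killed by the second half of the argument in Proposition \ref{prop:uniqueness}, using the mutual-locality constraint (2) of Proposition \ref{prop:locality} on the commutator $[D(z), B_K(w)]$. Once well-definedness is in place, the four axioms of a raviolo vertex algebra are immediate from (a)--(c); the identification $Y(a^i,z) = A^i(z)$ is the $l = 1$, $j_1 = -1$ case of the formula; and uniqueness of the resulting structure on $\CV$ is the content of Corollary \ref{cor:stateopNOP}. The chief obstacle is this circumvention of Proposition \ref{prop:uniqueness}: because $Y$ is not yet constructed, one cannot invoke it as a black box and must instead re-run its proof on the difference $D(z)$ using only mutual locality among the candidate fields $\{B_K(w)\}_K$.
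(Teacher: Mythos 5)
Your proposal is essentially correct, and its skeleton --- (a) mutual locality of all iterated normal-ordered products via Proposition \ref{prop:derivlocality} and Dong's Lemma \ref{lem:Dong}, (b) the translation property via Lemma \ref{lem:derivNOP}, (c) the vacuum computation by induction on the length of the monomial --- coincides with the paper's proof. Where you genuinely diverge is the treatment of well-definedness. The paper sidesteps it: one chooses a basis of $\CV$ among the monomials $A^{i_1}_{j_1}\dots A^{i_l}_{j_l}|0\rangle$ containing $|0\rangle$, defines $Y$ by the formula on that basis and extends linearly, verifies the axioms, and only afterwards invokes Proposition \ref{prop:uniqueness} (legitimately at that point, since a raviolo vertex algebra structure has already been produced) to show the result is independent of the chosen basis; uniqueness of the structure is then Corollary \ref{cor:stateopNOP}. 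You instead prove well-definedness directly by re-running the Goddard-type argument on the difference $D(z)=B_I(z)-B_J(z)$ of candidate fields, using only that $D$ is mutually local with the spanning family $\{B_K\}$, that $B_K(w)|0\rangle\in\CV[\![w]\!]$ with constant terms spanning $\CV$, and that $D(z)|0\rangle=0$. That adaptation is sound --- it is exactly the proof of Proposition \ref{prop:uniqueness} with $Y(b,w)$ replaced by the candidate fields --- and it buys a basis-free construction.

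One point needs tightening: knowing $B_I(z)=B_J(z)$ whenever two monomial expressions represent the same vector is not by itself enough to define a linear map on $\CV$, since the spanning monomials may satisfy more general linear relations $\sum_\alpha c_\alpha\, v_{I_\alpha}=0$ with the $v_{I_\alpha}$ pairwise distinct. Fortunately your argument applies verbatim to such a relation: set $D(z)=\sum_\alpha c_\alpha B_{I_\alpha}(z)$; then $D(z)|0\rangle=e^{z\pd}\bigl(\sum_\alpha c_\alpha v_{I_\alpha}\bigr)=0$, $D$ is mutually local with every $B_K$, and the same positive-part/negative-part argument forces $D(z)=0$. With that restatement the well-definedness step is complete, and the remainder of your outline (axioms from (a)--(c), the $l=1$, $j_1=-1$ case giving $Y(a^i,z)=A^i(z)$, and uniqueness via Corollary \ref{cor:stateopNOP}) matches the paper.
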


If $\CV$ has a $\frac{1}{L}\Z$-grading such that $|0\rangle$ has weight $0$, $\pd$ increases weight by $1$, the vectors $\{a^i\}$ are homogeneous, with $a^i$ of weight $s_i$, and the mode $A^i_{m}$ of $A^i(z)$ has weight $s_i-m-1$, then this grading induces a spin grading for $\CV$. Similarly, if $\CV$ has an additional $\Z/2$ grading so that $a^i$ is a boson (resp. fermion) and the modes $A^i_{m}$ are bosonic (resp. fermionic) for $m<0$ and fermionic (resp. bosonic) for $m \geq 0$ then this induces a super grading on $\CV$.

If $S$ is a graded commutative unital $\C$-algebra, then we can replace $\Z$-graded vector space by graded $S$-module, add \emph{as an $S$-module} after the word spanned in condition $4)$, require $\pd$ is an $S$-module morphism, and require $A^i$ are homogeneous fields over $S$, then the following proof yields a raviolo vertex algebra over $S$.

\begin{proof}
	The proof is nearly identical to the proof of Theorem 4.4.5 of \cite{FBZ}, but we shall repeat it for completeness.
	
	First, note that property $3)$ together with Lemma \ref{prop:derivlocality} and Lemma \ref{lem:Dong} implies that all of the fields
	\beqn
		\frac{1}{(-j_1-1)!\dots (-j_l-1)!}\norm{\pd^{-j_1-1}_z A^{i_1} \dots \pd^{-j_l-1}_z A^{i_l}}
	\eeqn
	are mutually local.
	
	Second, $\pd|0\rangle = 0$ is a consequence of $2)$ and $Y(|0\rangle, z) = \id_\CV$ is a consequence of $4)$. We can further show that applying the above field to $|0\rangle$ gives an element of $\CV[\![z]\!]$ with $A^{i_1}_{j_1} \dots A^{i_l}_{j_l}|0\rangle$ as the constant term by induction on $l$. The case $l=1$ is provided by property $1)$. The general case follows by using
	\beqn
		\norm{\pd^{-j-1}A^i(z) A(z)} = \pd^{-j-1}A^i(z)_+ A(z) + (-1)^{|A^i||A|} A(z) A^i(z)_-
	\eeqn
	to see that $\norm{\pd^{-j-1}A^i(z) A(z)}|0\rangle$ belongs to $\CV[\![z]\!]$ and has a constant term of the desired form: the second term annihilates $|0\rangle$ by property $1)$, and the inductive hypothesis implies $\pd^{-j-1}A^i(z)_+ A(z)|0\rangle$ belongs to $\CV[\![z]\!]$ and such that $\pd^{-j-1}A^i(z)_+ A(z)|0\rangle|_{z=0} = (-j-1)! A^i_{j} A(z)|0\rangle|_{z=0}$.
	
	Third, repeated applications of property $2)$ implies $[\pd, \pd^{-j-1}_z A^i(z)] = \pd^{-j}_z A^i(z)$ for any $j<0$. Induction on $l$ then shows $[\pd, A(z)] = \pd_z A(z)$ for $A(z)$ any field of the above form: the residue formula in Lemma \ref{lem:NOP} and the inductive hypothesis then implies
	\beqn
	\begin{aligned}[]
		[\pd, \norm{\pd^{-j-1}_z A^i(z) A(z)}] & = \norm{\pd^{-j}_z A^i(z) A(z)} + \norm{\pd^{-j-1}_z A^i(z) \pd_z A(z)}\\
		& = \pd_z \norm{\pd^{-j-1}_z A^i(z) A(z)}
	\end{aligned}
	\eeqn
	where we use Lemma \ref{lem:derivNOP} in the second equality.
	
	Now choose a basis amongst the $A^{i_1}_{j_1} \dots A^{i_l}_{j_l}|0\rangle$ containing $|0\rangle$ and define the state-operator correspondence $Y(-,z)$ on $\CV$ by the above formula extended to $\CV$ linearly. The above shows that $(\CV, |0\rangle, \pd, Y)$ gives $\CV$ the structure of a raviolo vertex algebra: the locality axiom is follows from the first point; the vacuum axiom follows from the second point; and the translation axiom follows from the third point.
	
	We now show that the state-operator correspondence $Y(-,z)$ does not depend on the choice of basis. Suppose $Y_1(-,z)$ and $Y_2(-,z)$ are the state-operator correspondences for two different choices. Lemma \ref{lem:Dong} and Proposition \ref{prop:derivlocality} imply $Y_1(a,z)$ and $Y_2(b,z)$ are mutually local for all $a,b \in \CV$. For fixed $a$, the vertex algebra axioms imply
	\beqn
		Y_1(a,z)|0\rangle|_{z=0} = a = Y_2(a,z)|0\rangle|_{z=0}
	\eeqn
	and the translation axioms imply
	\beqn
		\pd_z Y_1(a,z)|0\rangle = \pd Y_1(a,z)|0\rangle \qquad \pd_z Y_2(a,z)|0\rangle = \pd Y_2(a,z)|0\rangle
	\eeqn
	We see that these fields agree $Y_1(a,z) = Y_2(a,z)$ due to Proposition \ref{prop:uniqueness}.
	
	Finally, to the asserted uniqueness of the vertex algebra structure is provided by the fact that the above formula for the state-operator correspondence is uniquely determined by $Y(a^i,z) = A^i(z)$, cf. Corollary \ref{cor:stateopNOP}.
\end{proof}

This reconstruction result enables a generator-and-relations style approach to raviolo vertex algebras. We formalize these notions with the following definitions.

\begin{dfn}
	Let $\CV$ be a raviolo vertex algebra. A countable ordered set of (homogeneous) vectors $\{a^i\}$ are \defterm{(homogeneous) generators} of $\CV$ if $\CV$ is spanned by monomials of the form $a^{i_1}_{(j_1)} \dots a^{i_l}_{(j_l)}|0\rangle$. The generators $\{a^i\}$ are said to be \defterm{strong generators} if $\CV$ is spanned by monomials of the form $a^{i_1}_{(j_1)} \dots a^{i_l}_{(j_l)}|0\rangle$ with $j_i < 0$.
	
	If $\CV$ is a raviolo vertex algebra over $S$, we say that a set of (homogeneous) elements $\{b^i\}$ are \defterm{generators over $S$} if $\CV$ is spanned as an $S$-module by monomials of the form $b^{i_1}_{(j_1)} \dots b^{i_l}_{(j_l)}|0\rangle$. The generators $\{b^i\}$ are said to be \defterm{strong generators over $S$} if $\CV$ is spanned as an $S$-module by monomials of the form $b^{i_1}_{(j_1)} \dots b^{i_l}_{(j_l)}|0\rangle$ with $j_i < 0$.
\end{dfn}

With these definitions, we see that the vectors $\{a^i\}$ appearing in the formulation of Proposition \ref{prop:reconstruction} are strong generators of $\CV$. If $\CV$ is a raviolo vertex algebra over $S$ and $\{b^i\}$ are strong generators over $S$, they need not be strong generators of $\CV$ merely as a raviolo vertex algebra.

\subsection{Free field algebras}
\label{sec:freefields}
Our first example will be the simplest example of a free field algebra, roughly analogous to the complex fermion (a.k.a. $bc$ ghost) or symplectic boson (a.k.a. $\beta \gamma$ ghost) vertex algebras.

%Let $L > 0$ be an integer and fix $j \in \frac{1}{L} \Z$.
We start by considering the abelian Lie algebra $\CF := \CK \oplus \CK^{(1)}[1]$ which is concentrated in cohomological degrees $-1,0,1$. 
The residue pairing gives us a natural central extension $\widehat{\CF}$:
\be
	0 \to \C K \to \widehat{\CF} \to \CF \to 0
\ee
For $\alpha \in \CK$ and $\beta \in \CK^{(1)}[1]$ the commutator in the central extension is given by
\be
[\alpha, \beta] = K \textrm{Res} \alpha \beta = - [\beta, \alpha]
\ee
where $K$ is the (degree 0, bosonic) central generator.
Notice that the cohomological shift in $\beta$ is necessary to ensure $K$ is degree $0$.
There is an auxiliary grading, which we will refer to as flavor charge in this section, induced by declaring that $\alpha$ has flavor charge $1$ and $\beta$ has flavor charge $-1$.
This forces $K$ to have flavor charge $0$.

The Lie algebra $\widehat{\CF}$ has a basis given by the central generator $K$ together with $Y_{n} = u^n$ and $\psi_n = \Omega^n_u$ from $\CK$ as well as $\chi_n = u^n \d u$ and $X_n = \d u \Omega^{n}_u$ from $\CK^{(1)}[1]$, where $n \in \Z_{\geq 0}$.
The gradings are as follows:
\begin{itemize}
	\item The generator $K$ has cohomological degree zero and spin zero.
	\item The generator $Y_n$ has cohomological degree zero and spin $-n$.
	\item The generator $\psi_n$ has cohomological degree $1$ and spin $n+1$.
	\item The generator $X_n$ has cohomological degree zero and spin $n$. 
	\item The generator $\chi_n$ has cohomological degree $-1$ and spin $-n-1$.
\end{itemize}
The generators $X_n, \chi_n$ have flavor charge $1$ while the generators $\psi_n, Y_n$ have flavor charge $-1$.
These generators have the following commutators:
\be
	[Y_n, X_m] = \delta_{n,m}K = -[X_m, Y_n] \qquad [\psi_n, \chi_m] = \delta_{n,m}K = [\chi_n, \psi_m]
\ee
The action of $-\pd_u$ on $\CK, \CK^{(1)}$ leads to a derivation $\pd$ on $\widehat{\CF}$:
it annihilates the central generator $\pd K = 0$, its action on $Y_n, \chi_n$ is given by
\be
	\pd Y_n = -n Y_{n-1}, \qquad \pd \chi_n = -n \chi_{n-1}
\ee
and its action on $X_n, \psi_n$ is given by
\be
	\pd X_{n} = (n+1) X_{n+1} \qquad \pd \psi_n = (n+1) \psi_{n+1} .
\ee

We now define a particular `vacuum' type module for $\Hat{\CF}$.
The centrally-extended algebra $\widehat{\CF}$ has a ``positive'' subalgebra $\widehat{\CF}_{\geq0}$ generated by $Y_n, \chi_n$, and $K$ for $n \geq 0$.
For $k \in \C$ we then consider the representation $\C_k$ of $\Hat{\CF}_{\geq0}$ where $Y_n, \chi_n$ act as zero and $K$ acts as multiplication by $k$.
The vacuum module is defined by induction
\be
	FC_k = U \widehat{\CF} \otimes_{U\widehat{\CF}_{\geq0}} \C_k 
\ee
where $U\fg$ denotes the universal enveloping algebra of a Lie algebra $\fg$. 

We are mostly interested in situations with $k \ne 0$; in this case we can rescale the generators to set $k = 1$ and we will always assume this has been done. Correspondingly, $K$ acts as the identity matrix $\id_{FC_1}$ and we will omit $k, K$ from the notation entirely. The proof that there is a raviolo vertex algebra structure for $k=0$ is exactly the same as $k \ne 0$. Yet more generally, we could replace $\C_k$ by $\C[K]$ and consider the $\C[K]$-module
\be
	FC_{univ} = U \widehat{\CF} \otimes_{U\widehat{\CF}_{\geq0}} \C[K] 
\ee
The following proof shows that $FC_{univ}$ has the structure of a raviolo vertex algebra over the polynomial ring $\C[K]$ strongly generated over $\C[K]$ by $X(z)$ and $\psi(z)$. The raviolo vertex algebra $FC_k$ is identified with the quotient of $FC_{univ}$ associated to the maximal ideal $(K-k)$ of $\C[K]$.

\begin{prop}
	$FC$ is a raviolo vertex algebra with spin grading that is strongly generated by the fields
	\begin{equation*}
		X(z) = \sum_{n\geq 0} z^n X_{n} + \Omega^n_z \chi_{n} \; , \qquad \psi(z) = \sum_{n\geq 0} z^n \psi_{n} + \Omega^n_z Y_{n} .
	\end{equation*}
\end{prop}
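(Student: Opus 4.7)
The plan is to apply the Reconstruction Theorem, Proposition \ref{prop:reconstruction}, with vacuum $|0\rangle = 1 \otimes 1 \in FC$, translation operator $\pd$ induced from the degree-zero derivation on $\widehat{\CF}$ (which preserves the subalgebra $\widehat{\CF}_{\geq 0}$ and hence descends to $FC$), and strong generators $X = X_0|0\rangle$, $\psi = \psi_0|0\rangle$ with candidate fields as displayed. The statement reduces to verifying the four conditions of that theorem together with the compatibility of the spin grading.

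The first step is to show $X(z)$ and $\psi(z)$ are raviolo fields, i.e., satisfy the truncation condition. By PBW, any $v \in FC$ is a finite combination of monomials in the $X_{n_i}, \psi_{m_j}$ (with $n_i, m_j \geq 0$) acting on $|0\rangle$, and the truncation $\chi_m v = 0$ for $m$ sufficiently large is obtained by moving $\chi_m$ to the right through such a monomial using only the non-vanishing bracket $[\chi_m, \psi_{m_j}] = \delta_{m, m_j} K$ and killing the terminal $\chi_m |0\rangle = 0$; an identical argument applies to $Y_m$. Condition 1) reduces to $X_0|0\rangle = X$, $\chi_m|0\rangle = 0$ for $m \geq 0$, and analogously for $\psi$, all immediate from the definition of the induced module. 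Condition 2) is a term-by-term check: using $\pd_z z^n = n z^{n-1}$, $\pd_z \Omega^n_z = -(n+1)\Omega^{n+1}_z$ and the given action of $\pd$ on each generator of $\widehat{\CF}$, one finds $[\pd, A_m] = -m A_{m-1}$ for each mode of $X(z)$ and $\psi(z)$, matching the mode expansion of $\pd_z A(z)$ from the earlier lemma.

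The crux is condition 3), mutual locality. The modes of $X(z)$ (respectively $\psi(z)$) pairwise commute in $\widehat{\CF}$, so $[X(z), X(w)] = 0 = [\psi(z), \psi(w)]$ and mutual locality is immediate for these pairs. For the cross commutator, I would use the characterization in Proposition \ref{prop:locality}(4) with $N = 0$, $C^0(w) = Y(|0\rangle, w) = \id_{FC}$ and $C^n = 0$ for $n \geq 1$, so that $C^0_m = \delta_{m,-1}\id$. Plugging into the four cases ($m,l \geq 0$; $m \geq 0, l < 0$; $m < 0, l \geq 0$; $m, l < 0$) and matching against $[\chi_m, Y_l] = 0$, $[\chi_m, \psi_{-l-1}] = \delta_{m,-l-1}$, $[X_{-m-1}, Y_l] = -\delta_{-m-1,l}$, $[X_{-m-1}, \psi_{-l-1}] = 0$ (with the sign $(-1)^{|X|+1} = -1$ appearing in the third case), each equation is satisfied. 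By Proposition \ref{prop:locality}(3), this is equivalent to
\[
    [X(z), \psi(w)] = \Delta(z-w) \cdot \id_{FC} ,
\]
establishing mutual locality. The main obstacle is the careful sign bookkeeping here; routing through Proposition \ref{prop:locality}(4) rather than a direct distributional calculation sidesteps most of the combinatorics.

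Finally, condition 4) is PBW for $FC$: any element is a linear combination of monomials $X_{n_1}\cdots X_{n_p}\psi_{m_1}\cdots \psi_{m_q}|0\rangle$ with $n_i, m_j \geq 0$, and these are exactly the expressions $X(z)_{j_1}\cdots \psi(z)_{j_\ell}|0\rangle$ with each $j_k < 0$, since the $j_k < 0$ modes of $X(z)$ and $\psi(z)$ are precisely the $X_n$ and $\psi_n$ for $n \geq 0$. Proposition \ref{prop:reconstruction} then furnishes a unique raviolo vertex algebra structure on $FC$ with $Y(X,z) = X(z)$ and $Y(\psi,z) = \psi(z)$. The spin grading is compatible in the sense of the remark following that Proposition: with assignments $s_X = 0$, $s_\psi = 1$, the modes $X(z)_m$ and $\psi(z)_m$ carry spins $s_X - m - 1 = -m-1$ and $s_\psi - m - 1 = -m$ respectively, matching the spins of $X_{-m-1}, \chi_m$ and of $\psi_{-m-1}, Y_m$ recorded in the setup.
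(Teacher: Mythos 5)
Your proof is correct and takes essentially the same route as the paper: both reduce the statement to the reconstruction theorem (Proposition \ref{prop:reconstruction}) and verify locality by computing the commutators of the generators, the key point being $[X(z),\psi(w)] = \Delta(z-w)\,\id_{FC}$. The only cosmetic difference is that you check this via the mode-commutator criterion, part 4) of Proposition \ref{prop:locality}, whereas the paper writes the equivalent distributional identity of part 3) directly, and you spell out the truncation/PBW and translation checks that the paper dismisses as holding by construction.
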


In terms of our usual notation, we have that $X_{(n)} = X_{-n-1}$ for $n < 0$ and $X_{(n)} = \chi_n$ for $n \geq 0$ and similarly for $\psi_{(n)}$.

\begin{proof}
	The vacuum vector $|0 \rangle$ will be identified with the image of $1 \otimes 1 \in U \widehat{\CF} \otimes \C_1$ in the quotient $FC$ and the translation operator $\pd$ on $FC$ will be induced from the one defined on $\widehat{\CF}$ together with $\pd1=0$. To complete the proof, we now show that the vectors $X_0|0\rangle$, $\psi_0|0\rangle$ and the fields $X(z)$, $\psi(z)$, together with the vacuum vector $|0\rangle$ and the translation operator $\pd$, satisfy the conditions of Proposition \ref{prop:reconstruction}.
	
	The fact that the above ``spin'' induces a spin grading on $FC$ is obvious. Conditions $2)$ and $4)$ hold by construction. Condition $1)$ follows from the fact that $Y_n$ and $\chi_n$ annihilate the vacuum vector. To complete the proof we verify mutual locality of the generators by computing their commutators:
	\be
	\begin{aligned}[]
		[X(z), X(w)] & = 0 \qquad & [\psi(z), X(w)] & = \Delta(z,w) \id_{FC} \\
		[X(z), \psi(w)] & = \Delta(z,w)\id_{FC} \qquad & [\psi(z), \psi(w)] & = 0
	\end{aligned}
	\ee
\end{proof}

We see that the singular terms in the OPEs of these generating fields take the following form:
\be
\begin{aligned}
	X(z) X(w) & \sim 0 \qquad & \psi(z) X(w) & \sim \Omega_{z-w}^0\\
	X(z) \psi(w) & \sim \Omega_{z-w}^0 \qquad & \psi(z) \psi(w) & \sim 0
\end{aligned}
\ee
where we have suppressed the factors of $\id_{FC}$. In particular, $X$ and $\psi$ have regular OPEs with themselves and a non-trivial singular term in the OPE with one another. This example arises as the raviolo vertex algebra underlying the local operators in the minimal twist of a collection of a free three-dimensional $\CN=2$ supersymmetric theory called the theory of a `free chiral multiplet', cf. Section 3.2 of \cite{OhYagi}.

Let's consider the two-variable character of $FC_k$ defined by
\beqn
	\op{ch}_{FC_k}(q,y) \define \sum_{r,s \in \Z} \op{grdim}(FC_k^{(s),r}) q^s y^r
\eeqn
where $FC_k^{(s),r} \subset FC_k$ is spanned by the elements of spin $s$ and flavor charge~$r$.
Then
\beqn
	\op{ch}_{FC_k}(q,y) = \frac{(q y^{-1};q)_\infty}{(y;q)_\infty}
\eeqn
where the infinite Pochammer symbol is the formal series $(y;q)_\infty = \prod_{n \geq 0} (1-yq^n)$.
This character reproduces the superconformnal index of the free three-dimensional chiral multiplet, as expected.

There are variants of this example obtained by changing the spin that the generators carry.
For this general case, we can define a central extension of the abelian graded Lie algebra $\CF^{(s)} = \CK^{(-s)} \oplus \CK^{(s+1)}[1]$ which can be used to define the raviolo vertex algebra $FC^{(s)}_{univ}$ over $\C[K]$ and its quotients where $X(z)$ has spin $s$ and $\psi(z)$ has spin $1-s$.
	
Another variant changes the cohomological degrees of the generators at the cost of introducing a super grading. 
We give the field $X(z)$ an intrinsic cohomological degree $r$ as well as the super grading $(-1)^r$ to ensure it is always a boson; the field $\psi(z)$ is then a fermion of cohomological degree $1-r$. We will denote the resulting raviolo vertex algebra over $\C[K]$ by $FC_{r, univ}$. We instead follow the above construction with $\CF$ replaced by $\CF_r: = \Pi^{r}\big(\CK[r] \oplus \CK^{(1)}[1-r]\big)$.
	
Finally, we note that we can combine these two variants to form the raviolo vertex algebra $FC^{(s)}_{r, univ}$ strongly generated over $\C[K]$ by the bosonic field $X(z)$ of cohomological degree $r$ and spin $s$ and the fermionic field $\psi(z)$ of cohomological degree $1-r$ and spin $1-s$. We denote its quotient at the ideal $(K-1)$ by $FC^{(s)}_r$.

\subsection{Raviolo Heisenberg algebra}
\label{sec:heisenberg}
Our second example is another free field algebra, roughly analogous to the Heisenberg and symplectic fermion vertex algebras.

Consider the abelian graded Lie algebra $\CH = \CK \oplus \CK[1]$ which is concentrated in degrees $-1,0,1$.
The residue gives a $2$-cocycle on this Lie algebra
\beqn
	(\alpha, \beta) \mapsto \Res_u \diff u \alpha \del_u \beta 
\eeqn
where $\alpha\in \CK, \beta \in \CK[1]$ which, in turn, determines a central extension
\beqn
	0 \to \C K \to \Hat{\CH} \to \CH \to 0 .
\eeqn
Notice that this differs from the previous example by a derivative.
We can assign an auxiliary grading to this graded Lie algebra which we again call flavor charge\footnote{For the reader familiar with the appearance of this system in the context of three-dimensional supersymmetric gauge theory, we point out that this terminology is misleading, but we will use it only in this section.}, by declaring that $\alpha$ has charge $+1$ and $\beta$ has charge $-1$.

The Lie algebra $\Hat{\CH}$ has a basis given by the central generator $K$ together with $c_{n} = u^n$ and $\nu_n = \Omega^n_u$ from $\CK$ as well as $\varphi_n = u^n$ and $b_n = \Omega^n_u$ from $\CK [1]$, where $n \in \Z_{\geq 0}$. As before the action of $-\pd_u$ defines a derivation $\pd$ that annihilates on $K$, acts on $\CK$ as
\beqn
	 \pd \nu_n = (n+1) \nu_{n+1} \qquad \pd c_n = -n c_{n-1}
\eeqn
and on $\CK[1]$ as
\beqn
	\pd b_n = (n+1) b_{n+1} \qquad \pd \varphi_n = -n \varphi_{n-1}
\eeqn

The gradings are as follows:
\begin{itemize}
	\item The generator $K$ has cohomological degree zero and spin zero.
	\item The generator $c_n$ has cohomological degree zero and spin $-n$.
	\item The generator $\nu_n$ has cohomological degree $1$ and spin $n+1$.
	\item The generator $b_n$ has cohomological degree zero and spin $n+1$. 
	\item The generator $\varphi_n$ has cohomological degree $-1$ and spin $-n$.
\end{itemize}
The non-vanishing commutators of these generators are
\be
	[c_n, b_m] = -n \delta_{n,m+1}K = -[b_m, c_n]
\ee
and 
\be
	[\nu_n, \varphi_m] = m \delta_{n+1,m}K = [\varphi_m, \nu_n]
\ee
Note that the generators $c_0$ and $\varphi_0$ are central in $\Hat{\CH}$.

The graded Lie algebra $\Hat{\CH}$ has a positive subalgebra $\Hat{\CH}_{\geq 0} = \CK^0 \oplus \CK^0 [1] \oplus \C K$.
For $k \in \C$ define the $\Hat{\CH}_{\geq 0}$-module $\C_k$ where $\CK^0,\CK^0 [1]$ act trivially and $K$ acts by $k$.
Define the induced $\Hat{\CH}$-module
\beqn
	H_k \define U \Hat{\CH} \otimes_{U \Hat{\CH}_{\geq 0}} \C_k .
\eeqn
For $k \ne 0$ we can rescale generators to set $k = 1$, we denote $H = H_1$. As in the case of $FC$, we could even consider the $\C[K]$-module
\beqn
	H_{univ} \define U \Hat{\CH} \otimes_{U \Hat{\CH}_{\geq 0}} \C[K] .
\eeqn
and the following proof leads us to a raviolo vertex algebra over $\C[K]$ strongly generated over $\C[K]$ by two fields $b(z)$ and $\nu(z)$. The raviolo vertex algebra $H_k$ is then the quotient of $H_{univ}$ associated to the ideal $(K-k)$ of $\C[K]$.

\begin{prop}
	$H$ is a raviolo vertex algebra with spin grading that is strongly generated by the fields
	\begin{equation*}
		b(z) = \sum_{n\geq 0} z^n b_{n} + \Omega^n_z \varphi_{n} \; , \qquad \nu(z) = \sum_{n\geq 0} z^n \nu_{n} + \Omega^n_z c_{n}.
	\end{equation*}
\end{prop}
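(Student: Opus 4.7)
The plan is to invoke the Reconstruction Theorem (Proposition \ref{prop:reconstruction}). Take $|0\rangle \in H$ to be the image of $1 \otimes 1$, $\pd$ the endomorphism of $H$ induced by the derivation $-\pd_u$ on $\widehat{\CH}$ (with $\pd|0\rangle = 0$), and candidate strong generators $b_0|0\rangle$ and $\nu_0|0\rangle$ with the associated fields $b(z)$ and $\nu(z)$ as stated.

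Three of the four axioms of the reconstruction theorem are routine. For axiom (1), the positive subalgebra $\widehat{\CH}_{\geq 0}$ contains $c_n$ and $\varphi_n$ for all $n \geq 0$---these being precisely the coefficients of $\Omega^n_z$ in $\nu(z)$ and $b(z)$---so by construction of the induced module they annihilate the vacuum, while $b_{(-1)}|0\rangle = b_0|0\rangle$ and $\nu_{(-1)}|0\rangle = \nu_0|0\rangle$ by direct inspection. For axiom (2), the translation compatibility $[\pd, b(z)] = \pd_z b(z)$ (and analogously for $\nu(z)$) is verified by comparing the derivation rules tabulated for $b_n, \varphi_n, \nu_n, c_n$ with the formal rules $\pd_z z^n = n z^{n-1}$ and $\pd_z \Omega^m_z = -(m+1)\Omega^{m+1}_z$. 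For axiom (4), the Poincar\'e--Birkhoff--Witt theorem applied to $\widehat{\CH}$ and the induced module yields a linear basis of $H$ consisting of ordered monomials in $\{b_n, \nu_n : n \geq 0\}$ applied to $|0\rangle$, which correspond precisely to the monomials of strictly negative modes required in the reconstruction theorem.

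The main step is axiom (3), mutual locality of $b(z)$ and $\nu(z)$. Since both $\CK$ and $\CK[1]$ are abelian and the residue-pairing 2-cocycle vanishes within each summand, one immediately has $[b(z), b(w)] = 0$ and $[\nu(z), \nu(w)] = 0$, so each field is mutually local with itself. For the cross-commutator $[b(z), \nu(w)]$, a direct term-by-term expansion using the non-vanishing brackets $[c_n, b_m] = -n\delta_{n,m+1}K$ and $[\nu_n, \varphi_m] = m\delta_{n+1,m}K$---with careful bookkeeping of the Koszul signs produced by the odd-degree generators $\nu_n$ (cohomological degree $+1$) and $\varphi_n$ (cohomological degree $-1$)---yields $[b(z), \nu(w)] = K\,\pd_w \Delta(z-w)$. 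This has the form required by Proposition \ref{prop:locality}(3) with $C^1(w) = K\,\id_H$ and all other $C^n$ zero; equivalently, the singular OPE reads $b(z)\nu(w) \sim K\,\Omega^1_{z-w}$.

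The spin grading on $H$ is declared by assigning spin $n+1$ to $b_n$ and $\nu_n$ and spin $-n$ to $c_n$ and $\varphi_n$, then extending multiplicatively to monomials; this is compatible with $\pd$ increasing spin by $1$, the vacuum having spin $0$, and makes $b(z)$ and $\nu(z)$ homogeneous of spin $1$. The principal obstacle is the Koszul-sign bookkeeping in the cross-commutator: since the defining 2-cocycle here involves a derivative $\pd_u$ inside the residue pairing (unlike the free-field example of Section \ref{sec:freefields}), one must confirm that after collecting terms the answer is precisely $K\,\pd_w\Delta(z-w)$ rather than some more elaborate polynomial combination in the $\pd^k_w\Delta$.
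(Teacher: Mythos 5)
Your proposal is correct and follows essentially the same route as the paper: identify the vacuum and translation operator on the induced module, invoke the reconstruction theorem (Proposition \ref{prop:reconstruction}), and reduce everything to the commutators of the generating fields, where indeed $[b(z),b(w)]=[\nu(z),\nu(w)]=0$ and $[b(z),\nu(w)]=K\,\pd_w\Delta(z-w)$ (so $C^1=K\,\id$ and all other $C^n=0$), exactly as in the paper's computation with $K$ acting as the identity on $H=H_1$.
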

\begin{proof}
	The vacuum vector $|0 \rangle$ is the image of $1 \otimes 1 \in U \Hat{\CH} \otimes \C_1$ in the quotient $H$ and the translation operator $\pd$ is induced from the one defined on $\Hat{\CH}$ together with $\pd1=0$. We associate the vectors $b_{0}|0\rangle$ and $\nu_0|0\rangle$ to the fields $b(z)$, $\nu(z)$. To complete the proof, it suffices to show this data satisfies the conditions of Proposition \ref{prop:reconstruction}. As this is nearly identical to the previous example, we just verify mutual locality of the generators. Computing their commutators, we find
	\be
	\begin{aligned}[]
		[b(z), b(w)] & = 0 \qquad & [\nu(z), b(w)] & = -\del_w\Delta(z-w)\\
		[b(z), \nu(w)] & = \del_w \Delta(z-w) \qquad & [\nu(z), \nu(w)] & = 0
	\end{aligned}
	\ee
	so that these generators are indeed mutually local.
\end{proof}

We see that the singular terms in the OPEs of these generating fields take the following form:
\be
\begin{aligned}
	b(z) b(w) & \sim 0 \qquad & \nu(z) b(w) & \sim -\Omega_{z-w}^1\\
	b(z) \nu(w) & \sim \Omega_{z-w}^1 \qquad & \nu(z) \nu(w) & \sim 0 .
\end{aligned}
\ee
In the universal raviolo Heisenberg algebra $H_{univ}$, the OPEs replace $\Omega^1_{z-w}$ by $\Omega^1_{z-w} K$.

Let's consider the two-variable character of $FC_k$ defined by
\beqn
\op{ch}_{H}(q,y) \define \sum_{r,s \in \Z} \op{grdim}(H^{(s),r}) q^s y^r
\eeqn
where $H^{(s),r} \subset H$ is spanned by the elements of spin $s$ and flavor charge $r$.
Then
\beqn
\op{ch}_{H}(q,y) = \frac{(y^{-1}q;q)_\infty}{(yq;q)_\infty} .
\eeqn

Recall the graded Lie algebra $\Hat{\CF}$ underlying the free chiral raviolo vertex algebra $FC$ from the previous subsection.
Notice that there is a map of graded Lie algebras
\beqn
	\Hat{\CH} \to \Hat{\CF}
\eeqn
induced by sending $c_n \mapsto Y_n, \nu_n \mapsto \psi_n$, $\varphi_n \mapsto -\pd \chi_n$, $b_n \mapsto -\pd X_n$, and $K \mapsto K$.
This induces a morphism $H_{univ} \to FC_{univ}$ of raviolo vertex algebras over $\C[K]$ and hence morphisms $H_k \to FC_k$ of the quotients that send $\nu(z)$ to $\psi(z)$ and $b(z)$ to $-\pd_z X(z)$. We also note that there is a map of graded Lie algebras $\Hat{\CH} \to \Hat{\CF}^{(1)}$ and hence a morphism $H_{univ} \to FC^{(1)}_{univ}$ corresponding to the identifications $\nu(z) = \pd_z \psi(z)$ and $b(z) = X(z)$.

There are variants of this example obtained by changing the spin that the generators carry.
For this general case, we can define a central extension of the abelian graded Lie algebra $\CK^{(-s)} \oplus \CK^{(s)}[1]$ which can be used to define the raviolo vertex algebra $H^{(s)}_{univ}$ over $\C[K]$ where $b(z)$ has spin $1+s$ and $\nu(z)$ has spin $1-s$. It is also possible to give the generators an intrinsic cohomological grading at the cost of introducing a super grading: $b(z)$ would be a boson of cohomological degree $r$ and $\nu(z)$ a fermion of degree $1-r$.

\subsection{Raviolo current algebra}
\label{sec:currents}
The next example we consider is the raviolo analog of an affine vertex algebra.
We choose a complex Lie superalgebra $\fg$ and equip it with an even, symmetric, $\fg$-invariant bilinear form $h$.
With this data, we can construct a degree $+1$ one-dimensional central extension 
\beqn
0 \to \C\kappa [-1] \to \Hat{\fg}_h \to \fg\langle\!\langle u \rangle\!\rangle \to 0 
\eeqn
of the graded Lie algebra $\fg\langle\!\langle u \rangle\!\rangle = \fg \otimes \CK^u$ defined by the formula
\be
	[A \otimes \alpha, B \otimes \beta] = [A,B] \otimes \alpha \beta - \kappa h(A,B) \textrm{Res}_u\diff u \alpha \pd_u \beta
\ee
where $\alpha, \beta \in \CK$ and $\kappa$ is the degree $+1$, central term. If $\fg$ has multiple factors, one could allow for multiple degree $1$ generators. For simplicity, we will focus on the case where there is only one.
This central extension is the raviolo analog of the affine Kac--Moody central extension of $\fg(\!(z)\!)$.
Notice that the cohomological shift of the central term arises from the residues.
We refer to $\Hat{\fg}_h$ as the raviolo Kac-Moody algebra, see \cite{FHK} for related algebras.

If we choose a homogeneous basis $\{T_a\}$ of $\fg$, with $h(T_a, T_b) = h_{ab}$ and commutators $[T_a, T_b] = f^c_{ab} T_c$, then we have the following basis for $\Hat{\fg}_h$:
\begin{itemize}
	\item The central element $\kappa$ is of cohomlogical degree $+1$ and spin $0$.
	\item For each $n \geq 0$ we have the cohomological degree zero generator $J_{a,n} = T_a \otimes u^n$ which has spin $-n$.
	\item For each $n \geq 0$ we have the cohomological degree $+1$ generator $\mu_{a,n} = T_a \otimes \Omega^n_u$ which has spin $n+1$.
\end{itemize}
The generator $J_{a,n}$ (resp. $\mu_{a,n}$) is bosonic (resp. fermionic) if $T_a \in \fg_+$ and fermionic (resp. bosonic) if $T_a \in \fg_-$; the generator $\kappa$ is always fermionic. These generators have the following commutators:
\be
\begin{aligned}[]
	[J_{a,m}, J_{b,n}] = f_{ab}^c J_{c, m+n} \hspace{2cm} [\mu_{a,m}, \mu_{b,n}] = 0\\
	[J_{a,m}, \mu_{b,n}] = \begin{cases} 
		0 & n+1 < m\\
		m h_{ab} \kappa & n+1 = m\\
		f^c_{ab} \mu_{c,n-m} & n \geq m
	\end{cases} \hspace{1cm}
\end{aligned}
\ee
The action of $-\pd_u$ on $\CK$ implies the following action of the translation operator on the generators:
\beqn
	\pd \mu_{a,n} = (n+1) \mu_{a,n+1} \qquad \pd J_{a,n} = -n J_{a,n-1} \qquad \pd \kappa = 0
\eeqn

%There is a further quotient $\CV[\fg]$ of $U \Hat{\fg}$ where we set $\kappa = 0$; this is the vacuum representation for the non-centrally extended Lie algebra $\fg[\CK]$.

Next, we define the analog of the vacuum module associated to the raviolo Kac-Moody algebra. As we will see, this is most naturally a module over $\C[\kappa]= \C \oplus \C \kappa[-1]$.
We view $\Hat{\fg}_{\geq0} = \fg[\![u]\!] \oplus \C\kappa [-1]$ as the positive subalgebra of $\Hat{\fg}_h$ and define the vacuum module by
\be
	\CV[\fg, h]_{univ} = U \Hat{\fg}_h \otimes_{U\Hat{\fg}_{\geq0}} \C[\kappa].
\ee
where $\fg[\![u]\!]$ acts trivially on $\C[\kappa]$ and $\kappa$ by multiplication by $\kappa$.
%This is a $\Hat{\fg}$-module defined over the graded ring $\C[\kappa] = \C \oplus \C \kappa[-1]$.
%As a graded vector space, we can identify $\CV$ with the space of functions on the space of $\fg^*$-valued sections of $\Pi K$. 
As a $\C[\kappa]$-module, $\CV [\fg,h]_{univ}$ can be identified with the space of $\C[\kappa]$-linear polynomials in the variables $\mu_{a,n}$. %$\mu_{a,n}$ is identified with $\frac{1}{n!} \pd^n_z \mu_a(0)$ for $\mu_a(z)$ the components of the section $\mu(z)$. The vacuum vector is again identified as $|0\rangle = 1 \in \C[\{\mu_{a,n}\}]$ and 
The vacuum vector $|0\rangle$ will be the image of $1 \otimes 1$ in the quotient. The translation operator acts as above and gets extended to $\CV[\fg,h]_{univ}$ by the Leibniz rule.

The state-operator correspondence is directly analogous to the previous case, so we do not spell it out in detail. Instead, we describe the generating fields. The linear states $\mu_{a,0}|0\rangle$ will correspond to the fields
\beqn
	\mu_a(z) = \sum_{n \geq 0} z^{n} \mu_{a,n} + \Omega^n_z J_{a,n}
\eeqn
It is a simple task to verify that the above data satisfies the conditions of Proposition \ref{prop:reconstruction}. For example, the commutators of the fields $\mu_a(z)$ are given by
\be
	[\mu_a(z), \mu_b(w)] = \pd_w\Delta(z,w) h_{ab} \kappa + \Delta(z,w) f^c_{ab}\mu_c(w)
\ee
which establishes their mutual locality. We see that the singular terms in the OPEs of the generating fields are given by
\be
	\mu_a(z) \mu_b(w) \sim \Omega_{z-w}^1 h_{ab} \kappa + \Omega_{z-w}^0 f^c_{ab}\mu_c(w)
\ee

\begin{prop}
$\CV[\fg,h]_{univ}$ is a raviolo vertex algebra over $\C[\kappa]$ with spin grading and super grading that is strongly generated over $\C[\kappa]$ by the fields $\mu_a(z)$.
\end{prop}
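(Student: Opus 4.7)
The plan is to apply the reconstruction theorem, Proposition \ref{prop:reconstruction}, in its variant for raviolo vertex algebras over a graded commutative unital $\C$-algebra, taking $S = \C[\kappa]$ (where $\kappa$ has cohomological degree $+1$). The graded $S$-module is $\CV[\fg,h]_{univ}$, the vacuum is $|0\rangle = 1 \otimes 1$, the translation operator $\pd$ is extended from its action on $\Hat{\fg}_h$ by $\pd|0\rangle = 0$ and the Leibniz rule, and the candidate strong generators over $\C[\kappa]$ are the fields $\{\mu_a(z)\}$ indexed by a homogeneous basis $\{T_a\}$ of $\fg$.

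First I would dispatch conditions (1), (2), and (4) of the reconstruction theorem. Condition (1) says $\mu_{a,(-1)}|0\rangle$ is the associated state (true by definition, since $\mu_{a,(-1)} = \mu_{a,0}$) and $\mu_{a,(m)}|0\rangle = J_{a,m}|0\rangle = 0$ for $m \geq 0$; this holds because $\Hat{\fg}_{\geq 0}$ acts trivially on the inducing module $\C[\kappa]$. For condition (2), $[\pd, \mu_a(z)] = \pd_z \mu_a(z)$ follows by matching $\pd \mu_{a,n} = (n+1)\mu_{a,n+1}$ and $\pd J_{a,n} = -n J_{a,n-1}$ against the action of $\pd_z$ on the basis $\{z^n, \Omega^n_z\}$ of $\CK_{dist}$. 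Condition (4), spanning as a $\C[\kappa]$-module by monomials in negative modes $\mu_{a,(j)}$ with $j<0$, follows from the Poincaré--Birkhoff--Witt theorem for the induced module: the negative modes correspond to the creation operators $\mu_{a,n}$ with $n \geq 0$, which span a complement of $\Hat{\fg}_{\geq 0}$ in $\Hat{\fg}_h$.

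The substantive step is condition (3), mutual locality of the $\mu_a(z)$. Using criterion (3) of Proposition \ref{prop:locality}, I would assemble the mode commutators of $\Hat{\fg}_h$ into distributions and check locality with $N = 1$. The central term $[J_{a,m},\mu_{b,m-1}] = m h_{ab}\kappa$ reassembles into $\pd_w\Delta(z-w)\, h_{ab}\,\kappa$, while the structure-constant pieces $[J_{a,m}, J_{b,n}] = f^c_{ab}J_{c,m+n}$ and $[J_{a,m},\mu_{b,n}] = f^c_{ab}\mu_{c,n-m}$ (for $n \geq m$) combine into $\Delta(z-w)\, f^c_{ab}\,\mu_c(w)$, yielding
\[
[\mu_a(z), \mu_b(w)] = \pd_w \Delta(z-w)\, h_{ab}\,\kappa + \Delta(z-w)\, f^c_{ab}\,\mu_c(w).
\]
The reconstruction theorem then extends $Y(-,z)$ uniquely to all of $\CV[\fg,h]_{univ}$ via iterated normal-ordered products of derivatives.

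Finally, the spin and super gradings extend from the generators using the grading clauses of Proposition \ref{prop:reconstruction}: the assignments of spin $-n$ to $J_{a,n}$ and $n+1$ to $\mu_{a,n}$, together with the totalized parities (opposite for positive and negative modes, as required), make each $\mu_a(z)$ a homogeneous raviolo field over $\C[\kappa]$. The main obstacle is careful bookkeeping of signs, since the commutators in $\Hat{\fg}_h$ must be interpreted as graded commutators with respect to the totalized parity, and these must be reconciled with the extra sign from the cohomological degree $-1$ residue pairing that enters Proposition \ref{prop:locality}. Once the conventions are pinned down the argument parallels those given for $FC$ and $H$ and reduces to a routine mode-by-mode verification.
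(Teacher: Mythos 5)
Your proposal is correct and follows essentially the same route as the paper: one invokes the $S=\C[\kappa]$ variant of the reconstruction theorem (Proposition \ref{prop:reconstruction}) with generating fields $\mu_a(z)$, verifies the vacuum, translation, and PBW-spanning conditions, and establishes mutual locality via the commutator $[\mu_a(z),\mu_b(w)] = \pd_w\Delta(z-w)\,h_{ab}\,\kappa + \Delta(z-w)\,f^c_{ab}\,\mu_c(w)$, with the spin and super gradings induced exactly as you describe. The paper leaves most of these checks as "a simple task," so your write-up is, if anything, more explicit than the original.
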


We note that $\CV[\fg,h]_{univ}$, viewed as merely a raviolo vertex algebra, isn't strongly generated by the fields $\mu_a(z)$ because states proportional to $\kappa$ cannot be realized by acting on the vacuum with the $\mu_{a,n}$. Instead, $\CV[\fg,h]_{univ}$ is strongly generated by the $\mu_a(z)$ and the constant field $\kappa = Y(\kappa|0\rangle,z)$.

The only non-trivial ideal of $\C[\kappa]$ is the (maximal) ideal $(\kappa)$ and the resulting quotient is given by
\beqn
	\CV[\fg] = U \Hat{\fg}_h \otimes_{U\Hat{\fg}_{\geq0}}\C
\eeqn
where $\C$ is the trivial 1-dimensional representation of $\Hat{\fg}_{\geq0}$. We note that this quotient is independent of the bilinear form $h$ used to define the central extension $\widehat{\fg}_h$. This raviolo vertex algebra is strongly generated by fields $\mu_a(z)$ with OPEs
\beqn
	\mu_a(z) \mu_b(w) \sim \Omega^0_{z-w} f_{ab}^c\mu_c(w)
\eeqn

With the notion of a raviolo current algebra, we can formulate a notion of raviolo vertex algebras with symmetries. This should capture when a raviolo vertex algebra $\CV$ admits a non-trivial morphism from $\CV[\fg,h]_{univ}$. %As $\CV[\fg]_{univ}$ is strongly generated by the fields $\mu_a(z)$ and the constant field $\kappa$, the data of such a morphism includes a degree 1 vector $\kappa^\CV$ (the image of $\kappa|0\rangle$) such that $Y(\kappa^\CV,z)$ is a constant field. In particular, $\kappa^{\CV}_{(-1)}$ is a degree $1$ linear map from $\CV$ to itself that commutes with $Y(a,z)$ for all $a \in \CV$. Moreover, $\kappa^{\CV}_{(-1)}$ commutes with the translation operator because $[\pd, Y(\kappa^\CV,z)] = \pd_z Y(\kappa^\CV,z) = 0$. Equivalently, given a degree $1$ linear map $\kappa:\CV \to \CV$ commuting with all $Y(a,z)$ and $\pd$, the field $Y(\kappa(|0\rangle),z)$ is constant and given by $\kappa$.

\begin{dfn}
	Let $\CV$ be a raviolo vertex algebra over $\C[\kappa]$. We say $\CV$ has a \defterm{Hamiltonian $\fg$ symmetry at level $h$} if it equipped with a non-trivial morphism $\CV[\fg,h]_{univ} \to \CV$ of raviolo vertex algebras over $\C[\kappa]$.
\end{dfn}

If the level $h$ vanishes or if $\CV$ is a trivial $\C[\kappa]$ module, the morphism $\CV[\fg,h]_{univ} \to \CV$ factors through the quotient $\CV[\fg] \to \CV$. By an abuse of notation, we often denote by $\mu_a(z)$ the fields on $\CV$ corresponding to the images of $\mu_{a,0}|0\rangle$ in $\CV$. For $\zeta = \zeta^a T_a \in \fg$ we denote $\mu_\zeta(z) = \zeta^a \mu_a(z)$. The modes $\mu_{\zeta,(0)}$ give $\CV$ the structure of a representation for $\fg$ via raviolo vertex algebra derivations.
As a representation we can consider its character.

As usual, we denote by $q$ the generator for the $U(1)$ action determining the spin grading.
Let $W$ be the Weyl group of $G$ and $T \subset G$ a Cartan.
For any weight $w$ of $G$ let $s^w \colon T \to \C^\times$ be the corresponding one-dimensional $T$-representation.
In particular, for a root $\alpha$ we have the representation $s^\alpha$.
If we choose generators $\{s_i\}_{i=1}^{rk(\lie{g})}$ for the Cartan $T$ then $s^\alpha = \prod_{i=1}^{rk (g)} s_i^{\alpha_i}$.
The character of a raviolo vertex algebra with a $\lie{g}$ symmetry is defined as the graded dimension with respect to both the spin grading and the grading determined by the Cartan of $\lie{g}$.
For the raviolo Kac--Moody algebra $\CV[\lie{g}]$ one has
\beqn
\text{ch}_{\CV[\lie{g}]} (q, \{s_i\}) = \prod_{\alpha \in \text{rt}(\lie{g})} (q s^\alpha; q)_\infty
\eeqn
where $\text{rt}(\lie{g})$ is the set of roots of $\lie{g}$.

%\begin{dfn}
%	Let $\CV$ be a ravioli vertex algebra with a Hamiltonian $\fg$ symmetry. A vector $a \in \CV^{(s)}$ is called a \defterm{$\fg$ primary state} if for every $\zeta \in \fg$
%\end{dfn}

%We note that $\til \CV[\fg]$ isn't strongly generated by the fields $\mu_a(z)$ because states proportional to $\kappa$ cannot be realized by acting on the vacuum with the $\mu_{a,n}$. 
%$\til \CV[\fg]$ is strongly generated by the $\mu_a(z)$ and the constant field $\kappa$.
%
%With the notion of a ravioli current algebra, we can formulate a notion of ravioli vertex algebras with symmetries. This should capture when a ravioli vertex algebra $\CV$ admits a morphism from $\CV[\fg]$. As $\til \CV[\fg]$ is strongly generated by the fields $\mu_a(z)$ and the constant field $\kappa$, the data of such a morphism includes a degree 1 vector $\kappa^\CV$ (the image of $\kappa|0\rangle$) such that $Y(\kappa^\CV,z)$ is a constant field. In particular, $\kappa^{\CV}_{(-1)}$ is a degree $1$ linear map from $\CV$ to itself that commutes with $Y(a,z)$ for all $a \in \CV$. Moreover, $\kappa^{\CV}_{(-1)}$ commutes with the translation operator because $[\pd, Y(\kappa^\CV,z)] = \pd_z Y(\kappa^\CV,z) = 0$. Equivalently, given a degree $1$ linear map $\kappa:\CV \to \CV$ commuting with all $Y(a,z)$ and $\pd$, the field $Y(\kappa(|0\rangle),z)$ is constant and given by $\kappa$.

%Notice that if $\kappa_\CV = 0$ then $\til \CV[\lie{g}] \to \CV$ factors through a map $\CV[\lie{g}] \to \CV$.

\begin{dfn}
	Let $\CV$ be a raviolo vertex algebra with a Hamiltonian $\fg$ symmetry at level $h$ (possibly zero). A vector $a \in \CV^{(s)}$ is called a \defterm{$\fg$ primary state} if for every $\zeta \in \fg$
	\begin{equation*}
		\mu_{\zeta,(n+1)} a = 0\,, n > 0
	\end{equation*}
	The corresponding field $Y(a,z)$ is called a \defterm{$\fg$ primary field}.
\end{dfn}
The primary constraint on $a$ implies the follow OPE:
\be
	\mu_\zeta(z) Y(a,w) \sim \Omega^0_{z-w} Y(\zeta \cdot a,w)
\ee
where $\zeta \cdot a = \mu_{\zeta,(0)} a$.

From the perspective of the three-dimensional QFT, the operators $\mu_{a,(-1)}|0\rangle$ giving rise to the fields $\mu_a(z)$ generate a $\fg[\![z]\!]$ symmetry of the theory via the descent brackets $\{\{\mu_a, -\}\}^{(n)} \leftrightarrow J_{a,n}$. From this perspective, $\kappa^\CV h$ has a clear meaning. If $\kappa^\CV h$ is non-trivial, this $\fg[\![z]\!]$ symmetry suffers from an anomaly: $\mu_a$ and its derivatives generate the $\fg[\![z]\!]$ symmetry but do not transform as expected, i.e. they are not primary operators transforming in the coadjoint representation $\fg^*$.

With $\fg$ primary fields in hand, the following useful property of their normal-ordered product is an immediate consequence of Corollary \ref{cor:NOPcomm}.
\begin{prop}
\label{prop:currentprimary}
	Let $Y(a_1,z), Y(a_2,w)$ be $\fg$ primary fields for a Hamiltonian $\fg$ symmetry transforming in representations $R_1$ and $R_2$, then their normal-ordered product $\norm{Y(a_1,z) Y(a_2,z)}$ is a primary field transforming in the representation $R_1 \otimes R_2$.
\end{prop}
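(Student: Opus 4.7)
The plan is to invoke Corollary \ref{cor:NOPcomm} directly, with the primary condition guaranteeing that the relevant OPEs contain only a single singular term. By hypothesis, each $\mu_\zeta(z)$ is mutually local with $Y(a_i,w)$ and the primary condition forces the commutator to be
\beqn
	[\mu_\zeta(z), Y(a_i, w)] = \Delta(z-w)\, Y(\zeta \cdot a_i, w), \qquad i=1,2.
\eeqn
In the notation of that corollary, taking $A = \mu_\zeta$, $B = Y(a_1)$, and $C = Y(a_2)$, this means $D^0(w) = Y(\zeta \cdot a_1, w)$ and $E^0(w) = Y(\zeta \cdot a_2, w)$, while all higher $D^m$ and $E^m$ vanish.

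Applying Corollary \ref{cor:NOPcomm} then yields
\beqn
	[\mu_\zeta(z), \norm{Y(a_1) Y(a_2)}(w)] = \Delta(z-w) \Bigl( \norm{Y(\zeta\cdot a_1)\, Y(a_2)}(w) + (-1)^{(|\mu_\zeta|+1)|a_1|} \norm{Y(a_1)\, Y(\zeta\cdot a_2)}(w) \Bigr).
\eeqn
Since the right-hand side involves only $\Delta(z-w)$ and no $\pd_w^m \Delta(z-w)$ with $m \geq 1$, Proposition \ref{prop:locality} implies the OPE of $\mu_\zeta(z)$ with $\norm{Y(a_1) Y(a_2)}(w)$ has only the $\Omega^0_{z-w}$ singular term, so $\norm{Y(a_1) Y(a_2)}(w)$ is $\fg$-primary. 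Using Corollary \ref{cor:stateopNOP} to identify $\norm{Y(a_1) Y(a_2)}(w) = Y((a_1)_{(-1)} a_2, w)$, the coefficient on the right is precisely the super Leibniz action of $\zeta$ on $(a_1)_{(-1)} a_2$, which is the standard $\fg$-action on the tensor product $R_1 \otimes R_2$.

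The only real bookkeeping is verifying that the Koszul sign $(-1)^{(|\mu_\zeta|+1)|a_1|}$ supplied by Corollary \ref{cor:NOPcomm} agrees with the expected sign $(-1)^{|\zeta||a_1|}$ in the Leibniz rule on $R_1 \otimes R_2$. The shift $|\mu_\zeta|+1$ arises because $\mu_{\zeta,(0)}$ is the mode in front of $\Omega^0_z$, whose cohomological degree $+1$ compensates the intrinsic degree of $\mu_\zeta$, so the effective parity of the derivation $\mu_{\zeta,(0)}$ matches that of $\zeta \in \fg$ as prescribed by the sign conventions of Section \ref{sec:currents}. Once this sign is reconciled, the proposition is a purely formal consequence of Corollary \ref{cor:NOPcomm} applied to a primary OPE.
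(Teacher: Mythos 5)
Your proof is correct and follows the same route as the paper, which simply observes that the statement is an immediate consequence of Corollary \ref{cor:NOPcomm}: the primary condition reduces the current--primary commutator to the single $\Delta(z-w)$ term, and the corollary then yields the Leibniz-type action on the normal-ordered product, with the sign $(-1)^{(|\mu_\zeta|+1)|a_1|}$ correctly reconciled with the Koszul sign for the $\fg$-action on $R_1\otimes R_2$. Your added details (using Proposition \ref{prop:locality} and Corollary \ref{cor:stateopNOP} to translate the commutator into vanishing of the higher modes on $(a_1)_{(-1)}a_2$) are exactly the bookkeeping the paper leaves implicit.
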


This behavior is quite different from the case of vertex algebras, where the normal-ordered product of primaries need not be a primary. For example, in the VOA of a free, complex fermion $\psi(z) \chi(w) \sim (z-w)^{-1}$ the current $J = \norm{\psi \chi}$ is not a primary for itself due to the anomaly.

We note that the raviolo vertex algebra of $N$ free fields has a Hamiltonian $\fgl(N)$ symmetry:

\begin{prop}
	$FC^{\otimes N}$ has a Hamiltonian $\fgl(N)$ symmetry at level $0$ generated by $\norm{\psi_j X^i}$. Moreover, $X^i$ and $\psi_i$ are primaries, transforming in the standard representation $\C^N$ and its dual $(\C^N)^*$, respectively.
\end{prop}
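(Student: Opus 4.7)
The strategy is to verify the claim by computing all relevant OPEs directly using Corollary \ref{cor:NOPcomm}, the derivation property of the normal-ordered product with respect to commutators. Set $\mu^i_j(z) = \norm{\psi_j X^i}(z)$; this has cohomological degree $|\psi_j| + |X^i| = 1$. Mutual locality of $\mu^i_j$ with all the single-letter fields follows from Dong's lemma (Lemma \ref{lem:Dong}), so the corollary applies throughout.

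First, take $A = X^k$ (respectively $\psi_k$) and $(B, C) = (\psi_j, X^i)$ in Corollary \ref{cor:NOPcomm}. The elementary OPEs
\[
X^i(z)\psi_j(w) \sim \delta^i_j \Omega^0_{z-w},\qquad X^i(z) X^j(w) \sim 0 \sim \psi_i(z)\psi_j(w)
\]
imply that exactly one term survives in each application and that no $\pd_w\Delta(z-w)$ terms appear, yielding
\[
[X^k(z),\mu^i_j(w)] = \delta^k_j \Delta(z-w) X^i(w),\qquad [\psi_k(z),\mu^i_j(w)] = \delta^i_k \Delta(z-w) \psi_j(w).
\]
The identities $\Delta(w-z) = -\Delta(z-w)$ and $\Delta(z-w) A(z) = \Delta(z-w) A(w)$ together with graded skew symmetry of the bracket translate these into the OPEs
\[
\mu^i_j(z) X^k(w) \sim \Omega^0_{z-w} \delta^k_j X^i(w),\qquad \mu^i_j(z) \psi_k(w) \sim -\Omega^0_{z-w} \delta^i_k \psi_j(w).
\]
Only the $n = 0$ pole appears, so $X^k$ and $\psi_k$ are $\fgl(N)$-primary in the sense of Section \ref{sec:currents} under the identification $\mu^i_j \leftrightarrow E^i_j$ with the matrix units, and the zero-mode actions $\delta^k_j X^i$ and $-\delta^i_k \psi_j$ reproduce the standard representation $E^i_j \cdot e^k = \delta^k_j e^i$ and its dual $E^i_j \cdot e^*_k = -\delta^i_k e^*_j$.

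Second, apply Corollary \ref{cor:NOPcomm} once more with $A = \mu^i_j$ and $(B, C) = (\psi_l, X^k)$, inserting the OPEs just derived. The Koszul factor $(-1)^{(|\mu^i_j|+1)|\psi_l|}$ equals $+1$, and again no $\pd_w\Delta$ contributions arise:
\[
[\mu^i_j(z), \mu^k_l(w)] = \Delta(z-w)\big(\delta^k_j\mu^i_l(w) - \delta^i_l\mu^k_j(w)\big).
\]
The right-hand side is precisely $\Delta(z-w)\mu_{[E^i_j,E^k_l]}(w)$, and the absence of a $\pd_w\Delta(z-w)$ term confirms that the level vanishes. These are exactly the defining OPE relations of the raviolo current algebra $\CV[\fgl(N)]$ introduced in Section \ref{sec:currents}. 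By the universal property of $\CV[\fgl(N)]$ --- which follows from the reconstruction theorem (Proposition \ref{prop:reconstruction}) applied to its strong generators $\mu_{E^i_j}(z)$ --- the assignment $\mu_{E^i_j}(z) \mapsto \mu^i_j(z)$ extends uniquely to a morphism $\CV[\fgl(N)] \to FC^{\otimes N}$ of raviolo vertex algebras, providing the asserted Hamiltonian $\fgl(N)$ symmetry at level $0$.

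The main care required is sign bookkeeping: all the Koszul signs appearing in Corollary \ref{cor:NOPcomm} end up being $+1$ in the cases that matter, because $|\psi_j| = 1$ and $(|A|+1)|\psi_j|$ is even whenever $A \in \{\psi_k, \mu^i_j\}$, but this must be verified at each step. A secondary delicate point is translating between the bracket $[A(z), B(w)]$ and the OPE of $A(z) B(w)$, particularly in the $[\mu^i_j, \psi_k]$ case where both fields are cohomologically odd, where the graded skew symmetry of the bracket introduces an additional sign that must be tracked.
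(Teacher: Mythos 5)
Your computation is correct and is precisely the ``explicit computation that we do not show here'' to which the paper appeals: the use of Corollary \ref{cor:NOPcomm}, the Koszul signs, the commutators $[\mu^i_j(z),\mu^k_l(w)]=\Delta(z-w)\big(\delta^k_j\mu^i_l(w)-\delta^i_l\mu^k_j(w)\big)$ with no $\pd_w\Delta$ term (level $0$), and the zero-mode actions identifying $X^i$ and $\psi_i$ with the standard and dual representations all check out. The only slightly loose point is attributing the universal property of $\CV[\fgl(N)]$ to Proposition \ref{prop:reconstruction}; the cleaner in-paper justification for extending $\mu_{E^i_j}(z)\mapsto\norm{\psi_j X^i}(z)$ to a morphism of raviolo vertex algebras is the universal property of $\text{Vac}(\CL)$, namely $\Hom(\CL,\CV[1])\simeq\Hom(\text{Vac}(\CL),\CV)$, but this does not affect the correctness of your argument.
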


This follows from an explicit computation that we do not show here. There is a Hamiltonian $\fg$ symmetry (at level $0$) for any $\fg \subseteq \fgl(N)$. We also note that the raviolo Heisenberg algebra $H$ has a Hamiltonian $\fgl(1)$ symmetry:

\begin{prop}
	$H$ has a Hamiltonian $\fgl(1)$ symmetry at level $0$ generated by $\nu(z)$. The field $b(z)$ is not a primary.
\end{prop}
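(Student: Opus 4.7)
The plan is to construct a morphism $\CV[\fgl(1), 0]_{univ} \to H$ of raviolo vertex algebras over $\C[\kappa]$ by sending the generator $\mu(z)$ of the current algebra to $\nu(z) \in H$, and sending $\kappa \in \C[\kappa]$ to $0$. By the construction of $\CV[\fgl(1), h]_{univ}$ in Section \ref{sec:currents}, specifying this data amounts to choosing a field on $H$ whose self-OPE matches that of $\mu(z)$ at level $h = 0$. Concretely, at level $h = 0$ the defining OPE of the current in $\CV[\fgl(1), 0]_{univ}$ reduces to $\mu(z)\mu(w) \sim \Omega^1_{z-w} \cdot 0$, which is regular.

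The first step is to invoke the OPE computation from Section \ref{sec:heisenberg}, which gives $\nu(z)\nu(w) \sim 0$. Since $\nu(z)$ has trivial self-OPE, its modes close on a copy of the abelian subalgebra of $\Hat{\fg}_h$ obtained by setting $\kappa = 0$. I would then apply the universal property of the induced module $\CV[\fgl(1),0]_{univ} = U\Hat{\fg}_h \otimes_{U\Hat{\fg}_{\geq 0}} \C[\kappa]$ (together with Goddard uniqueness, Proposition \ref{prop:uniqueness}) to extend the assignment $\mu_0 \mapsto \nu_0 = \nu(z=0)_{-} |0\rangle$ to a $\C[\kappa]$-linear morphism of raviolo vertex algebras; the only compatibility to check is that the OPEs agree, which we have just verified. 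Non-triviality is immediate because $\nu(z)$ is one of the strong generators of $H$, so the image contains a nonzero element. This establishes the Hamiltonian $\fgl(1)$ symmetry at level $0$.

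For the second claim, I would simply read off the OPE $\nu(z) b(w) \sim -\Omega^1_{z-w}$ from Section \ref{sec:heisenberg}. Unpacking the definition of a primary state, $b_0 |0\rangle$ would be primary only if the singular part of $\nu(z) b(w)$ consisted solely of an $\Omega^0_{z-w}$ term. Here the singular part is purely a multiple of $\Omega^1_{z-w}$, equivalently $\nu_{(1)} b \ne 0$ (it equals $-|0\rangle$ up to sign), which directly violates the primary condition. Hence $b(z)$ is not primary.

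The only genuinely substantive step is the first paragraph's verification that the candidate morphism is well defined; the rest is essentially reading off consequences of the OPEs already computed. There is no real obstacle here once one observes that the vanishing $\nu(z)\nu(w) \sim 0$ forces the image of $\kappa$ to be central with no residue contribution, which is automatic at level zero.
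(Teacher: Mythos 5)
Your argument is correct and is essentially the computation the paper leaves implicit (this proposition is stated without proof): $\nu(z)\nu(w)\sim 0$ gives the non-trivial morphism $\CV[\fgl(1)]\to H$ (equivalently the level-$0$ Hamiltonian symmetry, with $\kappa$ acting by zero, so the map factors through the quotient $\CV[\fgl(1)]$ as the paper notes), and $\nu(z)b(w)\sim -\Omega^1_{z-w}$, i.e.\ $\nu_{(1)}b=-|0\rangle\neq 0$, shows $b$ fails to be primary. One remark: your reading of ``primary'' (no singular terms beyond $\Omega^0_{z-w}$) is the intended one, matching the displayed primary OPE $\mu_\zeta(z)Y(a,w)\sim \Omega^0_{z-w}Y(\zeta\cdot a,w)$, even though the inequality in the paper's definition as literally written only constrains the modes $\mu_{\zeta,(m)}$ with $m\geq 2$ (under that literal reading $b$ would vacuously be primary, so the definition must be read as you did).
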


\subsection{Conformal raviolo algebras}
\label{sec:virasoro}
The final basic example we describe is the raviolo vertex algebra analog of the Virasoro vertex algebra. As with the raviolo current algebra studied in the previous section, the raviolo analog of the central charge $\xi$ will have degree $1$. 
We will denote the resulting raviolo vertex algebra over $\C[\xi]= \C \oplus \C\xi[-1]$ by $\textrm{Vir}_{univ}$.

%This ravioli vertex algebra can be constructed in a totally analogous way to, e.g., Section 2.5 of \cite{FBZ}. 
Geometrically, the Virasoro algebra is a central extension of the Lie algebra of vector field s on the punctured disk.
In the raviolo context, the correct Lie algebra to start with is the Lie algebra of vector fields on $\C \times \R - \{0\}$ which are compatible with the THF structure.
In Section \ref{sec:model} we denoted a derived algebraic model for this by $\CA^{(-1)}$ and its cohomology by $\CK_{poly}^{(-1)}$.
We denote its completed version by $\CK^{(-1)}$.
The Lie bracket of vector fields endows $\CA^{(-1)}$ with the structure of a dg Lie algebra and its cohomology $\CK^{(-1)}$ with the structure of a graded Lie algebra.

The degree zero part of the graded Lie algebra $\CK^{(-1)}$ can be identified with $\C[\![u]\!] \del_u$; we will denote generators by $G_m = -u^m \del_u$, for $m \geq 0$.
In degree one there are expressions of the form $\Gamma_n = \Omega^n \del_u$, for $n \geq 0$.
The spin gradings are such that $G_m$ is of spin $1-m$ and $\Gamma_n$ is of spin $n+2$.
%Starting with the Lie algebra of derivations $\textrm{Der}\CK$ which we identify with $\CK^u \pd_u$ \fixme{is there an easy proof of this identification?}, we find a natural (degree $-1$) central extension of the form
%\beqn
%	0 \to \C[-1]\xi \to Vir \to \textrm{Der}\CK \to 0
%\eeqn
%where the bracket is given by
%\beqn
%	[f \pd_u, g \pd_u] = \bigg(f \big(\pd_u g\big) - \big(\pd_u f\big) g\bigg)\pd_u - \tfrac{1}{12} \xi\, \text{Res}_u\diff u f \pd^3_u g 
%\eeqn
%for $f, g \in \CK$. There are the following generators:
%\begin{itemize}
%	\item The central generator $\xi$ of degree $1$ and spin 0.
%	\item The bosonic generators $G_n = -u^n \pd_u$ has degree 0 and spin $1-n$.
%	\item The fermionic generators $\Gamma_n = - \Omega^n_u \pd_u$ has degree $1$ and spin $n+2$.
%\end{itemize}
The bracket in $\CK^{(-1)}$ is
\be\label{eqn:vir}
\begin{aligned}[]
	[G_m, G_n] = (m-n) G_{m+n-1} \hspace{2cm} [\Gamma_m, \Gamma_n] = 0\\
	[G_m, \Gamma_n] = \begin{cases} 
		0 & n+1 < m \\
		(m+n+1) \Gamma_{n-m+1} & n+1 \geq m\\
	\end{cases} \hspace{0.5cm}
\end{aligned}
\ee

There is a shifted central extension of $\CK^{(-1)}$ defined by the following cocycle
\beqn
(f \del_u, g \del_u) \mapsto - \tfrac{1}{12} \xi\, \text{Res}_u\diff u f \pd^3_u g .
\eeqn
Explicitly, this does not change the $[G_m,G_n], [\Gamma_m,\Gamma_n]$ brackets, but modifies the last bracket in \eqref{eqn:vir} to
\be
\begin{aligned}[]
	[G_m, \Gamma_n] = \begin{cases} 
		0 & n+3 < m\\
		\frac{m(m-1)(m-2)}{12}\xi & n+3 = m\\
		0 & n+2 = m\\
		(m+n+1) \Gamma_{n-m+1} & n+1 \geq m\\
	\end{cases} \hspace{0.5cm}
\end{aligned}
\ee
where we have introduced the degree $+1$, spin zero central generator $\xi$.
We call this the \emph{raviolo Virasoro algebra} $Vir$.

We can build a raviolo vertex algebra over $\C[\xi]$ via induction over the positive subalgebra $Vir_{\geq 0} = \C[\![u]\!] \del_u \oplus \C\xi[-1]$ as before:
\beqn
	\textrm{Vir}_{univ} = U Vir \otimes_{U Vir_{\geq0}} \C[\xi]
\eeqn
To avoid repetition, we note that the vacuum vector is again the image of $1 \otimes 1 \in U Vir \otimes \C[\xi]$ in the quotient and just describe the generating field $\Gamma$. This is a field of cohomological degree 1 and spin $2$ with the following OPE with itself:
\be
	\Gamma(z) \Gamma(w) \sim \Omega^3_{z-w} (\xi/2) + \Omega^1_{z-w} 2 \Gamma(w) + \Omega^0_{z-w} \pd_w \Gamma
\ee
This is the field corresponding to the image of $\Gamma_0 \otimes 1 \in UVir \otimes \C[\xi]$ in the quotient.

\begin{prop}
	$\textrm{Vir}_{univ}$ is a raviolo vertex algebra over $\C[\xi]$ with spin grading that is strongly generated over $\C[\xi]$ by the field $\Gamma(z)$.
\end{prop}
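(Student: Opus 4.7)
The plan is to invoke the reconstruction theorem (Proposition \ref{prop:reconstruction}) in its raviolo-vertex-algebra-over-$S$ variant with $S = \C[\xi]$, taking $\Gamma(z)$ as the single strong generator over $\C[\xi]$. The vacuum $|0\rangle$ is the image of $1 \otimes 1 \in UVir \otimes_{UVir_{\geq 0}} \C[\xi]$ and the translation operator $\pd$ is the Leibniz extension of its action on $Vir$, annihilating $|0\rangle$. By PBW applied to the graded Lie algebra $Vir$ with positive subalgebra $Vir_{\geq 0} = \C[\![u]\!]\pd_u \oplus \C\xi[-1]$, the module $\textrm{Vir}_{univ}$ is spanned as a $\C[\xi]$-module by ordered monomials $\Gamma_{n_1}\cdots\Gamma_{n_l}|0\rangle$ with $n_k \geq 0$. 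Under the mode identification $\Gamma_{(-n-1)} = \Gamma_n$ and $\Gamma_{(m)} = G_m$ for $m \geq 0$, these are precisely the monomials featured in condition $4)$ of the reconstruction theorem.

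Conditions $1)$ and $2)$ are quick. For $1)$, the positive subalgebra acts trivially on the defining representation $\C[\xi]$, so $G_m|0\rangle = 0$ for $m \geq 0$ and $\Gamma_{(-1)}|0\rangle = \Gamma_0|0\rangle$ by identification. For $2)$, one checks $\pd|0\rangle = 0$ by construction, and the translation axiom reduces modewise to the identities $[\pd, \Gamma_n] = (n+1)\Gamma_{n+1}$ and $[\pd, G_n] = -nG_{n-1}$ (viewed as identities of endomorphisms), which follow from the definition of $\pd$ as induced by $-\pd_u$ and a direct comparison with the mode expansion of $\pd_z\Gamma(z)$ using $\pd_z\Omega^n_z = -(n+1)\Omega^{n+1}_z$.

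The main step is verifying mutual locality (condition $3)$) of $\Gamma(z)$ with itself. I would compute the bracket $[\Gamma(z), \Gamma(w)]$ directly by expanding
\begin{equation*}
[\Gamma(z), \Gamma(w)] = \sum_{m,n \geq 0} \big( z^m w^n [\Gamma_m, \Gamma_n] + z^m\Omega^n_w[\Gamma_m, G_n] + \Omega^m_z w^n [G_m, \Gamma_n] + \Omega^m_z\Omega^n_w[G_m, G_n] \big),
\end{equation*}
substituting the brackets \eqref{eqn:vir} together with the cocycle contribution, and then resumming the result. The expected outcome, corresponding via \eqref{eq:OPE} to the advertised OPE, is
\begin{equation*}
[\Gamma(z), \Gamma(w)] = \tfrac{1}{12}\pd_w^3\Delta(z-w)\, \xi + \pd_w\Delta(z-w)\, 2\Gamma(w) + \Delta(z-w)\, \pd_w\Gamma(w),
\end{equation*}
from which mutual locality follows immediately by Proposition \ref{prop:locality}. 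Tracing the origin of each summand: the central $\xi$ term arises from the $m = n+3$ piece of the cocycle contribution to $[G_m, \Gamma_n]$, the $2\Gamma(w)$ term from the $m = 1$ case of $[G_m, \Gamma_n] = (m+n+1)\Gamma_{n-m+1}$, and the $\pd_w\Gamma(w)$ term from the $m = 0$ case; the bulk of the $[G_m, \Gamma_n]$ contributions must then be shown to organize themselves into these three derivatives of $\Delta(z-w)$.

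The main obstacle will be the bookkeeping in that resummation: one must verify that the explicit mode-bracket expansion assembles into the predicted derivative-of-$\Delta$ form, which requires repeated use of $\pd_w^k\Omega^n_w = (-1)^k \tfrac{(n+k)!}{n!}\Omega^{n+k}_w$, careful handling of index shifts, and attention to the sign conventions implicit in the raviolo residue and the graded commutator. Once condition $3)$ is established, the induced spin grading declaring $|0\rangle$ to have spin $0$, $\pd$ to raise spin by $1$, and $\Gamma_0|0\rangle$ to have spin $2$ is consistent with the weights $n+2$ of $\Gamma_n$ and $1-n$ of $G_n$, and is preserved under the state-field correspondence, completing the proof.
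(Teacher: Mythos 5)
Your proposal is correct and follows essentially the same route as the paper: the paper likewise treats $\textrm{Vir}_{univ}$ as an induced module, identifies the vacuum and translation operator, and invokes the reconstruction theorem (Proposition \ref{prop:reconstruction}), with the only substantive check being the commutator $[\Gamma(z),\Gamma(w)]$, which the paper records (in OPE form) exactly as the expression $\tfrac{1}{12}\pd_w^3\Delta(z-w)\,\xi + 2\pd_w\Delta(z-w)\,\Gamma(w) + \Delta(z-w)\,\pd_w\Gamma(w)$ you predict. Your mode-bracket resummation is the verification the paper leaves implicit, and your stated conditions $1)$, $2)$, $4)$ and the spin-grading bookkeeping match the paper's setup.
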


The quotient raviolo vertex algebra Vir associated to the maximal ideal $(\xi)$ of $\C[\xi]$ has underlying vector space 
\beqn
	\textrm{Vir} = U Vir \otimes_{U Vir_{\geq0}} \C .
\eeqn
It also has a spin grading and is strongly generated by a field $\Gamma(z)$ whose OPE with itself is
\be
	\Gamma(z) \Gamma(w) \sim \Omega^1_{z-w} 2 \Gamma(w) + \Omega^0_{z-w} \pd_w \Gamma
\ee
With respect to this grading, the character of the raviolo vertex algebra $\text{Vir}$ is
\beqn
\text{ch}_{\text{Vir}}(q) = (q^2;q)_\infty .
\eeqn

\begin{dfn}
	Let $\CV$ be a raviolo vertex algebra over $\C[\xi]$ with spin grading. $\CV$ is called \defterm{conformal (of central charge $\xi^\CV$)}, if we are given a non-zero \defterm{conformal vector} $\gamma \in \CV^{1,(2)}$ such that the modes of the \defterm{stress tensor} $\Gamma(z) = Y(\gamma, z)$ satisfy the relations of the raviolo Virasoro algebra with $\xi$ acting by $\xi^\CV$, and in addition we have $G_0 = \pd$ and $G_1|_{\CV^{(s)}} = s \id_{\CV^{(s)}}$.
\end{dfn}

As described in Section 1.1 of \cite{CostelloDimofteGaiotto-boundary}, it is expected that the algebra of local operators in the holomorphic-topological twist of any three-dimensional $\CN=2$ supersymmetric theory (with a $U(1)_R$ $R$-symmetry) has a ``higher stress tensor'' $\Gamma$ (that they denote $G$) so that the descent bracket $\{\{\Gamma,-\}\}^{(0)}$ generates $z$-translations, i.e. 
\beqn
\{\{\Gamma, O\}\}^{(0)} = \pd_z O .
\eeqn
The higher order brackets $\{\{\Gamma, -\}\}^{(n)}$ naturally realize the remaining holomorphic vector fields $z^n \pd_z$ and so we are lead to the expectation that the algebra of local operators in the twist of such a theory has the structure of a (possibly dg) conformal raviolo vertex algebra.

\begin{dfn}
	Let $\CV$ be a conformal raviolo vertex algebra. A vector $a \in \CV^{(s)}$ is called a \defterm{conformal primary (of spin $s$)} if 
	\begin{equation*}
		G_{n+1} a = 0\,, n > 0
	\end{equation*}
	The corresponding field $a(z) = Y(a,z)$ is called a \defterm{conformal primary field}.
\end{dfn}

We see that the OPE of the stress tensor $\Gamma$ with a conformal primary field $a(z)$ of spin $s$ takes the following form:
\be
	\Gamma(z) a(w) \sim s \Omega^1_{z-w} a(w) + \Omega^0_{z-w} \pd_w a(w)
\ee

As we saw with Hamiltonian $\fg$ symmetries, a non-vanishing $\xi^\CV$ implies the action of holomorphic coordinate transformations is anomalous, i.e. $\Gamma$ isn't itself a conformal primary (of spin $2$). This is directly analogous to the setting in vertex algebras, where the central charge accounts for a conformal anomaly of the two-dimensional theory, cf. e.g. Section 5.4.2 of \cite{DFMS}.

As with the above current algebra, Lemma \ref{cor:NOPcomm} implies the following useful property of the normal-ordered product of conformal primaries:
\begin{prop}
\label{prop:confprimary}
	Let $a_1, a_2$ be conformal primaries of spins $s_1, s_2$, then their normal-ordered product $\norm{a_1 a_2}$ is a conformal primary of spin $s_1 + s_2$.
\end{prop}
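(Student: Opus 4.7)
The plan is to reduce this to a direct application of Corollary \ref{cor:NOPcomm}, which controls the OPE of a field with a normal-ordered product in terms of the constituent OPEs. Unpacked, the conformal primary hypothesis on $a_i$ of spin $s_i$ means exactly that the OPE of the stress tensor with $a_i(w)$ has only two singular terms: one proportional to $\Omega^1_{z-w}$ with coefficient $s_i \cdot a_i(w)$, and one proportional to $\Omega^0_{z-w}$ with coefficient $\pd_w a_i(w)$. Rephrased via the identification between OPE singular coefficients and commutator coefficients (see equation \eqref{eq:OPE} and Proposition \ref{prop:locality}.3), this reads
\begin{equation*}
[\Gamma(z), a_i(w)] = \Delta(z-w)\, \pd_w a_i(w) + \pd_w\Delta(z-w)\, s_i\, a_i(w),
\end{equation*}
i.e.\ $D^0(w) = \pd_w a_1(w)$, $D^1(w) = s_1 a_1(w)$, $E^0(w) = \pd_w a_2(w)$, $E^1(w) = s_2 a_2(w)$, and all other $D^m, E^m$ vanish in the notation of Corollary \ref{cor:NOPcomm}.

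I would then substitute these into Corollary \ref{cor:NOPcomm} applied to $A=\Gamma$, $B=a_1$, $C=a_2$. Since $|\Gamma|=1$, the sign $(-1)^{(|\Gamma|+1)|a_1|}$ is $+1$, so the $m=0$ coefficient is $\norm{(\pd a_1) a_2}(w) + \norm{a_1 (\pd a_2)}(w)$, which equals $\pd_w \norm{a_1 a_2}(w)$ by the Leibniz rule for $\pd_w$ and the normal-ordered product (Lemma \ref{lem:derivNOP}). The $m=1$ coefficient is $s_1 \norm{a_1 a_2}(w) + s_2 \norm{a_1 a_2}(w) = (s_1+s_2)\norm{a_1 a_2}(w)$, and all $m\geq 2$ coefficients vanish. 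Hence
\begin{equation*}
[\Gamma(z), \norm{a_1 a_2}(w)] = \Delta(z-w)\, \pd_w \norm{a_1 a_2}(w) + \pd_w\Delta(z-w)\, (s_1+s_2)\, \norm{a_1 a_2}(w),
\end{equation*}
which by Proposition \ref{prop:locality} translates to the OPE
\begin{equation*}
\Gamma(z)\, \norm{a_1 a_2}(w) \sim (s_1+s_2)\, \Omega^1_{z-w}\, \norm{a_1 a_2}(w) + \Omega^0_{z-w}\, \pd_w \norm{a_1 a_2}(w).
\end{equation*}
This is precisely the OPE of $\Gamma$ with a conformal primary of spin $s_1+s_2$. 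Reading off modes using the dictionary in Proposition \ref{prop:locality}.4 yields $G_{n+1}\norm{a_1 a_2}_{(-1)}|0\rangle = 0$ for $n>0$ and the expected spin-$(s_1+s_2)$ behavior, completing the proof.

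There is essentially no obstacle here: Corollary \ref{cor:NOPcomm} does all of the work, and the only things to check are sign bookkeeping (trivialized by $|\Gamma|=1$) and the assembly of the $m=0$ coefficient into $\pd_w\norm{a_1 a_2}$ via Lemma \ref{lem:derivNOP}. The mild subtlety worth flagging is that the spin grading enters only through the coefficients $s_i$ of $\Omega^1_{z-w}$ in the given OPEs, and these add because the $m=1$ coefficient in Corollary \ref{cor:NOPcomm} is a \emph{sum} $\norm{D^1 a_2}+\norm{a_1 E^1}$; the additivity of spin under normal-ordered product is therefore an immediate structural consequence rather than something requiring a separate verification.
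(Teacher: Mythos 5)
Your proof is correct and follows exactly the route the paper intends: the paper derives this proposition directly from Corollary \ref{cor:NOPcomm} (with Lemma \ref{lem:derivNOP} assembling the $m=0$ term into $\pd_w\norm{a_1a_2}$), which is precisely your argument, just written out in full detail.
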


The following two propositions are straight-forward computations:
\begin{prop}
	The raviolo vertex algebra $FC^{(s)}$ is conformal with vanishing central charge for any choice of spin $s$ for the boson $X$; the stress tensor is given by
	\be
		\Gamma = (1-s)\norm{\psi \pd_z X} -s\norm{X \pd_z \psi}\,.
	\ee
	Moreover, $X$ and $\psi$ are conformal primaries.
\end{prop}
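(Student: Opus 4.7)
The plan is to verify the three conditions defining a conformal structure---the raviolo Virasoro OPE for $\Gamma$ with itself, the conditions $G_0 = \pd$ and $G_1|_{\CV^{(s)}} = s\,\id$, and the primary OPEs for $X$ and $\psi$---by direct computation using Corollary \ref{cor:NOPcomm}. Mutual locality of $\Gamma$ with $X$, $\psi$, and itself is automatic from Dong's Lemma (Lemma \ref{lem:Dong}) and Proposition \ref{prop:derivlocality} given the pairwise mutual locality of the generators. Throughout I use $|X| = 0$, $|\psi| = 1$, $|\Gamma| = 1$, and the elementary OPEs $X(z)\psi(w) \sim \Omega^0_{z-w}$, $\psi(z)X(w)\sim \Omega^0_{z-w}$, and $X(z)X(w) \sim 0 \sim \psi(z)\psi(w)$.

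First, I would verify that $X$ and $\psi$ are conformal primaries of spin $s$ and $1-s$ respectively. Rather than apply Corollary \ref{cor:NOPcomm} with $\Gamma$ in the outer slot (whose two constituents are themselves NOPs), I would compute $[X(z), \Gamma(w)]$ with $X$ on the outside and $\norm{\psi \pd X}, \norm{X \pd\psi}$ in the $BC$ slot. The elementary OPEs immediately give $D^0 = \id$ (from $[X,\psi]$) and $E^1 = \id$ (from $[X,\pd\psi]$), yielding
\[
[X(z), \Gamma(w)] = (1-s)\,\Delta(z-w)\,\pd X(w) - s\,\pd_w \Delta(z-w)\,X(w).
\]
After applying skew-symmetry of the commutator and reorganizing with the essential identity $(z-w)\pd_w\Delta(z-w) = \Delta(z-w)$ (so that $\pd_w\Delta(z-w)\,X(z) = \pd_w\Delta(z-w)\,X(w) + \Delta(z-w)\,\pd X(w)$), this collapses to
\[
[\Gamma(z), X(w)] = \Delta(z-w)\,\pd X(w) + s\,\pd_w\Delta(z-w)\,X(w),
\]
which is the primary OPE of spin $s$. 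The parallel calculation gives $\psi$ as a primary of spin $1-s$.

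Next, with the primary OPEs in hand, I apply Corollary \ref{cor:NOPcomm} a second time to compute $[\Gamma(z), \norm{\psi \pd X}(w)]$ and $[\Gamma(z), \norm{X \pd\psi}(w)]$. The $\Omega^0_{z-w}$ and $\Omega^1_{z-w}$ terms assemble cleanly, via Lemma \ref{lem:derivNOP}, into $\Delta(z-w)\,\pd \Gamma(w)$ and $2 \pd_w\Delta(z-w)\,\Gamma(w)$. The only potential obstruction to vanishing central charge comes from the $\Omega^2_{z-w}$ coefficient, which equals
\[
s(1-s)\,\pd^2_w \Delta(z-w)\,\big(\norm{\psi X}(w) - \norm{X\psi}(w)\big).
\]
This vanishes because Proposition \ref{prop:assoc} gives $\norm{\psi X} = (-1)^{|\psi||X|}\norm{X\psi} = \norm{X\psi}$ thanks to $|X||\psi| = 0$. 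Hence $\Gamma(z)\Gamma(w) \sim 2\Omega^1_{z-w}\Gamma(w) + \Omega^0_{z-w}\pd\Gamma(w)$, the raviolo Virasoro OPE at $\xi = 0$.

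Finally, to confirm the conformal axioms on modes: $G_0 = \Gamma_{(0)}$ is a derivation by Corollary \ref{cor:derivation}, agrees with the derivation $\pd$ on the strong generators $X,\psi$ (by the primary OPEs above) and on $|0\rangle$, hence equals $\pd$ on all of $FC^{(s)}$. For $G_1 = \Gamma_{(1)}$, the primary OPEs give $G_1 X = s X$ and $G_1 \psi = (1-s)\psi$; a short induction using the mode commutators from Proposition \ref{prop:locality}(4) shows that $G_1$ acts on any monomial $X_{(j_1)}\cdots \psi_{(l_r)}|0\rangle$ (with $j_i, l_i < 0$) by the sum of the spins of the factors, i.e., as the spin operator on each spin-homogeneous subspace. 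The main obstacle is the bookkeeping in the $[\Gamma,\Gamma]$ computation---tracking the signs $(-1)^{(|A|+1)|B|}$, identifying the $D^m, E^m$ for each factor, and recognizing the $\pd \Gamma$ and $\Gamma$ combinations after Leibniz---but the conceptual point underlying the vanishing central charge is simply the $\Z/2$-commutativity of the NOP of $X$ and $\psi$ forced by their cohomological degrees.
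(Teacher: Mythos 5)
Your computation is correct: the primary OPEs for $X$ and $\psi$, the assembly of the $\Gamma(z)\Gamma(w)$ commutator via Corollary \ref{cor:NOPcomm} and Lemma \ref{lem:derivNOP}, the vanishing of the would-be anomalous $\Omega^2_{z-w}$ term by $\norm{\psi X}=\norm{X\psi}$ (Proposition \ref{prop:assoc}, since $|X||\psi|=0$), and the derivation/induction arguments identifying $G_0=\pd$ and $G_1$ with the spin grading all check out. The paper offers no proof of this proposition (it is dismissed as a ``straight-forward computation''), and your direct OPE verification is exactly the argument the authors intend.
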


\begin{prop}
	The raviolo Heisenberg vertex algebra is conformal with vanishing central charge; the stress tensor is given by
	\be
		\Gamma = - \norm{b\nu}\,.
	\ee
	Moreover, $b$ and $\nu$ are conformal primaries.
\end{prop}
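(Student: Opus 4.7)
The plan is to apply Corollary \ref{cor:NOPcomm} twice: first with $A \in \{b,\nu\}$ to produce the OPEs of $\Gamma = -\norm{b\nu}$ with the strong generators, and then with $A=\Gamma$, using those primary OPEs, to close the computation of $\Gamma(z)\Gamma(w)$. The only nontrivial inputs from the raviolo Heisenberg algebra are $b(z)\nu(w)\sim\Omega^1_{z-w}$ and $\nu(z)b(w)\sim-\Omega^1_{z-w}$ together with the vanishing self-OPEs, Lemma \ref{lem:derivNOP} that $\pd$ is a derivation of the normal-ordered product, the cohomological degrees $|b|=0$, $|\nu|=1$, and the distributional identity $\pd_w\Delta(z-w)A(z)=\pd_w\Delta(z-w)A(w)+\Delta(z-w)\pd_w A(w)$ derived in the proof of Lemma \ref{lem:deltafunction}. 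Graded skew-symmetry of the commutator and the swap $z\leftrightarrow w$, together with this identity, will convert each computed $[A(z),\Gamma(w)]$ into $[\Gamma(z),A(w)]$ in canonical OPE form.

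For the first step, applying Corollary \ref{cor:NOPcomm} with $A=b$, $B=b$, $C=\nu$, the vanishing $b$-$b$ OPE kills the $D^m$-contribution while the $b$-$\nu$ OPE gives $E^1=\id$, yielding $[b(z),\norm{b\nu}(w)] = \pd_w\Delta(z-w)\,b(w)$. Swapping variables produces the primary OPE
\[
\Gamma(z)b(w)\sim \Omega^1_{z-w} b(w) + \Omega^0_{z-w}\pd_w b(w),
\]
so $b$ is a conformal primary of spin $1$. The analogous recipe with $A=\nu$, $B=b$, $C=\nu$ uses $E^m=0$ and $D^1 = -\id$, giving $\Gamma(z)\nu(w)\sim\Omega^1_{z-w}\nu(w)+\Omega^0_{z-w}\pd_w\nu(w)$, so $\nu$ is also a spin-$1$ primary.

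For the second step, apply Corollary \ref{cor:NOPcomm} with $A=\Gamma$, using the OPEs just obtained: $D^0=\pd b$, $D^1=b$, $E^0=\pd\nu$, $E^1=\nu$, with all other $D^m, E^m=0$. The $m=0$ contribution consolidates via Lemma \ref{lem:derivNOP} into $\Delta(z-w)\pd_w\norm{b\nu}(w) = -\Delta(z-w)\pd_w\Gamma(w)$, and the $m=1$ contribution is $2\pd_w\Delta(z-w)\norm{b\nu}(w) = -2\pd_w\Delta(z-w)\Gamma(w)$. Hence
\[
\Gamma(z)\Gamma(w)\sim\Omega^1_{z-w}\cdot 2\Gamma(w)+\Omega^0_{z-w}\pd_w\Gamma(w),
\]
with no $\Omega^3_{z-w}$ term---precisely the raviolo Virasoro OPE at central charge $\xi^H=0$.

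To finish, read off the mode identities $\gamma_{(0)}\gamma=\pd\gamma$, $\gamma_{(1)}\gamma=2\gamma$, $\gamma_{(n)}\gamma=0$ for $n\geq 2$; Proposition \ref{prop:locality}(4) then reproduces the raviolo Virasoro commutators on $\{G_m,\Gamma_n\}$. The identity $G_0=\pd$ follows because $G_0=\gamma_{(0)}$ is a derivation of the raviolo vertex algebra (Corollary \ref{cor:derivation}) that agrees with $\pd$ on the strong generators $b,\nu$ and annihilates the vacuum. The identity $G_1|_{H^{(s)}}=s$ follows from the commutators $[\gamma_{(1)},b_{(n)}]=-n\,b_{(n)}$ and $[\gamma_{(1)},\nu_{(n)}]=-n\,\nu_{(n)}$, computed from Proposition \ref{prop:locality}(4), together with $\gamma_{(1)}|0\rangle=0$ from the vacuum axiom: noting that $b_{(n)}$ and $\nu_{(n)}$ shift spin by $-n$, induction on mode strings acting on the vacuum delivers the claim. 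The main subtlety lies in Step 2, where the absence of any $\Omega^3_{z-w}$ contribution---hence the vanishing of the central charge---is guaranteed by the fact that $\Gamma(z)b(w)$ and $\Gamma(z)\nu(w)$ have no terms beyond $\Omega^1_{z-w}$, precluding any $\pd_w^3\Delta$ contribution.
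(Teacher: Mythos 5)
Your computation is correct: the paper offers no proof (it labels this a ``straight-forward computation''), and your argument -- using Corollary \ref{cor:NOPcomm} twice, Lemma \ref{lem:derivNOP}, and the identity $\pd_w\Delta(z-w)A(z)=\pd_w\Delta(z-w)A(w)+\Delta(z-w)\pd_w A(w)$ to obtain $\Gamma(z)b(w)\sim\Omega^1_{z-w}b(w)+\Omega^0_{z-w}\pd_w b(w)$, likewise for $\nu$, and then $\Gamma(z)\Gamma(w)\sim 2\Omega^1_{z-w}\Gamma(w)+\Omega^0_{z-w}\pd_w\Gamma(w)$ with no $\Omega^3_{z-w}$ term -- is exactly the intended verification, and your checks of $G_0=\pd$ and $G_1|_{H^{(s)}}=s$ via the derivation property and the mode commutators are sound.
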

 
Suppose $\CV$ is a raviolo vertex algebra over $\C[\kappa, \xi]$ that is conformal with central charge $\xi^\CV$ and stress tensor $\Gamma(z)$ that also has a Hamiltonian $\fgl(1)$ symmetry at level $h \in \C$ generated by a field $\mu(z) = Y(\mu_{(-1)}|0\rangle,z)$. If we further assume $\mu$ is a conformal primary, then shifting the stress tensor $\Gamma \to \wt{\Gamma} = \Gamma - \pd_z \mu$ defines a new stress tensor of central charge $\xi^\CV - \tfrac{h}{12}\kappa^\CV$. If $O$ is simultaneously a conformal primary for $\Gamma$ of spin $s$ and a primary for this abelian current of weight $q$ then it will be a conformal primary of $\wt{\Gamma}$ of spin $s + q$. Thus, if we simultaneously modify the spin grading on $\CV$ by weights for $\mu_{(0)}$, we get another conformal structure on the same raviolo vertex algebra. For example, we can realize $FC^{(s)}$ from $FC$ in this way.

Note that neither $\CV[\fg,h]_{univ}$ nor the quotient $\CV[\fg]$ is conformal by itself. This is particularly clear in the abelian case: the field $\mu$ has degree 1 and therefore $\norm{\mu^2} = 0$. The only local operator with degree 1 and spin 2 is $\pd_z \mu$, but the OPE of $\pd_z \mu$ with itself doesn't match that of a stress tensor. This result is the raviolo analog of the situation in vertex algebras where the Sugawara construction fails in the universal affine vertex algebra over $\C[k]$ and at critical level $k = -h_\fg$, where $h_\fg$ is the dual Coexeter number for $\fg$, as we cannot invert $k+h_\fg$ in either case. In the context of raviolo vertex algebras, there are no non-critical levels.

\subsection{Nilpotent Deformations}
\label{sec:nilp}

A particularly important construction when describing the raviolo vertex algebras of local operators in three-dimensional holomorphic-topological quantum field theory is the possibility of deformations coming from holomorphic-topological descent.
For the connection between deformations of ordinary vertex algebras and two-dimensional conformal field theory we refer to~\cite{LiVertex}.

For any (bosonic) local operator $O$ of cohomological degree $2$ and spin $1$, we can consider deforming the action by the term $\int_{\C \times \R} O^{(2)}$, where $O^{(2)}$ is the second descendant of $O$; at the level of local operators, this should introduce a differential given by taking (or deform the existing differential by adding) the descent bracket with $O$, cf. the discussion in Section 3.4 of \cite{CostelloDimofteGaiotto-boundary}.

The requirement that $O$ has spin $1$ ensures this integral is (twisted) Lorentz invariant or, equivalently, that $\{\{O,-\}\}^{(0)}$ has spin $0$; the requirement that $O$ has cohomological degree $2$ ensures $\int_{\C \times \R} O^{(2)}$ is degree $0$ or, equivalently, that $\{\{O, -\}\}^{(0)}$ has degree $1$; finally, $O$ must be bosonic to ensure the action is bosonic or, equivalently, that the $\{\{O,-\}\}^{(0)}$ is a fermionic derivation. Note that it is not sensible to consider arbitrary $O$ of cohomological degree $2$ and spin $1$. For this deformation to define a differential, it must also square to zero%
\footnote{More generally, if there is already a differential then this $O$ must solve a suitable Maurer-Cartan equation.} %
and hence $\{\{O, O\}\}^{(0)} = 0$.

We now suppose $\CV$ is a raviolo vertex algebra. Choose a non-zero element $W \in \CV^{2}$ of cohomological degree 2. It is immediate that $D_W = W_{(0)}$ defines a fermionic derivation of $\CV$ with cohomological degree 1, cf. Corollary \ref{cor:derivation}. 
If, moreover, $D_W{}^2 = 0$ then we see that $D_W$ gives $\CV$ the structure of a dg raviolo vertex algebra. In terms of the OPE of $W$ with itself, we see that $D_W{}^2 = 0$ if and only if the coefficient of $\Omega^0_{z-w}\Omega^0_{w}$ in the OPE $W(z)W(w)$ vanishes. We will require the slightly stronger constraint: we require that the coefficient of $\Omega^0_{z-w}$ in the OPE $W(z) W(w)$ is a total derivative, i.e. it belongs to the image of $\pd_w$.%
\footnote{This stronger constraint is physically quite natural. For the twisted theories considered in \cite{CostelloDimofteGaiotto-boundary}, it is shown in Section 3.4 thereof that the descent bracket $\{\{-,-\}\}^{(0)}$ is related to the BV bracket. Our constraint translates to requiring that BV bracket of the Lagrangian density with itself is a total derivative, whence the action solves the classical master equation.} %
\begin{dfn}
	An element $W \in \CV^{2}$ is called a \defterm{superpotential} for $\CV$ if $W_{(0)}W_{(-1)}|0\rangle$ belongs to the image of $\pd$. We use the notation $(\CV, W)$, calling the pair a \defterm{raviolo vertex algebra with superpotential}.
\end{dfn}

If $\CV$ has a spin grading, we require the superpotential $W(z)$ has spin $1$ so that the derivation $D_W$ has spin $0$. Additionally, if $\CV$ has a super grading, we require $W(z)$ is even and hence bosonic; the derivation $D_W$ is thus even and fermionic.

The above analysis can be summarized as the following lemma.

\begin{lemma}
	Let $(\CV, W)$ be a raviolo vertex algebra with superpotential. The map
	\begin{equation*}
		D_W: a \mapsto W_{(0)} a
	\end{equation*}
	gives $\CV$ the structure of a dg raviolo vertex algebra.
\end{lemma}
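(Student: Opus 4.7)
The plan is to verify that $D_W$ is a derivation of cohomological degree one that squares to zero. That $D_W$ is a derivation is immediate from Corollary \ref{cor:derivation} applied to $a = W$. For the cohomological degree, I will unpack Definition \ref{dfn:field} applied to the homogeneous field $Y(W, z)$ of cohomological degree $|W| = 2$: its non-negative modes $W_{(m)}$ have cohomological degree $|W| - 1 = 1$. In the spin- and super-graded settings, the corresponding hypotheses on $W$ (spin $1$, bosonic) similarly give $D_W$ spin $0$ and fermionic, as required.

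The main content is $D_W^2 = 0$. Since $D_W$ has odd totalized degree, the graded bracket satisfies $[D_W, D_W] = 2 D_W^2$. I will compute this bracket using Proposition \ref{prop:locality} (part $4$) applied to the mutually local fields $Y(W, z), Y(W, w)$: at $m = l = 0$ only the $n = 0$ term survives, yielding
\beqn
[W_{(0)}, W_{(0)}] = (-1)^{|W|+1}(W_{(0)}W)_{(0)} = -(W_{(0)}W)_{(0)}.
\eeqn
It therefore suffices to show that $(W_{(0)}W)_{(0)} = 0$.

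To do so, I will invoke the superpotential hypothesis. By the vacuum axiom $W_{(-1)}|0\rangle = W$, so the hypothesis $W_{(0)}W_{(-1)}|0\rangle \in \op{Im}(\pd)$ reads $W_{(0)}W = \pd U$ for some $U \in \CV$. The translation axiom, in the form of Lemma \ref{lem:translation} (part $2$), then gives $Y(\pd U, z) = \pd_z Y(U, z)$. Since $\pd_z$ acts on the distinguished basis of $\CK_{dist}$ by $\pd_z z^n = n z^{n-1}$ and $\pd_z \Omega^m_z = -(m+1)\Omega^{m+1}_z$, its image has no $\Omega^0_z$-coefficient, so the zero mode of $\pd_z Y(U, z)$ vanishes.

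There is no serious obstacle here; the only point requiring care is the sign and combinatorial bookkeeping in Proposition \ref{prop:locality} (part $4$), together with the identification of the $n=0$ bracket $\{\{W, W\}\}^{(0)} = W_{(0)}W$ with the correct coefficient field $C^0$ appearing in the OPE of $Y(W, z)$ with itself.
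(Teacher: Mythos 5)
Your proof is correct and follows essentially the same route as the paper, which obtains the lemma by combining Corollary \ref{cor:derivation} (so $D_W$ is a degree-one, fermionic derivation) with the observation that $D_W^2=0$ amounts to the vanishing of the $\Omega^0_{z-w}\Omega^0_w$ coefficient of the self-OPE of $W(z)$, guaranteed because $W_{(0)}W=\pd U$ and total derivatives have vanishing zero mode. Your extra bookkeeping via Proposition \ref{prop:locality}(4) at $m=l=0$ just makes explicit the identification $2D_W^2=[W_{(0)},W_{(0)}]=-(W_{(0)}W)_{(0)}$ that the paper leaves implicit.
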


An important class of examples comes from conformal raviolo vertex algebras equipped with a superpotential $W$ that is a conformal primary.

\begin{dfn}
	A raviolo vertex algebra with superpotential $(\CV, W)$ is called \defterm{conformal} if $\CV$ is conformal and $W$ is a conformal primary.
\end{dfn}

\begin{corollary}
	\label{cor:conformalsuperpotential}
	Let $(\CV,W)$ be a conformal raviolo vertex algebra with superpotential, then $D_W$ gives $\CV$ the structure of a conformal dg raviolo vertex algebra. Moreover, if $a \in \CV$ is a conformal primary of spin $s$ then so too is $D_W a$.
\end{corollary}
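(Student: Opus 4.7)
The plan is to show two things: first, that the differential $D_W$ is compatible with the conformal structure in the sense that it preserves the stress tensor (equivalently, $D_W \gamma = 0$), which endows $\CV$ with the structure of a conformal dg raviolo vertex algebra; and second, that this preservation of the stress tensor makes the primary condition transparent under $D_W$.

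The first step is to show $D_W \gamma = W_{(0)} \gamma = 0$. Since $W$ is a conformal primary of spin $1$, the OPE $\Gamma(z) W(w)$ reads
\begin{equation*}
	\Gamma(z) W(w) \sim \Omega^1_{z-w} W(w) + \Omega^0_{z-w} \pd_w W(w),
\end{equation*}
so the only non-vanishing products are $\gamma_{(0)} W = \pd W$ and $\gamma_{(1)} W = W$, with $\gamma_{(m)} W = 0$ for $m \geq 2$. I will then apply the skew symmetry property for the OPE given in Eq.~\eqref{eqn:skewsymmetry} (equivalently, Proposition~\ref{prop:skew} applied to compute $W_{(0)} \gamma$ from $Y(W,z)\gamma = (-1)^{|W||\gamma|} e^{z\pd} Y(\gamma,-z) W$). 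Using $|W| = 2$ so that the overall sign is trivial, one finds
\begin{equation*}
	W_{(0)} \gamma = \sum_{l \geq 0} \frac{(-1)^l}{l!} \pd^l (\gamma_{(l)} W) = \pd W + (-1)\,\pd W = 0,
\end{equation*}
where only $l = 0, 1$ contribute.

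Once $D_W \gamma = 0$ is established, the derivation property $[D_W, Y(\gamma, z)] = Y(D_W \gamma, z) = 0$ gives $[D_W, \gamma_{(m)}] = 0$ for every mode. This commutation tells us that $D_W$ preserves the conformal structure, so the preceding lemma upgrades to give $\CV$ the structure of a conformal dg raviolo vertex algebra.

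For the second claim, I use this commutation directly. If $a \in \CV^{(s)}$ is a conformal primary, then for $m \geq 2$,
\begin{equation*}
	\gamma_{(m)}(D_W a) = D_W (\gamma_{(m)} a) = 0,
\end{equation*}
and similarly $\gamma_{(1)}(D_W a) = D_W (s a) = s\, D_W a$, showing that $D_W a$ lies in $\CV^{(s)}$ and satisfies the primary conditions. (The fact that $D_W$ preserves the spin grading also follows directly from $W$ having spin $1$, since then $W_{(0)}$ has spin $1 - 0 - 1 = 0$.)

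The main obstacle is the sign bookkeeping in the skew symmetry computation of $W_{(0)}\gamma$; one must track the totalized gradings carefully, particularly since $\gamma$ has cohomological degree $1$ and the raviolo skew symmetry formula differs subtly from the vertex-algebra one in its sign conventions. Once the skew-symmetry calculation goes through and yields $D_W\gamma = 0$, the remainder is a clean consequence of the commutator $[D_W, \gamma_{(m)}] = 0$ together with the derivation property of $D_W$.
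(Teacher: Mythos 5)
Your proposal is correct and follows essentially the same route as the paper: you establish $D_W\gamma = W_{(0)}\gamma = 0$ by the skew-symmetry formula applied to the primary OPE $\Gamma(z)W(w) \sim \Omega^1_{z-w}W + \Omega^0_{z-w}\pd_w W$ (the paper phrases this as $W\Gamma \sim -\Omega^1_{z-w}W$ having no $\Omega^0_{z-w}$ term), and then use the resulting commutation of $D_W$ with the stress tensor to transfer the primary conditions to $D_W a$. Working with the modes $\gamma_{(m)}$ rather than the OPE is only a cosmetic difference, and your sign bookkeeping (using that $W$ is bosonic and $D_W$, $\gamma_{(m\geq 0)}$ have the stated parities) is sound.
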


\begin{proof}
	The proof is a simple computation. The OPE $\Gamma W$ is given by
	\be
	\Gamma W \sim \Omega^1_{z-w} W + \Omega^0_{z-w} \pd_w W \qquad \rightsquigarrow \qquad W \Gamma \sim - \Omega^1_{z-w} W
	\ee
	because $W$ is a primary of weight 1; this implies the action of $D_W\Gamma =0$.
	
	Now suppose $a(z) = Y(a,z)$ is a conformal primary operator. It follows that the OPE of $\Gamma$ and $D_W a$ is given by
	\be
	\begin{aligned}
		\Gamma D_W a & = - D_W (\Gamma a)\\
		& \sim s \Omega^1_{z-w} D_W a + \Omega^0_{z-w} \pd_w D_W a
	\end{aligned}
	\ee
	as desired.
\end{proof}

\subsubsection{Example: chiral fields with a superpotential}
\label{sec:chiralsuperpotential}
As an example of a raviolo vertex algebra with a superpotential, we consider the conformal raviolo vertex algebra of $N$ free fields 
\beqn
\CV = \bigotimes_{i=1}^N FC^{(r_i/2)}_{r_i} .
\eeqn
We denote the generating fields by $X^i(z), \psi_i(z)$ which have cohomological degrees $r_i, 1-r_i$ and spins $\frac{r_i}{2}, 1-\frac{r_i}{2}$, respectively. 
The stress tensor is
\be
\Gamma = \sum_{i=1}^N \big(1-\tfrac{r_i}{2}\big)\norm{\psi_i \pd X^i} - \tfrac{r_i}{2}\norm{X^i \pd \psi_i}
\ee

Thinking of $X^i_{(-1)}$ as coordinate functions on $\C^N$, we choose a holomorphic function $W: \C^N \to \C$ that is quasihomogeneous of weight 2 and promote it to the field $W(z) = Y(W(X^i_{(-1)})|0\rangle,z)$. The requirement that $W$ is quasihomogeneous implies $W(z)$ is a field of cohomological degree 2 and spin 1, i.e. $W(X^i_{(-1)})|0\rangle \in \CV^{2,(1)}$. Moreover, $W(z)$ has a regular OPE with itself because the $X^i$ have regular OPEs with each other, thus $W(z)$ defines a superpotential. Finally, because the $X^i$ are conformal primaries and $W$ is a normal-ordered product thereof, $W$ is a conformal primary.

\begin{prop}
	The choice of quasi-homogeneous function $W:\C^N \to \C$ equips $\CV = \bigotimes_i FC^{(r_i/2)}_{r_i}$ with the structure a conformal raviolo vertex algebra with superpotential.
\end{prop}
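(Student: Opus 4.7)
The plan is to verify the three defining properties of a conformal raviolo vertex algebra with superpotential for the proposed $W(z) = Y(W(X^i_{(-1)})|0\rangle, z)$: that $W$ is an element of $\CV^{2,(1)}$ which is bosonic, that $W_{(0)}W_{(-1)}|0\rangle \in \op{Im}(\pd)$, and that $W$ is a conformal primary. Each of these will follow from the quasi-homogeneity hypothesis together with general structural properties established earlier.

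First I would unpack the definition of $W(X^i_{(-1)})|0\rangle$. By Proposition \ref{prop:assoc}, the normal-ordered product restricted to mutually regular fields is commutative and associative; since $X^i(z) X^j(w) \sim 0$ for all $i,j$ (the free-field OPEs of Section \ref{sec:freefields}), iterated normal-ordered products of the $X^i$ are unambiguous polynomials. A monomial $X^{i_1}_{(-1)} \cdots X^{i_k}_{(-1)} |0\rangle$ has cohomological degree $\sum_\ell r_{i_\ell}$ and spin $\tfrac{1}{2}\sum_\ell r_{i_\ell}$, and is bosonic (each factor $X^i$ is bosonic by construction of $FC^{(r_i/2)}_{r_i}$). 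Thus if $W$ is quasi-homogeneous of weight $2$ with weights $r_i$, then $W(X^i_{(-1)})|0\rangle$ lies in the bosonic part of $\CV^{2,(1)}$, and the associated field $W(z)$ is a candidate superpotential satisfying the spin/degree/parity hypotheses.

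Next I would verify the superpotential condition $W_{(0)}W_{(-1)}|0\rangle \in \op{Im}(\pd)$. The key observation is that $W(z)W(w)$ involves only the OPEs $X^i(z)X^j(w) \sim 0$; by iterated application of Corollary \ref{cor:NOPcomm} (or equivalently Dong's lemma, Lemma \ref{lem:Dong}), since $X^i$ is pairwise mutually local with all $X^j$ and has trivial singular OPE with each, the same holds for any normal-ordered polynomial built from the $X^i$ alone. Hence $W(z)W(w)$ is completely regular, i.e.\ $W_{(n)}W = 0$ for all $n \geq 0$. In particular $W_{(0)}W_{(-1)}|0\rangle = 0 \in \op{Im}(\pd)$, so $W$ is a bona fide superpotential.

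Finally I would check that $W$ is a conformal primary of spin $1$. Each $X^i$ is a conformal primary of spin $r_i/2$ by the preceding proposition on $FC^{(s)}$, and by Proposition \ref{prop:confprimary} the normal-ordered product of conformal primaries is again a conformal primary whose spin is the sum of the spins of the factors. Iterating, each quasi-homogeneous monomial $X^{i_1}_{(-1)} \cdots X^{i_k}_{(-1)}|0\rangle$ of total weight $2$ yields a conformal primary of spin $1$, and so does their linear combination $W$. Together with the previous two steps, this gives $(\CV, W)$ the structure of a conformal raviolo vertex algebra with superpotential. The only place where a genuine computation is needed is the verification that $X^i(z) X^j(w) \sim 0$ for all $i,j$, and this is already established in Section \ref{sec:freefields}, so there is no real obstacle; the argument is essentially a bookkeeping exercise once the existing structural results are in hand.
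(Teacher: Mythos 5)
Your proposal is correct and follows essentially the same route as the paper: quasi-homogeneity of weight $2$ gives $W(X^i_{(-1)})|0\rangle \in \CV^{2,(1)}$ (bosonic), regularity of the $X^i$-$X^j$ OPEs (made precise via Corollary \ref{cor:NOPcomm}) gives $W_{(n)}W=0$ for $n\geq 0$ and hence the superpotential condition, and Proposition \ref{prop:confprimary} applied to the primaries $X^i$ gives that $W$ is a conformal primary of spin $1$. The only difference is that you spell out the bookkeeping (parity, the vanishing of $W_{(0)}W_{(-1)}|0\rangle$ rather than just a total derivative) slightly more explicitly than the paper's prose argument, which is fine.
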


The OPEs of $W(z)$ with the generating fields $X^i(z)$ and $\psi_i(z)$ are particularly simple:
\be
	W(z) X^i(w) \sim 0 \qquad W(z) \psi_i(w) \sim \Omega^0_{z-w} \pd_iW(w)
\ee
where $\pd_i W(z)$ is the field corresponding to $\frac{\pd W(X^i_{(-1)})}{\pd X^i_{(-1)}}|0\rangle$. The action of the differential $D_W$ encodes the first-order pole in the OPE with $W(z)$, thus we find the following action on the generating fields
\be
	D_W X^i(z) = 0 \qquad D_W \psi_i(z) = \pd_i W(z)
\ee

This is the direct analog of the dg vertex algebra described in Section 5.1 of \cite{CostelloDimofteGaiotto-boundary} modeling the algebra of local operators in the twist of a three-dimensional $\CN=2$ theory of $N$ chiral superfields coupled by the superpotential $W:\C^N \to \C$. Quasihomogeneity of the superpotential is required to ensure that the theory admits the $U(1)_R$ $R$-symmetry used for twisting.

\subsubsection{Example: perturbative gauge theory, BRST reduction}
\label{sec:pertgaugetheory}

Another important example arises from holomorphic-topological perturbative gauge theory. 
Let $\CV$ be a raviolo vertex algebra with a Hamiltion $\fg$ symmerty at level 0 generated by fields $\mu_a$, $a = 1, ..., \dim \fg$. 

We introduce an additional $\dim \fg$ pairs of free fields $c^a, b_a$, where $c^a$ is a fermion of cohomological degree $1$ and spin $0$ and $b_a$ is a boson of cohomological degree $0$ and spin $1$.
These generate the free field raviolo vertex algebra 
\beqn
	\CV^{\lie{g}}_{bc} = (FC^{(1)})^{\otimes \dim \lie{g}} .
\eeqn

In the tensor product raviolo vertex algebra $\CV \otimes \CV^{\lie{g}}_{bc}$ we consider the field
\be
	W_\fg = \tfrac{1}{2} f^a_{bc} \norm{b_a c^b c^c} - \norm{c^a \mu_a} .
\ee
It is a simple computation to show that $W_\fg$ has a regular OPE with itself, hence defines a superpotential on $\CV \otimes \CV^{\lie{g}}_{bc}$. 
The action of the differential $D_{W_\fg}$ takes the following form:
\be
\begin{array}{c}
	D_{W_\fg} c^a(z) = \tfrac{1}{2}f^a_{bc}\norm{c^b c^c}(z) \qquad D_{W_\fg} b_a(z) = f^c_{ab}\norm{b_c c^b}(z) - \mu_a(z)\\
	D_{W_\fg} O(z) = \norm{c^a(z) \big(T_a O(z)\big)}
\end{array}
\ee
where $O \in \CV$ and, as above, $T_a O$ denotes the action of $T_a \in \fg$ on $O$. We call the resulting dg raviolo vertex algebra the \emph{BRST reduction of $\CV$ by $\fg$} and denote it $\CV/\!\!\!/\fg$.

We can modify this construction in two ways. First, $\CV$ could itself be equipped with a superpotential $W$. So long as the OPE of $\mu_a$ and $W$ is regular, i.e. the superpotential is a $\fg$-invariant primary, it is easy to check that $W + W_\fg$ has a regular OPE with itself and hence defines a superpotential on $\CV \otimes \CV^{\lie{g}}_{bc}$.

Now suppose we are given a (possibly degenerate) symmetric, $\fg$-invariant bilinear form $K_{ab}$%
\footnote{Physically, $K$ corresponds to the Chern-Simons level of the gauge fields, divided by $2\pi$. Because we are working perturbatively, we need not impose quantization conditions on these levels.} %
on $\fg$; a second deformation comes from considering the field
\be
	W_{\fg,K} = \tfrac{1}{2} f^a_{bc} \norm{b_a c^b c^c} + \tfrac{1}{2}K_{ab} \norm{c^a \pd_z c^b} - \norm{c^a \mu_a}
\ee
The OPE of $W_{\fg,K}$ with itself is \emph{not} regular, but the coefficient of $\Omega^0_{z-w}$ is a total derivative due to the $\fg$-invariance of $K_{ab}$ implying $K_{ad}f^{d}_{bc}$ is totally antisymmetric in $abc$:
\be
	W_{\fg, K} W_{\fg,K} \sim \Omega^0_{z-w} \bigg(\tfrac{1}{4}K_{ad}f^{d}_{bc}\norm{c^a c^b \pd_w c^c}\bigg) = \Omega^0_{z-w} \pd_w \bigg(\tfrac{1}{12}K_{ad}f^{d}_{bc}\norm{c^a c^b c^c}\bigg)
\ee
It follows that $W_{\fg, K}$ is once again a superpotential.

We can combine these two constructions together to get the following result.
\begin{theorem}
	\label{thm:pertgauging}
	Let $\CV$ be a raviolo vertex algebra with a Hamiltionian $\fg$ symmetry at level $0$ generated by fields $\mu_a$. Choose $K_{ab}$ a (possibly degenerate) symmetric, $\fg$-invariant bilinear form on $\fg$ and $W$ a $\fg$-invariant primary superpotential on $\CV$, then
	\begin{equation*}
		W_{tot} = \tfrac{1}{2} f^a_{bc} \norm{b_a c^b c^c} + \tfrac{1}{2}K_{ab} \norm{c^a \pd c^b} - \norm{c^a \mu_a} + W
	\end{equation*}
	is a superpotential on $\CV \otimes \CV^{\lie{g}}_{bc}$. Moreover, if $(\CV,W)$ is a conformal raviolo vertex algebra with superpotential and $\Gamma$ is its stress tensor, then $(\CV \otimes \CV^{\lie{g}}_{bc}, W_{tot})$ is a conformal raviolo vertex algebra with superpotential, where the stress tensor is given by
	\begin{equation*}
		\Gamma_{tot} = \Gamma - \norm{b_a \pd_z c^a}
	\end{equation*}
\end{theorem}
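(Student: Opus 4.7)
The plan is to treat the superpotential claim and the conformal claim in turn, using the decomposition $W_{tot} = W + W_{\fg,K}$ and the fact that $\CV^{\fg}_{bc}$ sits in a tensor factor commuting with $\CV$.

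For the first claim, I would compute $W_{tot}(z) W_{tot}(w)$ by bilinearity into four contributions. The self-OPE $W(z) W(w)$ has $\Omega^0_{z-w}$-coefficient a total derivative by the hypothesis that $W$ is a superpotential on $\CV$, and $W_{\fg,K}(z) W_{\fg,K}(w)$ enjoys the same property by the explicit calculation already presented in the text, which crucially used the antisymmetry of $K_{ad} f^{d}_{bc}$. For the cross-term $W(z) W_{\fg,K}(w)$ (and its symmetric partner), the pure ghost pieces $\tfrac12 f^a_{bc}\norm{b_a c^b c^c}$ and $\tfrac12 K_{ab}\norm{c^a \pd c^b}$ have regular OPE with $W$ because they live in a commuting tensor factor; the only possibly nontrivial contribution would come from the mixed piece $-\norm{c^a \mu_a}$, but $\fg$-invariance of $W$ gives $\mu_a(z) W(w) \sim 0$, and $c^a(z) W(w) \sim 0$ since $c^a \in \CV^{\fg}_{bc}$, so Corollary \ref{cor:NOPcomm} forces $\norm{c^a \mu_a}(z) W(w) \sim 0$. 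Summing the four contributions, the $\Omega^0_{z-w}$-coefficient of $W_{tot}(z)W_{tot}(w)$ is a $\pd_w$-total derivative, establishing that $W_{tot}$ is a superpotential.

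For the conformal claim, I would first verify that $\Gamma_{tot}$ is a conformal vector. Its two summands $\Gamma$ and $-\norm{b_a \pd_z c^a}$ live in commuting tensor factors; the second is, by the free-field Proposition at $s=1$, the stress tensor of $(FC^{(1)})^{\otimes \dim\fg}$, so both summands individually satisfy the raviolo Virasoro OPE and their sum does as well, with central charge equal to the sum of the two. I would then check that $W_{tot}$ is a conformal primary of spin $1$ under $\Gamma_{tot}$, summand by summand. The field $W$ is a primary of spin $1$ by hypothesis. The ghost fields $b_a$ and $c^a$ are conformal primaries of spins $1$ and $0$ in $FC^{(1)}$, so $\norm{b_a c^b c^c}$ is primary of spin $1$ by Proposition \ref{prop:confprimary}. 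For $\norm{c^a \pd c^b}$, I would first show that $\pd c^b$ is itself a primary of spin $1$: using $[G_n, G_0] = n G_{n-1}$ one finds $G_n \pd c^b = n G_{n-1} c^b + \pd G_n c^b$, and for $n \geq 2$ both terms vanish (the first because $G_1 c^b = 0$ since $c^b$ has spin zero, the second because $c^b$ is primary); Proposition \ref{prop:confprimary} then gives primarity of $\norm{c^a \pd c^b}$ with spin $1$. Finally, $\norm{c^a \mu_a}$ is a spin-$1$ primary provided $\mu_a$ is a $\Gamma$-primary of spin $1$ in $\CV$, which we take as the natural compatibility between the Hamiltonian $\fg$-action and the conformal structure on $\CV$.

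The main obstacle lies in the self-OPE $W_{\fg,K}(z) W_{\fg,K}(w)$: one must track carefully how the $\norm{c^a \pd c^b}$–$\norm{c^a \mu_a}$ and the $\norm{b_a c^b c^c}$–$\norm{b_a c^b c^c}$ cross terms combine so that the apparent obstruction to $(D_{W_{tot}})^2=0$ collapses into an exact term via the $\fg$-invariance of $K$. The other nontrivial point is the observation that $\pd c^b$ is again a primary, which only holds because $c^b$ has spin zero; any other spin choice for the ghost system would have introduced an anomaly into the conformal transformation of $W_{tot}$.
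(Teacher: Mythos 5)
Your proposal follows essentially the same route as the paper: the paper's own argument for the superpotential claim is just the assembly of the two computations displayed immediately before the theorem (the regularity of the $W_\fg$ self-OPE given a $\fg$-invariant primary $W$, and the total-derivative structure of the $W_{\fg,K}$ self-OPE via the antisymmetry of $K_{ad}f^d_{bc}$), combined with the observation that the cross terms vanish because the ghosts live in a commuting tensor factor and $\mu_a(z)W(w)\sim 0$; your four-term decomposition and use of Corollary \ref{cor:NOPcomm} reproduces exactly this. For the ``Moreover'' clause the paper offers no explicit verification, and here your write-up is actually more careful than the text: identifying $-\norm{b_a\pd_z c^a}$ as the $s=1$ free-field stress tensor, and the argument that $\pd c^b$ is again a conformal primary precisely because $c^b$ has spin zero (so $G_2\,\pd c^b = 2G_1 c^b + \pd G_2 c^b = 0$), are correct details the paper leaves implicit.

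The one substantive point to flag is the assumption you introduce at the end, that $\mu_a$ is a $\Gamma$-primary of spin $1$. This is genuinely needed: since the other three summands of $W_{tot}$ are primaries, $W_{tot}$ is a conformal primary for $\Gamma_{tot}$ if and only if $\norm{c^a\mu_a}$ is, which by Corollary \ref{cor:NOPcomm} forces $\mu_a$ to be primary. The theorem's hypotheses (``Hamiltonian $\fg$ symmetry at level $0$'' plus ``$W$ a $\fg$-invariant primary superpotential'') do not literally contain this condition --- the paper itself treats ``$\mu$ is a conformal primary'' as an extra assumption in its discussion of stress-tensor shifts --- so you have not missed a step that the paper supplies; rather you have made explicit a compatibility condition the theorem tacitly assumes, and which does hold in the intended examples (for $\bigotimes_i FC^{(r_i/2)}_{r_i}$ with a linear $\fg$-action the currents are normal-ordered products of conformal primaries, hence primaries by Proposition \ref{prop:confprimary}). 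With that assumption recorded, your argument is complete and matches the paper's.
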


If $W: \C^N \to \C$ is a holomorphic function invariant under a (linear) $\fg$ action on $\C^N$, then the conformal raviolo vertex algebra with superpotential described in Section \ref{sec:chiralsuperpotential} satisfies the necessary conditions for this Theorem to hold. The differential $D_{tot}$ on the generating fields $c^a, b_a, X^i, \psi_i$ takes the form
\be
\begin{aligned}
	D_{tot} c^a & = \tfrac{1}{2}f^a_{bc} \norm{c^b c^c} \qquad & D_{tot} b_a & = f^c_{ab}\norm{b_c c^b} + K_{ab}\pd c^b - \mu_a\\
	D_{tot} X^i & = (\rho_a)^i{}_j \norm{c^a X^j} \qquad & D_{tot} \psi_i & = -(\rho_a)^j{}_i \norm{c^a \psi_j} +\pd_i W\\
\end{aligned}
\ee
cf. Eq. (3.34)  of \cite{CostelloDimofteGaiotto-boundary}. The resulting conformal dg raviolo vertex algebra is thus a model for the algebra of local operators in the (perturbatively) gauged theory. We hope to understand non-perturbative corrections in the future.

\section{Modules for raviolo vertex algebras}
\label{sec:modules}

We now turn to the notion of a module for a raviolo vertex algebra $\CV$. Our definition is a direct translation from the theory of vertex algebras, cf. Section 5.1.1 of \cite{FBZ}:
\begin{dfn}
	Let $\CV$ be a raviolo vertex algebra. 
	A graded vector space $M$ is called a \defterm{$\CV$-module} if it is equipped with an operation $Y_M \colon \CV \to \End(M)\otimes \CK_{dist}$ that assigns to each $a \in \CV$ a field
	\begin{equation*}
		Y_M(a,z) = \sum_{n<0} z^{-n-1} a^M_{(n)} + \sum_{n \geq0} \Omega^n_z a^M_{(n)}
	\end{equation*}
	on $M$ subject to the following axioms:
	\begin{itemize}
		\item $Y_M(|0\rangle, z) = \id_M$\\
		\item for each $a,b \in \CV$ and $m \in M$ the three expressions \begin{equation*}
			\begin{aligned}
				Y_M(a,z) Y_M(b,w) m & \in M\langle\!\langle z \rangle\!\rangle \langle\!\langle w \rangle\!\rangle\,,\\
				(-1)^{|a||b|} Y_M(b,w) Y_M(a,z) m & \in M\langle\!\langle w \rangle\!\rangle\langle\!\langle z \rangle\!\rangle\,, \text{ and}\\
				Y_M(Y(a,z-w)b,w)  m & \in M\langle\!\langle w \rangle\!\rangle\langle\!\langle z-w \rangle\!\rangle\\
			\end{aligned}
		\end{equation*}
		are expansions, in their respective domains, of the same element of $M \otimes \three$.
	\end{itemize}
	
	We say that $M' \subset M$ is a \defterm{submodule} of $M$ if $m \in M'$ then $Y_M(a,z)m \in M'\langle\!\langle z \rangle\!\rangle$ for every $a\in \CV$. A module $M$ is called \defterm{simple} if it has no non-trivial submodules.
	
	A \defterm{morphism} of $\CV$-modules $f: (M, Y_M) \to (N, Y_N)$ is the data of a linear map $f: M \to N$ intertwining the structure maps
	\begin{equation*}
		f(Y_{M}(a,z) m) = Y_{N}(a,z) f(m)
	\end{equation*}
	
	If $\CV$ has a spin grading and/or $\Z/2$ super grading, then we say that $M$ is \defterm{graded} if $M$ has a $\C$ (spin) and/or $\Z/2$ (super) grading such that whenever $a$ is homogeneous the field $Y_M(a,z)$ is homogeneous with the same gradings as $a$. A morphism $f: (M, Y_M) \to (N, Y_N)$ is \defterm{homogeneous} if $f: M \to N$ is homogeneous.
\end{dfn}

As with vertex algebras, the vector space $\CV$ underlying a raviolo vertex algebra is naturally a $\CV$-module, called the \emph{vacuum module}. Similarly, given a morphism $\CW \to \CV$ it follows that any $\CV$-module $M$ has the structure of a $\CW$-module, thereby giving us a functor from the category of $\CV$-modules to the category of $\CW$-modules.

When $\CV$ is a conformal raviolo vertex algebra, it is natural to restrict the class of modules one allows. In particular, if $\gamma \in \CV$ is the conformal vector, then it is natural to constrain the action of the modes of the field $Y_M(\gamma, z)$.

\begin{dfn}
	Let $\CV$ be a conformal raviolo vertex algebra with conformal vector $\gamma$. A $\CV$-module $M$ is called a \defterm{grading restricted generalized module} if the spin grading on $M$ induced by generalized eigenspaces $M^{(s)}$ of $\gamma_{(1)}$ satisfies
	\begin{itemize}
		\item $M^{(s)}$ is finite dimensional
		\item $M^{(s+n)} = 0$ for $n$ sufficiently negative
	\end{itemize}
	$M$ is called \defterm{conformal} if $\gamma_{(1)}$ acts in a semisimple way.
\end{dfn}

Our first result concerning modules is an analog of Proposition 5.1.2 of \cite{FBZ}.

\begin{prop}
	Let $\CV$ be a raviolo vertex algebra and $M$ a $\CV$-module. Then for every $a \in \CV$
	\begin{itemize}
		\item[1)] $Y_M(\pd a, z) = \pd_z Y_M(a,z)$
		\item[2)] All fields $Y_M(a,z)$ are mutually local.
	\end{itemize}
\end{prop}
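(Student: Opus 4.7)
My plan is to derive both assertions from a single specialization of the module associativity axiom. Setting $b = |0\rangle$ in the associativity axiom and invoking the vacuum axiom $Y_M(|0\rangle, w) = \id_M$ together with Lemma \ref{lem:translation}(1), which gives $Y(a, z-w)|0\rangle = e^{(z-w)\pd}a$, I obtain that the two expressions
$Y_M(a,z)m$ and $\sum_{k\geq 0}\tfrac{(z-w)^k}{k!} Y_M(\pd^k a, w)m$
are expansions, in $M\langle\!\langle z\rangle\!\rangle\langle\!\langle w\rangle\!\rangle$ and $M\langle\!\langle w\rangle\!\rangle\langle\!\langle z-w\rangle\!\rangle$ respectively, of one and the same element $f_{a,m} \in M\otimes\three$. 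This single identity will carry most of the weight of the argument.

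For assertion (1), I apply $\pd_z$ to $f_{a,m}$; viewed inside $\three$, this is a derivation annihilating the $w$-subalgebra and sending $(z-w)^k$ to $k(z-w)^{k-1}$. Comparing the two expansions of $\pd_z f_{a,m}$, one side reads $\pd_z Y_M(a,z)m$, while the other reads $\sum_{k\geq 0}\tfrac{(z-w)^k}{k!} Y_M(\pd^{k+1}a, w)m$. Applying the boxed identity with $a$ replaced by $\pd a$, this latter sum is precisely the expansion of $Y_M(\pd a, z)m$. Hence $\pd_z Y_M(a,z)m = Y_M(\pd a, z)m$ for every $m \in M$, proving (1).

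For assertion (2), mutual locality of $Y_M(a,z)$ and $Y_M(b,w)$ follows almost immediately from the associativity axiom: for any $m \in M$ and $\varphi \in M^*$, the matrix elements $\langle\varphi, Y_M(a,z) Y_M(b,w) m\rangle$ and $(-1)^{|a||b|}\langle\varphi, Y_M(b,w) Y_M(a,z) m\rangle$ are, by definition of a module, expansions in their respective domains of the common element $\langle\varphi, Y_M(Y(a,z-w)b, w) m\rangle \in \three$. The remaining point is the uniform pole-order condition of Definition \ref{dfn:mutual}: since $Y(a,z)b$ is a raviolo field on $\CV$, there exists $N$ depending only on $a,b$ such that $a_{(n)}b = 0$ for $n \geq N$, so $Y(a, z-w)b$ carries no $\Omega^n_{z-w}$ term with $n \geq N$, and this bound transfers to $Y_M(Y(a,z-w)b, w)m$ independent of $m$ and $\varphi$.

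The only mild obstacle is bookkeeping the action of $\pd_z$ across the two expansion domains in (1), which reduces to noting that the expansion maps of Lemma \ref{lem:expansions} commute with $\pd_z$ and that $M\langle\!\langle z\rangle\!\rangle$ injects into $M \otimes \three$; both are formal consequences of the construction of $\three$ in Section \ref{sec:expansions}.
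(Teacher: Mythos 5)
Your overall strategy is the paper's: both assertions are extracted from the module associativity axiom, with part (1) obtained by specializing $b=|0\rangle$ and using $Y(a,z-w)|0\rangle = e^{(z-w)\pd}a$ from Lemma \ref{lem:translation}. Your part (2) is correct and, if anything, more direct than the paper's: you verify Definition \ref{dfn:mutual} verbatim for the fields on $M$, with the uniform bound on the $(z-w)$-pole supplied by $a_{(n)}b=0$ for $n\geq N$, whereas the paper first derives the commutator formula $[Y_M(a,z),Y_M(b,w)]=\sum_{n\geq0}\tfrac{1}{n!}\pd^n_w\Delta(z-w)Y_M(a_{(n)}b,w)$ and then invokes Proposition \ref{prop:locality}; either route works.

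Part (1), however, has a genuine gap in its final step. You conclude that because $\pd_z f_{a,m}$ and $f_{\pd a,m}$ have the same expansion in $M\langle\!\langle w\rangle\!\rangle\langle\!\langle z-w\rangle\!\rangle$, their expansions in $M\langle\!\langle z\rangle\!\rangle\langle\!\langle w\rangle\!\rangle$ agree as well. That inference requires injectivity of the expansion map $\three \to \C\langle\!\langle w\rangle\!\rangle\langle\!\langle z-w\rangle\!\rangle$ on the relevant elements, and your stated justification (that the expansion maps commute with $\pd_z$ and that $M\langle\!\langle z\rangle\!\rangle$ injects into $M\otimes\three$) is not that statement. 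Worse, blanket injectivity is false: by the Arnold-type relation, $\Omega^0_z\Omega^0_w=-\Omega^0_{z-w}\Omega^0_z-\Omega^0_w\Omega^0_{z-w}$ is a nonzero element of $\three$ (its expansion in $\C\langle\!\langle z\rangle\!\rangle\langle\!\langle w\rangle\!\rangle$ is $\Omega^0_z\Omega^0_w\neq0$), yet its expansion in $\C\langle\!\langle w\rangle\!\rangle\langle\!\langle z-w\rangle\!\rangle$ vanishes, since $\Omega^0_z$ maps to a series in $(z-w)^j\Omega^j_w$ and $\Omega^j_w\Omega^0_w=0$. So ``equal $(w,z-w)$-expansions implies equal $(z,w)$-expansions'' cannot be used wholesale; it holds only after a degree-by-degree analysis (it is true on the degree $\leq 1$ part, which is what actually occurs here, but you give no argument). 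The clean repair, which is also the paper's route, avoids the derivative trick and the cross-domain transfer entirely: stay inside the single space $M\langle\!\langle w\rangle\!\rangle\langle\!\langle z-w\rangle\!\rangle$, where the third expression of the axiom reads $\sum_{k\geq0}(z-w)^k\,Y_M(a_{(-k-1)}|0\rangle,w)m$ while the re-expansion of $Y_M(a,z)m$ there is the Taylor series $\sum_{k\geq0}\tfrac{(z-w)^k}{k!}\pd^k_w Y_M(a,w)m$; comparing the coefficients of $(z-w)^1$ and using $a_{(-2)}|0\rangle=\pd a$ gives $Y_M(\pd a,w)m=\pd_w Y_M(a,w)m$ for all $m$, which is assertion (1).
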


\begin{proof}
	To prove $1)$, we consider the second axiom for $b = |0\rangle$. For any $m \in M$ and $a \in \CV$ we have
	\beqn
	Y_M(Y(a,z-w)|0\rangle, w) = \sum_{n\geq0} \frac{(z-w)^n}{n!} \pd^n_w Y(a,w)m
	\eeqn
	Assertion $1)$ follows by comparing the coefficient of $(z-w)$, together with
	\beqn
	Y(a,z-w)|0\rangle = \sum {n\geq0} = (z-w)^n a_{(-n)} |0\rangle
	\eeqn
	and $a_{(-2)}|0\rangle = \pd a$.
	
	The second assertion is also straightforward. To show that $Y_M(a,z)$ and $Y_M(b,w)$ are mutually local for all $a,b \in \CV$, we apply their commutator to a general element $m \in M$. The second axiom of a module, together with the associativity proprty of $\CV$ implies the following equality in $M \otimes \CK_{dist}^{z,w}$
	\beqn
	[Y_M(a,z), Y_M(b,w)] m = \sum_{n \geq 0} \tfrac{1}{n!}\pd^n_w \Delta(z-w) Y_M(a_{(n)}b,w) m
	\eeqn
	As this holds independent of $m$, we deduce the following equality in $\End(M) \otimes \CK^{z,w}_{dist}$
	\beqn
	[Y_M(a,z), Y_M(b,w)] = \sum_{n \geq 0} \tfrac{1}{n!}\pd^n_w \Delta(z-w) Y_M(a_{(n)}b,w)
	\eeqn
	As $a_{(n)}b = 0$ for $n \gg 0$, we conclude $Y_M(a,z)$ and $Y_M(b,w)$ are mutually local.
\end{proof}

\subsection{Raviolo vertex algebras induced from Lie algebras}

As with vertex algebras, when $\CV$ is defined as an induced module for a a graded Lie algebra $\mathfrak{G}$, such the first  examples given in Section \ref{sec:examples}, we can construct $\CV$-modules from certain $\mathfrak{G}$ modules by using the $\mathfrak{G}$ action to define an action of the generating fields. As the generators of $\CV$ must give fields on any module $M$, it follows that the action of $\mathfrak{G}$ must be \emph{smooth}. For concreteness, we will compare $\mathcal{V}[\fg,h]_{univ}$-modules and modules for the shifted central extension $\Hat{\fg}_h$ of $\fg\langle\!\langle u \rangle\!\rangle$. The following considerations apply equally well for the other universal raviolo vertex algebras $FC_{univ}$, $H_{univ}$, and $\textrm{Vir}_{univ}$ and their corresponding Lie algebras $\Hat{\CF}$, $\Hat{\CH}$, and $Vir$.

\begin{dfn}
	A $\Hat{\fg}_h$ module $M$ is \defterm{smooth} if for every $m\in M$ there exists $N \geq 0$ such that $J_{a,n} m = 0$ for $n \geq N$.
\end{dfn}

This is necessary if we want the above to define a $\CV[\fg,h]_{univ}$-module, as $Y_M(\mu_{a,0}|0\rangle,z)$ must define a field. We now show that smooth $\Hat{\fg}_h$-modules are equivalent to $\CV[\fg,h]_{univ}$-modules.

\begin{prop}
	The category of $\CV[\fg,h]_{univ}$-modules is equivalent to the category of smooth $\Hat{\fg}_h$-modules.
\end{prop}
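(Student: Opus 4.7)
The plan is to construct a pair of mutually inverse functors between the two categories. Going from Lie algebra modules to raviolo modules, start with a smooth $\Hat{\fg}_h$-module $M$ and define the candidate generating fields
\begin{equation*}
	\mu^M_a(z) = \sum_{n < 0} z^{-n-1} \mu^M_{a,-n-1} + \sum_{n \geq 0} \Omega^n_z J^M_{a,n},
\end{equation*}
where the superscript indicates the given action on $M$; smoothness ensures each $\mu^M_a(z)$ is a raviolo field. A direct calculation using the commutators of $\Hat{\fg}_h$ shows
\begin{equation*}
	[\mu^M_a(z), \mu^M_b(w)] = \pd_w \Delta(z-w) h_{ab} \kappa^M + \Delta(z-w) f^c_{ab} \mu^M_c(w),
\end{equation*}
which is exactly the mutual-locality form of Proposition \ref{prop:locality}, condition 3. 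For a general state $v = \mu_{a_1,(j_1)} \cdots \mu_{a_l,(j_l)} |0\rangle$ with $j_k < 0$, I would define
\begin{equation*}
	Y_M(v,z) = \tfrac{1}{(-j_1-1)!\cdots (-j_l-1)!} \norm{\pd_z^{-j_1-1}\mu^M_{a_1}(z) \cdots \pd_z^{-j_l-1}\mu^M_{a_l}(z)},
\end{equation*}
extended $\C[\kappa]$-linearly, with $\kappa$ acting via the given central action.

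To verify that this gives a module structure, I would adapt the proof of the reconstruction theorem (Proposition \ref{prop:reconstruction}) to the module setting. The vacuum axiom holds by construction. For the three-term associativity axiom, the raviolo analog of Dong's lemma (Lemma \ref{lem:Dong}) extends from fields on $\CV$ to fields on any module without modification, so pairwise mutual locality of all $Y_M(v,z)$ follows by induction on the length of $v$. The associativity of the three expansions in the module axiom then follows by the same argument as in Theorem \ref{thm:assoc}: it reduces to mutual locality plus the translation identity $[\pd, Y_M(a,z)] = \pd_z Y_M(a,z)$, which I would establish inductively from Lemma \ref{lem:derivNOP} and its module version.

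In the reverse direction, a $\CV[\fg,h]_{univ}$-module $(M, Y_M)$ produces endomorphisms $J^M_{a,n}$ and $\mu^M_{a,n}$ as the modes of the field $Y_M(\mu_{a,(-1)}|0\rangle, z)$, with $\kappa$ acting by whatever scalar/endomorphism the $\C[\kappa]$-module structure provides. That this gives a well-defined action of $\Hat{\fg}_h$ is exactly the content of the module-level commutator formula (condition 4 of Proposition \ref{prop:locality}) applied to the known OPE $\mu_a(z) \mu_b(w) \sim \Omega^1_{z-w} h_{ab} \kappa + \Omega^0_{z-w} f^c_{ab}\mu_c(w)$. Smoothness of the resulting $\Hat{\fg}_h$-module follows from the fact that $Y_M(\mu_{a,(-1)}|0\rangle, z)$ is a raviolo field on $M$. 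The two constructions are manifestly inverse on objects, and compatibility with morphisms is immediate since morphisms of $\CV[\fg,h]_{univ}$-modules intertwine precisely the modes $\mu^M_{a,n}, J^M_{a,n}$, which are the generators of the $\Hat{\fg}_h$ action.

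The main obstacle is the careful bookkeeping in the module version of the reconstruction theorem, in particular verifying the three-expansion associativity axiom for states beyond the generators. One has to confirm that the module-level analogs of Lemmas \ref{lem:NOP}, \ref{lem:derivNOP}, and Corollary \ref{cor:NOPcomm} go through verbatim when one factor is allowed to act on $M$ rather than on $\CV$; this is straightforward since those results were proved using only residue calculus in $\CK_{dist}$ and the commutator identities, both of which make sense on an arbitrary module. Once this is in hand, the induction reducing associativity for composite states to associativity for the generators $\mu_a$ closes the argument.
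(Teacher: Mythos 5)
Your overall architecture is the same as the paper's: the same two functors, the same generating fields $\mu^M_a(z)$, composite fields defined by normal-ordered products of derivatives, and an induction using Dong's lemma and Corollary \ref{cor:NOPcomm}. The genuine gap is in how you justify the module (``three expansions'') axiom. You claim it ``follows by the same argument as in Theorem \ref{thm:assoc}'', reducing to mutual locality plus the identity $[\pd, Y_M(a,z)] = \pd_z Y_M(a,z)$. That argument does not transfer to modules: the proof of Theorem \ref{thm:assoc} runs through skew-symmetry (Proposition \ref{prop:skew}) and the conjugation formula of Lemma \ref{lem:translation}, both applied to the third vector, which must therefore be an element of the algebra with its own field, and it uses the vacuum and translation operator. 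A general smooth $\Hat{\fg}_h$-module has none of these; in particular there is no operator $\pd$ on $M$, so your commutator $[\pd, Y_M(a,z)]$ is not even defined (the correct module-level statement is only $Y_M(\pd a,z) = \pd_z Y_M(a,z)$, proved \emph{after} the axioms are in place). Moreover, mutual locality of the module fields by itself only gives $[Y_M(a,z),Y_M(b,w)] = \sum_n \tfrac{1}{n!}\pd^n_w\Delta(z-w)\, C^n(w)$ for \emph{some} fields $C^n$ on $M$; the module axiom additionally demands $C^n(w) = Y_M(a_{(n)}b,w)$ and that the regular part be the generating function of the $Y_M(a_{(-n-1)}b,w)$, i.e.\ compatibility with the structure constants of $\CV[\fg,h]_{univ}$. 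That compatibility is the actual content and is not a consequence of locality plus translation covariance.

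The repair is exactly what the paper does, and it is the base case your closing induction silently presupposes: verify the axiom directly for the generators by writing $\mu^M_a(z)\mu^M_b(w) = \norm{\mu^M_a(z)\mu^M_b(w)} + [\mu^M_a(z)_-,\mu^M_b(w)]$, noting from the $\Hat{\fg}_h$-commutator that the singular coefficients are $h_{ab}\kappa$ and $f^c_{ab}\mu^M_c(w)$ --- precisely the fields attached to $\mu_{a,(1)}\mu_{b,(-1)}|0\rangle$ and $\mu_{a,(0)}\mu_{b,(-1)}|0\rangle$ in $\CV[\fg,h]_{univ}$ --- while the regular part is by definition $\sum_{n\geq 0}(z-w)^n Y_M(\mu_{a,n}\mu_{b,0}|0\rangle,w)$; all three expressions are then visibly expansions of a single element of $M\otimes\three$. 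With this base case, Corollary \ref{cor:NOPcomm} and induction on the length of monomials extend the axiom to all of $\CV[\fg,h]_{univ}$, and the reverse direction and functoriality in your proposal agree with the paper's proof.
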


\begin{proof}
	First let $M$ be a $\CV[\fg,h]_{univ}$ module. The fields $\mu_a^M(z)= Y_M(\mu_{a,0}|0\rangle,z)$ have the following commutators
	\begin{equation}
		[\mu_a^M(z), \mu_b^M(w)] = \pd_w \Delta(z-w) h_{ab} \kappa + \Delta(z-w) f^{c}_{ab} \mu_c^M(w)
	\end{equation}
	due to the second axiom defining a $\CV[\fg,h]_{univ}$-module. In particular, the modes of $\mu_a^M(z)$ give $M$ the structure of an $\Hat{\fg}_h$-module; $M$ is smooth because the $\mu_a^M(z)$ are fields on $M$.
	
	Now suppose $M$ is a smooth $\Hat{\fg}_h$ module. We first set $Y_M(|0\rangle,z) = \id_M$, $Y_M(\kappa|0\rangle,z) = \kappa$, and define fields $\mu_a^M(z)$, corresponding to $Y_M(\mu_{a,0}|0\rangle,z)$, in the obvious way; that these are fields on $M$ follows by smoothness. The above commutator still holds due to the fact that $M$ is a $\Hat{\fg}_h$ module. In particular, the $\mu_a^M(z)$ define mutually local fields on $M$. If we define $Y_M(\mu_{a,n}\mu_{b,0}|0\rangle, z)$ by the formula
	\beqn
		\sum_{n\geq0} (z-w)^n Y_M(\mu_{a,n}\mu_{b,0}|0\rangle, w) = \norm{\mu_a^M(z) \mu_b^M(w)}
	\eeqn
	then the above commutator translates to
	\beqn
	\begin{aligned}
		\mu_a^M(z) \mu_b^M(w) & = \pd_w \Delta_-(z-w) h_{ab} \kappa + \Delta_-(z-w) f^{c}_{ab} \mu_c^M(w)\\
		& \qquad + \sum_{n\geq0} (z-w)^n Y_M(\mu_{a,n}\mu_{b,0}|0\rangle, w)
	\end{aligned}
	\eeqn
	and
	\beqn
	\begin{aligned}
		(-1)^{(|a|+1)(|b|+1)}\mu_b^M(w) \mu_a^M(z) & = -\pd_w \Delta_+(z-w) h_{ab} \kappa - \Delta_+(z-w) f^{c}_{ab} \mu_c^M(w)\\
		& \qquad + \sum_{n\geq0} (z-w)^n Y_M(\mu_{a,n}\mu_{b,0}|0\rangle, w)
	\end{aligned}
	\eeqn
	Putting these together, we see that 
	\beqn
	\begin{aligned}
		& Y_M(\mu_{a,0}|0\rangle,z) Y_M(\mu_{b,0}|0\rangle,w) m\,,\\
		& (-1)^{(|a|+1)(|b|+1)}Y_M(\mu_{b,0}|0\rangle,w) Y_M(\mu_{a,0}|0\rangle,z) m\,, \text{ and}\\
		& Y_M(Y(\mu_{a,0}|0\rangle,z-w)\mu_{b,0}|0\rangle,w)m
	\end{aligned}
	\eeqn
	are all expansions of the same element of $M \otimes \three$, namely
	\beqn
		\Omega^1_{z-w} h_{ab} \kappa m + \Omega^1_{z-w} f^{c}_{ab} \mu_c^M(w)m + \sum_{n\geq0} (z-w)^n Y_M(\mu_{a,n}\mu_{b,0}|0\rangle, w)m
	\eeqn
	More generally, we define
	\beqn
		Y_M(\mu_{a_1, j_1} \dots \mu_{a_l, j_l}|0\rangle, z) = \frac{1}{j_1! \dots j_l!} \norm{\pd^{j_1}_z \mu_{a_1}(z) \dots \pd^{j_l}_z \mu_{a_l}(z)}
	\eeqn
	and
	\beqn
		Y_M(\kappa\mu_{a_1, j_1} \dots \mu_{a_l, j_l}|0\rangle, z) = \frac{1}{j_1! \dots j_l!}\kappa\norm{\pd^{j_1}_z \mu_{a_1}(z) \dots \pd^{j_l}_z \mu_{a_l}(z)}
	\eeqn
	The fact that axiom 2 holds for general elements of $\CV[\fg,h]_{univ}$ follows by Corollary \ref{cor:NOPcomm} and induction on $l$.
\end{proof}

Unsurprisingly, this result for the universal algebras implies the following relation for their quotients.

\begin{corollary}
	The category of $\CV[\fg]$-modules is equivalent to the category of smooth $\fg\langle\!\langle u \rangle \!\rangle$-modules.
\end{corollary}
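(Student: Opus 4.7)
The plan is to deduce the corollary from the preceding proposition by carefully tracking the role of the central element $\kappa$. Recall that $\CV[\fg]$ is the quotient of $\CV[\fg,h]_{univ}$ as a raviolo vertex algebra over $\C[\kappa]$ by the maximal ideal $(\kappa)\subset \C[\kappa]$. By the discussion at the end of Section \ref{sec:commCalgs}, a $\CV[\fg]$-module is the same data as a $\CV[\fg,h]_{univ}$-module on which $\kappa$ (acting via the constant field $Y_M(\kappa|0\rangle,z)$) acts as zero. In parallel, the Lie algebra $\fg\langle\!\langle u\rangle\!\rangle$ is obtained from $\Hat{\fg}_h$ by quotienting by the central element $\kappa$: setting $\kappa=0$ in the bracket of $\Hat{\fg}_h$ kills the $h$-dependent term and recovers the ordinary bracket on $\fg\langle\!\langle u\rangle\!\rangle$.

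For the forward direction, start with a $\CV[\fg]$-module $M$ and pull back along the projection $\CV[\fg,h]_{univ}\to\CV[\fg]$ to obtain a $\CV[\fg,h]_{univ}$-module structure on $M$. The previous proposition then produces a smooth $\Hat{\fg}_h$-module structure on $M$ in which $J_{a,n}$ acts via the modes of $Y_M(\mu_{a,0}|0\rangle,z)$. Since the image of $\kappa$ in $\CV[\fg]$ is zero, the element $\kappa\in\Hat{\fg}_h$ acts as zero on $M$, and so the $\Hat{\fg}_h$-action descends to an action of $\Hat{\fg}_h/(\kappa)\cong \fg\langle\!\langle u\rangle\!\rangle$. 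Smoothness is inherited from smoothness as an $\Hat{\fg}_h$-module.

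For the converse, start with a smooth $\fg\langle\!\langle u\rangle\!\rangle$-module $M$ and extend it to a smooth $\Hat{\fg}_h$-module by letting $\kappa$ act as zero; the bracket relations of $\Hat{\fg}_h$ are then trivially satisfied because the $h$-dependent term drops out. By the previous proposition, this endows $M$ with the structure of a $\CV[\fg,h]_{univ}$-module. Since $\kappa$ acts as zero, every field associated to a state in the ideal $\CI_{(\kappa)}\subset\CV[\fg,h]_{univ}$ acts as zero on $M$, so the $\CV[\fg,h]_{univ}$-module structure factors through the quotient $\CV[\fg]$. Both constructions are manifestly functorial on morphisms and are mutually inverse.

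The main thing to verify---and the only real subtlety---is the compatibility of the two quotient operations with the equivalence of the previous proposition, i.e. that the constant field $Y_M(\kappa|0\rangle,z)$ on the raviolo-vertex-algebra side coincides with the action of $\kappa\in\Hat{\fg}_h$ on the Lie-algebra side. This is immediate from the construction in the proof of the previous proposition, where $Y_M(\kappa|0\rangle,z)=\kappa^M$ is declared to be the constant field representing the action of $\kappa\in S=\C[\kappa]$. Thus this corollary really is a formal consequence of the preceding proposition together with the general interplay between quotients of $\C[\kappa]$ and corresponding quotients of raviolo vertex algebras over $\C[\kappa]$.
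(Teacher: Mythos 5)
Your argument is correct and is essentially the paper's own (implicit) proof: the paper simply asserts that the corollary follows from the equivalence for $\CV[\fg,h]_{univ}$ and smooth $\Hat{\fg}_h$-modules by passing to the quotient at $(\kappa)$, which is exactly the deduction you spell out. Your explicit check that ``$\kappa$ acts as zero'' matches on both sides of the equivalence is the right point to verify, and it holds for the reasons you give.
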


An analogous result holds for the quotients $\text{Vir}$, $H_k$, and $FC_k$, where one must restrict to smooth $\Hat{\CH}$ or $\Hat{\CF}$ modules where the central element $K$ is required to act as $k$ on the latter two examples.

A special class of modules for $\CV[\fg,h]_{univ}$ come by induction from modules of $\fg$. Let $R$ be a representation of $\fg$, we then consider the representation $R[\kappa]$ of $\fg[\![z]\!] \oplus \C\kappa[-1]$ on which $\fg[\![z]\!]$ acts through its quotient to $\fg$ and the central element acts as multiplication by $\kappa$; the vector space 
\be
	U\Hat{\fg}_h \otimes_{U\Hat{\fg}_{\geq0}} R[\kappa]
\ee
is a smooth $\Hat{\fg}_h$-module and thus admits the structure of a $\CV[\fg,h]_{univ}$ module. Viewing a raviolo vertex algebra with a Hamiltonian $\fg$-symmetry as an $\CV[\fg,h]_{univ}$ module, we see that the $\fg$-primary operators discussed in Section \ref{sec:currents}, together with the states obtained by normal-ordered products with the currents $\mu_a(z)$ and their derivatives, transform in $\CV[\fg,h]_{univ}$-submodules of $\CV$ of this form.

\subsection{Lattice raviolo vertex algebra}
As a final example, we consider a certain module for the raviolo Heisenberg algebra $H$ that itself has the structure of a raviolo vertex algebra. Our construction is totally analogous to the construction of a lattice vertex algebra, cf. Section 5.2 \cite{FBZ}, and serves as a model for the full, non-perturbative algebra of local operators in holomorphic-topological abelian $BF$ theory, which is equivalent to the twist of a free three-dimensional $\CN=2$ vector multiplet.

We start with a family of smooth modules for the Lie algebra $\Hat{\CH}$ built by induction over the subalgebra $\Hat{\CH}_{\geq0}$. We consider the module $\C_\lambda = \C_{1,\lambda}$ generated by a vector $|\lambda\rangle$ with the action of $\Hat{\CH}_{\geq 0}$ given by
\beqn
	c_{n+1} |\lambda\rangle = \varphi_n |\lambda\rangle = 0, n \geq 0, \qquad c_0 |\lambda\rangle = \lambda |\lambda\rangle, K|\lambda \rangle = |\lambda\rangle
\eeqn
We then consider the following Fock module:
\beqn
	\textrm{Fock}_\lambda = U \Hat{\CH} \otimes_{U \Hat{\CH}_{\geq 0}} \C_{\lambda}
\eeqn
This has the structure of an $H$-module, where the states $b_0$ and $\nu_{0}$ correspond to the fields
\beqn
	b(z) = Y_M(b_0, z) = \sum_{n<0} z^{-n-1} b_n + \sum_{n\geq0} \Omega^n_z \varphi_n
\eeqn
and
\beqn
	\nu(z) = Y_M(\nu_0, z) = \sum_{n<0} z^{-n-1} \nu_n + \sum_{n\geq0} \Omega^n_z c_n
\eeqn
It is immediate that $\textrm{Fock}_0$ is isomorphic to the vacuum module. Using the fact that the stress tensor is $\Gamma = -\norm{b \nu}$, we see that $|\lambda\rangle$ is annihilated by $\gamma_{(1)}$, i.e. it is an eigenvector with eigenvalue $0$, whence $\textrm{Fock}_\lambda$ is a conformal module for $H$. We now prove the following result, cf. Lemma 5.2.2 of \cite{FBZ}.

\begin{lemma}
	Any simple graded $H$-module with spin bounded from below is isomorphic to $\textrm{Fock}_\lambda$ for some $\lambda \in \C$.
\end{lemma}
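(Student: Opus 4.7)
The plan is to mimic the classical argument used to classify simple bounded modules for the Heisenberg algebra, adapted to the peculiarities of the raviolo setting. Let $M$ be a simple graded $H$-module with spin bounded from below. I would first locate the minimum spin $s_0$ occurring in $M$ and let $M_0 \subset M$ denote the (nonzero) spin-$s_0$ subspace. From the spin grading of the generators listed in Section~\ref{sec:heisenberg}, the modes $c_n$ and $\varphi_n$ have spin $-n$ for $n\geq 1$, so these strictly decrease spin and must annihilate $M_0$. The remaining generators of $\widehat{\CH}_{\geq 0}$ acting trivially on spin are $K,c_0,\varphi_0$; by construction $K$ acts as the identity on $H=H_1$, and both $c_0$ and $\varphi_0$ are central in $\widehat{\CH}$.

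Next, I would invoke the graded Schur lemma: since $M$ is simple and $c_0$ is a central endomorphism of cohomological degree $0$, it must act as a scalar $\lambda\in\C$ on all of $M$; the element $\varphi_0$ is central of cohomological degree $-1$, and both $\ker\varphi_0$ and $\mathrm{im}\,\varphi_0$ are graded $\widehat{\CH}$-submodules, so simplicity forces $\varphi_0$ either to vanish or to act bijectively. In the latter case, iterating $\varphi_0$ on any $v_0\in M_0$ produces a family of nonzero elements of $M_0$ of arbitrarily negative cohomological degree, which is incompatible with the implicit finiteness requirements of a graded simple module in this context; one concludes $\varphi_0=0$ on $M$.

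Choosing any nonzero $v_0\in M_0$, the previous two paragraphs show that $v_0$ satisfies the defining relations of the generator $|\lambda\rangle\in\mathrm{Fock}_\lambda$, namely $c_{n+1}v_0=0$, $\varphi_n v_0=0$ for $n\geq 0$, $c_0 v_0=\lambda v_0$, and $Kv_0=v_0$. By Frobenius reciprocity this extends uniquely to an $\widehat{\CH}$-equivariant map $\rho\colon \mathrm{Fock}_\lambda\to M$ with $\rho(|\lambda\rangle)=v_0$; its image is a nonzero submodule, hence equal to $M$ by simplicity. Injectivity follows from showing that $\mathrm{Fock}_\lambda$ is itself simple: using the PBW basis, any nonzero element is a polynomial in the negative modes $b_n,\nu_m$ ($n,m\geq 0$) applied to $|\lambda\rangle$, and applying suitable products of the positive modes $c_{m+1}$ and $\varphi_{n+1}$ — which extract $b_m$ and $\nu_n$ respectively via the commutators $[c_{m+1},b_m]=-(m+1)K$ and $[\varphi_{n+1},\nu_n]=-(n+1)K$ — reduces the element to a nonzero scalar multiple of $|\lambda\rangle$, proving $\mathrm{Fock}_\lambda$ is cyclic on any nonzero vector.

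The main obstacle is the Schur-type step forcing $\varphi_0=0$: unlike in the ordinary Heisenberg setting, the raviolo Heisenberg algebra contains the central element $\varphi_0$ of nonzero cohomological degree, so standard Schur's lemma does not immediately suffice. Making the argument airtight requires pinning down the correct finiteness hypothesis built into ``graded simple'' (boundedness of cohomological degree within spin levels, or finite-dimensionality of graded pieces) so as to rule out the case where $\varphi_0$ acts bijectively; every other step is routine and parallels the proof of the analogous Lemma~5.2.2 in \cite{FBZ}.
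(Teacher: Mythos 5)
Your overall route is the same as the paper's: prove $\mathrm{Fock}_\lambda$ is simple by using the pairings $[c_{m+1},b_m]\propto K$ and $[\varphi_{n+1},\nu_n]\propto K$ to strip a PBW monomial down to $|\lambda\rangle$, show $\varphi_0$ acts as zero and that a lowest-spin vector is annihilated by all the positive modes, identify the $c_0$-eigenvalue $\lambda$, and conclude via the induced map from $\mathrm{Fock}_\lambda$. Your use of Schur/Dixmier for $c_0$ and Frobenius reciprocity in place of the paper's observation that the minimal-spin subspace is one-dimensional is a harmless variation (and a small sign slip: in the paper's conventions $[\varphi_{n+1},\nu_n]=(n+1)K$, not $-(n+1)K$, which is immaterial).

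The genuine gap is exactly the step you flag: ruling out the possibility that the central element $\varphi_0$ acts bijectively. Your appeal to ``implicit finiteness requirements'' is not available — the lemma assumes only that the spin grading is bounded below, with no bound on cohomological degree and no finite-dimensionality of graded pieces — so as written this step fails. But no such hypothesis is needed: $\varphi_0$ has odd cohomological degree and $[\varphi_0,\varphi_0]=0$ in $\Hat{\CH}$ (it is the mode $b_{(0)}$ of $b(z)$, and $[b(z),b(w)]=0$ as a graded commutator), so on any graded module $\varphi_0^2=\tfrac{1}{2}[\varphi_0,\varphi_0]=0$. Hence $\mathrm{im}\,\varphi_0\subseteq\ker\varphi_0$, so $\varphi_0$ bijective would force $M=\varphi_0 M=\varphi_0^2M=0$; since $\mathrm{im}\,\varphi_0$ is a submodule by centrality, simplicity then gives $\varphi_0=0$ outright. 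This nilpotency argument is the content behind the paper's brief remark that the image of the central element $\varphi_0$ is a submodule, and with it your proof closes up and coincides in substance with the paper's.
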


We do not need to impose the stronger constraint that the $H$-module is conformal, merely that it has a compatible spin graded that is bounded from below. Note that the spin grading on $\textrm{Fock}_\lambda$ is uniquely determined by the spin $s_\lambda \in \C$ of $|\lambda\rangle$; a choice of conformal structure on $H$ distinguishes a particular spin grading.

\begin{proof}
	We start by noting that $\textrm{Fock}_\lambda$ is itself simple. As noted above, any homogeneous vector $m \in \textrm{Fock}_\lambda$ can be written as $P|\lambda\rangle$ for $P$ some polynomial homogeneous polynomial in the $b_n, \nu_n$. If $P$ isn't constant, there exists $n > 0$ with $\pd P/\pd b_n \neq 0$ and hence $c_n m \neq 0$. The polynomial $[c_n, P]$ is again homogeneous, but has strictly lower spin. Continuing in this fashion, we find a vector $c_{n_1} \dots c_{n_k} m \neq 0$ annihilated by all $c_{n}$ for $n >0$, i.e. $P' = [c_{n_1}, [\dots, [c_{n_k},P]]]$ is a homogeneous polynomial in $\nu_n$. If $P'$ isn't constant, there is some $l > 0$ with $\pd P'/\pd \nu_l \neq 0$. Continuing in this fashion, we find a vector $\varphi_{l_1} \dots \varphi_{l_j} c_{n_1} \dots c_{n_k}m$ annihilated by $c_{n+1}$ and $\varphi_n$ for all $n \geq 0$ and therefore
	\beqn
		\varphi_{l_1} \dots \varphi_{l_j} c_{n_1} \dots c_{n_k}m = \alpha |\lambda\rangle
	\eeqn
	for some nonzero $\alpha \in \C$. Due to the fact that $\textrm{Fock}_\lambda$ is generated by acting with $b_n, \nu_n$ on $|\lambda\rangle$, we conclude that any two elements of $\textrm{Fock}_\lambda$ are related by acting with an element of $U\Hat{\CH}$, whence $\textrm{Fock}_\lambda$ is simple.
	
	Now suppose $M$ is any simple graded $H$-module with grading bounded from below. We first note that $\varphi_0$ must act as zero due to $M$ being simple -- the image of $\varphi_0$ is a submodule of $M$ because $\varphi_0$ is central in $\Hat{\CH}$. Let $M_{min}$ be the subspace of minimal spin. It follows that $M_{min}$ is annihilated by $c_{n+1}$ and $\varphi_{n}$ for $n \geq 0$. Moreover, $\dim_\C M_{min} = 1$ because $M$ is simple. We conclude that $c_0$ acts via multiplication by a scalar $\lambda$ on $M$, hence $M \simeq \textrm{Fock}_\lambda$.
\end{proof}

Let $\CV_\Z$ be the conformal $H$-module $\bigoplus_{\mathfrak{m} \in \Z} \textrm{Fock}_\mathfrak{m}$.
%\beqn
%	\CV_\Z = \bigoplus_{\mathfrak{m} \in \Z} \textrm{Fock}_\mathfrak{m}
%\eeqn

\begin{prop}
	$\mathcal{V}_\Z$ has the structure of a conformal raviolo vertex algebra (of central charge $0$) such that $H = \textrm{Fock}_0$ is a conformal raviolo vertex subalgebra.
\end{prop}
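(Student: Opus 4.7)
The plan is to mimic the construction of the standard lattice vertex algebra. For each $\mathfrak{m} \in \Z$ denote by $|\mathfrak{m}\rangle \in \textrm{Fock}_\mathfrak{m}$ the generating vector characterized in Section \ref{sec:heisenberg} and introduce shift operators $S_\mathfrak{m} \colon \CV_\Z \to \CV_\Z$ determined by $S_\mathfrak{m} |\lambda\rangle = |\lambda+\mathfrak{m}\rangle$ and by the requirement that $S_\mathfrak{m}$ commutes with the action of the modes $b_n, \varphi_n, \nu_n$, and $c_n$ for $n > 0$; the only non-trivial intertwining relation is $[c_0, S_\mathfrak{m}] = \mathfrak{m}\, S_\mathfrak{m}$, consistent with $c_0$ acting by $\lambda$ on $\textrm{Fock}_\lambda$. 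For each $\mathfrak{m}$ I would then define a raviolo field
\[
	V_\mathfrak{m}(z) = S_\mathfrak{m}\cdot \mathcal{E}^{+}_\mathfrak{m}(z) \cdot \mathcal{E}^{-}_\mathfrak{m}(z)
\]
on $\CV_\Z$, where $\mathcal{E}^{\pm}_\mathfrak{m}(z)$ are formal exponentials in the positive, resp.\ negative, raviolo modes of a `primitive' of $\mathfrak{m}\,\nu(z)$, tuned so that $V_\mathfrak{m}(z)|0\rangle = e^{z\pd}|\mathfrak{m}\rangle \in \textrm{Fock}_\mathfrak{m}[\![z]\!]$ as forced by the vacuum and translation axioms, cf.\ property (1) of Lemma \ref{lem:translation}.

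Next I would check mutual locality of the strong generators $b(z)$, $\nu(z)$, and $\{V_\mathfrak{m}(z)\}_{\mathfrak{m}\in\Z}$. The relations $b_{(n)}|\mathfrak{m}\rangle = \varphi_n|\mathfrak{m}\rangle = 0$ and $\nu_{(n)}|\mathfrak{m}\rangle = c_n|\mathfrak{m}\rangle = \mathfrak{m}\,\delta_{n,0}|\mathfrak{m}\rangle$ for $n \geq 0$ translate, via the associativity axiom of a module together with the reconstruction-style formula defining $V_\mathfrak{m}$, into the OPEs
\[
	b(z)V_\mathfrak{m}(w) \sim 0, \qquad \nu(z) V_\mathfrak{m}(w) \sim \mathfrak{m}\,\Omega^0_{z-w}\,V_\mathfrak{m}(w),
\]
from which mutual locality of $b$ and $\nu$ with each $V_\mathfrak{m}$ follows by Proposition \ref{prop:locality}(3). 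The OPE $V_\mathfrak{m}(z) V_\mathfrak{n}(w)$ is computed by sliding $\mathcal{E}^{-}_\mathfrak{m}(z)$ past $\mathcal{E}^{+}_\mathfrak{n}(w)$ via the commutator $[b(z)_-, \nu(w)_+]$-type contractions; the result will have the form of $V_{\mathfrak{m}+\mathfrak{n}}(w)$ multiplied by a uniformly bounded-order element of $\three$, whose expansions in $\C\langle\!\langle z\rangle\!\rangle\langle\!\langle w\rangle\!\rangle$ and $\C\langle\!\langle w\rangle\!\rangle\langle\!\langle z\rangle\!\rangle$ realize the two composition orders, verifying Definition \ref{dfn:mutual}.

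With mutual locality established, I would finish by invoking the reconstruction theorem (Proposition \ref{prop:reconstruction}) with generators $\{b, \nu\} \cup \{V_\mathfrak{m}\}_{\mathfrak{m}\in\Z}$: condition (1) is provided by $V_\mathfrak{m}(z)|0\rangle = e^{z\pd}|\mathfrak{m}\rangle$ with constant term $|\mathfrak{m}\rangle$; condition (2) is the translation covariance $[\pd, V_\mathfrak{m}(z)] = \pd_z V_\mathfrak{m}(z)$ built into the definition of $V_\mathfrak{m}$; condition (3) is the mutual locality just discussed; and condition (4) holds because $\CV_\Z = \bigoplus_\mathfrak{m} \textrm{Fock}_\mathfrak{m}$ is spanned, block by block, by Heisenberg descendants of the $|\mathfrak{m}\rangle$. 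Since the conformal vector $\gamma = -(b\cdot\nu)$ and all its OPE relations stay within $\textrm{Fock}_0 = H$, the conformal structure of $H$ from Section \ref{sec:virasoro} extends unchanged to $\CV_\Z$, still with central charge $0$, and $H \subset \CV_\Z$ is a conformal raviolo vertex subalgebra. The main obstacle will be making sense of $\mathcal{E}^{\pm}_\mathfrak{m}(z)$ precisely: a naive primitive of $\nu(z)$ requires a symbol that integrates $\Omega^0_z$, which has no antiderivative in $\CK_{dist}$ (reflecting the non-triviality of the cohomology class $[\omega(\ul{z};0)]$); absorbing this would-be logarithmic zero mode into the shift operator $S_\mathfrak{m}$, so that it cancels out of all well-defined matrix elements, is the delicate step that makes the construction go through.
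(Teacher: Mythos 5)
Your overall strategy matches the paper's (Fock modules, shift operators, exponential vertex operators, then the reconstruction theorem, with the conformal vector staying inside $\textrm{Fock}_0$), but the construction of $V_\mathfrak{m}(z)$ as written does not work: the exponential must be built from the modes of $b$, not from a primitive of $\mathfrak{m}\,\nu$. The module structure forces $b(z)V_\mathfrak{m}(w)$ to be regular, $\nu(z)V_\mathfrak{m}(w)\sim \mathfrak{m}\,\Omega^0_{z-w}V_\mathfrak{m}(w)$, and (via translation covariance and conformal primarity of $|\mathfrak{m}\rangle$) $\pd_z V_\mathfrak{m}(z) = -\mathfrak{m}\norm{b\,V_\mathfrak{m}}(z)$; the last relation says $V_\mathfrak{m}$ is heuristically $\exp$ of a primitive of $-\mathfrak{m}\,b$, and indeed the paper takes $V_\mathfrak{m}(z)=S_\mathfrak{m}\exp\big(\mathfrak{m}\sum_{n<0}z^{-n}\tfrac{b_{(n)}}{n}-\mathfrak{m}\sum_{n>0}\Omega^{n-1}_z\tfrac{b_{(n)}}{n}\big)$. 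With your $\nu$-based exponential the OPEs come out backwards: since the modes of $\nu$ commute among themselves, $[\nu(z),V_\mathfrak{m}(w)]$ would reduce to the shift-operator term $\mathfrak{m}\,\Omega^0_z\,V_\mathfrak{m}(w)$ (a pole at $z=0$, not a local expression in $\Delta(z-w)$), while $[b(z),V_\mathfrak{m}(w)]$ would be nonzero, contradicting the regularity required by $b_{(n)}|\mathfrak{m}\rangle=\varphi_n|\mathfrak{m}\rangle=0$; moreover the negative modes of $\nu$ have cohomological degree $1$, so your exponential would not even be homogeneous of degree $0$ as $Y(|\mathfrak{m}\rangle,z)$ must be. The zero-mode ``delicacy'' you flag also dissolves in the correct construction: the only missing mode in the exponent is $b_{(0)}=\varphi_0$, which is central and acts as zero on all of $\CV_\Z$, so nothing needs to be absorbed into $S_\mathfrak{m}$; the genuinely noncommuting piece is $[\nu_{(0)},S_\mathfrak{m}]=\mathfrak{m}S_\mathfrak{m}$, which supplies part of the $\Delta(z-w)$ in $[\nu(z),V_\mathfrak{m}(w)]$.

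Relatedly, your mechanism for locality of $V_\mathfrak{m}(z)V_\mathfrak{n}(w)$ via ``$[b_-,\nu_+]$-type contractions'' producing an element of $\three$ is not what happens: in the correct construction the two exponentials commute outright (all $b$-modes commute with one another), there are no $(z-w)$-dependent factors at all (which is why, as the paper notes, no integrality condition on the lattice is needed), and mutual locality of $V_{\mathfrak{m}_1}$ and $V_{\mathfrak{m}_2}$ reduces exactly to commutativity of the shift operators, i.e.\ the cocycle condition $k_{\mathfrak{m}_1,\mathfrak{m}_2+\mathfrak{m}_3}k_{\mathfrak{m}_2,\mathfrak{m}_3}=k_{\mathfrak{m}_2,\mathfrak{m}_1+\mathfrak{m}_3}k_{\mathfrak{m}_1,\mathfrak{m}_3}$, solved by $k\equiv 1$ (your normalization $S_\mathfrak{m}|\lambda\rangle=|\lambda+\mathfrak{m}\rangle$ is this solution, but you never identify this as the actual locality condition to verify). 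So while the requirements you list for the OPEs with $b$ and $\nu$ are the right ones, the construction you propose would not satisfy them, and the key closing step of the argument (locality among the $V_\mathfrak{m}$) is argued by an incorrect mechanism.
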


\begin{proof}
	Using the fact that $\textrm{Fock}_\mathfrak{m}$ is generated by $|\mathfrak{m}\rangle$ by acting with the modes of $b(z), \nu(z)$, the raviolo reconstruction theorem \ref{prop:reconstruction} implies that it suffices to define fields $V_{\mathfrak{m}}(z) = Y(|\mathfrak{m}\rangle,z)$ for all $\mathfrak{m} \neq 0$ and verify they satisfy the necessary conditions.
	
	Using the prescribed action of $H$ on $\textrm{Fock}_{\mathfrak{m}}$, we necessarily have the following OPEs of the generators $b(z)$ and $\nu(z)$ with $V_\mathfrak{m}(w)$:
	\beqn
	\label{eq:bnuVOPE}
	\begin{aligned}
		b(z) V_\mathfrak{m}(w) & = \norm{b(z) V_\mathfrak{m}(w)}\\
		\nu(z) V_\mathfrak{m}(w) & = \mathfrak{m}\Omega^0_{z-w} V_\mathfrak{m}(w) + \norm{\nu(z) V_\mathfrak{m}(w)}
	\end{aligned}
	\eeqn
	By assumption, we are taking the stress tensor for $\CV_\Z$ to be given by $\Gamma = -\norm{b\nu}$; the action of its modes on $|\mathfrak{m}\rangle$ are given by
	\beqn
		\gamma_{(n)} |\mathfrak{m}\rangle = \begin{cases} 0 & n \neq 0\\ -\mathfrak{m} b_{(-1)}|\mathfrak{m}\rangle & n = 0 \end{cases}
	\eeqn
	hence $V_{\mathfrak{m}}(w)$ must be a conformal primary of spin $0$. Its OPE with the stress tensor $\Gamma(z)$ must therefore be given by
	\beqn
		\Gamma(z) V_\mathfrak{m}(w) = -\mathfrak{m} \Omega^0_{z-w} \norm{b V_\mathfrak{m}}(w) + \norm{\Gamma(z) V_\mathfrak{m}(w)}
	\eeqn
	where $\norm{b V_\mathfrak{m}}(z) = Y(b_{(-1)}|\mathfrak{m}\rangle,z)$. In particular, part 2) of Corollary \ref{lem:translation} implies we must further have
	\beqn
	\label{eq:Vderiv}
		\pd_z V_\mathfrak{m}(z) = -\mathfrak{m} \norm{b V_\mathfrak{m}}(z)
	\eeqn
	
	As $b$ and $\nu$ are spin $1$ and the $V_\mathfrak{m}$ are all spin $0$, it follows that $\CV_\Z$ has a spin grading with support only in the non-negative integers; the only states with spin $0$ are the vectors $|\mathfrak{m}\rangle$, all of which have different weight with respect to $\nu_{(0)} = c_0$. It immediately follows that
	\beqn
	\label{eq:Vmodes}
		V_{\mathfrak{m}_1, (n)}|\mathfrak{m}_2\rangle = \begin{cases} 0 & n \geq 0\\ k_{\mathfrak{m}_1, \mathfrak{m}_2}|\mathfrak{m}_1 + \mathfrak{m}_2\rangle & n = -1\\ \end{cases}
	\eeqn
	for some $k_{\mathfrak{m}_1, \mathfrak{m}_2} \in \C$. The vacuum axiom dictates $k_{\mathfrak{m}, 0} = 1$ for every $\mathfrak{m}$. Together with Eq. \eqref{eq:bnuVOPE} and Eq. \eqref{eq:Vderiv}, the action in Eq. \eqref{eq:Vmodes} uniquely characterizes the field $V_{\mathfrak{m}}(z)$ --- Eq. \eqref{eq:Vderiv} encodes the action of the remaining modes of $V_{\mathfrak{m}_1}(z)$ and the OPEs Eq. \eqref{eq:bnuVOPE} give us the necessary commutators to define the action of these modes on a general state in $H_{\mathfrak{m}_2}$.
	
	As in the theory of vertex algebras, we can write an explicit formula for the field $V_\mathfrak{m}(z)$ using the modes of $b$. We let $S_\mathfrak{m}$ denote the shift operator defined by
	\beqn
		S_{\mathfrak{m}_1} |\mathfrak{m}_2\rangle = k_{\mathfrak{m}_1, \mathfrak{m}_2} |\mathfrak{m}_1+\mathfrak{m}_2\rangle
	\eeqn
	and that it commutes with all of the $b_{(n)}$ as well as the $\nu_{(n)}$ with $n \neq 0$. $V_\mathfrak{m}(z)$ then takes the form
	\beqn
		V_\mathfrak{m}(z) = S_\mathfrak{m} \exp\bigg(\mathfrak{m}\sum_{n < 0} z^{-n} \frac{b_{(n)}}{n} - \mathfrak{m}\sum_{n > 0} \Omega^{n-1}_z \frac{b_{(n)}}{n} \bigg)
	\eeqn
	Note that this expression is homogeneous of cohomological degree 0.
	
	It is easy to verify this expression for $V_\mathfrak{m}$ satisfies the above properties. We find the commutators of $V_\mathfrak{m}$ with the generators $b(z)$ and $\nu(z)$ are given by
	\beqn
	\begin{aligned}[]
		[b(z), V_\mathfrak{m}(w)] & = 0\\
		[\nu(z), V_\mathfrak{m}(w)] & = \mathfrak{m}\Delta(z-w) V_\mathfrak{m}(w)
	\end{aligned}
	\eeqn
	from which the OPEs in Eq. \eqref{eq:bnuVOPE} follow immediately, cf. Proposition \ref{prop:locality}. Moreover, we see that $V_\mathfrak{m}(w)$ is mutually local with both $b(z)$ and $\nu(z)$. Taking the derivative with respect to $z$, we find
	\beqn
		\pd_z V_\mathfrak{m}(z) = -\mathfrak{m}\bigg(\sum_{n < 0} z^{-n-1} b_{(n)} + \sum_{n > 0} \Omega^{n}_z b_{(n)} \bigg)V_\mathfrak{m}(z)
	\eeqn
	As the $b_{(n)}$ all commute with one another, we conclude the right-hand side is precisely $-\mathfrak{m} \norm{b V_\mathfrak{m}}(z)$ and hence Eq. \eqref{eq:Vderiv} holds as well; the fact that $b_{(0)} = \varphi_0$ is absent from the sum is no issue as it acts as $0$ on all of $\CV_\Z$. Finally, the action of $V_{\mathfrak{m}_1}(z)$ on $|\mathfrak{m}_2\rangle$ is given by
	\beqn
		V_{\mathfrak{m}_1}|\mathfrak{m}_2\rangle = k_{\mathfrak{m}_1, \mathfrak{m}_2} \exp\bigg(\mathfrak{m}_1\sum_{n < 0} z^{-n} \frac{b_{(n)}}{n}\bigg)|\mathfrak{m}_1+\mathfrak{m}_2\rangle
	\eeqn
	from which Eq. \eqref{eq:Vmodes} is immediate.
	
	We are finally in a position to verify the conditions of Proposition \ref{prop:reconstruction}. The vacuum vector is $|0\rangle$, and the translation operator $\pd$ acts as above. We consider the fields $b(z)$, $\nu(z)$, and the $V_\mathfrak{m}(z)$ for all $\mathfrak{m} \neq 0$, corresponding to the vectors $b_{(-1)}|0\rangle$, $\nu_{(-1)}|0\rangle$, and $|\mathfrak{m}\rangle$, respectively. Conditions $1)$ and $4)$ are immediate from the above, and condition $2)$ holds for $b$ and $\nu$. To see that condition $2)$ holds for $V_\mathfrak{m}(z)$, we note that the above commutators imply that the commutator of $\Gamma$ and $V_\mathfrak{m}$ is
	\beqn
		[\Gamma(z), V_\mathfrak{m}(w)] = -\mathfrak{m} \Omega^0_{z-w} \norm{b V_\mathfrak{m}}(w)
	\eeqn
	and hence $[\pd, V_\mathfrak{m}(z)] = [\gamma_{(0)}, V_\mathfrak{m}(z)] = \pd_z V_\mathfrak{m}(z)$.
	
	 We are left with checking condition $3)$, i.e. the mutual locality of these generators. We already know $b$ and $\nu$ are mutually local with one another and with $V_\mathfrak{m}$, so it suffices to verify the mutual locality of $V_{\mathfrak{m}_1}$ and $V_{\mathfrak{m}_2}$. Using the fact that the modes $b_{(n)}$ commute with one another, we see that they are mutually local if and only if
	\beqn
		S_{\mathfrak{m}_1} S_{\mathfrak{m}_2} = S_{\mathfrak{m}_2} S_{\mathfrak{m}_1} \Leftrightarrow k_{\mathfrak{m}_1, \mathfrak{m}_2 + \mathfrak{m}_3}k_{\mathfrak{m}_2, \mathfrak{m}_3} = k_{\mathfrak{m}_2, \mathfrak{m}_1 + \mathfrak{m}_3}k_{\mathfrak{m}_1, \mathfrak{m}_3}
	\eeqn
	Thus, any choice of the $k_{\mathfrak{m}_1, \mathfrak{m}_2}$ satisfying this constraint, together with $k_{\mathfrak{m},0} =1$, gives $\CV_\Z$ the structure of a raviolo vertex algebra, cf. the proof of Proposition 5.2.5 in \cite{FBZ}. Our preferred solution will be to set $k_{\mathfrak{m}_1, \mathfrak{m}_2} = 1$ for all $\mathfrak{m}_1, \mathfrak{m}_2$.
\end{proof}

It is worth noting that $b(z) = \norm{V_{1} \pd_z V_{-1}}$. In particular, $\CV_\Z$ is strongly generated by the bosonic fields $V_\pm(z) = V_{\pm 1}(z)$ and fermionic field $\nu(z)$. As noted in Section \ref{sec:examples}, $H$ has a Hamiltonian $\fgl(1)$ symmetry (at level $0$) generated by $\nu(z)$ and hence so too does $\CV_\Z$. The fields $V_{\mathfrak{m}}(z)$ are primaries of weight $\mathfrak{m}$. Physically, this $\fgl(1)$ symmetry corresponds to the \emph{topological flavor symmetry} of an abelian 3d gauge theory; the weight $\mathfrak{m}$ is identified with \emph{monopole number} and the operators $V_{\mathfrak{m}}(z)$ are called \emph{monopole operators}.

We note that the above proof can be applied to the module 
\beqn
	\CV_{\Lambda} = \bigoplus_{\lambda \in \Lambda} \textrm{Fock}_\lambda
\eeqn
for $\Lambda$ any countable additive subgroup of $\C$. Similarly, we can start from $H^{\otimes N}$ and built a raviolo vertex algebra $\CV_\Lambda$ from any countable additive subgroup $\Lambda \subset \C^N$. Unlike the case of vertex algebras, we find that there are no integrality constraints on $\Lambda$.

Although they are different $H$-modules, the raviolo vertex algebra structure on $\CV_{\Lambda}$ only depends on $\Lambda$ up to overall rescaling --- rescaling the generators $b(z), \nu(z)$ as $\sigma b(z), \sigma^{-1} \nu(z)$ for $\sigma \in \C^\times$ is an automorphism of $H$ that sends $\textrm{Fock}_\lambda$ to $\textrm{Fock}_{\sigma \lambda}$ and yields the desired isomorphism $\CV_{\Lambda} \simeq \CV_{\sigma \Lambda}$.

For any subgroup $\Lambda' \subset \Lambda$ it follows that $\CV_{\Lambda'}$ is a subalgebra of $\CV_{\Lambda}$. In the case of $N = 1$, the smallest non-trivial additive subgroups of $\C$ take the form $\Lambda = \lambda \Z$ for some nonzero $\lambda \in \C$ and hence correspond to a raviolo vertex algebra isomorphic to $\CV_\Z$. We note that the graded character of $\CV_\Z$ is given by
\beqn
	\text{ch}_{\CV_\Z}(q,x) \define \sum_{s, \lambda} \op{grdim}(\CV_\Z^{(s),\lambda}) q^s x^\lambda = \sum_{\mathfrak{m}\in\Z} x^\mathfrak{m}
\eeqn
where $\CV_\Z^{(s),\lambda}$ is the subspace of $\CV_\Z$ of spin $s$ and $c_0$ weight $\lambda$.%
\footnote{The flavor grading of the Heisenberg raviolo vertex algebra used in Section \ref{sec:heisenberg} does not extend to the lattice vertex algebra.} %
This exactly matches the superconformal index of a free $\CN=2$ vector multiplet

\bibliography{rav}
\bibliographystyle{amsalpha}
\end{document}